\providecommand{\U}[1]{\protect\rule{.1in}{.1in}}
\newtheorem{theorem}{Théorème}
\newtheorem{corollary}[theorem]{Corollaire}
\newtheorem{definition}[theorem]{Définition}
\newtheorem{lemma}[theorem]{Lemme}
\newtheorem{proposition}[theorem]{Proposition}
\newenvironment{proof}[1][Preuve]{\textbf{#1.} }{\ \rule{0.5em}{0.5em}}
\def\QTR#1{\csname #1\endcsname}
\begin{document}

\title{Probl\`{e}me inverse pour la conductivit\'{e} en dimension deux}
\author{Vincent Michel\\Universit\'{e} Pierre et Marie Curie, 4 place Jussieu, 75252 Paris cedex}
\date{17 juin 2016;
\today
}
\maketitle

\begin{abstract}%
\begin{spacing}{1}%
\medskip Cet article propose un proc\'{e}d\'{e} de reconstruction d'une
surface de Riemann \`{a} bord coupl\'{e} \`{a} un tenseur de conductivit\'{e}
\`{a} partir de son bord et de l'op\'{e}rateur de DirichletNeumann associ\'{e}
\`{a} cette conductivit\'{e}. Lorsque la donn\'{e}e de d\'{e}part provient
d'une surface riemannienne r\'{e}elle de dimension deux \'{e}quip\'{e}e d'un
tenseur de conductivit\'{e}, ce proc\'{e}d\'{e} restitue l'int\'{e}gralit\'{e}
de ce qui peut \^{e}tre d\'{e}termin\'{e} \`{a} partir de ces
donn\'{e}es.\medskip\medskip

\hspace{15.2263pc}\textbf{Abstact}

This article proposes a process to reconstruct a Riemann surface with boundary
equipped with a linked conductivity tensor from its boundary and the
Dirichlet-Neumann operator associated to this conductivity. When initial data
comes from a two dimensional real Riemannian surface equipped with a
conductivity tensor, this process recover the whole of what can be determined
from this data.\medskip\medskip

\hspace{15.2834pc}\textbf{D\'{e}dicace}

En janvier 2016 d\'{e}c\'{e}dait mon ami Gennadi Henkin avec qui j'ai
travaill\'{e} pendant plus de quinze ans. Ce travail qui revient sur un sujet
qu'il avait apport\'{e} lui est d\'{e}di\'{e}. Par les nombreuses citations
d'articles o\`{u} il figure en tant qu'auteur, le lecteur pourra constater la
profondeur de la pens\'{e}e math\'{e}matique de Gennadi.\medskip

\hspace{15.3407pc}---------------

\noindent\textit{Mots cl\'{e}s: }surface de Riemann, probl\`{e}me de
Dirichlet-Neumann, fonction de Green, conductivit\'{e}, onde de choc, plongement.

\noindent\textit{Key words:} Riemann surface, Dirichlet-to-Neumann problem,
Green function, conductivity, shock wave, embedding.

\noindent\textit{Classification AMS: }Primaire, 32c25, 32d15, 32v15, 35r30,
58j32 secondaire 35r30

%

\end{spacing}%

\newpage

\end{abstract}
\tableofcontents%

\begin{spacing}{1}%
\medskip

\noindent Cet article est organis\'{e} de la mani\`{e}re suivante. La
section~\ref{S/ SC} propose une formalisation du probl\`{e}me d'imp\'{e}dance
tomographique \'{e}lectrique inverse qui, si elle n'est pas nouvelle, n'est
pas vraiment explicite dans la litt\'{e}rature. La
section~\ref{S/ SurfacesNodales} \'{e}nonce ce que nous avons besoin de savoir
sur les surfaces nodales qui apparaissent naturellement dans ce probl\`{e}me.

La troisi\`{e}me section regroupe des th\'{e}or\`{e}mes in\'{e}dits et
d'autres d\'{e}j\`{a} connus afin de dresser le panorama de l'unicit\'{e} et
de la construction d'une solution au probl\`{e}me formalis\'{e} dans la
section~\ref{S/ PbDNI}.

La section~\ref{S/ ReconsCondiso} aborde le probl\`{e}me de la construction
effective d'une solution dans le cas cl\'{e} de la reconstruction d'une
surface de Riemann \`{a} bord \`{a} partir de son op\'{e}rateur de
Dirichlet-Neumann. La m\'{e}thode propos\'{e}e est \`{a} notre connaissance
enti\`{e}rement nouvelle et repose sur l'analyse a priori de la
d\'{e}composition d'une fonction holomorphe de deux variables en somme d'onde
de chocs, c'est \`{a} dire de solutions de l'\'{e}quation diff\'{e}rentielle
$\frac{\partial h}{\partial y}=h\frac{\partial h}{\partial x}$.

\section{Structures de conductivit\'{e}\label{S/ SC}}

Nous d\'{e}finissons une structure de conductivit\'{e} de dimension deux comme
un couple $\left(  M,\sigma\right)  $ o\`{u} $M$ est une surface r\'{e}elle
\`{a} bord$^{(}$\footnote{On consid\`{e}re qu'une surface \`{a} bord $M$ est
un ouvert dense d'une vari\'{e}t\'{e} r\'{e}elle orient\'{e}e \`{a} bord de
dimension deux $\overline{M}$ dont toutes les composantes connexes sont
bord\'{e}es par des vari\'{e}t\'{e}s r\'{e}elles de dimension pure $1$~;
ainsi, le bord $bM$ de $M$ est $\overline{M}\backslash M$ . Une surface de
Riemann \`{a} bord est une vari\'{e}t\'{e} complexe connexe de dimension $1$
qui est aussi une surface \`{a} bord r\'{e}elle.}$^{)}$ connexe orient\'{e}e
munie d'une conductivit\'{e} $\sigma:T^{\ast}\overline{M}\rightarrow T^{\ast
}\overline{M}$, c'est-\`{a}-dire d'un tenseur tel que pour tout $p\in
\overline{M}$,
\[
\Sigma_{p}:T_{p}^{\ast}\overline{M}\times T_{p}^{\ast}M\ni\sigma_{p}\left(
a,b\right)  \mapsto\frac{a\wedge\sigma_{p}\left(  b\right)  }{\mu_{p}}%
\]
est une forme bilin\'{e}aire d\'{e}finie positive, $\mu$ \'{e}tant une forme
volume $\mu$ pour $\overline{M}$ fix\'{e}e une fois pour toute. Cette
d\'{e}finition, peut \^{e}tre inhabituelle, n'est qu'une reformulation$^{(}%
$\footnote{Si on fixe un point $p$ dans $\overline{M}$, des coordonn\'{e}es
$\left(  x,y\right)  $ au voisinage de $p$ et qu'on pose comme
dans~\cite{LeJ-UhG1989} $\left(  \xi,\eta\right)  =\left(  dy,-dx\right)  $
puis $\sigma\left(  dx\right)  =r\xi+t\eta$ et $\sigma\left(  dy\right)
=u\xi+s\eta$, pour $a=a_{x}dx+a_{y}dy$ et $b=b_{x}dx+b_{y}dy$ dans
$T_{p}\overline{M}$, $\sigma_{p}\left(  b\right)  =\left(  b_{x}%
r+b_{y}u\right)  \xi+\left(  b_{x}t+b_{y}s\right)  \eta$ et
\begin{align*}
a\wedge\sigma_{p}\left(  b\right)   &  =\left(  a_{x}dx+a_{y}dy\right)
\wedge\left[  \left(  b_{x}r+b_{y}u\right)  dy-\left(  b_{x}t+b_{y}s\right)
dx\right] \\
&  =\left(  ra_{x}b_{x}+ua_{x}b_{y}+ta_{y}b_{x}+sb_{x}b_{y}\right)  dx\wedge
dy
\end{align*}
\par
{}}$^{)}$ intrins\`{e}que de celle donn\'{e}e par~\cite{LeJ-UhG1989}
et~\cite{SyJ1990}. La section~\ref{S/ PbDNI} montre comment affranchir cette
d\'{e}finition du choix de cette forme volume auxiliaire.

Pour toute fonction continue $u:bM\rightarrow\mathbb{R}$, il existe alors dans
$C^{0}\left(  \overline{M},\mathbb{R}\right)  $ une unique fonction
$E_{\sigma}u$ qui r\'{e}sout le probl\`{e}me de Dirichlet suivant%
\begin{equation}
d\sigma dU=0~\&~\left.  U\right\vert _{bM}=u. \label{F/ dsdu=0}%
\end{equation}
Dans le probl\`{e}me d'imp\'{e}dance tomographique \'{e}lectrique inverse qui
est le sujet de cet article, il est courant de consid\'{e}rer que la
donn\'{e}e essentielle est l'op\'{e}rateur de Dirichlet-Neumann associ\'{e}
\`{a} (\ref{F/ dsdu=0}). Dans son acceptation usuelle, celui-ci fait
intervenir une d\'{e}riv\'{e}e normale et donc une m\'{e}trique. Lorsque $M$
est une surface r\'{e}elle contenue dans $\mathbb{R}^{3}$, on peut utiliser
celle induite sur $\overline{M}$ par celle standard de $\mathbb{R}^{3}$ bien
que cette derni\`{e}re ne soit pas forc\'{e}ment corr\'{e}l\'{e}e \`{a}
$\sigma$. Avant de donner d\'{e}finition intrins\`{e}que \`{a} $\sigma$ de cet
op\'{e}rateur, la section suivante donne une analyse a priori, souvent en
filigrane dans la litt\'{e}rature, des structures de conductivit\'{e} de
dimension 2.

Certains auteurs pr\'{e}f\`{e}rent consid\'{e}rer une m\'{e}trique
Riemannienne $g$ sur $M$ et construire l'op\'{e}rateur de Dirichlet-Neumann
\`{a} partir de la solution du probl\`{e}me de Dirichlet $\left(  \Delta
_{g}U=0~\&~\left.  U\right\vert _{bM}=u\right)  $ o\`{u} $\Delta_{g}$ est
l'op\'{e}rateur de Laplace-Beltrami associ\'{e} \`{a} $g$. En \'{e}crivant les
\'{e}quations $d\sigma dU=0$ et $\Delta_{g}U=0$ en coordonn\'{e}es, on
constante que ces deux formulations ne sont \'{e}quivalentes que dans le cas
particulier o\`{u} $\det\sigma_{p}=1$ pour tout $p\in\overline{M}$. La
formule~(\ref{F/ dec sigma}) de la section~(\ref{S/ fact}) montre que $\sigma$
se r\'{e}duit alors \`{a} une structure complexe et que dans le probl\`{e}me
de tomographie inverse, c'est alors la seule chose \`{a} reconstruire.
reconstruire autre chose que cette structure conforme.

\subsection{Factorisation d'une conductivit\'{e}\label{S/ fact}}

Les conditions impos\'{e}es \`{a} $\sigma$ pour \^{e}tre une conductivit\'{e}
indiquent l'implication d'une m\'{e}trique naturelle. Il est remarqu\'{e}
dans~\cite{HeG-MiV2007} qu'une forme volume $\mu$ pour $\overline{M}$ ayant
\'{e}t\'{e}, on d\'{e}finit une m\'{e}trique naturelle $g_{\mu,\sigma}$ sur
$\overline{M}$ en posant pour tout $t\in T\overline{M}$%
\begin{equation}
g_{\mu,\sigma}\left(  t\right)  =\frac{\sigma^{-1}\left(  t\,\lrcorner
\,\mu\right)  \wedge\left(  t\,\lrcorner\,\mu\right)  }{\mu}.
\label{F/ gmusigma}%
\end{equation}
Sa classe conforme $\mathcal{C}_{\sigma}$ ne d\'{e}pend pas de $\mu$ et
$\sigma$ se factorise (voir \cite{HeG-MiV2007}) \`{a} travers $\mathcal{C}%
_{\sigma}$ au sens o\`{u} il existe une fonction $s_{\sigma}:\overline
{M}\rightarrow\mathbb{R}_{+}^{\ast}$ de m\^{e}me r\'{e}gularit\'{e} que
$\sigma$, appel\'{e}e coefficient de conductivit\'{e} dans cet article, telle
que lorsque $\left(  x_{1},x_{2}\right)  $ est un couple de coordonn\'{e}es
isothermiques locales pour $\mathcal{C}_{\sigma}$,
\begin{equation}
Mat_{dx}^{\left(  dx_{2},-dx_{1}\right)  }(\sigma_{p})=s_{\sigma}\left(
p\right)  I_{2} \label{F/ diag}%
\end{equation}
pour tout $p$ dans l'ouvert de $\overline{M}$ o\`{u} $\left(  x_{1}%
,x_{2}\right)  $ est d\'{e}fini, $I_{2}$ \'{e}tant la matrice unit\'{e}
d'ordre $2$ et $dx=\left(  dx_{1},dx_{2}\right)  $. Notons $\det\sigma$
l'application qui \`{a} tout point $p$ de $\overline{M}$ associe le
d\'{e}terminant de l'application lin\'{e}aire $\sigma_{p}$. Alors
(\ref{F/ diag}) entra\^{\i}ne que $\det\sigma=s_{\sigma}^{2}$,
c'est-\`{a}-dire $s_{\sigma}=\sqrt{\det\sigma}$. Si on note $c_{\sigma}$ la
conductivit\'{e} d\'{e}finie par%
\begin{equation}
\sigma=s_{\sigma}.c_{\sigma}=\sqrt{\det\sigma}.c_{\sigma},
\label{F/ dec sigma}%
\end{equation}
$\mathcal{C}_{\sigma}$ est aussi la classe conforme associ\'{e}e \`{a}
$c_{\sigma}$ et quand $\left(  x_{1},x_{2}\right)  $ est un couple de
coordonn\'{e}es isothermiques locales pour $\mathcal{C}_{\sigma}$,
\[
Mat_{dx}^{dx}(c_{\sigma})=\left(
\begin{array}
[c]{cc}%
0 & -1\\
1 & 0
\end{array}
\right)  \overset{d\acute{e}f}{=}J.
\]
Autrement dit, $c_{\sigma}$ est aussi l'op\'{e}rateur de conjugaison agissant
sur les 1-formes de $M$. De plus, si $d^{\sigma}=c_{\sigma}d$, $\overline
{\partial}{}^{\sigma}=\frac{1}{2}\left(  d-id^{\sigma}\right)  $ est
l'op\'{e}rateur de Cauchy-Riemann associ\'{e} \`{a} $\mathcal{C}_{\sigma}$ et
\[
d\sigma dU=ds_{\sigma}d^{\sigma}U
\]
pour toute fonction $U\in C^{2}\left(  \overline{M}\right)  $.

Supposons que $\mathcal{C}$ soit une structure complexe sur $\overline{M}$,
c'est-\`{a}-dire la donn\'{e}e d'un atlas de $\overline{M}$ qui fasse de $M$
une surface de Riemann \`{a} bord de bord $bM$. Si $x_{1}$ et $x_{2}$ sont les
parties r\'{e}elles et imaginaires d'une m\^{e}me coordonn\'{e}e holomorphe de
$M$, les matrices jacobiennes relatives \`{a} $\left(  x_{1},x_{2}\right)  $
d'applications holomorphes commutent avec $J$. Cela signifie qu'on peut
d\'{e}finir un tenseur $c:T\overline{M}\rightarrow T\overline{M}$ par le fait
que dans de telles coordonn\'{e}es, $Mat_{dx}^{dx}(c)=J$. Par construction,
$c$ est une conductivit\'{e} de coefficient constant $1$, $c\circ d=i\left(
\overline{\partial}-\partial\right)  \overset{d\acute{e}f}{=}d^{c}$ et $c$ est
son op\'{e}rateur de Hodge agissant sur les 1-formes quand $M$ est muni de la
m\'{e}trique duale de celle d\'{e}finie sur chaque $T_{p}^{\ast}\overline{M}$
par $\left\langle a,b\right\rangle \mu=a\wedge\ast b=\frac{1}{\sqrt{\det
\sigma}}a\wedge\sigma\left(  b\right)  $.

La d\'{e}composition (\ref{F/ dec sigma}) fait donc appara\^{\i}tre une
structure complexe naturellement associ\'{e}e \`{a} $\sigma$. Celle-ci est
unique au sens o\`{u} si $c^{\prime}$ est l'op\'{e}rateur de conjugaison
associ\'{e} \`{a} une structure complexe $\mathcal{C}^{\prime}$ et si
$s^{\prime}\in\left(  \mathbb{R}_{+}^{\ast}\right)  ^{M^{\prime}}$,
l'\'{e}galit\'{e} $\sigma=s^{\prime}.c^{\prime}$ impose $s=s^{\prime}$ puis
$c_{\sigma}=c^{\prime}$ car $\det c_{\sigma}=1=\det c^{\prime}$.

La formule (\ref{F/ diag}) montre que pour tout $p\in M$, $\sigma_{p}$ commute
avec les automorphismes orthogonaux de $\left(  T_{p}M,\left(  g_{\mu,\sigma
}\right)  _{p}\right)  $. Lorsque $M$ est une sous-vari\'{e}t\'{e} de
$\mathbb{R}^{3}$, en particulier si $M$ est un domaine de $\mathbb{R}^{2}$, et
lorsque la classe conforme de $g_{\mu,\sigma}$ est induite par la m\'{e}trique
standard de $\mathbb{R}^{3}$, ceci signifie que $\sigma$ est isotrope au sens
usuel (voir \cite{LeJ-UhG1989} et \cite{SyJ1990} par exemple). La proposition
ci-dessous r\'{e}sume ce qui pr\'{e}c\`{e}de.

\begin{proposition}
Soit $M$ une surface \`{a} bord orient\'{e}e de dimension r\'{e}elle $2$. Une
structure complexe $\mathcal{C}$ sur $\overline{M}$ d\'{e}finit un tenseur de
conductivit\'{e} de coefficient \'{e}gal \`{a} $1$. Inversement, pour toute
conductivit\'{e} $\sigma$ sur $\overline{M}$, il existe une unique structure
complexe $\mathcal{C}_{\sigma}$ telle que $\sigma=\sqrt{\det\sigma}c_{\sigma}$
o\`{u} $c_{\sigma}$ est l'op\'{e}rateur de conjugaison associ\'{e} \`{a}
$\mathcal{C}_{\sigma}$.
\end{proposition}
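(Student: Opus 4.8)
\noindent\emph{Esquisse.} Le plan est de prouver les deux implications s\'{e}par\'{e}ment, l'unicit\'{e} se r\'{e}duisant ensuite \`{a} une normalisation par le d\'{e}terminant. Pour l'assertion directe, partant d'une structure complexe $\mathcal{C}$ sur $\overline{M}$, je d\'{e}finirais le tenseur $c$ sur $T^{\ast}\overline{M}$ par $Mat_{dx}^{dx}(c)=J$ dans un syst\`{e}me de coordonn\'{e}es $\left(  x_{1},x_{2}\right)  $ form\'{e} des parties r\'{e}elle et imaginaire d'une coordonn\'{e}e holomorphe. Le seul point \`{a} justifier est l'ind\'{e}pendance vis-\`{a}-vis de ce choix~: deux tels syst\`{e}mes se d\'{e}duisent l'un de l'autre par une application holomorphe, dont la matrice jacobienne est de la forme $aI_{2}+bJ$ et commute donc avec $J$, ce qui montre que $c$ est bien globalement d\'{e}fini. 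Il reste \`{a} constater que $c$ est une conductivit\'{e} de coefficient $1$~: un calcul direct donne $a\wedge c\left(  b\right)  =\left(  a_{1}b_{1}+a_{2}b_{2}\right)  dx_{1}\wedge dx_{2}$, forme sym\'{e}trique d\'{e}finie positive relativement \`{a} toute forme volume $\mu$ compatible avec l'orientation~; et comme $\det c=\det J=1$, le coefficient $s_{c}=\sqrt{\det c}$ vaut $1$.

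Pour la r\'{e}ciproque, le point d\'{e}cisif est de fabriquer la structure complexe. J'observerais d'abord que la condition de positivit\'{e} impos\'{e}e \`{a} $\Sigma_{p}$ (sym\'{e}trie et positivit\'{e}) \'{e}quivaut, pour l'endomorphisme $\sigma_{p}$ de $T_{p}^{\ast}M$, \`{a} $\operatorname{tr}\sigma_{p}=0$ et $\det\sigma_{p}>0$~: dans la param\'{e}trisation de la note, $\Sigma_{p}$ est sym\'{e}trique si et seulement si $t=u$, c'est-\`{a}-dire $\operatorname{tr}\sigma_{p}=u-t=0$, et $\det\sigma_{p}=rs-tu=rs-t^{2}$ est alors strictement positif puisque $\Sigma_{p}$ l'est. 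Un endomorphisme r\'{e}el de trace nulle et de d\'{e}terminant strictement positif a pour valeurs propres $\pm i\sqrt{\det\sigma_{p}}$ et est donc conjugu\'{e}, par un \'{e}l\'{e}ment de $GL_{2}^{+}$, \`{a} $\sqrt{\det\sigma_{p}}\,J$ (le signe \'{e}tant fix\'{e} par la positivit\'{e} de $\Sigma_{p}$)~; l'ensemble des rep\`{e}res r\'{e}alisant cette conjugaison est un torseur sous le centralisateur de $J$, \`{a} savoir $\left\{  aI_{2}+bJ\right\}  \cong\mathbb{C}^{\ast}$, ce qui \'{e}quipe chaque $T_{p}M$ d'une structure conforme. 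Cette structure co\"{\i}ncide avec la classe conforme de la m\'{e}trique $g_{\mu,\sigma}$ de (\ref{F/ gmusigma}) (voir \cite{HeG-MiV2007}), laquelle ne d\'{e}pend pas de $\mu$ (le remplacer par $f\mu$, $f>0$, multiplie $g_{\mu,\sigma}$ par un scalaire positif).

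Il reste alors \`{a} int\'{e}grer ce champ de structures conformes en une v\'{e}ritable structure complexe $\mathcal{C}_{\sigma}$, ce qui est exactement l'existence de coordonn\'{e}es isothermes pour $g_{\mu,\sigma}$ (th\'{e}or\`{e}me de Korn--Lichtenstein), point o\`{u} intervient une hypoth\`{e}se de r\'{e}gularit\'{e} h\"{o}ld\'{e}rienne sur $\sigma$~; c'est l'unique obstacle analytique s\'{e}rieux de l'\'{e}nonc\'{e}, le reste relevant de l'alg\`{e}bre lin\'{e}aire $2\times2$ et du suivi des d\'{e}finitions. Dans de telles coordonn\'{e}es, $\sigma$ prend par construction la forme $\sqrt{\det\sigma}\,J$, c'est-\`{a}-dire (\ref{F/ diag}), d'o\`{u} $\sigma=\sqrt{\det\sigma}\,c_{\sigma}$ comme en (\ref{F/ dec sigma}). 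Enfin, l'unicit\'{e} r\'{e}sulte de la normalisation $\det c_{\sigma}=\det J=1$~: si $\sigma=s^{\prime}c^{\prime}$ avec $c^{\prime}$ op\'{e}rateur de conjugaison d'une structure complexe $\mathcal{C}^{\prime}$ (donc $\det c^{\prime}=1$) et $s^{\prime}>0$, alors $\det\sigma=\left(  s^{\prime}\right)  ^{2}$ force $s^{\prime}=\sqrt{\det\sigma}$ puis $c^{\prime}=\sigma/s^{\prime}=c_{\sigma}$~; un op\'{e}rateur de conjugaison d\'{e}terminant sa structure complexe, on conclut $\mathcal{C}^{\prime}=\mathcal{C}_{\sigma}$.
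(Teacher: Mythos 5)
Votre preuve est correcte et suit pour l'essentiel la m\^{e}me d\'{e}marche que l'article~: sens direct en posant $Mat_{dx}^{dx}(c)=J$ dans des coordonn\'{e}es holomorphes (les jacobiennes des changements de cartes commutent avec $J$), sens r\'{e}ciproque par factorisation de $\sigma$ \`{a} travers une classe conforme int\'{e}gr\'{e}e en structure complexe par coordonn\'{e}es isothermes --- \'{e}tape que l'article d\'{e}l\`{e}gue \`{a} \cite{HeG-MiV2007} l\`{a} o\`{u} vous explicitez la normalisation ponctuelle et l'appel \`{a} Korn--Lichtenstein --- et unicit\'{e} par la m\^{e}me normalisation $\det c_{\sigma}=1=\det c^{\prime}$. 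Seule retouche mineure~: $\operatorname{tr}\sigma_{p}=0$ et $\det\sigma_{p}>0$ ne caract\'{e}risent la sym\'{e}trie et la d\'{e}finitude de $\Sigma_{p}$ qu'au signe pr\`{e}s (la positivit\'{e} exige en outre $r>0$ dans la param\'{e}trisation de la note de bas de page), mais comme vous le relevez ensuite, la positivit\'{e} de $\Sigma_{p}$ fixe pr\'{e}cis\'{e}ment le signe dans la conjugaison \`{a} $\sqrt{\det\sigma_{p}}\,J$, donc l'argument tient.
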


Ainsi, il est naturel de dire qu'une fonction $f$ d'un ouvert $U$ de $M$ \`{a}
valeurs dans $\mathbb{C}$ est $\sigma$-holomorphe si $\overline{\partial}%
{}^{\sigma}f=0$, ou de fa\c{c}on \'{e}quivalente, lorsque pour toute carte
$z:V\rightarrow\mathbb{C}$ de l'atlas holomorphe de $\left(  M,\mathcal{C}%
_{\sigma}\right)  $, $f\circ z^{-1}$ est holomorphe sur $z^{-1}\left(
U\right)  $ au sens usuel.

Si $\left(  M^{\prime},\sigma^{\prime}\right)  $ est une autre structure de
conductivit\'{e} de dimension $2$, une fonction $f$ d'un ouvert $U$ de $M$
\`{a} valeurs dans $M^{\prime}$ est dite $\left(  \sigma,\sigma^{\prime
}\right)  $-analytique si pour toute carte holomorphe $z^{\prime}:V^{\prime
}\rightarrow\mathbb{C}$ de $\left(  M^{\prime},\mathcal{C}_{\sigma^{\prime}%
}\right)  $, $z^{\prime}\circ f$ est $\sigma$-holomorphe sur $f^{-1}\left(
V^{\prime}\right)  \cap U$, c'est-\`{a}-dire si $z^{\prime}\circ f\circ
z^{-1}$ est holomorphe sur $z^{-1}\left(  f^{-1}\left(  V^{\prime}\right)
\cap U\right)  $ au sens usuel pour toute carte holomorphe $z:V\rightarrow
\mathbb{C}$ de $\left(  M,\mathcal{C}_{\sigma}\right)  $. Cette
propri\'{e}t\'{e} se caract\'{e}rise aussi par le lemme suivant.

\begin{lemma}
\label{L/ sigma-analyticite}Soient $\left(  M,\sigma\right)  $ et $\left(
M^{\prime},\sigma^{\prime}\right)  $ des structures de conductivit\'{e} de
dimension $2$, $U$ un ouvert de $M$ et $f:U\rightarrow M^{\prime}$ une
application diff\'{e}rentiable. Alors $f$ est $\left(  \sigma,\sigma^{\prime
}\right)  $-analytique si et seulement si $\left(  {}^{t}Df\right)  \circ
c_{\sigma^{\prime}}=c_{\sigma}\circ\left(  {}^{t}Df\right)  $. Lorsque $f$
r\'{e}alise un diff\'{e}omorphisme $\varphi$ de $U$ sur $f\left(  U\right)  $,
$\varphi$ est $\left(  \sigma,\sigma^{\prime}\right)  $-analytique si et
seulement si $\varphi_{\ast}c_{\sigma}=c_{\sigma^{\prime}}$ et en particulier
si $\varphi_{\ast}\sigma=\sigma^{\prime}$.
\end{lemma}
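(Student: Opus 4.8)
The plan is to reduce everything to a local computation in holomorphic charts. Since $(\sigma,\sigma')$-analyticity is a local property, I would fix $p\in U$, a holomorphic chart $z=x_1+ix_2$ of $(M,\mathcal{C}_\sigma)$ near $p$ and a holomorphic chart $z'=x_1'+ix_2'$ of $(M',\mathcal{C}_{\sigma'})$ near $f(p)$. By the definition recalled above, $f$ is then $(\sigma,\sigma')$-analytic near $p$ if and only if $F:=z'\circ f\circ z^{-1}$ is holomorphic in the usual sense, i.e. its real Jacobian $A$, written in the bases $(\partial_{x_1},\partial_{x_2})$ and $(\partial_{x_1'},\partial_{x_2'})$, commutes with the matrix $J$ of multiplication by $i$: $AJ=JA$. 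Because $(x_1,x_2)$ and $(x_1',x_2')$ are isothermal, formula~(\ref{F/ diag}) guarantees that $c_\sigma$ and $c_{\sigma'}$ both have matrix $J$ in the dual cobases $(dx_1,dx_2)$ and $(dx_1',dx_2')$.

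Next I would translate the tensorial condition into coordinates. Since the matrix of $Df$ is $A$, the matrix of ${}^t Df:T^*_{f(p)}M'\to T^*_pM$ in the dual cobases is ${}^t A$, so that ${}^t Df\circ c_{\sigma'}=c_\sigma\circ{}^t Df$ reads ${}^t A\,J=J\,{}^t A$. The heart of the argument is the elementary observation that, because ${}^t J=-J$, a real $2\times2$ matrix commutes with $J$ if and only if its transpose does (transpose $BJ=JB$ and use ${}^t J=-J$). Hence ${}^t A\,J=J\,{}^t A$ is equivalent to $AJ=JA$, which is exactly the holomorphy of $F$ found above; this yields the first equivalence.

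For the diffeomorphism statement I would note that when $f=\varphi$ is a diffeomorphism, ${}^t D\varphi$ is invertible and the relation ${}^t D\varphi\circ c_{\sigma'}=c_\sigma\circ{}^t D\varphi$ rearranges to $\varphi^* c_{\sigma'}=c_\sigma$, equivalently $\varphi_* c_\sigma=c_{\sigma'}$, since the pullback of a cotangent endomorphism is $T\mapsto{}^t D\varphi\circ T\circ({}^t D\varphi)^{-1}$. Finally, for the ``in particular'' clause, starting from $\varphi_*\sigma=\sigma'$ I would invoke the factorization $\sigma=\sqrt{\det\sigma}\,c_\sigma$ from the preceding Proposition: being conjugate to $c_\sigma$, the tensor $\varphi_* c_\sigma$ still has determinant $1$, so $\varphi_*\sigma=(\sqrt{\det\sigma}\circ\varphi^{-1})\,\varphi_* c_\sigma$ is a factorization of $\sigma'$ of the required type, and the uniqueness asserted in that Proposition forces $\varphi_* c_\sigma=c_{\sigma'}$, hence $(\sigma,\sigma')$-analyticity.

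The main obstacle is the transpose bookkeeping — precisely the fact that commuting with $J$ is stable under transposition — which is what explains why ${}^t Df$ rather than $Df$ appears in the statement: the conjugation operators act on $1$-forms and are therefore transported by pullback. One should also keep track of orientation conventions so that $c_\sigma$ corresponds to $+J$ and not $-J$, but since the same convention serves for both $\sigma$ and $\sigma'$ this does not affect the equivalence.
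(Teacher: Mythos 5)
Your proof is correct and follows essentially the same route as the paper's: the same reduction to holomorphic charts where $c_{\sigma}$ and $c_{\sigma'}$ have matrix $J$, the same identification of the tensorial condition with ${}^{t}A\,J=J\,{}^{t}A$ and hence $AJ=JA$ (the paper verifies this equivalence on matrix entries while you use ${}^{t}J=-J$, a cosmetic difference), and the same treatment of the diffeomorphism case via the definition of $\varphi_{\ast}c_{\sigma}$ and the uniqueness of the factorization $\sigma=\sqrt{\det\sigma}\,c_{\sigma}$. No gap to report.
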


\begin{proof}
Consid\'{e}rons des cartes holomorphes $z:V\rightarrow\mathbb{C}$ de $\left(
M,\mathcal{C}_{\sigma}\right)  $ et $z^{\prime}:V^{\prime}\rightarrow
\mathbb{C}$ de $\left(  M^{\prime},\mathcal{C}_{\sigma^{\prime}}\right)  $ et
posons $F=Mat_{\left(  dx,dy\right)  }^{\left(  dx^{\prime},dy^{\prime
}\right)  }\left(  Df\right)  $ o\`{u} $\left(  x,y\right)  =\left(
\operatorname{Re}z,\operatorname{Im}z\right)  $ et $\left(  x^{\prime
},y^{\prime}\right)  =\left(  \operatorname{Re}z^{\prime},\operatorname{Im}%
z^{\prime}\right)  $. Alors%
\begin{align*}
Mat_{\left(  dx^{\prime},dy^{\prime}\right)  }^{\left(  dx,dy\right)  }\left(
\left(  {}^{t}Df\right)  \circ c_{\sigma^{\prime}}\right)   &  =Mat_{\left(
dx^{\prime},dy^{\prime}\right)  }^{\left(  dx,dy\right)  }\left(  {}%
^{t}Df\right)  Mat_{\left(  dx^{\prime},dy^{\prime}\right)  }^{\left(
dx^{\prime},dy^{\prime}\right)  }\left(  c_{\sigma^{\prime}}\right)  ={}%
^{t}FJ\\
Mat_{\left(  dx^{\prime},dy^{\prime}\right)  }^{\left(  dx,dy\right)  }\left(
c_{\sigma}\circ\left(  {}^{t}Df\right)  \right)   &  =Mat_{\left(
dx,dy\right)  }^{\left(  dx,dy\right)  }\left(  c_{\sigma}\right)
Mat_{\left(  dx^{\prime},dy^{\prime}\right)  }^{\left(  dx,dy\right)  }\left(
{}^{t}Df\right)  =J{}^{t}F
\end{align*}
La relation $\left(  {}^{t}Df\right)  \circ c_{\sigma^{\prime}}=c_{\sigma
}\circ\left(  {}^{t}Df\right)  $ a donc lieu si et seulement si $JF=FJ$.
Traduite sur les entr\'{e}es de matrice, ceci est \'{e}quivalent au fait que
$\operatorname{Re}f$ et $\operatorname{Im}f$ v\'{e}rifient le syst\`{e}me des
\'{e}quations de Cauchy-Riemann, c'est-\`{a}-dire $\frac{\partial f}%
{\partial\overline{z}}=0$.

Supposons maintenant que $\varphi=f\left\vert _{U}^{f\left(  U\right)
}\right.  $ est diff\'{e}omorphisme. Puisque par d\'{e}finition,
$\varphi_{\ast}c_{\sigma}=\left(  {}^{t}Df\right)  _{\psi}^{-1}\circ\left(
c_{\sigma}\right)  {}_{\psi}\circ{}^{t}\left(  D\varphi\right)  _{\psi}$
o\`{u} $\varphi=\psi^{-1}$, le point pr\'{e}c\'{e}dent donne que $\varphi$ est
$\left(  \sigma,\sigma^{\prime}\right)  $-analytique si et seulement si
$\varphi_{\ast}c_{\sigma}=c_{\sigma^{\prime}}$. Par ailleurs, $\varphi_{\ast
}c_{\sigma}=\left(  \det\sigma\right)  _{\psi}.\varphi_{\ast}c_{\sigma}%
=\det\left(  \sigma_{\psi}\right)  .\varphi_{\ast}c_{\sigma}$. Donc la
relation $\varphi_{\ast}\sigma=\sigma^{\prime}$ impose $\det\left(
\sigma_{\psi}\right)  =\det c_{\sigma^{\prime}}$ et $\varphi_{\ast}c_{\sigma
}=c_{\sigma^{\prime}}$.
\end{proof}

\subsection{L'op\'{e}rateur de Dirichlet-Neumann\label{S/ PbDNI}}

Consid\'{e}rons une structure de conductivit\'{e} $\left(  M,\sigma\right)  $
de dimension $2$ et $\mu$ une forme volume sur $\overline{M}$. On \'{e}quipe
$M$ d'une m\'{e}trique $g$ arbitraire et on note $\nu$ le champ de vecteur
d\'{e}fini sur $bM$ tel que pour tout $p\in bM$, $\left(  \nu_{p},\tau
_{p}\right)  $ est une base $g$-orthonorm\'{e}e directe de $T_{p}\overline{M}%
$. On dispose alors de l'op\'{e}rateur de Dirichlet-Neumann
\guillemotleft normal\guillemotright \ $N_{\nu}^{\sigma}$ d\'{e}fini pour
toute fonction suffisamment r\'{e}guli\`{e}re $u:bM\rightarrow\mathbb{R}$ par
\begin{equation}
N_{\nu}^{\sigma}u=\left.  \frac{\partial E_{\sigma}u}{\partial\nu}\right\vert
_{bM} \label{F/ Nsigmag}%
\end{equation}
Ainsi, lorsque $u:bM\rightarrow\mathbb{R}$ est suffisamment
r\'{e}guli\`{e}re,
\[
dE_{\sigma}u=\left(  E_{\sigma}u.\nu\right)  \nu^{\ast}+\left(  E_{\sigma
}u.\tau\right)  \tau^{\ast}=\left(  N_{\nu}^{\sigma}u\right)  \nu^{\ast
}+\left(  du.\tau\right)  \tau^{\ast}.
\]
Cette formule montre que la donn\'{e}e de $N_{\nu}^{\sigma}$ qui d\'{e}pend du
choix d'une m\'{e}trique peut \^{e}tre remplac\'{e}e par celle de
l'op\'{e}rateur de Dirichlet-Neumann
\guillemotleft diff\'{e}rentiel\guillemotright \ $N_{d}^{\sigma}$ dont
l'action sur une fonction $u:bM\rightarrow\mathbb{R}$ suffisamment
r\'{e}guli\`{e}re est d\'{e}finie par%
\begin{equation}
N_{d}^{\sigma}u=d\left.  E_{\sigma}u\right\vert _{bM} \label{F/ Nsigma}%
\end{equation}
Dans le cas particulier o\`{u} $\det\sigma=1$, il est not\'{e}
dans~\cite{HeG-MiV2007} que $\left.  \partial E_{\sigma}u\right\vert
_{bM}=\left(  L_{\nu}^{\sigma}u\right)  \left(  \nu^{\ast}+i\tau^{\ast
}\right)  $ o\`{u} $L_{\nu}^{\sigma}=\frac{1}{2}\left(  N_{\nu}^{\sigma
}-i\frac{\partial}{\partial\tau}\right)  $ de sorte qu'on peut consid\'{e}rer
que l'op\'{e}rateur de Dirichlet-Neumann
\guillemotleft complexe\guillemotright \ $\theta_{c}^{\sigma}$ d\'{e}fini sur
les fonctions $u:bM\rightarrow\mathbb{R}$ suffisamment r\'{e}guli\`{e}res par
\begin{equation}
\theta_{c}^{\sigma}u=\left.  \partial E_{\sigma}u\right\vert _{bM}=\left(
L_{\nu}^{c_{\sigma}}u\right)  \left(  \nu^{\ast}+i\tau^{\ast}\right)
\label{F/ theta}%
\end{equation}
\medskip

Ceci \'{e}tant, la t\^{a}che \`{a} accomplir est g\'{e}n\'{e}ralement
pens\'{e}e comme la reconstruction de $\left(  M,\sigma\right)  $ \`{a} partir
de $\left(  \partial M,N_{\nu}^{\sigma}\right)  $, $\partial M$ \'{e}tant $bM$
muni de l'orientation induite par $M$ et $\nu$. Cette formulation est
ambigu\"{e} car elle ne dit pas s'il s'agit de d\'{e}terminer $M$ comme une
vari\'{e}t\'{e} abstraite, comme une vari\'{e}t\'{e} plong\'{e}e ou encore
plus pr\'{e}cis\'{e}ment comme une certaine sous-vari\'{e}t\'{e} d'un espace
standard et le succ\`{e}s de cette entreprise d\'{e}pend du point de vue adopt\'{e}.

Lorsque $M$ est un domaine de $\mathbb{R}^{2}$, la conductivit\'{e} est
souvent assimil\'{e}e \`{a} la matrice $\left(  \sigma_{jk}\right)
=Mat_{\left(  dx_{1},dx_{2}\right)  }^{\left(  dx_{2},-dx_{1}\right)  }\left(
\sigma\right)  $ o\`{u} $\left(  x_{1},x_{2}\right)  $ est le couple des
coordonn\'{e}es standards de $\mathbb{R}^{2}$; (\ref{F/ dsdu=0}) s'\'{e}crit
alors sous la forme%
\begin{equation}
\sum\limits_{j,k=1,2}\frac{\partial}{\partial x_{j}}\left(  \sigma_{jk}%
\frac{\partial U}{\partial_{x_{k}}}\right)  =0~~\&~~\left.  U\right\vert
_{bM}=u. \label{F/ dsdu=0 ds R^2}%
\end{equation}
et les conditions impos\'{e}es \`{a} $\sigma$ pour \^{e}tre une
conductivit\'{e} se traduisent par le fait que $\left(  \sigma_{jk}\right)  $
est sym\'{e}trique et d\'{e}finie positive.

Si le probl\`{e}me pos\'{e} est compris comme celui de reconstruire $\left(
\sigma_{jk}\right)  $ \`{a} partir $\left(  \partial M,N_{d}^{\sigma}\right)
$, il est sans solution naturelle car d'apr\`{e}s une remarque de Tartar
cit\'{e}e par \cite{KoR-VoM1984}, lorsque $\varphi\in C^{1}\left(
\overline{M},\overline{M}\right)  $ est un diff\'{e}omorphisme co\"{\i}ncidant
avec l'identit\'{e} sur $bM$ et $\Phi$ est la matrice jacobienne de $\varphi$,
$\left(  \beta_{jk}\right)  =\frac{1}{\det\Phi}{}^{t}\Phi\left(  \sigma
_{jk}\right)  \Phi$ d\'{e}finit une conductivit\'{e} $\beta$ telle que
$N_{d}^{\beta}=N_{d}^{\sigma}$. Cependant, la section pr\'{e}c\'{e}dente
indique que $\varphi$ est alors un biholomorphisme entre les surfaces de
Riemann $\left(  M,\mathcal{C}_{\beta}\right)  $ et $\left(  M,\mathcal{C}%
_{\alpha}\right)  $. Bien qu'elles le m\^{e}me ensemble sous-jacent, il est
plus indiqu\'{e} de les voir comme deux plongement diff\'{e}rents de la
m\^{e}me surface de Riemann abstraite.

Cet exemple invite \`{a} consid\'{e}rer le probl\`{e}me de conductivit\'{e}
inverse comme \'{e}tant celui de la reconstruction d'une surface de Riemann
\`{a} bord abstraite $M$ et d'une fonction $s:\overline{M}\rightarrow
\mathbb{R}_{+}^{\ast}$ \`{a} partir de la donn\'{e}e de $bM$, de la
restriction de $s.c$ \`{a} $T_{bM}\overline{M}$ o\`{u} $c$ est l'op\'{e}rateur
de conjugaison de $T^{\ast}\overline{M}$ et de l'op\'{e}rateur%
\[
N^{s,c}:\mathcal{F}\left(  bM\right)  \ni u\mapsto\left.  d^{c}E_{s.c}%
u\right\vert _{bM}%
\]
o\`{u} $\mathcal{F}\left(  M\right)  $ est n'importe quel espace raisonnable
de fonctions comme $C^{0}\left(  bM\right)  $, $C^{\infty}\left(  bM\right)  $
ou $H^{1/2}\left(  bM\right)  $, $d^{c}=i\left(  \overline{\partial}%
-\partial\right)  $, $\partial=d-\overline{\partial}$ et $\overline{\partial}$
est l'op\'{e}rateur de Cauchy-Riemann de $M$.\medskip

Comme il a \'{e}t\'{e} pr\'{e}cis\'{e} au d\'{e}but de la section
pr\'{e}c\'{e}dente, il est possible de s'affranchir de la forme volume
auxiliaire $\mu$. Puisque dans le probl\`{e}me inverse envisag\'{e} ici,
l'op\'{e}rateur de Dirichlet-Neumann $N_{d}^{\sigma}$ et $\sigma\left\vert
_{bM}\right.  $ sont consid\'{e}r\'{e}s comme connus, l'op\'{e}rateur
$\ast_{\sigma}$ de conjugaison associ\'{e} \`{a} la structure complexe
$\mathcal{C}_{\sigma}$ de $\left(  M,\sigma\right)  $ est connu quand il agit
sur le fibr\'{e} $T_{bM}^{\ast}\overline{M}=\underset{s\in bM}{\cup}%
T_{s}\overline{M}$. Ayant fix\'{e} une section lisse et g\'{e}n\'{e}ratrice
$\tau^{\ast}$ de $T^{\ast}bM$, on pose $\nu_{s}^{\ast}=-\left(  \ast_{\sigma
}\right)  _{s}\tau_{s}^{\ast}$ pour tout $s\in bM$. Par d\'{e}finition d'une
conductivit\'{e}, $bM\ni s\mapsto\nu_{s}^{\ast}\wedge\tau_{s}^{\ast}$ est
alors une section lisse du fibr\'{e} des formes volumes de $\overline{M}$ et
peut \^{e}tre prolong\'{e}e en une forme volume lisse pour $\overline{M}$.
Bien que ce prolongement ne soit pas unique, tout tenseur qui serait une
conductivit\'{e} pour l'un de ces prolongements le serait pour tous.

\section{Surfaces de Riemann nodales \`{a} bord\label{S/ SurfacesNodales}}

Une surface de Riemann nodale \`{a} bord $Q$ est un ensemble de la forme
$\left(  \overline{S}/\mathcal{R}\right)  \backslash\pi\left(  bS\right)  $
o\`{u} $S$ est une surface de Riemann \`{a} bord, $\mathcal{R}$ une relation
d'\'{e}quivalence identifiant un nombre fini de points de $\overline{S}$ mais
telle que deux points distincts de $bS$ sont dans deux classes diff\'{e}rentes
et $\pi$ est la projection naturelle de $\overline{S}$ sur $\overline
{S}/\mathcal{R}$. En particulier, $\pi_{bS}=\pi\left\vert _{bS}^{bS}\right.  $
est une bijection.

On munit $\overline{S}/\mathcal{R}$ de la topologie quotient de sorte que $Q$
est un ouvert, $\overline{Q}=\overline{S}/\mathcal{R}$ et $bQ=\pi\left(
bS\right)  $. On d\'{e}signe par $\operatorname*{Reg}Q$ l'ensemble des points
de $Q$ qui n'ont par $\pi$ qu'un ant\'{e}c\'{e}dent et on pose
$\operatorname*{Sing}Q=Q\backslash\operatorname*{Reg}Q$~; on d\'{e}finit de
fa\c{c}on analogue $\operatorname*{Reg}\overline{Q}$ et $\operatorname*{Sing}%
\overline{Q}$. Puisque $\pi$ est bijective de $\overline{S}\cap\pi^{-1}\left(
\operatorname*{Reg}\overline{Q}\right)  $ sur $\operatorname*{Reg}\overline
{Q}$, $\pi$ permet de donner \`{a} $\operatorname*{Reg}\overline{Q}$ une
structure de surface de Riemann ouverte \`{a} bord de bord $\left(  bQ\right)
\cap\operatorname*{Reg}\overline{Q}$ et \`{a} $\pi_{\operatorname*{reg}}%
=\pi\left\vert _{S\cap\pi^{-1}\left(  \operatorname*{Reg}\overline{Q}\right)
}^{\operatorname*{Reg}\overline{Q}}\right.  $ le statut d'isomorphisme.

Cette structure complexe se prolonge \`{a} travers les \'{e}ventuelles
singularit\'{e}s de $\overline{Q}$ de la fa\c{c}on suivante. Soient pour
$q\in\overline{Q}$ donn\'{e}, les ant\'{e}c\'{e}dents $p_{1},...,p_{\nu\left(
q\right)  }$ de $q$ par $\pi$ et $V_{q,1},...,V_{q,\nu\left(  q\right)  }$ des
voisinages connexes ouverts dans $\overline{S}$ de ces points tels que pour
tout $j\in\left\{  1,...,\nu\left(  q\right)  \right\}  $, $\overline{V_{q,j}%
}\cap\pi^{-1}\left(  \operatorname*{Sing}\overline{Q}\right)  =\left\{
p_{j}\right\}  $, $\overline{V_{q,j}}\cap bS$ est soit vide, soit un ouvert
connexe de $bS$ et $\overline{V_{q,j}}\cap\overline{V_{q,k}}=\emptyset$
lorsque $k\neq j$. Pour chaque $j$, $\pi$ r\'{e}alise une bijection $\pi
_{q,j}$ de $V_{q,j}\backslash bS$ sur $Q_{q,j}$, ce qui permet de munir
$Q_{q,j}$ ou $Q_{q,j}\cup\left(  \overline{Q_{q,j}}\cap bS\right)  $ si $q\in
bQ$ et $j$ est l'indice tel que $p_{j}\in\left(  bS\right)  \cap\pi
^{-1}\left(  Q\right)  $, d'une structure de surface de Riemann, \`{a} bord si
$q\in bQ$. Par construction, $\pi_{q,j}$ prolonge holomorphiquement la
restriction de $\pi_{\operatorname*{reg}}$ \`{a} $V_{q,j}$ pour tout $j$.

On appelle branche de $\overline{Q}$ en $q$ toute surface de Riemann connexe
$B$ de la forme $Q_{q,j}$ - si $\overline{B}\cap bS=\varnothing$, $B$ est dite
int\'{e}rieure et si $\overline{B}\cap bS$ contient un voisinage de $q$ dans
$bS$, $B$ est dite de bord. \medskip

Par fonction nodale classe $C^{k}$ sur $U$ ou $\overline{U}$, $0\leqslant
k\leqslant\infty$, (resp. holomorphe sur $U$), $U$ \'{e}tant un ouvert de $Q$,
on entend une fonction usuelle sur $\operatorname*{Reg}U$ qui se prolonge en
fonction de classe $C^{k}$ (resp. holomorphe) le long de toute branche de $U$
ou $\overline{U}$ selon le cas \'{e}ch\'{e}ant. On d\'{e}finit de fa\c{c}on
similaire la notion d'application nodale de classe $C^{k}$ ou holomorphe \`{a}
valeurs dans une vari\'{e}t\'{e} de classe $C^{k}$ ou analytique. Une
application $f$ \`{a} valeurs dans $\overline{Q}$ est dite de classe $C^{k}$
(resp. holomorphe) si pour toute branche $B$ de $\overline{Q}$, $f^{-1}\left(
B\right)  $ est une vari\'{e}t\'{e} nodale de classe $C^{k}$ (ou analytique)
et $f$ est de classe $C^{k}$ (resp. holomorphe) sur $f^{-1}\left(  B\right)  $.

Si $X$ est une vari\'{e}t\'{e} r\'{e}elle \`{a} bord de classe $C^{k}$ et $Q$
une surface de Riemann \`{a} bord, une $C^{k}$-normalisation (resp.
normalisation) de $X$ sur $Q$ est une application $f:\overline{X}%
\rightarrow\overline{Q}$ qui est un $C^{k}$-diff\'{e}omorphisme (resp.
biholomorphisme) usuel de $f^{-1}\left(  \operatorname*{Reg}\overline
{Q}\right)  $ sur $\operatorname*{Reg}\overline{Q}$ et de $f^{-1}\left(
B\right)  $ sur $B$ pour toute branche $B$ de $Q$. Avec cette d\'{e}finition,
$\pi:\overline{S}\rightarrow\overline{Q}$ est une normalisation de
$Q$.\medskip

Consid\'{e}rons une autre surface de Riemann nodale \`{a} bord $Q^{\prime}$
qui est le quotient d'une surface de Riemann \`{a} bord $S^{\prime}$ et notons
$\pi^{\prime}$ la projection naturelle de $\overline{S^{\prime}}$ sur
$\overline{Q^{\prime}}$. On se donne une application nodale $\varphi
:\overline{Q}\longrightarrow\overline{Q^{\prime}}$~; $\varphi$ est donc
univalu\'{e}e sur $\operatorname*{Reg}\overline{Q}$ et multivalu\'{e}e sur
$\operatorname*{Sing}\overline{Q}$. On dit que $\varphi$ est un isomorphisme
approximatif de surfaces de Riemann nodale \`{a} bord si les deux conditions
suivantes sont r\'{e}alis\'{e}es~:

i) $\varphi$ est bijective de $\varphi^{-1}\left(  \operatorname*{Reg}%
\overline{Q^{\prime}}\right)  \cap\operatorname*{Reg}\overline{Q}$ sur
$\varphi\left(  \operatorname*{Reg}\overline{Q}\right)  \cap
\operatorname*{Reg}\overline{Q^{\prime}}$.

ii) Pour toute branche $B^{\prime}$ int\'{e}rieure (resp. de bord) issue d'un
point $q^{\prime}$ de $\overline{Q}$, $\varphi^{-1}\left(  B^{\prime}\right)
$ est une branche int\'{e}rieure (resp. de bord) de $\overline{Q}$ en $q$ et
$\varphi\left\vert _{B}^{B^{\prime}}\right.  $ est un isomorphisme de surfaces
de Riemann (resp. \`{a} bord). \newline Ces conditions entra\^{\i}nent les
propri\'{e}t\'{e}s suivantes~:

iii) $\varphi$ r\'{e}alise isomorphisme de surfaces de Riemann \`{a} bord de
$\varphi^{-1}\left(  \operatorname*{Reg}\overline{Q^{\prime}}\right)
\cap\operatorname*{Reg}\overline{Q}$ sur $\varphi\left(  \operatorname*{Reg}%
\overline{Q}\right)  \cap\operatorname*{Reg}\overline{Q^{\prime}}$

iv) Pour chaque $q^{\prime}\in\operatorname*{Sing}\overline{Q^{\prime}}$, on
se donne un jeu $\left(  Q_{q^{\prime},j}^{\prime}\right)  _{1\leqslant
j\leqslant\nu\left(  q^{\prime}\right)  }$ complet de branches deux \`{a} deux
disjointes de $\overline{Q^{\prime}}$ en $q^{\prime}$. Alors $\left(
\varphi^{-1}\left(  Q_{q^{\prime},j}^{\prime}\right)  \right)  _{q\in
\operatorname*{Sing}\overline{Q^{\prime}},~1\leqslant j\leqslant\nu\left(
q^{\prime}\right)  }$ est un jeu de branches de $\overline{Q}$ en ses points
singuliers, complet au sens o\`{u} pour tout $q\in\operatorname*{Sing}%
\overline{Q} $, il existe une partie $E_{q}$ de $\left\{  \left(  q^{\prime
},j\right)  ;~q^{\prime}\in\operatorname*{Sing}\overline{Q^{\prime}%
},~1\leqslant j\leqslant\nu\left(  q^{\prime}\right)  \right\}  $ telle que
les branches de $\overline{Q}$ en $q$ sont les $\varphi^{-1}\left(
Q_{q^{\prime},j}^{\prime}\right)  $, $\left(  q^{\prime},j\right)  \in E_{q}$.

v) $\varphi$ se rel\`{e}ve en un isomorphisme $\widetilde{\varphi}%
:\overline{S}\longrightarrow\overline{S^{\prime}}$ tel que pour toute branche
$B$ de $\overline{Q}$, $\varphi=\pi^{\prime}\circ\widetilde{\varphi}%
\circ\left(  \pi\left\vert _{B}^{\pi^{-1\left(  B\right)  }}\right.  \right)
^{-1}$ .\newline Si $\varphi$ est un isomorphisme approximatif et respecte les
noeuds de $\overline{Q}$ et $\overline{Q^{\prime}}$, c'est-\`{a}-dire si les
branches de $\overline{Q^{\prime}}$ en $\varphi\left(  q\right)  $ sont les
images par $\varphi$ des branches de $\overline{Q}$ en $q$, ou de fa\c{c}on
\'{e}quivalente, $\left(  \pi^{\prime}\right)  {}^{-1}\left(  \left\{
\varphi\left(  q\right)  \right\}  \right)  =\widetilde{\varphi}\left(
\pi^{-1}\left(  \left\{  q\right\}  \right)  \right)  $ pour tout
$q\in\overline{Q^{\prime}}$, on dit que $\varphi$ est un isomorphisme de
surfaces de Riemann nodales \`{a} bord. Dans ce cas, $\varphi$ est un
hom\'{e}omorphisme de $\overline{Q}\ $sur $\overline{Q^{\prime}}$.\medskip

Une structure de conductivit\'{e} sur $\overline{Q}$ est une structure de
conductivit\'{e} sur $\operatorname*{Reg}\overline{Q}$ qui se prolonge en
structure de conductivit\'{e} le long de toute branche de $\overline{Q}$. Les
consid\'{e}rations de la section pr\'{e}c\'{e}dente s'appliquent alors de
fa\c{c}on naturelle et nous n'en faisons pas le d\'{e}tail. Dans cet article,
nous n'utilisons pour les surfaces nodales que les conductivit\'{e}s de
coefficient $1$, c'est \`{a} dire les structures complexes
sous-jacentes.\medskip

Pour conclure cette section, pr\'{e}cisons que d'apr\`{e}s$^{(}$%
\footnote{Cette caract\'{e}risation est prouv\'{e}e pour le cas o\`{u}
$bQ\subset\operatorname*{Reg}\overline{Q}$ mais la preuve s'applique sans
changement au cas pr\'{e}sent.}$^{)}$~\cite[prop. 2]{HeG-MiV2012} une
distribution $u$ sur un ouvert $W$ de $\overline{Q}$ est harmonique si elle
est harmonique au sens usuel sur $W\cap\operatorname*{Reg}Q$, continue sur
$W\cap\operatorname*{Reg}\overline{Q} $ et si pour tout point singulier $q$ de
$\overline{Q}$ les deux conditions suivantes sont v\'{e}rifi\'{e}es~:

\hspace{2.015pc}1) pour toute branche int\'{e}rieure $B$ de $\overline{Q}$ en
$q$ suffisamment petite pour qu'on puisse s'en donner une une coordonn\'{e}e
holomorphe $z$, il existe $c_{B}\in\mathbb{C}$ tel que $u\left\vert
_{Q_{q,j}\backslash\left\{  q\right\}  }\right.  -2c_{B}\ln\left\vert
z\right\vert $ se prolonge \`{a} $B$ en fonction harmonique usuelle.

\hspace{2.0075pc}2) $\sum c_{B}=0$, la somme \'{e}tant \'{e}tendue aux
branches int\'{e}rieures de $\overline{Q}$ en $q$.

Inversement, toute distribution ayant ces deux propri\'{e}t\'{e}s est une
distribution harmonique. Ceci entra\^{\i}ne qu'une m\^{e}me fonction $u$
continue sur $bQ$ admet plusieurs prolongements en tant que distribution
harmonique $U$~; le probl\`{e}me de Dirichlet pour $u$ n'est bien pos\'{e} que
si on sp\'{e}cifie pour tout $q\in\operatorname*{Sing}\overline{Q}$ et toute
branche int\'{e}rieure $\mathcal{B}$ de $\overline{Q}$ en $q$, le r\'{e}sidu
$c_{B}$ de $\partial U\left\vert _{\mathcal{B}}\right.  $ en $q$. En
particulier, $\widehat{u}$ d\'{e}signant le prolongement harmonique de
$u\circ\pi_{bS}^{-1}$ \`{a} $S$, $\pi_{\ast}\widehat{u}$ est la seule
distribution harmonique qui est continue le long de toute branche de
$\overline{Q}\backslash\left\{  q\right\}  $ et co\"{\i}ncide avec $u$ sur
$bQ$~; on l'appelle le prolongement harmonique simple de $u$.

En particulier, on peut d\'{e}finir pour une surface de Riemann nodale un
op\'{e}rateur de Dirichlet-Neumann complexe $\theta_{c}^{\sigma}$ o\`{u} dans
(\ref{F/ theta}) on utilise les extensions harmoniques simples.

\section{Th\'{e}or\`{e}mes d'unicit\'{e}\label{S/ Unicite}}

\subsection{Conductivit\'{e} anisotrope\label{S/ condaniso}}

Lorsque la conductivit\'{e} n'est pas isotrope, les auteurs se sont
concentr\'{e}s sur l'injectivit\'{e} \`{a} diff\'{e}omorphisme pr\`{e}s de
$\sigma\mapsto N_{\nu}^{\sigma}$, c'est-\`{a}-dire \`{a} la r\'{e}ciproque de
la remarque de Tartar. Cette injectivit\'{e} est prouv\'{e}e par
Nachman~\cite{NaA1996} pour une conductivit\'{e} de classe $C^{2}$ et un
domaine de $\mathbb{R}^{2}$. Dans~\cite{AsK-PaL-LaM2005}, elle est \'{e}tablie
pour une conductivit\'{e} de classe $L^{\infty}$ mais pour un domaine
simplement connexe de $\mathbb{R}^{2}$.

Dans le cas sp\'{e}cial o\`{u} le coefficient de conductivit\'{e} est
constant, la question est de savoir si deux structures conformes sur $M$ sont
identiques lorsqu'elles partagent le m\^{e}me op\'{e}rateur de Dirichlet \`{a}
Neumann. Une r\'{e}ponse positive est affirm\'{e}e par Lassas et
Uhlmann~\cite{LaM-UhG2001} quand $M$ est connexe et Belishev la corrobore
dans~\cite{BeM2003} en montrant que $M$ peut \^{e}tre retrouv\'{e}e comme le
spectre de l'alg\`{e}bre des restrictions \`{a} $bM$ des fonctions holomorphes
sur $M$ se prolongeant contin\^{u}ment \`{a} $\overline{M}$.

Dans~\cite{LaM-UhG2001} et~\cite{BeM2003}, la connaissance compl\`{e}te de
l'op\'{e}rateur de Dirichlet-Neumann est n\'{e}cessaire pour obtenir
l'unicit\'{e} de la structure conforme. Dans~\cite{HeG-MiV2007}, il est dit
que celle-ci est d\'{e}termin\'{e}e par l'action de l'op\'{e}rateur de
Dirichlet-Neumann sur trois fonctions g\'{e}n\'{e}riques mais la preuve
donn\'{e}e de ce r\'{e}sultat n'est correcte qu'en renfor\c{c}ant un peu les
conditions g\'{e}n\'{e}riques impos\'{e}es \`{a} ces trois fonctions, ce qui
est fait dans \cite{HeG-MiV2012}. L'unicit\'{e} en question ici peut \^{e}tre
aussi obtenue en augmentant le nombre de fonctions g\'{e}n\'{e}riques comme
dans~\cite{HeG-MiV2015}. Le th\'{e}or\`{e}me~\ref{T/ unicite} de cet article
en donne une preuve avec les hypoth\`{e}ses de~\cite{HeG-MiV2007} et la
section~\ref{S/ algorithme} propose une reconstruction plus explicite de la
structure complexe $M$ \`{a} partir de son op\'{e}rateur de Dirichlet-Neumann.

Dans~\cite{HeG-SaM2010} pour un domaine de $\mathbb{R}^{2}$, puis
dans~\cite{HeG-SaM2012} pour le cas g\'{e}n\'{e}ral d'une vari\'{e}t\'{e}
r\'{e}elle connexe $M$ de dimension $2$, Henkin et Santacesaria ont fait une
avanc\'{e}e majeure dans la th\'{e}orie en prouvant que l'op\'{e}rateur de
Dirichlet-Neumann d\'{e}termine la structure complexe de $\left(
M,\sigma\right)  $ sous la forme d'une surface de Riemann nodale \`{a} bord
plong\'{e}e dans $\mathbb{C}^{2}$.

\begin{theorem}
[Henkin-Santacesaria, 2012]\label{T/ HS}Soit $\left(  M,\sigma\right)  $ une
structure de conductivit\'{e} de dimension $2$, $\sigma$ \'{e}tant de classe
$C^{3}$. Il existe alors une surface de Riemann nodale \`{a} bord
$\mathcal{M}$ plong\'{e}e dans $\mathbb{C}^{2}$ et une $C^{3}$-normalisation
$F:\overline{M}\rightarrow\overline{\mathcal{M}}$ telle que $F_{\ast}%
\sigma=tc_{\mathcal{M}}$ o\`{u} $t\in C^{3}\left(  \mathcal{M},\mathbb{R}%
_{+}^{\ast}\right)  $ et $c_{\mathcal{M}}$ est la structure complexe induite
par $\mathbb{C}^{2}$ sur $\mathcal{M}$. En outre si $F:\overline{M}%
\rightarrow\overline{\mathcal{M}^{\prime}}$ est une autre $C^{3}%
$-normalisation du m\^{e}me type, $\mathcal{M}$ et $\mathcal{M}^{\prime}$ sont
approximativement isomorphes. Enfin, les valeurs au bord de $F$ et en
particulier $b\mathcal{M}$ sont d\'{e}termin\'{e}es par $bM$, $\sigma
\left\vert _{T_{bM}M}\right.  $ et l'op\'{e}rateur de Dirichlet-Neumann
$N_{d}^{\sigma}$ de $\left(  M,\sigma\right)  $.
\end{theorem}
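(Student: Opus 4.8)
Le plan est d'exploiter la Proposition de la section~\ref{S/ fact} pour identifier $\left(M,\mathcal{C}_{\sigma}\right)$ \`{a} une surface de Riemann \`{a} bord, ce qui ram\`{e}ne l'\'{e}nonc\'{e} \`{a} la construction d'un plongement nodal de cette surface dans $\mathbb{C}^{2}$ dont les valeurs au bord se lisent sur $\left(bM,\sigma\left\vert _{T_{bM}M}\right. ,N_{d}^{\sigma}\right)$. Pour l'existence, je commencerais par plonger $\overline{M}$ dans un $\mathbb{C}^{N}$~: le double de $\overline{M}$ \'{e}tant une surface de Riemann compacte, la th\'{e}orie classique (propri\'{e}t\'{e} de Stein des surfaces de Riemann ouvertes, plongement de Narasimhan) fournit des fonctions $\sigma$-holomorphes $f_{1},\dots,f_{N}$ de classe $C^{3}$ sur $\overline{M}$, s\'{e}parant les points et y donnant des coordonn\'{e}es locales, d'o\`{u} un plongement $F_{0}=\left(f_{1},\dots,f_{N}\right) :\overline{M}\hookrightarrow\mathbb{C}^{N}$. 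Une projection lin\'{e}aire g\'{e}n\'{e}rique $p:\mathbb{C}^{N}\rightarrow\mathbb{C}^{2}$ transforme $p\circ F_{0}$ en une immersion dont les seules singularit\'{e}s sont des points doubles transverses, c'est-\`{a}-dire des n\oe{}uds. On obtient ainsi la surface de Riemann nodale $\mathcal{M}=p\left(F_{0}\left(\overline{M}\right) \right)$ et la $C^{3}$-normalisation $F=p\circ F_{0}$~; comme $F$ est $\left(\sigma,c_{\mathcal{M}}\right)$-analytique par construction, le Lemme~\ref{L/ sigma-analyticite} donne imm\'{e}diatement $F_{\ast}\sigma=tc_{\mathcal{M}}$ avec $t\in C^{3}\left(\mathcal{M},\mathbb{R}_{+}^{\ast}\right)$.

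Le c\oe{}ur de la preuve est la d\'{e}termination des valeurs au bord par les donn\'{e}es. Si $f=U+iV$ est $\sigma$-holomorphe, alors $U=E_{\sigma}\left(\operatorname{Re}\left. f\right\vert _{bM}\right)$ et la relation $\overline{\partial}{}^{\sigma}f=0$ \'{e}quivaut \`{a} $dV=d^{\sigma}U=c_{\sigma}dU$. Par d\'{e}finition, $\left. dU\right\vert _{bM}=N_{d}^{\sigma}\left(\operatorname{Re}f\right)$, donc $\left. dV\right\vert _{bM}=c_{\sigma}\left. dU\right\vert _{bM}$, l'op\'{e}rateur $c_{\sigma}$ agissant sur $T_{bM}^{\ast}\overline{M}$ \'{e}tant connu d\`{e}s que $\sigma\left\vert _{T_{bM}M}\right.$ l'est~; en particulier $dV\cdot\tau=dU\cdot\nu=N_{\nu}^{\sigma}\left(\operatorname{Re}f\right)$, de sorte qu'en int\'{e}grant le long de $bM$ on restitue $\left. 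V\right\vert _{bM}$, et donc $\left. f\right\vert _{bM}$, \`{a} une constante additive pr\`{e}s. Il reste \`{a} caract\'{e}riser les donn\'{e}es r\'{e}elles $u$ sur $bM$ qui sont la partie r\'{e}elle d'une fonction $\sigma$-holomorphe univalu\'{e}e~: une condition n\'{e}cessaire est l'annulation du flux $\oint_{\gamma}N_{\nu}^{\sigma}u$ le long de chaque composante $\gamma$ de $bM$, qui se lit directement sur $N_{d}^{\sigma}$, \`{a} laquelle s'ajoutent des conditions de moments associ\'{e}es aux cycles int\'{e}rieurs de $M$. En admettant que ces derni\`{e}res s'expriment elles aussi via $N_{d}^{\sigma}$, l'alg\`{e}bre des valeurs au bord des fonctions $\sigma$-holomorphes, et partant $\left. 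F\right\vert _{bM}$ ainsi que $b\mathcal{M}=F\left(bM\right)$, sont d\'{e}termin\'{e}s par les donn\'{e}es de bord.

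L'unicit\'{e} est alors facile~: si $F^{\prime}:\overline{M}\rightarrow\overline{\mathcal{M}^{\prime}}$ est une autre normalisation du m\^{e}me type, l'application $F^{\prime}\circ F^{-1}$ est un biholomorphisme des parties r\'{e}guli\`{e}res (car $F$ et $F^{\prime}$ transportent toutes deux $\sigma$ sur la structure complexe induite) et se prolonge en un isomorphisme approximatif entre $\mathcal{M}$ et $\mathcal{M}^{\prime}$ au sens de la section~\ref{S/ SurfacesNodales}, ces deux surfaces \'{e}tant des images $\sigma$-holomorphes de la m\^{e}me surface abstraite $\left(M,\mathcal{C}_{\sigma}\right)$. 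Le point d\'{e}licat sera double. D'une part, il faut v\'{e}rifier que la projection g\'{e}n\'{e}rique fournit bien une surface \`{a} singularit\'{e}s purement nodales et que $F$ en est une normalisation. D'autre part, et c'est l\`{a} l'obstacle principal, il faut \'{e}tablir que les conditions de p\'{e}riodes autour des cycles int\'{e}rieurs de $\overline{M}$ (a priori invisibles depuis le bord) sont effectivement cod\'{e}es dans $N_{d}^{\sigma}$~; je m'y prendrais en reliant ces p\'{e}riodes int\'{e}rieures \`{a} des int\'{e}grales de bord par la formule de Stokes et par les r\'{e}sidus $c_{B}$ de la d\'{e}finition des distributions harmoniques nodales de la section~\ref{S/ SurfacesNodales}, les n\oe{}uds de $\mathcal{M}$ correspondant pr\'{e}cis\'{e}ment aux p\'{e}riodes non triviales des conjugu\'{e}es $\sigma$-harmoniques.
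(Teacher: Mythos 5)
Your third paragraph (uniqueness) is essentially what the paper itself does: the text does not reprove the Henkin--Santacesaria theorem but quotes it from~\cite{HeG-SaM2012}, and the only argument it supplies is precisely for the clause \guillemotleft~$\mathcal{M}$ et $\mathcal{M}^{\prime}$ approximativement isomorphes~\guillemotright, by considering $H=F\circ G^{-1}$ on the regular parts, extending it holomorphically along the branches, and concluding that the two curves coincide outside finitely many points. So on that clause you and the paper agree. For existence and for the boundary-determination clause, however, your route diverges from the source, and the second of these contains a genuine gap.

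The gap is exactly where you flag it, and your proposed repair would not work. Your $F$ is built from an abstract Stein/Narasimhan embedding followed by a generic projection, so nothing ties it to $N_{d}^{\sigma}$; the whole burden then falls on characterizing, from $\left(bM,\sigma\left\vert_{T_{bM}M}\right.,N_{d}^{\sigma}\right)$, which boundary functions are boundary values of \emph{single-valued} $\sigma$-holomorphic functions. Integrating $dV=c_{\sigma}\,N_{d}^{\sigma}u$ along $bM$ only produces a candidate for $V\left\vert_{bM}\right.$; single-valuedness of the conjugate requires the vanishing of the periods of $d^{\sigma}E_{\sigma}u$ over \emph{all} of $H_{1}\left(\overline{M}\right)$, which has rank $2g+c-1$ while the boundary components span only a rank-$\left(c-1\right)$ subspace. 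Stokes therefore computes the periods over boundary-homologous cycles only; for the $2g$ interior $a$- and $b$-cycles it gives nothing, and the residues $c_{B}$ of section~\ref{S/ SurfacesNodales} are irrelevant here (they concern nodal points of a nodal surface, not periods of conjugate differentials on the smooth $M$). The step you propose to \guillemotleft~admit~\guillemotright\ is the actual substance of the theorem. The known mechanisms are different: either one characterizes the pair $\left(u,v\right)$ directly by the pointwise boundary relation $\theta_{c}^{\sigma}u=i\,\theta_{c}^{\sigma}v$ (both sides computable from $N_{d}^{\sigma}$ and $c_{\sigma}\left\vert_{T_{bM}^{\ast}\overline{M}}\right.$; if it holds, the holomorphic $\left(1,0\right)$-form $\partial E_{\sigma}u-i\partial E_{\sigma}v$ has vanishing pull-back to $bM$, hence vanishes identically, so $E_{\sigma}u+iE_{\sigma}v$ is holomorphic --- this is the point of view of~\cite{BeM2003} and~\cite{HeG-MiV2007}), or, as in~\cite{HeG-SaM2012}, one constructs $F$ itself from Cauchy-type boundary integral equations whose kernels involve only the boundary data, so that the boundary values of $F$ are determined by construction. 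Without one of these, your determination clause is unproved; and as a secondary caveat, your generic-projection step should also check that the double points are transverse, finite in number, and avoid $F\left(bM\right)$, so that $\mathcal{M}$ is a nodal surface with boundary in the sense of section~\ref{S/ SurfacesNodales} and $F$ a normalization of it.
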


Notons que gr\^{a}ce au lemme~\ref{L/ sigma-analyticite}, $F$ est holomorphe
au sens o\`{u} pour toute branche $B$ de $\mathcal{M}$, $F$ est analytique de
$\left(  F^{-1}\left(  B\right)  ,\mathcal{C}_{\sigma}\right)  $ dans
$\mathbb{C}^{2}$. Par ailleurs, il r\'{e}sulte de la preuve de ce
th\'{e}or\`{e}me que les singularit\'{e}s de $\mathcal{M}$ sont les points de
$F\left(  \overline{M}\right)  $ ayant plusieurs ant\'{e}c\'{e}dents par $F$.
Ainsi, lorsque $\mathcal{M}$ n'a pas de singularit\'{e}, $F$ est un
diff\'{e}omorphisme de $\overline{M}$ sur $\mathcal{M}$ v\'{e}rifiant les
hypoth\`{e}ses du lemme~\ref{L/ sigma-analyticite}, ce qui en fait un
isomorphisme de surfaces de Riemann \`{a} bord.

Dans~\cite{HeG-SaM2012}, il est dit que $\mathcal{M}$ et $\mathcal{M}^{\prime
}$ sont isomorphes mais sans que le sens en soit pr\'{e}cis\'{e}.
D\'{e}montrons succinctement qu'il s'agit au moins d'isomorphie approximative.
Supposons que $F:\overline{M}\rightarrow\overline{\mathcal{M}}$ et
$G:F:\overline{M}\rightarrow\overline{\mathcal{M}^{\prime}}$ sont deux $C^{3}%
$-normalisations du type ci-dessus. Posons $G_{\operatorname{reg}}=G\left\vert
_{_{\operatorname*{Reg}\mathcal{M}}}^{_{G^{-1}\left(  \operatorname*{Reg}%
\mathcal{M}\right)  }}\right.  $, $F_{\operatorname{reg}}=F\left\vert
_{_{\operatorname*{Reg}\mathcal{M}^{\prime}}}^{_{F^{-1}\left(
\operatorname*{Reg}\mathcal{M}^{\prime}\right)  }}\right.  $ et notons
$H_{\operatorname{reg}}$ l'application de $\operatorname*{Reg}\mathcal{M}\cap
G\left(  F^{-1}\left(  \operatorname*{Reg}\mathcal{M}^{\prime}\right)
\right)  $ dans $\operatorname*{Reg}\mathcal{M}\cap F\left(
\operatorname*{Reg}\mathcal{M}^{\prime}\right)  $ d\'{e}finie par
$H_{\operatorname{reg}}\left(  z\right)  =F_{\operatorname{reg}}\left(
G_{\operatorname{reg}}^{-1}\left(  z\right)  \right)  $. Parce que $F$ et $G$
sont des normalisations, $H_{\operatorname{reg}}$ se prolonge
holomorphiquement le long de toute branche de $\mathcal{M}$ en une application
(multivalu\'{e}e) $H$ de $\mathcal{M}$ \`{a} valeurs dans $\mathcal{M}%
^{\prime}$. Par construction, $H\left(  \mathcal{M}^{\prime}\right)  $ et
$\mathcal{M}$ sont des courbes complexes qui ne diff\'{e}rent qu'en un nombre
fini de points. Elles sont donc \'{e}gales et en particulier,
$\operatorname*{Sing}\mathcal{M}$ et $\operatorname*{Sing}\mathcal{M}^{\prime
}$ ont m\^{e}me cardinal. Il s'ensuit que $\mathcal{M}$ et $\mathcal{M}%
^{\prime}$ sont approximativement isomorphes.

En r\'{e}alit\'{e}, $\mathcal{M}$ et $\mathcal{M}^{\prime}$ sont bien
isomorphes au sens fort donn\'{e} dans cet article. En effet, $\mathcal{M}$ et
$\mathcal{M}^{\prime}$ induisent par rel\`{e}vement sur $M$ des structures
complexes a priori diff\'{e}rentes mais qui co\"{\i}ncident sur $bM$ et
partagent le m\^{e}me op\'{e}rateur de Dirichlet-Neumann. Gr\^{a}ce au
th\'{e}or\`{e}me~\ref{T/ unicite} \'{e}nonc\'{e} et prouv\'{e} plus loin, les
surfaces de Riemann \`{a} bord correspondantes sont donc isomorphes et du
coup, $\mathcal{M}$ et $\mathcal{M}^{\prime}$ le sont aussi en tant que
surfaces de Riemann nodale \`{a} bord.

Ainsi, $\mathcal{M}$ est un mod\`{e}le, \'{e}ventuellement singulier, de la
structure complexe de $\left(  M,\sigma\right)  $. Ce mod\`{e}le est
calculable au sens effectif du terme. En effet, $\mathcal{M}$ est un
sous-ensemble analytique de dimension $1$ de $\mathbb{C}^{2}\backslash
b\mathcal{M}$ qui au sens des courants de $\mathbb{C}^{2}$ satisfait $d\left[
\mathcal{M}\right]  =F_{\ast}\left[  \partial M\right]  $ o\`{u} $\left[
\mathcal{M}\right]  $ d\'{e}signe le courant d'int\'{e}gration sur
$\mathcal{M}$ et $\left[  \partial M\right]  $ celui du bord de $M$
orient\'{e} par $M$. Dans cette situation, on sait, essentiellement depuis les
travaux de Harvey et Lawson~\cite{HaR-LaB1975, HaR1977}, que $\mathcal{M}$
peut \^{e}tre calcul\'{e}e de fa\c{c}on effective gr\^{a}ce \`{a} des formules
de type Cauchy (voir par exemple~\cite[th. 2]{HeG-MiV2007} ou \cite[prop.
1]{HeG-SaM2012}).

On peut consid\'{e}rer que $\mathcal{M}$ suffit comme pr\'{e}sentation de $M$
en tant que surface de Riemann mais on peut aussi souhaiter disposer d'un
atlas de la structure complexe de $M$. Le paragraphe suivant explique comment
le travail de~\cite{HeG-MiV2007, HeG-MiV2012} permet d'obtenir $M$ \`{a}
partir de $\mathcal{M}$ et donc in fine \`{a} partir de $N_{d}^{\sigma}%
$.\medskip

Dans le th\'{e}or\`{e}me ci-dessous, on note $\left[  w_{0}:\cdots
:w_{d}\right]  $ les coordonn\'{e}es homog\`{e}nes standards de $\mathbb{CP}%
_{d}$. Si $\omega_{0},...,\omega_{d}$ sont des $\left(  1,0\right)  $-formes
deux \`{a} deux proportionnelles ne s'annulant pas simultan\'{e}ment, on note
$\left[  \omega_{0}:\cdots:\omega_{d}\right]  $ ou $\left[  \omega\right]  $
l'application d\'{e}finie sur chaque $\left\{  \omega_{j}\neq0\right\}  $ par
$\left[  \omega\right]  =\left[  \frac{\omega_{0}}{\omega_{j}}:\cdots
:\frac{\omega_{d}}{\omega_{j}}\right]  $.

\begin{theorem}
\label{T/ plgt1}Soient $\left(  M,\sigma\right)  $ une structure de
conductivit\'{e} de dimension $2$, $\sigma$ \'{e}tant de classe $C^{3}$, et
$F:\overline{M}\rightarrow\overline{\mathcal{M}}$ une normalisation comme dans
le th\'{e}or\`{e}me~\ref{T/ HS}. On consid\`{e}re l'op\'{e}rateur $\theta
_{c}^{\sigma}$ d\'{e}fini en (\ref{F/ theta}) et, $\mathcal{M}$ \'{e}tant muni
de la conductivit\'{e} de coefficient associ\'{e}e \`{a} sa structure
complexe, on d\'{e}signe par $\theta_{c}^{\mathcal{M}}$ l'op\'{e}rateur de
Dirichlet-Neumann complexe d\'{e}fini de fa\c{c}on similaire \`{a} la fin de
la section~\ref{S/ SurfacesNodales}. Enfin, on pose $f=F\left\vert
_{bM}^{F\left(  bM\right)  }\right.  $.

Alors pour $u\in C^{\infty}\left(  bM,\mathbb{R}\right)  $ et $w=u\circ
f^{-1}=f_{\ast}u$, $F_{\ast}\theta_{c}^{\sigma}u=\theta_{c}^{\mathcal{M}}w$
est la restriction \`{a} $b\mathcal{M}$ de $\partial\widehat{w}$ o\`{u}
$\widehat{w}$ est l'extension harmonique simple de $w$. De plus, pour $\left(
u_{0},u_{1},u_{2},u_{3}\right)  $ g\'{e}n\'{e}rique dans $C^{\infty}\left(
bM,\mathbb{R}\right)  ^{4}$, l'application%
\[
\left[  F^{\ast}\theta_{c}^{\mathcal{M}}f_{\ast}u\right]  =\left[  F^{\ast
}\theta_{c}^{\mathcal{M}}f_{\ast}u_{0}:\cdots:F^{\ast}\theta_{c}^{\mathcal{M}%
}f_{\ast}u_{3}\right]
\]
plonge dans $\mathbb{CP}_{3}$ la courbe r\'{e}elle $bM$ sur une courbe
r\'{e}elle bord\'{e}e dans $\mathbb{CP}_{3}$ par une surface de Riemann \`{a}
bord $S$ isomorphe \`{a} $M$.
\end{theorem}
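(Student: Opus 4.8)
The plan is to prove the two assertions in turn, the first being a naturality statement for the complex Dirichlet--Neumann operator under the normalization $F$, the second an embedding statement of projective type.

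For the first assertion, the key observation is that $\theta_c^{\sigma}$ depends only on the complex structure $\mathcal{C}_{\sigma}$: by~(\ref{F/ theta}) it is built from $L_{\nu}^{c_{\sigma}}$, so $\theta_c^{\sigma}u=\partial E_{c_{\sigma}}u|_{bM}$, where $E_{c_{\sigma}}u$ is the $\mathcal{C}_{\sigma}$-harmonic extension of $u$ (the solution of $d\,d^{c_{\sigma}}E_{c_{\sigma}}u=0$ with boundary value $u$) and $\partial$ is the Cauchy--Riemann operator of $(M,\mathcal{C}_{\sigma})$. By Theorem~\ref{T/ HS} one has $F_{\ast}\sigma=t\,c_{\mathcal{M}}$; since $\det c_{\sigma}=\det c_{\mathcal{M}}=1$, the uniqueness of the factorization~(\ref{F/ dec sigma}) forces $F_{\ast}c_{\sigma}=c_{\mathcal{M}}$, so by Lemma~\ref{L/ sigma-analyticite} $F$ is biholomorphic along every branch between $(M,\mathcal{C}_{\sigma})$ and $\mathcal{M}$. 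I would then argue that $V:=F_{\ast}E_{c_{\sigma}}u$ is harmonic on $\operatorname{Reg}\mathcal{M}$ (push-forward of a harmonic function by a biholomorphism), continuous on $\operatorname{Reg}\overline{\mathcal{M}}$, equal to $w=f_{\ast}u$ on $b\mathcal{M}$, and---because $E_{c_{\sigma}}u$ is a genuine continuous function on $\overline{M}$ and $F$ identifies the branches of $\overline{\mathcal{M}}$ with pieces of $\overline{M}$---continuous along every branch of $\overline{\mathcal{M}}$. By the characterization of harmonic distributions recalled at the end of Section~\ref{S/ SurfacesNodales}, $V$ is therefore the simple harmonic extension $\widehat{w}$. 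Since $F$ is holomorphic on branches it intertwines the two Cauchy--Riemann operators, $F_{\ast}\circ\partial=\partial\circ F_{\ast}$, whence $F_{\ast}\theta_c^{\sigma}u=\partial(F_{\ast}E_{c_{\sigma}}u)|_{b\mathcal{M}}=(\partial\widehat{w})|_{b\mathcal{M}}=\theta_c^{\mathcal{M}}w$, which is the asserted identity.

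For the second assertion, the crucial a priori fact is that each boundary datum $\omega_{j}:=\theta_c^{\sigma}u_{j}=\partial E_{c_{\sigma}}u_{j}|_{bM}$ is the restriction to $bM$ of the holomorphic $(1,0)$-form $h_{j}:=\partial E_{c_{\sigma}}u_{j}$ on $M$, holomorphic precisely because $E_{c_{\sigma}}u_{j}$ is harmonic for $\mathcal{C}_{\sigma}$. Consequently the boundary map $[F^{\ast}\theta_c^{\mathcal{M}}f_{\ast}u]=[\omega_{0}:\cdots:\omega_{3}]$ extends to the holomorphic map $\Phi:=[h_{0}:h_{1}:h_{2}:h_{3}]\colon M\rightarrow\mathbb{CP}_{3}$, whose components are pairwise proportional $(1,0)$-forms, so that the ratios $h_{i}/h_{j}$ are meromorphic functions on $(M,\mathcal{C}_{\sigma})$. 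I would then set $S:=\Phi(\overline{M})$: once $\Phi$ is shown to be an embedding of $\overline{M}$, the map $\Phi\colon M\rightarrow S$ is a biholomorphism, $S$ is a bordered Riemann surface isomorphic to $M$, and its boundary $bS=\Phi(bM)$ is the real curve into which $\Phi$ carries $bM$.

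The main obstacle is the genericity step: showing that for a generic $(u_{0},u_{1},u_{2},u_{3})$ the map $\Phi$ is base-point-free, injective, and an immersion on $\overline{M}$. Here I would exploit that the space $\{\partial E_{c_{\sigma}}u\colon u\in C^{\infty}(bM,\mathbb{R})\}$ of available $(1,0)$-forms is rich: for every holomorphic $g$ on $\overline{M}$ one has $\operatorname{Re}g|_{bM}\in C^{\infty}(bM,\mathbb{R})$ and $E_{c_{\sigma}}(\operatorname{Re}g|_{bM})=\operatorname{Re}g$ by uniqueness of the harmonic extension, hence $2\,\partial E_{c_{\sigma}}(\operatorname{Re}g|_{bM})=dg$; since $\overline{M}$ is a compact bordered Riemann surface, holomorphic functions separate its points and furnish local coordinates, so these differentials separate points and tangent directions of $\overline{M}$. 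A standard general-position argument then shows that four generic such forms give an embedding into $\mathbb{CP}_{3}$---the value $3=2\dim_{\mathbb{C}}M+1$ being exactly the threshold at which a generic linear system embeds a complex curve, since the secant variety of a curve in projective space has dimension at most $3$. This is where the data must be used quantitatively, and I would model the argument on the embedding results of~\cite{HeG-MiV2007, HeG-MiV2012}, taking care that injectivity and immersivity hold up to and along $bM$, so that $\Phi(bM)$ is an embedded real curve bounding $S=\Phi(M)$.
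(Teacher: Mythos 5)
Your proof of the first assertion coincides with the paper's own argument: you identify $\theta_{c}^{\sigma}u$ with $\left(  \partial^{\sigma}\widetilde{u}\right)  \left\vert _{bM}\right.  $, push the $c_{\sigma}$-harmonic extension forward by $F$, which is holomorphic along each branch (the paper gets this from the lemme~\ref{L/ sigma-analyticite}, you from the uniqueness of the factorization $\sigma=\sqrt{\det\sigma}\,c_{\sigma}$ --- both are fine), observe that the pushforward is continuous along every branch and therefore, by the characterization of harmonic distributions recalled at the end of the section~\ref{S/ SurfacesNodales}, is the simple harmonic extension $\widehat{w}$, and finally intertwine $\partial$ through $F$. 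This is exactly the paper's proof of $F_{\ast}\theta_{c}^{\sigma}u=\theta_{c}^{\mathcal{M}}w=\left(  \partial\widehat{w}\right)  \left\vert _{b\mathcal{M}}\right.  $.

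The divergence is in the embedding step, and there your sketch has a genuine gap. The paper does not reprove genericity: it invokes the remark made for the th\'{e}or\`{e}me~4 of~\cite{HeG-MiV2015}, which directly supplies that for generic real $\left(  u_{0},...,u_{3}\right)  $ the forms $\partial^{\sigma}\widetilde{u_{j}}$ have no common zero and $\left[  \partial^{\sigma}\widetilde{u}\right]  $ embeds $\overline{M}$ into $\mathbb{CP}_{3}$. Your replacement argument leaps twice. First, from \guillemotleft holomorphic functions separate points and give local coordinates\guillemotright\ you conclude that the projectivized differentials separate points; but $\left[  dg_{0}:\cdots:dg_{3}\right]  \left(  p\right)  =\left[  dg_{0}:\cdots:dg_{3}\right]  \left(  q\right)  $ means proportionality of the two derivative vectors, and separation of \emph{values} says nothing about $1$-jets at pairs of points: you need interpolation of $1$-jets at two points (available on a Stein neighborhood of $\overline{M}$) together with a compactness/transversality argument uniform up to $bM$ and near the diagonal for immersivity --- precisely the point where, as the section~\ref{S/ condaniso} recalls, the genericity claim of~\cite{HeG-MiV2007} was incorrectly proved and had to be repaired in~\cite{HeG-MiV2012} and~\cite{HeG-MiV2015}. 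Second, the secant-variety count $3=2\dim_{\mathbb{C}}M+1$ governs complex-linear projections of compact curves, whereas the admissible forms here are exactly the holomorphic $\left(  1,0\right)  $-forms $\partial\widetilde{u}$ with $u$ \emph{real}, i.e. those with purely imaginary periods (since $2\operatorname{Re}\int_{\gamma}\partial\widetilde{u}=\int_{\gamma}d\widetilde{u}=0$ on every cycle $\gamma$); a generic complex-linear projection of an auxiliary embedding leaves this class, so the general-position argument must be run inside the real-linear family of real boundary data, with a separate real codimension count. Neither obstacle is fatal --- both can be handled along the lines of~\cite{HeG-MiV2015} --- but as written your \guillemotleft standard general-position argument\guillemotright\ conceals exactly the delicate content of the result the paper is quoting.
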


\begin{proof}
Consid\'{e}rons une fonction $u$ de $C^{0}\left(  bM,\mathbb{R}\right)  $.
Notons $\widetilde{u}$ le prolongement harmonique de $u$ \`{a} $M$~; puisque
$2\overline{\partial}{}^{\sigma}\partial^{\sigma}=idd^{c_{\sigma}}$,
$\widetilde{u}$ est l'unique solution dans $C^{0}\left(  \overline{M}\right)
$ du syst\`{e}me%
\[
\overline{\partial}{}^{\sigma}\partial^{\sigma}U=0~~\&~~U\left\vert
_{bM}\right.  =u.
\]
D'apr\`{e}s~\cite[formule~1.1]{HeG-MiV2007}, $\theta_{c}^{\sigma}u$ est la
restriction \`{a} $bM$ de la $\left(  1,0\right)  $-forme $c_{\sigma}%
$-holomorphe $\partial^{\sigma}\widetilde{u}$.

Puisque $F_{B}=F\left\vert _{F^{-1}\left(  B\right)  }^{B}\right.  $ est un
isomorphisme $\left(  \sigma,c_{\mathcal{M}}\right)  $-analytique pour toute
branche $B$ de $\mathcal{M}$ et puisque $\widetilde{u}$ est lisse, $F_{\ast
}\widetilde{u}$ est lisse le long de toute branche $B$ de $\mathcal{M}$ et
v\'{e}rifie $\left(  \overline{\partial}\partial\left(  F_{B}\right)  {}%
_{\ast}\widetilde{u}\right)  \left\vert _{B}\right.  =\left(  F_{B}\right)
{}_{\ast}\overline{\partial}{}^{\sigma}\partial^{\sigma}\widetilde{u}=0$.
Ainsi, $\widetilde{u}\circ\left(  F_{\operatorname*{Reg}\mathcal{M}}\right)
^{-1}$ se prolonge harmoniquement le long de toute branche de $\mathcal{M}$ et
d\'{e}finit sur $\mathcal{M}$ une distribution $\widehat{w}$ qui est l'unique
solution continue le long de toute branche de $\mathcal{M}$ du probl\`{e}me%
\begin{equation}
\overline{\partial}\partial w=0~~\&~~w\left\vert _{\mathcal{M}}\right.  =w
\label{F/ pbY}%
\end{equation}
quand $w=u\circ f^{-1}$. Les m\^{e}mes calculs que dans~\cite[formule~1.1]%
{HeG-MiV2007} donnent que $\theta_{c}^{\mathcal{M}}w$ est la restriction \`{a}
$b\mathcal{M}$ de la $\left(  1,0\right)  $-forme holomorphe $\partial
\widehat{w}$. Etant donn\'{e} que $F$ est $\left(  \sigma,c_{\mathcal{M}%
}\right)  $-analytique, $\partial\widehat{w}=F_{\ast}\partial^{\sigma
}\widetilde{u}$.

Consid\'{e}rons maintenant dans $C^{\infty}\left(  bM,\mathbb{R}\right)  $
quatre fonctions $u_{0},...,u_{3}$ dont on note par un tilde leur prolongement
$c_{\sigma}$-harmonique \`{a} $\overline{M}$. Gr\^{a}ce \`{a} la remarque
faite page~327 pour le th\'{e}or\`{e}me~4 de~\cite{HeG-MiV2015}, on sait que
pour $\left(  u_{0},...,u_{3}\right)  $ g\'{e}n\'{e}rique, les formes
$\partial^{\sigma}\widetilde{u_{j}}$ ne s'annulent pas simultan\'{e}ment et
que $\left[  \partial^{\sigma}u\right]  =\left[  \partial^{\sigma}u_{0}%
:\cdots:\partial^{\sigma}u_{3}\right]  $ plonge $\overline{M}$ dans
$\mathbb{CP}_{3}$ sur une surface de Riemann $S$ \`{a} bord. Dans une telle
situation, les distributions harmoniques $\widehat{w_{j}}$ qui sont les
solutions du probl\`{e}me~(\ref{F/ pbY}) quand $w=f_{\ast}u_{j}$
v\'{e}rifient
\[
\partial^{\sigma}\widetilde{u_{j}}=F^{\ast}\partial\widehat{w}_{j}=F^{\ast
}\theta_{c}^{\mathcal{M}}w_{j}=F^{\ast}\theta_{c}^{\mathcal{M}}f_{\ast}u_{j}%
\]
pour tout $j\in\mathbb{N}$.
\end{proof}

\noindent\textbf{Remarque. }$\left[  \partial w\right]  $ est d\'{e}finie sans
ambigu\"{\i}t\'{e} sur $\operatorname*{Reg}\mathcal{M}$ et se prolonge
holomorphiquement le long de toute branche $B$ de $\mathcal{M}$. L'application
multivalu\'{e}e $\left[  \partial\widehat{w}\right]  $ qui en r\'{e}sulte
d\'{e}singularise $\mathcal{M}$ au sens o\`{u} il existe une normalisation
$G:S\rightarrow\mathcal{M}$ telle que $G\circ\left[  \partial\widehat{w}%
\right]  =Id_{\mathcal{M}}$.\medskip

$\theta_{c}^{\sigma}$ n'\'{e}tant pas directement accessible \`{a} partir de
$N_{d}^{\sigma}$, la m\'{e}thode propos\'{e}e par le
th\'{e}or\`{e}me~\ref{T/ plgt1} pour construire un plongement de $\overline
{M}$ dans $\mathbb{CP}_{3}$ repose sur l'utilisation de la surface nodale
$\mathcal{M}$ fabriqu\'{e}e par le th\'{e}or\`{e}me de Henkin et Santacesaria
pour obtenir $\theta_{c}^{\mathcal{M}}$ \`{a} partir d'extensions harmonique
simples \`{a} $\mathcal{M}$ de fonctions d\'{e}finies sur $b\mathcal{M}$. Bien
que $\mathcal{M}$ soit connue, le fait que $\mathcal{M}$ soit nodale complique
les choses car ce probl\`{e}me de Dirichlet n'a de solution unique que si on
sp\'{e}cifie des donn\'{e}es aux points nodaux.\medskip

Plus pr\'{e}cis\'{e}ment, notons $q_{1},...,q_{s}$ les points de
$\overline{\mathcal{M}}$ o\`{u} $\overline{\mathcal{M}}$ a au moins deux
branches int\'{e}rieures. Pour chacun de ces points $q_{j}$, notons $B_{j,1}%
$,...,$B_{j,\mu_{j}}$ les branches int\'{e}rieures de $\mathcal{M}$ en $q_{j}%
$. Alors, pour $v\in C^{\infty}\left(  b\mathcal{M}\right)  $ donn\'{e}e, on
sait d'apr\`{e}s~\cite[prop. 2]{HeG-MiV2012} que pour toute famille
$\alpha=\left(  \left(  \alpha_{j,k}^{r}\right)  _{1\leqslant j\leqslant
s}\right)  _{1\leqslant k\leqslant\mu_{j}}$ de nombre complexes v\'{e}rifiant
$\sum\limits_{1\leqslant k\leqslant\mu_{j}}\alpha_{j,k}=0$ pour tout $j$, il
existe un et un seul prolongement de $v$ en distribution harmonique
$\widehat{v}^{\alpha}$ tel que pour tous $j,k$ le r\'{e}sidu en $q_{j}$ de
$\partial\widehat{v}^{\alpha}\left\vert _{\mathcal{B}_{j,k}}\right.  $ vaut
$\alpha_{j,k}$. En particulier, $\widehat{v}^{0}=\widehat{v}$.

Si dans le th\'{e}or\`{e}me~\ref{T/ plgt1}, on remplace $\left[  F^{\ast
}\theta_{c}^{\mathcal{M}}f_{\ast}u\right]  $ par
\[
\left[  F^{\ast}\theta_{c}^{\mathcal{M},\alpha}f_{\ast}u\right]  =\left[
F^{\ast}\partial\widehat{f_{\ast}u_{0}}^{\alpha^{0}}\left\vert _{b\mathcal{M}%
}\right.  :\cdots:F^{\ast}\partial\widehat{f_{\ast}u_{4}}^{\alpha^{4}%
}\left\vert _{b\mathcal{M}}\right.  \right]
\]
o\`{u} les familles de r\'{e}els $\alpha^{r}=\left(  \left(  \alpha_{j,k}%
^{r}\right)  _{1\leqslant j\leqslant s}\right)  _{1\leqslant k\leqslant\mu
_{j}}$ v\'{e}rifient $\sum\limits_{1\leqslant k\leqslant\mu_{j}}\alpha
_{j,k}^{r}=0$ , on obtient encore les m\^{e}mes conclusions quand $\left(
u_{0},u_{1},u_{2},u_{3}\right)  $ est g\'{e}n\'{e}rique et l'application
$\left(  j,k\right)  \mapsto\left[  \alpha_{j,k}^{0}:\cdots:\alpha_{j,k}%
^{3}\right]  $ est bien d\'{e}finie et injective. Notons que la seconde partie
du th\'{e}or\`{e}me admet aussi une modification similaire.\medskip

Cette modification du th\'{e}or\`{e}me~\ref{T/ plgt1} dit en substance qu'il
n'est pas utile de se soucier de quels prolongements en distribution
harmonique on use. Cependant, la construction de tels prolongements n'est pas
forc\'{e}ment \'{e}vidente. La section suivante en propose une.

\subsection{Fonctions de Green nodale\label{S/ FGN}}

Quand $M$ est une surface de Riemann \`{a} bord lisse, une fonction de Green
pour $M$ est une fonction sym\'{e}trique r\'{e}elle $g$ d\'{e}finie sur
$\overline{M}\times\overline{M}$ priv\'{e}e de sa diagonale $\Delta
_{\overline{M}}$ telle que pour tout $q\in M$, $g_{q}=g\left(  q,.\right)  $
est harmonique sur $M\backslash\left\{  q\right\}  $, continue sur
$\overline{M}\backslash\left\{  q\right\}  $ et pr\'{e}sente une
singularit\'{e} logarithmique isol\'{e}e en $q$, ce qui signifie qu'ayant
choisi au voisinage de $q$ dans $M$ une coordonn\'{e}e holomorphe $z$
centr\'{e}e en $q $, $g_{q}-\frac{1}{2\pi}\ln\left\vert z\right\vert $ se
prolonge harmoniquement au voisinage de $q$. L'unique fonction de Green telle
que $g_{q}\left\vert _{bM}\right.  =0$ pour tout $q\in M$ est dite principale.
Avec une fonction de Green $g$, on fabrique l'op\'{e}rateur $T$ qui \`{a}
$v\in C^{\infty}\left(  bM\right)  $ associe la fonction $Tv$ d\'{e}finie par%
\begin{equation}
Tv:M\ni q\mapsto\frac{i}{2}\int_{\partial M}v\overline{\partial}g_{q}
\label{F/ opT}%
\end{equation}
Lorsque $v\in C^{\infty}\left(  bM\right)  $, $Tv$ est harmonique sur $M$, se
prolonge \`{a} $\overline{M}$ en fonction de classe $C^{\infty}$ et si $g$ est
principale, co\"{\i}ncide avec $v$ sur $bM$. Si on se ram\`{e}ne au cas o\`{u}
$\overline{M}$ est relativement compact dans une surface de Riemann
$\widetilde{M}$ (on peut par exemple choisir pour $\widetilde{M}$ le double de
$M$), on peut consid\'{e}rer l'op\'{e}rateur $S$ qui \`{a} $v\in C^{\infty
}\left(  bM\right)  $ associe la fonction $Sv$ d\'{e}finie par%
\[
Sv:\widetilde{M}\backslash\overline{M}\ni q\mapsto\frac{i}{2}\int_{\partial
M}v\overline{\partial}g_{q}%
\]
Lorsque $v\in C^{\infty}\left(  bM\right)  $, $Sv$ est harmonique sur $M$, se
prolonge \`{a} $\overline{M}$ en fonction de classe $C^{\infty}$ et
d'apr\`{e}s un r\'{e}sultat de Sohotksy de 1873, v\'{e}rifie $v=Tv-Sv$ sur
$bM$. En outre, d'apr\`{e}s les travaux de Fredholm en 1900, l'\'{e}quation
int\'{e}grale $v=w+\left(  Sw\right)  \left\vert _{bM}\right.  $ admet une
unique solution qu'on note $Rv$ et le prolongement harmonique $Ev$ de $v$
\`{a} $M$ est $Ev=TRv$. En particulier, $G_{M}:\left(  q,z\right)  \mapsto
g\left(  q,z\right)  -E\left(  g_{z}\left\vert _{bM}\right.  \right)  \left(
q\right)  $ est la fonction de Green principale de $M$. Quand $g$ est une
fonction de Green principale, la formule~(\ref{F/ opT}) donne le prolongement
harmonique de $v$ \`{a} $M$.\medskip

\begin{definition}
Soit $\mathcal{Y}$ une courbe complexe ouverte, \'{e}ventuellement
singuli\`{e}re, d'un ouvert de $\mathbb{C}^{2}$. Une fonction de Green pour
$\mathcal{Y}$ est une fonction $g:\left(  \operatorname*{Reg}\mathcal{Y}%
\times\operatorname*{Reg}\mathcal{Y}\right)  \backslash\Delta
_{\operatorname*{Reg}\mathcal{Y}}\rightarrow\mathbb{R}$ sym\'{e}trique telle
que pour tout $q_{\ast}\in\operatorname*{Reg}\mathcal{Y}$, $g_{q_{\ast}%
}=g\left(  q_{\ast},.\right)  $ v\'{e}rifie $i\partial\overline{\partial
}g_{q_{\ast}}=\delta_{q_{\ast}}dV$ au sens des courant, $\delta_{q_{\ast}}$
\'{e}tant la mesure de Dirac port\'{e}e par $\left\{  q_{\ast}\right\}  $ et
$dV=i\partial\overline{\partial}\left\vert .\right\vert ^{2}$ - ceci implique
en particulier que $\partial g_{q_{\ast}}$ est une $\left(  1,0\right)
$-forme faiblement holomorphe sur $\mathcal{Y}\backslash\left\{  q_{\ast
}\right\}  $ au sens de~\cite{RoM1954}.

Lorsque $\mathcal{Y}$ est une surface de Riemann nodale ouverte, quotient
d'une surface de Riemann $S$ par une relation d'\'{e}quivalence et lorsque
$\pi$ est la projection canonique de $S$ sur $\mathcal{Y}$, une fonction de
Green simple pour $\mathcal{Y}$ est une fonction $g:\left(
\operatorname*{Reg}\mathcal{Y}\times\operatorname*{Reg}\mathcal{Y}\right)
\backslash\Delta_{\operatorname*{Reg}\mathcal{Y}}\rightarrow\mathbb{R}$ pour
laquelle il existe une fonction $\widetilde{g}:\left(  S\times S\right)
\backslash\Delta_{S}\rightarrow\mathbb{R}$ ayant les propri\'{e}t\'{e}s
suivantes :\ - lorsque $q_{\ast}\in\operatorname*{Reg}\mathcal{Y}$ et
$\left\{  s_{\ast}\right\}  =\pi^{-1}\left(  q_{\ast}\right)  $, $g_{q_{\ast}%
}=\pi_{\ast}\widetilde{g}_{s_{\ast}}$ au sens des courants - lorsque $q_{\ast
}\in\operatorname*{Sing}\mathcal{Y}$, $\mathcal{B}$ est une branche de
$\mathcal{Y}$ en $q_{\ast}$ et $\left\{  s_{\ast}\right\}  =\left(
\pi\left\vert _{\mathcal{B}}\right.  \right)  ^{-1}\left(  q_{\ast}\right)  $,
$g_{q}$ tend au sens des courants vers $\pi_{\ast}\widetilde{g}_{s_{\ast}}$
lorsque $q\in\mathcal{B}\backslash\left\{  q_{\ast}\right\}  $ tend vers
$q_{s}$.

Une fonction de Green principale pour une surface de Riemann nodale \`{a} bord
$\mathcal{Y}$ est une fonction de Green simple $g$ pour $\mathcal{Y}$ telle
que si $\mathcal{B}$ est une branche de bord de $\mathcal{Y}$, $g\left\vert
_{\mathcal{B}}\right.  $ se prolonge contin\^{u}ment \`{a} $\overline
{\mathcal{B}}$ par la valeur $0$ sur $\overline{\mathcal{B}}\cap b\mathcal{Y}%
$.\smallskip
\end{definition}

D\'{e}taillons maintenant la formule explicite de la proposition~17
de~\cite{HeG-MiV2014} \'{e}tablissant l'existence de fonctions Green pour une
famille \`{a} 1 param\`{e}tre de courbes complexes dont les singularit\'{e}s
\'{e}ventuelles sont quelconques. On se donne une courbe complexe
$\mathcal{Y}$ d'un ouvert de $\mathbb{C}^{2}$, $\Omega$ un voisinage de Stein
de $\mathcal{Y}$ dans $\mathbb{C}^{2}$, $\Phi$ une fonction holomorphe sur
$\Omega$ telle que $\mathcal{Y}=\left\{  \Phi=0\right\}  $ et $d\Phi\left\vert
_{\mathcal{Y}}\right.  \neq0$ puis un domaine strictement pseudoconvexe
$\Omega^{\ast}$ de $\mathbb{C}^{2}$ v\'{e}rifiant
\[
\mathcal{Y}_{0}=\mathcal{Y}\cap\Omega^{\ast}\subset\Omega,
\]
et enfin une fonction sym\'{e}trique $\Psi\in\mathcal{O}\left(  \Omega
\times\Omega,\mathbb{C}^{2}\right)  $ telle que pour tout $\left(
z,z^{\prime}\right)  \in\mathbb{C}^{2}$,%
\[
\Phi\left(  z^{\prime}\right)  -\Phi\left(  z\right)  =\left\langle
\Psi\left(  z^{\prime},z\right)  ,z^{\prime}-z\right\rangle
\]
o\`{u} $\left\langle v,w\right\rangle =v_{1}w_{1}+v_{2}w_{2}$ lorsque
$v,w\in\mathbb{C}^{2}$. On d\'{e}finit sur $\operatorname*{Reg}\mathcal{Y}$
une $\left(  1,0\right)  $-forme $\omega$ en posant%
\begin{align*}
\omega &  =\frac{-dz_{1}}{\partial\Phi/\partial z_{2}}~\text{sur}%
~\mathcal{Y}^{1}=\mathcal{Y}\cap\left\{  \partial\Phi/\partial z_{2}%
\neq0\right\} \\
\omega &  =\frac{+dz_{2}}{\partial\Phi/\partial z_{1}}~\text{sur}%
~\mathcal{Y}^{2}=\mathcal{Y}\cap\left\{  \partial\Phi/\partial z_{1}%
\neq0\right\}
\end{align*}
et on consid\`{e}re%
\[
k\left(  z^{\prime},z\right)  =\det\left[  \frac{\overline{z^{\prime}%
}-\overline{z}}{\left\vert z^{\prime}-z\right\vert ^{2}},\Psi\left(
z^{\prime},z\right)  \right]  .
\]
Lorsque $q_{\ast}\in\operatorname*{Reg}\mathcal{Y}_{0}$, \cite[prop.~17
]{HeG-MiV2014} \'{e}tablit que la formule
\begin{equation}
g_{q_{\ast}}\left(  q\right)  =\frac{1}{4\pi^{2}}\int_{q^{\prime}%
\in\mathcal{Y}_{0}}k\left(  q^{\prime},q\right)  k\left(  q_{\ast},q^{\prime
}\right)  ~i\omega\left(  q^{\prime}\right)  \wedge\overline{\omega}\left(
q^{\prime}\right)  . \label{F/ Green}%
\end{equation}
d\'{e}finit pour $\mathcal{Y}_{0}$ une fonction de Green au sens ci-dessus. En
outre, la preuve de~\cite[prop.~17 ]{HeG-MiV2014} apporte que si $q_{\ast}%
\in\operatorname*{Reg}\mathcal{Y}_{0}$
\[
\partial g_{q_{\ast}}=\widetilde{k}_{q_{\ast}}\omega
\]
o\`{u} $\widetilde{k}_{q_{\ast}}=\frac{1}{2\pi}k\left(  .,q_{\ast}\right)  $.
La proposition ci-dessous donne un compl\'{e}ment utile.

\begin{proposition}
\label{P/ GreenNodale}On suppose $\mathcal{Y}$ n'a que des singularit\'{e}s
nodales. Dans ce cas, la fonction $g$ d\'{e}finie par (\ref{F/ Green}) est une
fonction de Green simple pour $\mathcal{Y}$.
\end{proposition}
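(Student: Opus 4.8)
The plan is to pull the entire construction back through the normalization $\pi\colon S\to\mathcal Y$ and to verify the two defining properties of a simple Green function separately: the regular-point identity $g_{q_*}=\pi_*\widetilde g_{s_*}$ and the limit condition at the nodes.

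First I would lift formula~(\ref{F/ Green}). Since $\pi$ restricts to a biholomorphism of $\pi^{-1}(\operatorname{Reg}\mathcal Y)$ onto $\operatorname{Reg}\mathcal Y$ and the preimages of the finitely many nodes form a set of measure zero, the change of variables $q'=\pi(s')$ rewrites the integral over $\mathcal Y_0$ as an integral over $S_0=\pi^{-1}(\mathcal Y_0)$. The decisive point is that, because the branches at a node are transversal, the pulled-back form $\pi^{*}\omega$ is holomorphic and nowhere vanishing across the node-preimages on the smooth surface $S$, so that the area measure $i\,\pi^{*}\omega\wedge\overline{\pi^{*}\omega}$ is smooth and integrable on $S$. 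This lets me set $\widetilde g(s_*,s)=g(\pi(s_*),\pi(s))$ on $(\pi^{-1}(\operatorname{Reg}\mathcal Y))^2\setminus\Delta_S$ and read off its behaviour near node-preimages.

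Next I would establish the regular-point identity. Using $\partial g_{q_*}=\tfrac1{2\pi}k(\cdot,q_*)\,\omega$ from \cite[prop.~17]{HeG-MiV2014}, I would show that $\widetilde g_{s_*}$ extends harmonically across each node-preimage $s_j\neq s_*$. The key estimate is that the kernel $k(q',q_*)$ stays bounded as $q'$ tends to a node, a direct consequence of the transversality of the branches, which keeps the determinant defining $k$ finite even though $\overline{q'-q_*}/|q'-q_*|^2$ blows up. Hence $\widetilde g_{s_*}$ is harmonic on $S\setminus\{s_*\}$ with a single logarithmic pole at $s_*$, i.e. a genuine Green function of the smooth surface $S$; its push-forward $\pi_*\widetilde g_{s_*}$ is then continuous along every branch through each node and agrees with $g_{q_*}$ on $\operatorname{Reg}\mathcal Y$. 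This is exactly $g_{q_*}=\pi_*\widetilde g_{s_*}$ in the sense of currents, and the characterisation of nodal harmonicity recalled from \cite[prop.~2]{HeG-MiV2012} (coefficients summing to zero at each node, here all zero) confirms the computation.

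Finally, for the limit condition I would prove that $s_*\mapsto\widetilde g_{s_*}$ is continuous on $S$, including across node-preimages, in the current topology. Again this reduces to the continuity of $q_*\mapsto k(\cdot,q_*)$ up to the node provided by the same boundedness estimate; since $g$ is given by the explicit integral~(\ref{F/ Green}) there is no free additive constant to track, so convergence of $\partial g_{q}$ upgrades to convergence of $g_q$ itself. Then, as $q\to q_0$ along a branch $\mathcal B$ with $s_0=(\pi|_{\mathcal B})^{-1}(q_0)$, one obtains $g_q=\pi_*\widetilde g_{\pi^{-1}(q)}\to\pi_*\widetilde g_{s_0}$ as currents, which is the required limit property at the nodes; the symmetry of $\widetilde g$ and its global consistency on $(S\times S)\setminus\Delta_S$ follow from the symmetry of $g$ together with the preceding extensions. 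I expect the hard part to be precisely this last step: controlling $g_q$ uniformly as its own pole $q$ slides into the node and showing that no extra mass concentrates there in the limit. It is here that the nodal hypothesis is indispensable, since transversal branches with distinct tangents are exactly what keep $k$ bounded; for a tangential (non-nodal) singularity the determinant kernel would blow up and the argument would break down.
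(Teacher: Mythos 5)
Your pull-back strategy founders on its ``decisive point'': it is false that $\pi^{*}\omega$ is holomorphic and nonvanishing across the node-preimages. Quite the opposite: at a node $q$ with $\nu$ branches, writing $\Phi\left(  z\right)  =\left(  z_{2}-\varphi\left(  z_{1}\right)  \right)  \Theta\left(  z\right)  $ on a branch $\mathcal{B}=\left\{  z_{2}=\varphi\left(  z_{1}\right)  \right\}  $, the transversality of the remaining branches forces $\Theta\left(  z_{1},\varphi\left(  z_{1}\right)  \right)  =z_{1}^{\nu-1}\theta\left(  z_{1}\right)  $ with $\theta\left(  0\right)  \neq0$, so that on $\mathcal{B}\backslash\left\{  q\right\}  $ one has $\omega=\frac{dz_{1}}{\theta\left(  z_{1}\right)  z_{1}^{\nu-1}}$, a pole of order $\nu-1\geqslant1$. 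Transversality is precisely what makes $d\Phi$ vanish at the node; it creates the pole rather than preventing it. Consequently $i\,\pi^{*}\omega\wedge\overline{\pi^{*}\omega}$ is neither smooth nor even locally integrable at the node-preimages, and the boundedness of $k\left(  \cdot,q_{\ast}\right)  $ --- which is automatic anyway, since $q^{\prime}$ stays away from $q_{\ast}$ so no diagonal factor blows up, contrary to what your parenthetical suggests --- gives no control of $\partial g_{q_{\ast}}=\frac{1}{2\pi}k\left(  \cdot,q_{\ast}\right)  \omega$ near the node. Your conclusion that $\widetilde{g}_{s_{\ast}}$ is harmonic across the node-preimages, hence a genuine Green function of the smooth surface $S$, is therefore unsupported.

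This is not a repairable detail: it is the whole content of the proposition. A priori, \cite[prop.~2]{HeG-MiV2012} only guarantees that $g_{q_{\ast}}$ is a harmonic distribution, so along each interior branch $\mathcal{B}$ at a node it may carry a logarithmic term $c_{\mathcal{B}}\ln\left\vert z_{1}\right\vert $ (equivalently, $\partial g_{q_{\ast}}\left\vert _{\mathcal{B}}\right.  $ may have a simple pole), with only the sum $\sum c_{\mathcal{B}}=0$ imposed; being a \emph{simple} Green function means each individual $c_{\mathcal{B}}$ vanishes, and your sketch contains no mechanism producing this. The paper's proof supplies it by a distributional computation: pairing $\partial g_{q_{\ast}}=\widetilde{k}_{q_{\ast}}\omega$ with test forms $\xi d\overline{z_{1}}$ supported in $\mathcal{B}$, Taylor-expanding $\widehat{k}_{q_{\ast}}\xi/\theta$ to order $\nu-1$ and using the rotational cancellation $\int_{0}^{2\pi}e^{i\theta\left(  \alpha-\beta-\nu+1\right)  }d\theta=0$ for $\alpha+\beta<\nu-1$ to show the principal value exists with no Dirac-type contribution; comparing with the a priori expression $\left(  \frac{c}{z_{1}}+h\left(  z_{1}\right)  \right)  dz_{1}$ then forces $c=0$. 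Your final limit step inherits the same gap: in the paper, the convergence of $g_{q}$ as the pole $q$ slides into a node $q_{s}$ is obtained from the symmetry of $g$ combined with the residue-vanishing just established and uniform convergence read off the explicit formula~(\ref{F/ Green}), not from any regularity of $\omega$ upstairs, which fails.
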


\begin{proof}
Commen\c{c}ons par prouver que $q_{\ast}$ \'{e}tant fix\'{e} dans
$\operatorname*{Reg}\mathcal{Y}_{0}$, $g_{q_{\ast}}$ se prolonge en fonction
harmonique usuelle le long des branches de $\mathcal{Y}_{0}\backslash\left\{
q_{\ast}\right\}  $. Puisque, $g_{q_{\ast}}$ est une distribution harmonique
sur $\mathcal{Y}_{0}\backslash\left\{  q_{\ast}\right\}  $, on sait
d\'{e}j\`{a} que $g_{q_{\ast}}\left\vert _{\operatorname*{Reg}\mathcal{Y}_{0}%
}\right.  $ est une fonction harmonique usuelle et gr\^{a}ce \`{a}~\cite[prop.
2]{HeG-MiV2012} que pour toute branche $\mathcal{B}$ de $\mathcal{Y}_{0}$ en
$q $, $g_{q_{\ast}}\left\vert _{\mathcal{B}}\right.  $ a au plus une
singularit\'{e} logarithmique isol\'{e}e en $q$ et donc que $\partial
g_{q_{\ast}}$ au plus un p\^{o}le simple en $q$. Fixons $q$ dans
$\operatorname*{Sing}\mathcal{Y}_{0}$ et $\mathcal{B}$ une branche de
$\mathcal{Y}_{0}$ en $q $. En diminuant $\mathcal{B}$ et en changeant
\'{e}ventuellement de coordonn\'{e}es, on se ram\`{e}ne au cas o\`{u} $q=0$ et
o\`{u} $\Phi$ est au voisinage de $0$ de la forme%
\[
\Phi\left(  z\right)  =\left(  z_{2}-\varphi\left(  z_{1}\right)  \right)
\Theta\left(  z\right)
\]
avec $\varphi$ holomorphe dans un disque $V=D\left(  0,r\right)  $ assez petit
et $\Theta\left\vert _{\mathcal{B}}\right.  $ ne s'annulant qu'en $0$. En
particulier, il existe une fonction $\theta$ holomorphe sur $V$ telle que
$\theta\left(  0\right)  \neq0$ et $\Theta\left(  z_{1},\varphi\left(
z_{1}\right)  \right)  =z_{1}^{\nu-1}\theta\left(  z_{1}\right)  $ lorsque
$z_{1}\in V$, $\nu$ d\'{e}signant le nombre de branches de $\mathcal{Y}_{0}$
en $q$. Sur $\mathcal{B}\backslash\left\{  q\right\}  $, on a donc
$\omega=\frac{dz_{1}}{\theta\left(  z_{1}\right)  z_{1}^{\nu-1}}$.
Consid\'{e}rons alors $\chi$ une $\left(  0,1\right)  $-forme \`{a} support
compact dans $\mathcal{B}$~; $\chi$ s'\'{e}crit donc $\xi d\overline{z_{1}}$
avec $\xi\in\mathcal{D}\left(  V\right)  $. D'o\`{u}, par d\'{e}finition,
\[
\left\langle \partial g_{q_{\ast}},\chi\right\rangle =\underset{\varepsilon
\downarrow0^{+}}{\lim}\int_{z_{1}\in V\backslash D\left(  0,\varepsilon
\right)  }\frac{\widehat{k}_{q_{\ast}}\left(  z_{1}\right)  \xi\left(
z_{1}\right)  }{\theta\left(  z_{1}\right)  z_{1}^{\nu-1}}idz_{1}\wedge
d\overline{z_{1}}%
\]
o\`{u} $\widehat{k}_{q_{\ast}}\left(  z_{1}\right)  =\widetilde{k}_{q_{\ast}%
}\left(  z_{1},\varphi\left(  z_{1}\right)  \right)  $. Ecrivons%
\[
\frac{\widehat{k}_{q_{\ast}}\left(  z_{1}\right)  \xi\left(  z_{1}\right)
}{\theta\left(  z_{1}\right)  }=\sum\limits_{\alpha+\beta<\nu-1}%
c_{\alpha,\beta}z_{1}^{\alpha}\overline{z_{1}}^{\beta}+\int_{0}^{1}%
\frac{\left(  1-t\right)  ^{\nu-2}}{\left(  \nu-2\right)  !}\left.  D^{\nu
-1}\left(  \widehat{k}_{q_{\ast}}\xi/\theta\right)  \right\vert _{tz_{1}%
}.z_{1}^{\nu-2}dt~idz_{1}\wedge d\overline{z_{1}}%
\]
o\`{u} l'expression $\left.  D^{p}f\right\vert _{w}.z^{p}$ se lit comme
\'{e}tant la valeur prise par la diff\'{e}rentielle totale d'ordre $p$ de $f$
en $w$ sur le vecteur $\left(  z,...,z\right)  $. Puisque $\int_{0}^{2\pi
}e^{i\theta\left(  \alpha-\beta-\nu+1\right)  }d\theta=0$ lorsque
$\alpha+\beta<\nu-1$, on obtient
\begin{equation}
\left\langle \partial g_{q_{\ast}},\chi\right\rangle =\int_{z_{1}\in V}%
\int_{0}^{1}\frac{\left(  1-t\right)  ^{\nu-2}}{\left(  \nu-2\right)
!}\left.  D^{\nu-1}\left(  \widehat{k}_{q_{\ast}}\xi/\theta\right)
\right\vert _{tz_{1}}.1^{\nu-1}dt~idz_{1}\wedge d\overline{z_{1}}
\label{F/ dg v1}%
\end{equation}
Par ailleurs, il existe $c\in\mathbb{C}$ et $h\in\mathcal{O}\left(  V\right)
$ telle que l'expression de $\partial g_{q_{\ast}}\left\vert _{\mathcal{B}%
}\right.  $ dans la coordonn\'{e}e $z_{1}$ soit $\frac{c}{z_{1}}dz_{1}%
+hdz_{1}$. D'o\`{u}%
\[
\left\langle \partial g_{q_{\ast}},\chi\right\rangle =\underset{\varepsilon
\downarrow0^{+}}{\lim}\int_{z_{1}\in V\backslash D\left(  0,\varepsilon
\right)  }\left(  \frac{c}{z_{1}}+h\left(  z_{1}\right)  \right)  \xi\left(
z_{1}\right)  idz_{1}\wedge d\overline{z_{1}}%
\]
Ecrivons $\xi\left(  z_{1}\right)  =\xi\left(  0\right)  +\xi_{1,0}z_{1}%
+\xi_{0,1}\overline{z_{1}}+\int_{0}^{1}\left(  1-t\right)  \left.  D^{2}%
\xi\right\vert _{tz_{1}}.z_{1}^{2}dt$. Il vient alors%
\begin{equation}
\left\langle \partial g_{q_{\ast}},\chi\right\rangle =\pi r^{2}\xi_{1,0}%
c+\int_{z_{1}\in V}h\left(  z_{1}\right)  \xi\left(  z_{1}\right)
idz_{1}\wedge d\overline{z_{1}} \label{F/ dg v2}%
\end{equation}
Comme (\ref{F/ dg v1}) ne comporte pas de d\'{e}rivation de la mesure de Dirac
en $0$, la comparaison avec (\ref{F/ dg v2}) impose $c=0$, ce qu'il fallait prouver.

Fixons maintenant $q_{s}\ $dans $\operatorname*{Sing}\mathcal{Y}$.
Consid\'{e}rons une branche $\mathcal{B}$ de $\mathcal{Y}$ en $q_{s}$
suffisamment petite pour qu'on puisse s'y donner une coordonn\'{e}e holomorphe
$z$ centr\'{e}e en $q_{s}$. La sym\'{e}trie de $g$ et ce qui pr\'{e}c\`{e}de
impliquent que lorsque $q_{\ast}\in\mathcal{B}$ tend vers $q_{s}$,
$g_{q_{\ast}}-\frac{1}{2\pi}\ln\left\vert z-z\left(  q_{\ast}\right)
\right\vert $ converge uniform\'{e}ment sur $\mathcal{B}$ vers une fonction
harmonique de la forme $g_{\mathcal{B},q_{s}}^{\mathcal{B}}-\frac{1}{2\pi}%
\ln\left\vert z\right\vert $ o\`{u} $g_{\mathcal{B},q_{s}}$ est harmonique sur
$\mathcal{B}\backslash\left\{  q_{s}\right\}  $. Pour la m\^{e}me raison, si
$\mathcal{B}^{\prime}$ est une autre branche de $\mathcal{Y}$ en $q_{s}$ et
$\mathcal{B}\ni q_{\ast}$ tend vers $q_{s}$, $g_{q_{\ast}}$ converge
uniform\'{e}ment sur $\mathcal{B}^{\prime}$ vers une une fonction harmonique
$g_{\mathcal{B},q_{s}}^{\mathcal{B}^{\prime}}$. Enfin, l'expression
explicite~(\ref{F/ Green}) de $g$ donne que lorsque $q_{\ast}\in\mathcal{B}$
tend vers $q_{s}$ et que $\mathcal{B}^{\prime}$ est une branche de
$\mathcal{Y}$ relativement compacte dans $\mathcal{Y}\backslash\left\{
q_{s}\right\}  $, $g_{q_{\ast}}$ converge uniform\'{e}ment sur $\mathcal{B}%
^{\prime}$ vers une fonction harmonique $g_{\mathcal{B},q_{s}}^{\mathcal{B}%
^{\prime}}$. Quand $\mathcal{B}^{\prime}$ parcoure l'ensemble des branches de
$\mathcal{Y}$, ces fonctions $g_{\mathcal{B},q_{s}}^{\mathcal{B}^{\prime}}$ se
recollent en une fonction $g_{\mathcal{B},q_{s}}$ qui est harmonique sur
$\mathcal{B}^{\prime}\backslash\left\{  q_{s}\right\}  $ pour toute branche
$\mathcal{B}^{\prime}$ de $\mathcal{Y}$, dont la restriction \`{a}
$\mathcal{B}$ pr\'{e}sente une singularit\'{e} logarithmique en $q_{s}$ et
telle que $g_{q_{\ast}}\underset{\mathcal{B}\ni q_{\ast}}{\rightarrow
}g_{\mathcal{B},q_{s}}$ au sens des courants.

Proc\'{e}dant ainsi pour tous les points singuliers de $\mathcal{Y}$, on
constate que $g$ est une fonction de Green simple pour $\mathcal{Y}$.
\end{proof}

\begin{corollary}
\label{C/ GreenNodale2}Soit $\left(  M,\sigma\right)  $ une structure de
conductivit\'{e} de dimension deux. On se donne, ce qui est toujours possible,
une structure de conductivit\'{e} $(\widetilde{M},\widetilde{\sigma})$ de
dimension $2$ qui prolonge simplement $\left(  M,\sigma\right)  $, ce qui
signifie que $M\subset\subset\widetilde{M}$, $\widetilde{\sigma}\left\vert
_{M}\right.  =\sigma$ et que $\widetilde{\sigma}\left\vert _{p}\right.
=Id_{T_{p}^{\ast}\overline{\widetilde{M}}}$ pour tout $p\in b\widetilde{M}$.
On note alors $F:\widetilde{M}\rightarrow\mathbb{C}^{2}$ l'application obtenue
en appliquant le th\'{e}or\`{e}me~\ref{T/ HS} \`{a} $(\widetilde{M}%
,\widetilde{\sigma})$, on pose $\mathcal{Y}=F\left(  \widetilde{M}\right)  $
et on fixe un voisinage de Stein $\Omega$ de $\mathcal{Y}$ dans $\mathbb{C}%
^{2}$. Enfin, $\mathcal{M}=F\left(  M\right)  $ \'{e}tant relativement
compacte dans $\mathcal{Y}$, on peut se donner un domaine strictement
pseudoconvexe $\Omega^{\ast}$ de $\mathbb{C}^{2}$ v\'{e}rifiant $\mathcal{M}%
\subset\subset\mathcal{Y}_{0}=\mathcal{Y}\cap\Omega^{\ast}\subset\Omega$. On
note $g$ la fonction d\'{e}finie par (\ref{F/ Green}). Alors, $F^{\ast
}g\left\vert _{\overline{M}\times\overline{M}\backslash\Delta_{M}}\right.  $
est une fonction de Green pour $\left(  M,c_{\sigma}\right)  $.
\end{corollary}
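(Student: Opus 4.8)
The plan is to pull the simple Green function $g$ of Proposition~\ref{P/ GreenNodale} back through $F$ and to check that the result meets the three requirements, recalled at the start of this section, defining a Green function for the (non-nodal) Riemann surface with boundary $(M,\mathcal{C}_\sigma)$. First I would assemble the ingredients. Applying Theorem~\ref{T/ HS} to $(\widetilde M,\widetilde\sigma)$ produces the nodal Riemann surface with boundary $\mathcal{Y}=F(\widetilde M)$ whose singularities, by construction and by the remark following that theorem, are exactly the points of $F(\overline{\widetilde M})$ with several preimages, hence nodal. Proposition~\ref{P/ GreenNodale} therefore applies and the function $g$ given by~(\ref{F/ Green}) is a simple Green function for $\mathcal{Y}_0$. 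Since $F:\overline{\widetilde M}\to\overline{\mathcal{Y}}$ is itself a normalization, I would take it for the projection $\pi$ appearing in the definition of a simple Green function; the attached lift $\widetilde g$ then coincides with $F^\ast g$ wherever the latter is defined and extends it across the preimages of the nodes. The object to examine is $\widetilde g=F^\ast g$ restricted to $\overline M\times\overline M\setminus\Delta_M$, and Lemma~\ref{L/ sigma-analyticite} tells us that $F$ is $(\sigma,c_{\mathcal{Y}})$-analytic, i.e. holomorphic from $(M,\mathcal{C}_\sigma)$ to $\mathcal{Y}$ along every branch.

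Symmetry of $F^\ast g$ is inherited verbatim from that of $g$. For the remaining properties, fix $q\in M$ with image $q_\ast=F(q)$ and first suppose $q_\ast\in\operatorname*{Reg}\mathcal{Y}$. Then $F$ is a local biholomorphism near $q$, so $(F^\ast g)_q=g_{q_\ast}\circ F$ and, holomorphy of $F$ giving $F^\ast\overline{\partial}=\overline{\partial}^{\sigma}F^\ast$ and $F^\ast\partial=\partial^{\sigma}F^\ast$, one gets $\overline{\partial}^{\sigma}\partial^{\sigma}\big((F^\ast g)_q\big)=F^\ast\big(\overline{\partial}\partial\, g_{q_\ast}\big)=0$ away from $q$, i.e. $c_\sigma$-harmonicity on $M\setminus\{q\}$. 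The logarithmic singularity transfers as well: if $w$ is a holomorphic coordinate centred at $q_\ast$ with $g_{q_\ast}-\frac{1}{2\pi}\ln|w|$ harmonic near $q_\ast$, then $z=w\circ F$ is a $c_\sigma$-holomorphic coordinate vanishing to first order at $q$, $\ln|w\circ F|-\ln|z|$ is harmonic, and hence $(F^\ast g)_q-\frac{1}{2\pi}\ln|z|$ extends harmonically across $q$. Continuity of $(F^\ast g)_q$ on $\overline M\setminus\{q\}$ results from the continuity of $g$ along branches composed with the continuity of the $C^3$ map $F$; this is precisely where the relative compactness $\overline{\mathcal{M}}\subset\subset\mathcal{Y}_0$ is used, since it keeps $F(\overline M)$ inside $\mathcal{Y}_0$ and away from $b\mathcal{Y}_0$, so that no boundary behaviour of $g$ interferes.

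The delicate case, which I expect to be the main obstacle, is the finitely many $q\in M$ whose image $q_s=F(q)$ is a node, for which $g_{q_s}$ is not given directly by~(\ref{F/ Green}) but only through the limiting procedure encoded in the notion of simple Green function. Here I would lean on the fine convergence established inside the proof of Proposition~\ref{P/ GreenNodale}: as $q_\ast\to q_s$ along the branch $\mathcal{B}$ carrying $q$, the functions $g_{q_\ast}-\frac{1}{2\pi}\ln|z-z(q_\ast)|$ converge uniformly on $\mathcal{B}$ to a harmonic function, while on every other branch $\mathcal{B}'$ the functions $g_{q_\ast}$ converge uniformly to a harmonic function with no singularity. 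Read through $F$, this says exactly that $(F^\ast g)_q$ is harmonic on $\widetilde M\setminus\{q\}$, carries a single isolated logarithmic singularity at $q$ on the component of $\widetilde M$ through $q$, and remains harmonic at the other preimages of $q_s$; thus no spurious singularity is created at the nodal preimages. Collecting the three verifications, $F^\ast g$ restricted to $\overline M\times\overline M\setminus\Delta_M$ is symmetric, $c_\sigma$-harmonic in each variable off the diagonal with the correct logarithmic singularity, and continuous up to $bM$, hence a Green function for $(M,c_\sigma)$.
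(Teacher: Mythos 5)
Your proof is correct and takes essentially the same route as the paper: pull $g$ back through the normalization $F$, transfer symmetry, harmonicity and the logarithmic singularity on the regular part using the $(\sigma,c)$-analyticity of $F$, and define the pullback at preimages of nodes via the branch-limit functions $g_{\mathcal{B},q_{s}}$ from Proposition~\ref{P/ GreenNodale} (the paper does exactly this by setting $h_{p}=F^{\ast}g_{p,B}$). The only blemish is cosmetic: in the nodal case you write harmonicity on $\widetilde{M}\setminus\left\{  q\right\}  $ where $M\setminus\left\{  q\right\}  $ is meant.
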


\begin{proof}
Puisque $F:M\rightarrow\mathcal{M}$ est une normalisation $\left(  c_{\sigma
},c_{Q}\right)  $-analytique, $h=F^{\ast}g$ est bien d\'{e}finie sur
$\overline{M}_{\operatorname{reg}}\times\overline{M}_{\operatorname{reg}%
}\backslash\Delta_{\overline{M}_{\operatorname{reg}}}$ o\`{u} $\overline
{M}_{\operatorname{reg}}=F^{-1}\left(  \operatorname*{Reg}\overline{Q}\right)
$, sym\'{e}trique et pour tout $x\in\overline{M}$, $h_{x}=h\left(  .,x\right)
$ est harmonique sur $\overline{M}_{\operatorname{reg}}\backslash
bM\cup\left\{  x\right\}  $, continue sur $\overline{M}_{\operatorname{reg}%
}\backslash\left\{  x\right\}  $ et v\'{e}rifie $i\partial^{\sigma}%
\overline{\partial^{\sigma}}h=\delta_{x}dV$ o\`{u} $dV$ est une forme volume
pour $M$. Lorsque $p\in F^{-1}\left(  \operatorname*{Sing}\mathcal{\overline
{\mathcal{Y}}}\right)  \cap M$ et $U$ est un voisinage ouvert connexe de $p$
dans $M$, $B=F\left(  U\right)  $ est une branche int\'{e}rieure de
$\overline{\mathcal{M}}$ en $q=F\left(  p\right)  $ et on peut poser
$h_{p}=F^{\ast}g_{p,B}$. La proposition~\ref{P/ GreenNodale} entra\^{\i}ne que
$h$ ainsi construite est une fonction de Green pour $M$.
\end{proof}

Le corollaire~\ref{C/ GreenNodale2} permet aussi de compl\'{e}ter le
th\'{e}or\`{e}me~\ref{T/ plgt1} en donnant une formule explicite $F^{\ast
}\theta_{c}^{\mathcal{M}}f_{\ast}u$.~

\begin{corollary}
\label{C/ GreenNodale3}Les hypoth\`{e}ses et les notations sont celles du
th\'{e}or\`{e}me~\ref{T/ plgt1} et du corollaire~\ref{C/ GreenNodale2}.
$\mathcal{M}$ admet une fonction de Green principale et si $g$ est une telle
fonction, pour tout $u\in C^{\infty}\left(  bM\right)  $, $F^{\ast}\theta
_{c}^{\mathcal{M}}f_{\ast}u$ est donn\'{e} par la formule%
\[
F^{\ast}\theta_{c}^{\mathcal{M}}f_{\ast}u=\left(  F^{\ast}\partial
\widehat{f_{\ast}u}\right)  \left\vert _{bM}\right.  ,~\widehat{f_{\ast}%
u}:\operatorname*{Reg}\overline{\mathcal{M}}\ni q\mapsto\left\vert
\begin{array}
[c]{l}%
\frac{i}{2}\int_{\partial\mathcal{M}}\left(  f_{\ast}u\right)  \overline
{\partial}g_{q}~si~q\in\mathcal{M}\\
\left(  f_{\ast}u\right)  \left(  q\right)  ~si~q\in b\mathcal{M}%
\end{array}
\right.
\]

\end{corollary}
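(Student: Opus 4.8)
The plan is to use the normalization $F:\overline{M}\rightarrow\overline{\mathcal{M}}$ itself to transport the problem to the smooth Riemann surface with boundary $(M,\mathcal{C}_{\sigma})$, on which the classical Green-function machinery recalled at the start of Section~\ref{S/ FGN} applies verbatim. Indeed, by Th\'eor\`eme~\ref{T/ HS} and the remark following it, $F$ presents $\mathcal{M}$ as the quotient of the genuine Riemann surface $M$, so $M$ plays the role of the normalizing surface $S$ and $f=F\left\vert _{bM}^{F(bM)}\right.  $ the role of the boundary bijection.

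First, existence. Corollaire~\ref{C/ GreenNodale2} furnishes a Green function for $(M,c_{\sigma})$, namely $F^{\ast}g$ with $g$ given by (\ref{F/ Green}); it need not be principal. Applying the operators $T,S,R,E$ of the smooth theory, I would correct it into the principal Green function $G_{M}$ of $(M,\mathcal{C}_{\sigma})$, characterised by $G_{M}\left\vert _{bM}\right.  =0$. Then $F_{\ast}G_{M}$ is the desired principal Green function for $\mathcal{M}$: it is simple by the very definition (take $\widetilde{g}=G_{M}$ for the normalization $F$), and it is principal because $G_{M}$ vanishes on $bM$ while $f$ carries $bM$ isomorphically onto the boundary branches. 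Since the principal Green function on the smooth surface $M$ is unique, pulling any principal Green function $g$ on $\mathcal{M}$ back by $F$, noting its boundary values vanish, forces $g=F_{\ast}G_{M}$; it therefore suffices to prove the formula for this $g$.

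Second, the formula. For $q\in\operatorname*{Reg}\mathcal{M}$ with $p=F^{-1}(q)$, I would transport the boundary integral defining $\widehat{f_{\ast}u}(q)$ through $f$. Because $b\mathcal{M}$ lies in the regular locus and $F$ is a biholomorphism along every boundary branch, one has $g_{q}=F_{\ast}(G_{M})_{p}$ and $\overline{\partial}g_{q}=F_{\ast}(\overline{\partial}(G_{M})_{p})$ near $b\mathcal{M}$, while $f$ preserves orientation; the change of variables together with $f^{\ast}(f_{\ast}u)=u$ then yields
\[
\widehat{f_{\ast}u}(q)=\frac{i}{2}\int_{\partial\mathcal{M}}(f_{\ast}u)\,\overline{\partial}g_{q}=\frac{i}{2}\int_{\partial M}u\,\overline{\partial}(G_{M})_{p}=(Tu)(p),
\]
where $T$ is the operator (\ref{F/ opT}) for $M$ built from $G_{M}$. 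By the classical statement recalled in Section~\ref{S/ FGN}, $Tu$ is the $\mathcal{C}_{\sigma}$-harmonic extension $\widetilde{u}$ of $u$ to $M$. Hence $\widehat{f_{\ast}u}=F_{\ast}\widetilde{u}$ on $\operatorname*{Reg}\mathcal{M}$, and since $\widetilde{u}$ is harmonic on the smooth surface $M$ and $F$ is $(\sigma,c_{\mathcal{M}})$-analytique along each branch, its pushforward is harmonic and continuous along every branch of $\mathcal{M}$, so it coincides with the simple harmonic extension of $f_{\ast}u$ appearing in Th\'eor\`eme~\ref{T/ plgt1}. That theorem identifies $\theta_{c}^{\mathcal{M}}(f_{\ast}u)$ with the restriction to $b\mathcal{M}$ of $\partial$ of this simple harmonic extension, so applying $F^{\ast}$ gives $F^{\ast}\theta_{c}^{\mathcal{M}}f_{\ast}u=(F^{\ast}\partial\widehat{f_{\ast}u})\left\vert _{bM}\right.  $, as claimed.

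The main obstacle I anticipate is this second step: one must verify carefully that pushing $\widetilde{u}$ forward through the nodal points produces precisely the \emph{simple} harmonic extension, i.e. that no logarithmic residues are created at $\operatorname*{Sing}\mathcal{M}$, and that the boundary integral transforms with no correction term despite the singular target. The residue-free behaviour is exactly what the local computation in the proof of Proposition~\ref{P/ GreenNodale} secures, since here $\widetilde{u}$ descends from a genuinely smooth harmonic function on $M$; the orientation-preserving biholomorphy of $f$ along the boundary branches disposes of the change-of-variables issue.
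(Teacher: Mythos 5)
Your proposal is correct and follows essentially the same route as the paper, whose proof is a one-line reference to exactly the ingredients you deploy: the pulled-back Green function of Corollaire~\ref{C/ GreenNodale2}, the Fredholm correction procedure (the operators $T$, $S$, $R$, $E$) recalled at the start of Section~\ref{S/ FGN} to make it principal, and the identification of $F_{\ast}\widetilde{u}$ with the simple harmonic extension, which is already secured in the proof of Th\'{e}or\`{e}me~\ref{T/ plgt1}. Your expanded treatment of the two subtleties (absence of logarithmic residues at the nodes and the change of variables along the boundary branches) is exactly the verification the paper leaves implicit.
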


\begin{proof}
Ceci d\'{e}coule directement du corollaire~\ref{C/ GreenNodale2} et du
proc\'{e}d\'{e} de construction des fonctions de Green principale rappel\'{e}
au d\'{e}but de cette section.
\end{proof}

\noindent\textbf{Remarque. }Le corollaire~\ref{C/ GreenNodale2} permet aussi
de d\'{e}terminer la fonction de Green principale de $\left(  M,c_{\sigma
}\right)  $ gr\^{a}ce \`{a} la m\'{e}thode d\'{e}crite au d\'{e}but de cette section.

Cette section fournit donc un proc\'{e}d\'{e} pour d\'{e}terminer
l'op\'{e}rateur $N_{d}^{c_{\sigma}}$ \`{a} partir de $N_{d}^{\sigma}$ et
ram\`{e}ne le probl\`{e}me initial \`{a} celui o\`{u} le coefficient de
conductivit\'{e} est constant et donc \`{a} la situation \'{e}tudi\'{e}e
dans~\cite{HeG-MiV2007, HeG-MiV2012} \`{a} laquelle sont consacr\'{e}es les
sections~\ref{S/ coeffCst} et~\ref{S/ RcoeffCst}.

\subsection{Conductivit\'{e} de coefficient constant\label{S/ coeffCst}}

Dans cette section, on revient sur l'unicit\'{e} de la structure complexe dans
le cas d'une conductivit\'{e} de coefficient constant. Comme il est not\'{e}
dans la section~\ref{S/ condaniso}, cela permet de compl\'{e}ter le
th\'{e}or\`{e}me de Henkin-Santacesaria ainsi que la preuve du
th\'{e}or\`{e}me~1 de \cite{HeG-MiV2007} qui est le th\'{e}or\`{e}me
ci-dessous pour $n=2$. Notons que l'hypoth\`{e}se faite sur les fonctions
$u_{j}$ est g\'{e}n\'{e}riquement v\'{e}rifi\'{e}e (voir~\cite{HeG-MiV2007,
HeG-MiV2012}).

\begin{theorem}
[Henkin-Michel, 2007]\label{T/ unicite}Soient $M$ et $M^{\prime}$, deux
surfaces de Riemann \`{a} bord lisses bord\'{e}es par la m\^{e}me courbe
r\'{e}elle $\gamma$. On pose $\partial=d-\overline{\partial}$ (resp.
$\partial^{\prime}=d-\overline{\partial^{\prime}}$), $\overline{\partial}$
(resp. $\overline{\partial^{\prime}}$) \'{e}tant l'op\'{e}rateur de
Cauchy-Riemann de $M$ (resp. $M^{\prime}$). Si $u\in C^{\infty}\left(
\gamma\right)  $, on note $\widetilde{u}$ (resp. $\widehat{u}$) le
prolongement harmonique de $u$ \`{a} $M$ (resp. $M^{\prime}$) et on pose
$\theta u=\left(  \partial\widetilde{u}\right)  \left\vert _{\gamma}\right.  $
(resp. $\theta^{\prime}u=\left(  \partial^{\prime}\widehat{u}\right)
\left\vert _{\gamma}\right.  $)~; $\theta$ (resp. $\theta^{\prime}$) est aussi
l'op\'{e}rateur $\theta_{c}^{\sigma}$ d\'{e}fini par (\ref{F/ theta}) lorsque
$\sigma$ est la conductivit\'{e} associ\'{e}e \`{a} la structure complexe de
$M$ (resp. $M^{\prime}$).

On se donne $u_{0},...,u_{n}\in C^{\infty}\left(  \gamma\right)  $ o\`{u}
$n\in\mathbb{N}^{\ast}$, on suppose que pour tout $j\in\left\{
0,...,n\right\}  $, $\theta u_{j}=\theta^{\prime}u_{j}$, l'application
$\left[  \theta u\right]  =\left[  \theta u_{0}:\cdots:\theta u_{n}\right]
=\left[  \theta^{\prime}u\right]  $ est bien d\'{e}finie, r\'{e}alise un
plongement de $\gamma$ dans $\left\{  \left[  w_{0}:\cdots:w_{n}\right]
\in\mathbb{CP}_{n};~w_{0}\neq0\right\}  $ et on suppose en outre que $\left[
\partial\widetilde{u}\right]  $ (resp. $\left[  \partial^{\prime}%
\widehat{u}\right]  $) est bien d\'{e}finie sur $M$ (resp. $M^{\prime}$) et
prolonge m\'{e}romorphiquement $\left[  \theta u\right]  $ (resp. $\left[
\theta^{\prime}u\right]  $) \`{a} $M$ (resp. $M^{\prime}$). Dans ces
conditions, il existe un isomorphisme de surfaces de Riemann \`{a} bord de
$\overline{M}$ sur $\overline{M^{\prime}}$ dont la restriction \`{a} $\gamma$
est l'identit\'{e}.
\end{theorem}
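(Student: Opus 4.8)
The plan is to realize both surfaces as normalizations of a single complex curve in $\mathbb{CP}_n$ that is determined by its boundary alone. First I would set $\Phi=[\partial\widetilde u]=[\partial\widetilde u_0:\cdots:\partial\widetilde u_n]$ and $\Phi'=[\partial'\widehat u]$. Since each $\widetilde u_j$ is harmonic on the Riemann surface $M$, each $\partial\widetilde u_j$ is a holomorphic $(1,0)$-form, so the ratios $\partial\widetilde u_j/\partial\widetilde u_0$ are meromorphic and $\Phi:\overline M\to\mathbb{CP}_n$ is a meromorphic map; by hypothesis it is well defined, extends continuously to $\gamma$, and there restricts to the common embedding $[\theta u]=[\theta' u]$. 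The same holds for $\Phi'$ on $\overline{M'}$. Writing $\Gamma=[\theta u](\gamma)\subset\{w_0\neq0\}$, $V=\Phi(M)$ and $V'=\Phi'(M')$, one obtains two irreducible complex analytic curves of finite area in $\mathbb{CP}_n$, each carrying $\Gamma$ as boundary: in the sense of currents $d[V]=[\Gamma]=d[V']$, the orientations agreeing because $\Phi$ and $\Phi'$ coincide on $\gamma$ and $M,M'$ induce the same orientation on it.

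The heart of the matter is the uniqueness $V=V'$. Consider the holomorphic $1$-chain $[V]-[V']$; it is $d$-closed, since both terms bound $\Gamma$. By the structure theory of Harvey and Lawson (\cite{HaR-LaB1975,HaR1977}), a $d$-closed holomorphic $1$-chain of finite mass in $\mathbb{CP}_n$ is a finite integral combination of compact algebraic curves. But $V$ and $V'$ are images of connected bordered surfaces carrying the nonempty boundary $\Gamma$, so neither has a compact component, and hence the support of $[V]-[V']$ carries none either; an integral combination of compact curves with no compact component in its support is zero, whence $[V]=[V']$ and $V=V'$. Equivalently, this is exactly the uniqueness, among bounding chains with no compact components, of the holomorphic chain spanning the maximally complex curve $\Gamma$. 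I expect this to be the main obstacle, both in nailing down the correct current-theoretic formulation (finite mass, regularity up to $\gamma$, matching orientations) and in legitimately invoking the Harvey--Lawson machinery in the possibly singular, possibly-at-infinity situation produced by a merely meromorphic $\Phi$.

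With $V=V'$ in hand I would produce the isomorphism. Because $[\theta u]$ embeds $\gamma$, the immersion $\Phi$ is injective on $\gamma$ and therefore embeds a one-sided collar of $\gamma$ onto a neighborhood of $\Gamma$ in $V$; as $M$ is connected and $\overline M$ compact, the resulting branched cover of the normalization $\nu:\widehat V\to V$ has degree one near the boundary and hence everywhere, so $\Phi$ lifts to a biholomorphism $\widetilde\Phi:\overline M\to\widehat V$, and likewise $\widetilde\Phi':\overline{M'}\to\widehat V$. The composite $h=(\widetilde\Phi')^{-1}\circ\widetilde\Phi:\overline M\to\overline{M'}$ is then an isomorphism of Riemann surfaces with boundary, and since $\Phi=\Phi'$ on $\gamma$ it restricts there to the identity, which is the remaining assertion. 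The points needing care here are the boundary regularity used to lift near $\gamma$ and the propagation of the degree-one property from the boundary collar to the whole of the connected surface.
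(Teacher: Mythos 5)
Your proposal follows the paper's general strategy (push the canonical maps forward as holomorphic chains, prove uniqueness of the image, then lift), but both of its critical steps contain genuine gaps. First, the uniqueness argument. You write $d[V]=[\Gamma]=d[V']$ and then argue that $[V]-[V']$, being a closed chain, is an integral combination of compact algebraic curves, which must vanish since neither $V$ nor $V'$ has a compact component. Two things fail here. The boundary identity holds for the pushforward currents, and $F_{\ast}\left[M\right]=\sum n_{j}\left[Y_{j}\right]$ may carry multiplicities $n_{j}>1$ (lemme~\ref{L CourantPos}); it is not the multiplicity-one current $[V]$ that bounds $\Gamma$. More seriously, the support of the difference is only contained in $\overline{Y}\cup\overline{Y^{\prime}}$, which includes $\delta$; a compact algebraic curve $Z$ in the decomposition need not be a component of $Y$ or of $Y^{\prime}$ — it can be assembled from a piece of $Y$ and a piece of $Y^{\prime}$ glued across the real curve $\delta$. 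Ruling this out is exactly the content of lemme~\ref{L UniciteIm} in the paper, which takes a smooth arc $\beta\subset\delta\cap Z$, a two-sided local surface $B=B^{-}\cup\beta\cup B^{+}$ inside $Z$, and derives a contradiction from the \emph{positivity} of both chains together with an orientation count ($d[Y]=-d[Y^{\prime}]$ near $\beta$ would be forced). Your "no compact component in the support" shortcut skips this entirely, and it is the heart of the proof.

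Second, the lifting step. You claim that since $[\theta u]$ embeds $\gamma$, the map $\Phi$ has degree one near the boundary and hence everywhere, yielding a biholomorphism onto the normalization of $V$. This fails because $\Phi^{-1}(\delta)=\Gamma$ may contain, besides $\gamma$, a nonempty compact subset of the interior $M$ (lemme~\ref{L CC} only asserts $\Gamma\backslash\gamma$ is a compact of $M$, not that it is empty): fibers over points of $V$ arbitrarily close to $\delta$ can then pick up extra preimages near $\Gamma\backslash\gamma$, so the degree near the boundary need not be one; moreover the chain may have components of multiplicity $>1$. The hypotheses only give that $\left[\partial\widetilde{u}\right]$ is well defined, not that it is injective. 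The paper circumvents this by an adjunction argument: it adds harmonic functions $U_{n+1},\dots,U_{N}$ (lemme~12 de~\cite{HeG-MiV2015}) so that the enlarged canonical map \emph{is} an embedding, and — this is the step your approach has no substitute for — uses lemme~\ref{L/ hol3} (the moment condition, itself resting on the pushforward lemme~\ref{L/ hol1}) to extend the boundary values of the added forms from $M^{\prime}$ to holomorphic $(1,0)$-forms on $M$ and vice versa, producing matched embeddings $\Sigma,\Sigma^{\prime}$ that coincide on $\gamma$; the image-uniqueness argument is then rerun for these embeddings to conclude. Without the adjunction and the moment-condition transfer, knowing $V=V^{\prime}$ does not by itself yield an isomorphism $\overline{M}\to\overline{M^{\prime}}$.
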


L'une des \'{e}tapes de la preuve de ce th\'{e}or\`{e}me
utilise~\cite{HeG-MiV2015} dont les lemmes~11 \`{a} 14 avaient \'{e}t\'{e}
\'{e}crits au d\'{e}part par l'auteur de ces lignes pour donner une preuve
compl\`{e}te du th\'{e}or\`{e}me ci-dessus.

On note $\left(  U_{\ell}\right)  $ et $\left(  U_{\ell}^{\prime}\right)  $
les extensions harmoniques de $u$ \`{a} $M$ et $M^{\prime}$ respectivement.
Par hypoth\`{e}se, $F=\left[  \partial U\right]  :\overline{M}\longrightarrow
\mathbb{CP}_{n}$ et $F^{\prime}=\left[  \partial U^{\prime}\right]
:\overline{M^{\prime}}\longrightarrow\mathbb{CP}_{n}$ sont bien d\'{e}finies,
co\"{\i}ncident sur $\gamma$ et $f=F\left\vert _{\gamma}\right.  =F^{\prime
}\left\vert _{\gamma}\right.  $ plonge $\gamma$ dans $\left\{  w_{0}%
\neq0\right\}  $ o\`{u} $w_{0},...,w_{n}$ sont les coordonn\'{e}es
homog\`{e}nes standards de $\mathbb{CP}_{n}$. On munit $\delta=f\left(
\gamma\right)  $ de l'orientation de $\gamma$ transport\'{e}e par $f$. Les
hypoth\`{e}ses de r\'{e}gularit\'{e} faites sur $M$ et $M^{\prime}$ impliquent
que $F$ et $F^{\prime}$ sont de classe $C^{1}$. On pose%
\begin{align*}
Y  &  =F\left(  M\right)  \backslash\delta,~\Gamma=F^{-1}\left(
\delta\right)  ,\\
\widetilde{M}  &  =M\backslash\Gamma,~~\widetilde{F}=F\left\vert
_{M\backslash\Gamma}^{\mathbb{CP}_{n}\backslash\delta}\right.  ,\\
\overline{M}_{r}  &  =\left\{  dF\neq0\right\}  ~~\&~~M_{s}=\left\{
dF=0\right\}
\end{align*}
Puisque $f$ est un plongement de $\gamma$ dans $\left\{  w_{0}\neq0\right\}  $
qui est isomorphe \`{a} $\mathbb{C}^{n}$, il existe un voisinage $G$ ouvert de
$\gamma$ dans $\overline{M}$ tel que $F_{G}=F\left\vert _{G}\right.  $ soit un
plongement de $G$ dans $\mathbb{C}^{2}$; l'orientation de $\delta$ est donc
aussi celle induite par celle naturelle de $G$. Lorsque $A$ est un espace
topologique, on note $CC\left(  A\right)  $ l'ensemble des composantes
connexes de $A$. Si $A\subset\overline{M}$ et $B\subset F\left(  A\right)  $,
on note $\nu\left(  F,A,B\right)  $ le degr\'{e} de $F\left\vert _{A}%
^{B}\right.  $ quand il existe. On adopte pour $M^{\prime}$ des notations
similaires \`{a} celle prises pour $M$. La notation $\mathcal{D}_{p,q}\left(
U\right)  $ d\'{e}signe l'espace des $\left(  p,q\right)  $-formes de classe
$C^{\infty}$ \`{a} support compact dans un ouvert $U$ d'une vari\'{e}t\'{e}
complexe. La notation $\mathcal{H}^{d}\left(  E\right)  $ d\'{e}signe la
mesure $d$-dimensionnelle de Hausdorff d'un ensemble $E$ quand ceci a un sens.

\begin{lemma}
\label{L CC}$\Gamma\backslash\gamma$ est un compact de $M$ et $Y$\textit{\ est
une courbe complexe de }$\mathbb{CP}_{n}\backslash\delta$.
\end{lemma}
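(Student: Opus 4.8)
We must show two things: first, that $\Gamma \setminus \gamma$ is a compact subset of $M$; second, that $Y = F(M) \setminus \delta$ is a complex curve in $\mathbb{CP}_n \setminus \delta$.

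The plan is to treat the two assertions separately, first the topology of $\Gamma\setminus\gamma$ and then the analyticity of $Y$, using throughout that $F=[\partial U]$ is a genuine \emph{holomorphic} map $M\to\mathbb{CP}_n$: each $\partial\widetilde{u}_j$ is a holomorphic $(1,0)$-form, being $\partial$ of a harmonic function, and the hypothesis that $[\partial\widetilde{u}]$ is well defined and extends $[\theta u]$ meromorphically means exactly that the common indeterminacies cancel in $\mathbb{CP}_n$, so $F$ is holomorphic on all of $M$.

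For the first assertion I would start from $\delta=F(\gamma)$, so that $\gamma\subset\Gamma=F^{-1}(\delta)$. The key local input is that $F_G=F|_G$ embeds the neighbourhood $G$ of $\gamma$ into the affine chart $\{w_0\neq0\}\cong\mathbb{C}^n$. Injectivity of $F_G$ together with $\gamma\subset G$ forces $\Gamma\cap G=\gamma$: if $p\in G$ and $F(p)\in\delta=F(\gamma)$, then $F(p)=F(p')$ for some $p'\in\gamma\subset G$, whence $p=p'\in\gamma$. Consequently $\Gamma\setminus\gamma=\Gamma\setminus(\Gamma\cap G)=\Gamma\cap(\overline{M}\setminus G)$ is the intersection of two closed subsets of the compact space $\overline{M}$, hence compact, and by construction it is disjoint from $\gamma=bM$, so it lies in $M$. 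This gives the first claim.

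For the second assertion I would first observe that $Y=\widetilde{F}(\widetilde{M})$: a point of $F(M)\setminus\delta$ is an $F(p)$ with $F(p)\notin\delta$, hence $p\notin F^{-1}(\delta)=\Gamma$, so $Y=F(M\setminus\Gamma)=\widetilde{F}(\widetilde{M})$. The heart of the matter is that $\widetilde{F}:\widetilde{M}\to\mathbb{CP}_n\setminus\delta$ is \emph{proper}. Indeed, for any compact $K\subset\mathbb{CP}_n\setminus\delta$ one has $\widetilde{F}^{-1}(K)=F^{-1}(K)$, which is closed in the compact manifold $\overline{M}$ and disjoint from $\Gamma\supset\gamma$ (because $K\cap\delta=\varnothing$ gives $F^{-1}(K)\cap F^{-1}(\delta)=\varnothing$); since $\widetilde{M}=M\setminus\Gamma$ is open in $\overline{M}$, this set is then a compact subset of $\widetilde{M}$.

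Once properness is established, Remmert's proper mapping theorem applies to the proper holomorphic map $\widetilde{F}$ from the one-dimensional complex manifold $\widetilde{M}$ into $\mathbb{CP}_n\setminus\delta$, yielding that $Y=\widetilde{F}(\widetilde{M})$ is a closed analytic subset of dimension at most one. Since $F$ embeds $\gamma$ via $f$, it is non-constant on each component meeting the boundary (a closed component on which $F$ were constant would make $[\partial U]$ undefined there), so $\widetilde{F}$ has generic rank one and $Y$ is a complex curve of $\mathbb{CP}_n\setminus\delta$. The step I expect to be the main obstacle is the properness argument: one must verify that escaping every compact of $\widetilde{M}$ can only happen by accumulating on $\Gamma$, where $F\to\delta$, and this is precisely what the identities $\widetilde{F}^{-1}(K)=F^{-1}(K)$ and $\Gamma\cap G=\gamma$ encode; the appeal to the proper mapping theorem is then routine.
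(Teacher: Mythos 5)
Your proof is correct and takes essentially the same route as the paper's: the embedding $F_{G}$ gives $\Gamma\cap G=\gamma$, hence $\Gamma\backslash\gamma=\Gamma\cap\left(\overline{M}\backslash G\right)$ is a compact of $M$, then properness of $\widetilde{F}$ follows from $\widetilde{F}^{-1}\left(K\right)=F^{-1}\left(K\right)$ being a compact of $\overline{M}$ disjoint from $\Gamma$, and Remmert's proper mapping theorem yields the analyticity of $Y$. Your extra details (the explicit injectivity argument for $\Gamma\cap G=\gamma$ and the identity-theorem remark ensuring $\widetilde{F}$ is nonconstant on each component, so that $Y$ is genuinely a curve) only make explicit what the paper leaves implicit.
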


\begin{proof}
Puisque $F_{G}$ plonge $G$ dans $\mathbb{C}^{2}$, $\Gamma\cap G=\gamma$ et
$\Gamma\backslash\gamma=\Gamma\cap\left(  \overline{M}\backslash G\right)  $
est un compact contenu dans $M$. En particulier, $\widetilde{M}=M\backslash
\Gamma$ est une surface de Riemann ouverte. Par construction, $\widetilde{F}$
est propre car si $L$ est un compact de $\mathbb{CP}_{n}\backslash\delta$,
$\widetilde{F}{}^{-1}\left(  L\right)  $ est un compact de $\overline{M}$ qui
ne rencontre pas $\Gamma$ et donc est un compact de $\widetilde{M}$. Par un
th\'{e}or\`{e}me de Remmert, inutile dans le cas tr\`{e}s simple $n=1$,
$Y=\widetilde{F}(\widetilde{M})$ est un sous-ensemble analytique de
$\mathbb{CP}_{n}\backslash\delta$.
\end{proof}

\begin{lemma}
\label{L Struct-Img}$F_{\ast}\left[  M\right]  $ est un courant normal positif
port\'{e} par $\overline{Y}$ et $dF_{\ast}\left[  M\right]  =\left[
\delta\right]  $.
\end{lemma}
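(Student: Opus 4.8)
The plan is to prove the four assertions separately: positivity, finite mass, the boundary identity $dF_{\ast}[M]=[\delta]$, and the support condition. The first three are essentially formal once one records two facts already available from the hypotheses and from Lemme~\ref{L CC}: the map $F=[\partial\widetilde{u}]$ is holomorphic on $M$ (its homogeneous components are holomorphic $(1,0)$-forms $\partial\widetilde{u_j}$, and by hypothesis $[\partial\widetilde{u}]$ is defined on all of $M$) and it is of class $C^{1}$ on the compact set $\overline{M}$. The only point that needs genuine care is the support statement.

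First I would treat positivity. For a positive $(1,1)$ test form $\phi$ on $\mathbb{CP}_{n}$, the pullback $F^{\ast}\phi$ is a positive $(1,1)$-form on the Riemann surface $M$, since the pullback of a positive form by a holomorphic map is positive; hence $\langle F_{\ast}[M],\phi\rangle=\int_{M}F^{\ast}\phi\geq0$. The critical set $M_{s}=\{dF=0\}$ causes no difficulty here because $F^{\ast}\phi$ simply vanishes there. (Alternatively, on $\mathbb{CP}_{n}\setminus\delta$ one could identify $F_{\ast}[M]$ with the integration current on the complex curve $Y$ of Lemme~\ref{L CC}, with integer multiplicities, which is positive; but the pullback argument is cleaner and self-contained.)

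Next, normality and the boundary identity. Because $F\in C^{1}(\overline{M})$ and $\overline{M}$ is compact of finite area, $\mathbf{M}(F_{\ast}[M])\leq\int_{M}|\Lambda^{2}dF|\,d\mathcal{H}^{2}<\infty$. For the boundary, Stokes gives $d[M]=[\partial M]$, and since pushforward commutes with $d$ for the $C^{1}$ map $F$ on the compact $\overline{M}$, we get $dF_{\ast}[M]=F_{\ast}[\partial M]=f_{\ast}[\gamma]=[\delta]$, using that $f=F|_{\gamma}$ embeds $\gamma$ onto $\delta$ with exactly the orientation used to define $\delta$. In particular $\mathbf{M}(dF_{\ast}[M])=\mathcal{H}^{1}(\delta)<\infty$, so $F_{\ast}[M]$ has finite mass and finite boundary mass, i.e. it is normal, and the identity $dF_{\ast}[M]=[\delta]$ is established.

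Finally, the support condition, which I expect to be the main obstacle. Since $\overline{M}$ is compact, $\operatorname{supp}F_{\ast}[M]\subset F(\overline{M})$, and $F(\overline{M})=F(M)\cup\delta\subset Y\cup\delta$ because $F(\Gamma\cap M)\subset\delta$ by definition of $\Gamma=F^{-1}(\delta)$; hence $\operatorname{supp}F_{\ast}[M]\subset\overline{Y}\cup\delta$. It remains to rule out any mass on $\delta\setminus\overline{Y}$. Fix $p\in\delta\setminus\overline{Y}$ and choose a neighborhood $W$ with $W\cap Y=\emptyset$; then $F(M)\cap W\subset\delta$, so $F$ sends $F^{-1}(W)\cap M$ into the embedded $C^{1}$ curve $\delta$. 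Consequently $dF$ has rank at most $1$ on $F^{-1}(W)\cap M$, so $F^{\ast}\phi=0$ for every $(1,1)$ test form $\phi$ supported in $W$, whence $F_{\ast}[M]$ vanishes near $p$. Thus $\operatorname{supp}F_{\ast}[M]\subset\overline{Y}$. The delicate point is precisely this last rank argument: one must use that $\delta$ is an \emph{embedded} curve in order to conclude that the part of $M$ mapping into $\delta$ contributes nothing to a current of bidimension $(1,1)$.
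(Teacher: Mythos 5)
Your proof is correct and takes essentially the same route as the paper: positivity by pulling back positive $(1,1)$-forms through the holomorphic map $F$, finiteness of mass from $F\in C^{1}\left(\overline{M}\right)$ (the paper bounds the mass by $\int_{M}F^{\ast}\omega_{FS}$ via the test form decomposition $\chi=\xi\omega_{FS}$), and Stokes yielding $dF_{\ast}\left[M\right]=F_{\ast}\left[\gamma\right]=\left[\delta\right]$. Your final rank argument for the support is sound but superfluous: since $\Gamma\cap G=\gamma$ on a neighborhood $G$ of $\gamma$ where $F$ embeds (Lemme~\ref{L CC}), every point of $\delta$ is a limit of points $F\left(x\right)$ with $x\in G\setminus\gamma\subset M\setminus\Gamma$, hence $\delta\subset\overline{Y}$, so $\delta\setminus\overline{Y}=\varnothing$ and $\operatorname{supp}F_{\ast}\left[M\right]\subset\overline{F\left(M\right)}=\overline{Y}$ at once, which is how the paper disposes of the support claim in one line.
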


\begin{proof}
Si $\chi$ est une forme lisse \`{a} support compact de $\mathbb{CP}_{n}$,%
\[
\left\langle F_{\ast}\left[  M\right]  ,\chi\right\rangle =\int_{M}F^{\ast
}\chi.
\]
$F_{\ast}\left[  M\right]  $ est donc un courant de bidegr\'{e} $\left(
1,1\right)  $ port\'{e} par $\overline{F\left(  M\right)  }$ c'est-\`{a}-dire
$\overline{Y}$. Il est positif car si $\chi\in\mathcal{D}_{1,1}\left(
\mathbb{CP}_{n}\right)  $ est positive, $\left(  F^{\ast}\chi\right)
\left\vert _{M}\right.  $ est une $\left(  1,1\right)  $-forme positive de $M$
car $F$ est holomorphe et donc $\left\langle F_{\ast}\left[  M\right]
,\chi\right\rangle \geqslant0$. Soit $\xi\in C^{\infty}\left(  \mathbb{CP}%
_{n}\right)  $ tel que $\chi=\xi\omega_{FS}$ o\`{u} $\omega_{FS}=\frac{i}%
{2\pi}\partial\overline{\partial}\ln\left\vert w\right\vert ^{2}$ est la
$\left(  1,1\right)  $-forme qui induit la m\'{e}trique de Fubini-Study. On a
alors
\[
\left\vert \left\langle F_{\ast}\left[  M\right]  ,\chi\right\rangle
\right\vert \leqslant\int_{M}\left\vert \xi\right\vert F^{\ast}\omega
_{FS}\leqslant\left\Vert \xi\right\Vert _{\infty}\int_{M}F^{\ast}\omega_{FS}%
\]
Comme $\left\Vert \chi\right\Vert =\underset{p\in\mathbb{CP}_{n}}{\sup
}\left\Vert \chi_{p}\right\Vert $ et%
\begin{align*}
\left\Vert \chi_{p}\right\Vert  &  =\underset{s,t\in T_{p}\mathbb{CP}%
_{n},~\left\Vert s\right\Vert _{FS}=\left\Vert t\right\Vert _{FS}=1}{\max
}\left\vert \chi_{p}.\left(  s,t\right)  \right\vert \\
&  =\left\vert \xi\left(  p\right)  \right\vert \underset{s,t\in
T_{p}\mathbb{CP}_{n},~\left\Vert s\right\Vert _{FS}=\left\Vert t\right\Vert
_{FS}=1}{\max}\left\vert \left(  \omega_{FS}\right)  _{p}.\left(  s,t\right)
\right\vert =\left\vert \xi\left(  p\right)  \right\vert ,
\end{align*}
on obtient que la masse de $F_{\ast}\left[  M\right]  $ est finie et au plus
$\int_{M}F^{\ast}\omega_{FS}$. Si $\chi\in\mathcal{D}\left(  \mathbb{CP}%
_{n}\right)  $,
\[
\left\langle dF_{\ast}\left[  M\right]  ,\chi\right\rangle =\left\langle
F_{\ast}\left[  M\right]  ,d\chi\right\rangle =\int_{M}F^{\ast}d\chi=\int%
_{M}dF^{\ast}\chi=\int_{\gamma}F^{\ast}\chi=\left\langle F_{\ast}\left[
\gamma\right]  ,\chi\right\rangle
\]
Autrement dit, $dF_{\ast}\left[  M\right]  =F_{\ast}\left[  \gamma\right]
=\left[  \delta\right]  $. En particulier, la masse de $dF_{\ast}\left[
M\right]  $ est finie$~$; $F_{\ast}\left[  M\right]  $ est un courant normal
port\'{e} par $\overline{Y}$.
\end{proof}

\begin{lemma}
\label{L CourantPos}$F_{\ast}\left[  M\right]  \left\vert _{\mathbb{CP}%
_{n}\backslash\delta}\right.  $ est une cha\^{\i}ne holomorphe positive de
$\mathbb{CP}_{n}\backslash\delta$ port\'{e}e par $Y$.
\end{lemma}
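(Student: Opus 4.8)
The plan is to show that the restriction of the normal positive current $F_\ast[M]$ to the open set $\mathbb{CP}_n\setminus\delta$ is, in fact, a holomorphic chain supported on the analytic curve $Y$. By Lemma~\ref{L CC}, $Y$ is a one-dimensional analytic subset of $\mathbb{CP}_n\setminus\delta$, and by Lemma~\ref{L Struct-Img}, $T:=F_\ast[M]$ is a normal positive $(1,1)$-current carried by $\overline Y$ with $dT=[\delta]$. In particular $T$ restricted to $\mathbb{CP}_n\setminus\delta$ is a $d$-closed positive current supported on the analytic set $Y$. The natural tool is the structure theorem for positive closed currents supported on analytic varieties, going back to Harvey-Shiffman and exploited heavily in the Harvey-Lawson theory cited at~\cite{HaR-LaB1975, HaR1977}: a positive $d$-closed $(1,1)$-current supported on a pure one-dimensional analytic set $Y$ is a locally finite sum $\sum_\ell n_\ell[Y_\ell]$ of integration currents over the irreducible components $Y_\ell$ of $Y$, with coefficients $n_\ell\in\mathbb{Z}_{\geqslant 0}$.

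First I would verify that the hypotheses of that structure theorem are met: namely that $T\vert_{\mathbb{CP}_n\setminus\delta}$ has locally finite mass (inherited from the global finite-mass estimate $\int_M F^\ast\omega_{FS}$ established in Lemma~\ref{L Struct-Img}), is positive, and is $d$-closed on $\mathbb{CP}_n\setminus\delta$ (since $dT=[\delta]$ is supported on $\delta$, which is disjoint from the open set in question). The support condition $\operatorname{supp}T\subset\overline Y$ combined with $\operatorname{supp}T\cap(\mathbb{CP}_n\setminus\delta)\subset Y$ is exactly what lets one invoke the support theorem. The integrality and positivity of the multiplicities $n_\ell$ then come from the local structure of positive closed $(1,1)$-currents on analytic curves, where each multiplicity is the Lelong number along the generic point of $Y_\ell$; positivity of $T$ forces $n_\ell\geqslant 0$, and $d$-closedness forces these Lelong numbers to be constant along each component, hence the current is an integer combination of the $[Y_\ell]$.

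The main obstacle I expect is not the abstract invocation of the support theorem but the bookkeeping near $\delta$: the current $T$ is only normal, not closed, globally, and one must be careful that the restriction to $\mathbb{CP}_n\setminus\delta$ genuinely removes the boundary contribution so that $d(T\vert_{\mathbb{CP}_n\setminus\delta})=0$ there. This is where the finite-mass bound from Lemma~\ref{L Struct-Img} is essential, since without control on the mass one cannot guarantee that no singular part of the current concentrates on $\delta$ in a way that would spoil the chain structure after restriction. Once closedness on the open set and the mass bound are in hand, the classical theorem delivers the holomorphic chain structure directly, and positivity of the multiplicities is automatic from the positivity of $T$ proved earlier.
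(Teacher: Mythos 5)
Your overall route coincides with the paper's: restrict $T=F_{\ast}\left[ M\right]$ to $\mathbb{CP}_{n}\backslash\delta$, observe that the restriction is boundaryless there and supported on the analytic curve $Y$ of Lemme~\ref{L CC}, invoke a structure theorem to write it as $\sum n_{j}\left[ Y_{j}\right]$ over the irreducible components of $Y$, and use the positivity from Lemme~\ref{L Struct-Img} to get $n_{j}\geqslant 0$. There is, however, one genuine flaw: the structure theorem you quote is false as you state it. A positive $d$-closed current of bidimension $(1,1)$ supported on a pure one-dimensional analytic set need \emph{not} have integer multiplicities --- $\frac{1}{2}\left[ Y\right]$ is positive, closed, and supported on $Y$. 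Your proposed mechanism for integrality fails for the same reason: closedness does force the generic Lelong number to be constant along each component $Y_{j}$, but constant means a nonnegative \emph{real} constant; Siu-type support arguments for positive closed currents only give $\sum\lambda_{j}\left[ Y_{j}\right]$ with $\lambda_{j}\in\mathbb{R}_{\geqslant 0}$, which is strictly weaker than the conclusion of the lemma (a holomorphic chain requires $n_{j}\in\mathbb{N}$).

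The integrality must instead come from the metric structure of the current, and this is exactly how the paper argues: $S=T\vert_{\mathbb{CP}_{n}\backslash\delta}$ is normal (locally finite mass from Lemme~\ref{L Struct-Img}, together with $dS=0$ on the open set), hence locally flat, and the structure theorem~2.1 of~\cite{HaR1977} for locally flat currents supported on a pure $p$-dimensional analytic set yields $S=\sum n_{j}\left[ Y_{j}\right]$ with $n_{j}\in\mathbb{Z}$; positivity then upgrades the $n_{j}$ to natural numbers. Since you explicitly record that $T$ is normal with the mass bound of Lemme~\ref{L Struct-Img}, the repair costs nothing --- replace the ``positive closed'' support theorem by the flat one --- but as written the integrality step is a non sequitur. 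One further small point: your worry that mass might concentrate on $\delta$ ``in a way that would spoil the chain structure after restriction'' is unfounded, because restriction to an open set commutes with $d$, so $d\bigl(T\vert_{\mathbb{CP}_{n}\backslash\delta}\bigr)=(dT)\vert_{\mathbb{CP}_{n}\backslash\delta}=\left[ \delta\right]\vert_{\mathbb{CP}_{n}\backslash\delta}=0$ automatically; the mass bound is needed only for normality of $S$, not for its closedness.
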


\begin{proof}
Etant donn\'{e} que $T=F_{\ast}\left[  M\right]  $ est port\'{e} par
$\overline{Y}$ et que $Y=\overline{Y}\backslash\delta$, $S=T\left\vert
_{\mathbb{CP}_{n}\backslash\delta}\right.  $ est un courant normal et donc
localement rectifiable de $\mathbb{CP}_{n}\backslash\delta$, sans bord et
port\'{e} par $Y$. D'apr\`{e}s le th\'{e}or\`{e}me de structure~2.1
de~\cite{HaR1977}, il existe donc $\left(  n_{j}\right)  _{1\leqslant
j\leqslant N}\in\mathbb{Z}^{\mathbb{N}}$ tel que $S=%
{\displaystyle\sum\limits_{1\leqslant j\leqslant N}}
n_{j}\left[  Y_{j}\right]  $ o\`{u} $\left(  Y_{j}\right)  $ est la famille
des composantes irr\'{e}ductibles de $Y$. $S$ \'{e}tant par ailleurs un
courant positif d'apr\`{e}s le lemme~\ref{L Struct-Img}, les $n_{j}$ sont des
entiers naturels.
\end{proof}

\begin{lemma}
\label{L UniciteIm}$F_{\ast}\left[  M\right]  =F_{\ast}\left[  M^{\prime
}\right]  $ et $Y^{\prime}=Y$.
\end{lemma}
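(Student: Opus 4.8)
The plan is to study the single difference current $R = F_\ast[M] - F'_\ast[M']$ and to prove that it vanishes, since this yields both assertions simultaneously. By Lemma~\ref{L Struct-Img} (and its analogue for $M'$), $F_\ast[M]$ and $F'_\ast[M']$ are normal currents of dimension $2$ on $\mathbb{CP}_n$ with $dF_\ast[M] = dF'_\ast[M'] = [\delta]$, so $R$ is a normal current with $dR = 0$. By Lemma~\ref{L CourantPos}, on $\mathbb{CP}_n \setminus \delta$ both currents are positive holomorphic chains carried by $Y$ and $Y'$; hence there $R$ is a holomorphic chain $\sum_i m_i [W_i]$ with $m_i \in \mathbb{Z}$, the $W_i$ being the irreducible components of $Y \cup Y'$ that meet the support of $R$.

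First I would extend $R$ across $\delta$. Since $\delta$ is a rectifiable curve, $\mathcal{H}^1(\delta) < \infty$ and therefore $\mathcal{H}^2(\delta) = 0$; as $R$ has locally finite mass up to $\delta$, each $W_i$ has finite volume near $\delta$, so by the Bishop--Harvey--Shiffman removable singularity theorem its closure $\overline{W_i}$ is a compact complex analytic curve of $\mathbb{CP}_n$, in particular $d[\overline{W_i}] = 0$. The current $R - \sum_i m_i [\overline{W_i}]$ then vanishes on $\mathbb{CP}_n \setminus \delta$ and is supported on $\delta$; being a normal current of dimension $2$ carried by a set of Hausdorff dimension $1$, Federer's support theorem forces it to be zero. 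Thus $R = \sum_i m_i[\overline{W_i}]$ is a holomorphic $2$-cycle of $\mathbb{CP}_n$.

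The heart of the argument is to show that every $m_i$ vanishes. Fix $i$ with $m_i \neq 0$. As $W_i$ is irreducible and contained in the analytic subset $Y \cup Y'$ of $\mathbb{CP}_n \setminus \delta$, it lies in $Y$ or in $Y'$; say $W_i \subset Y \subset F(M)$. I would then examine $F^{-1}(W_i) \subset M$: since $F$ is non-constant (it embeds a neighborhood of $\gamma$) and $M$ and $W_i$ both have complex dimension $1$, either $F^{-1}(W_i)$ is discrete or $F(M) \subset \overline{W_i}$. The first case is impossible, for $W_i \subset F(M)$ would then be the image of a discrete set, hence discrete, contradicting $\dim W_i = 1$. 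In the second case $F(\overline{M}) \subset \overline{W_i}$, so $F_\ast[M]$ is carried by the compact curve $\overline{W_i}$; writing $F_\ast[M] = n[\overline{W_i}] + S$ with $S$ supported on $\delta$, the support theorem again gives $S = 0$, whence $dF_\ast[M] = n\,d[\overline{W_i}] = 0$, contradicting $dF_\ast[M] = [\delta] \neq 0$. The case $W_i \subset Y'$ is identical, using $F'$ and $M'$. Hence $R = 0$.

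Consequently $F_\ast[M] = F'_\ast[M']$. Comparing supports on $\mathbb{CP}_n \setminus \delta$ — which, for a positive holomorphic chain with strictly positive multiplicities, is exactly the union of its components, i.e. $Y$ on one side and $Y'$ on the other — gives $Y = Y'$. I expect the main obstacle to be the passage across $\delta$: one must justify that the finite-mass holomorphic chain extends analytically through the $\mathcal{H}^2$-negligible real curve $\delta$, and that the residual term supported on $\delta$ is annihilated by the dimension/support argument. Once this is secured, the elementary dimension count on $F^{-1}(W_i)$ and the contradiction with $dF_\ast[M]=[\delta]$ close the proof.
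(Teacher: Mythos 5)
Two steps of your proposal break down, and the second one is fatal. First, the passage across $\delta$, which you yourself flagged: the Shiffman removable singularity theorem for pure $p$-dimensional analytic sets requires the exceptional set $E$ to satisfy $\mathcal{H}^{2p-1}(E)=0$, so for curves ($p=1$) you would need $\mathcal{H}^{1}(\delta)=0$ --- false, since $\delta$ is a real curve of positive length; $\mathcal{H}^{2}(\delta)=0$ is not sufficient. The standard counterexample is exactly the configuration at hand: the flat disc $\left\{ (z,0):\left\vert z\right\vert <1\right\} $ is an analytic curve of finite area in the complement of the circle $\left\{ (z,0):\left\vert z\right\vert =1\right\} $ in $\mathbb{C}^{2}$, yet its closure is not analytic. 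Indeed the components of $Y$ itself are finite-area curves bordered by $\delta$ whose closures are in general not analytic, so no statement of the kind ``$\overline{W_{i}}$ is a compact analytic curve'' can hold component by component. This step is repairable, but with a different tool applied to the \emph{global} current $R$: $R$ is normal (hence flat), $d$-closed, and supported on $\overline{Y}\cup\overline{Y^{\prime}}$, a set of finite $\mathcal{H}^{2}$ measure, hence rectifiable by Federer; the Harvey--Shiffman characterization of holomorphic chains ($d$-closed rectifiable currents of bidimension $(1,1)$) then gives $R=\sum n_{j}\left[ Z_{j}\right] $ with $Z_{j}$ compact irreducible curves. This is in effect what the paper asserts when it writes the difference current in that form.

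The fatal gap is in your ``heart'': from $F(\overline{M})\subset\overline{W_{i}}$ you write $F_{\ast}\left[ M\right] =n\left[ \overline{W_{i}}\right] +S$ with a \emph{single} integer $n$. That is unjustified: off $\delta$, $F_{\ast}\left[ M\right] $ is a positive chain carried by $\overline{W_{i}}\backslash\delta$, whose irreducible components may carry \emph{different} multiplicities, so the correct form is $F_{\ast}\left[ M\right] =\sum_{k}a_{k}\left[ V_{k}\right] +S$, and then $dF_{\ast}\left[ M\right] =\left[ \delta\right] $ is perfectly consistent --- no contradiction. Worse, the situation you are trying to exclude genuinely occurs: nothing prevents $\overline{Y}$ from lying inside a compact algebraic curve $K$ (the paper treats precisely this case later, in the lemma~\ref{L/ G1affine} on $G_{1}$ affine), in which case $F(\overline{M})\subset K$ and $F_{\ast}\left[ M\right] $ is, say, $\left[ Y\right] $ with $d\left[ Y\right] =\left[ \delta\right] \neq0$; your global degree argument cannot rule out a nonzero compact component inside $\overline{Y}$. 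The paper's proof does the necessary work \emph{locally and with orientations}: since $Z\cap\delta\neq\varnothing$ (your discreteness dichotomy, which is fine, is essentially how the paper gets this), it takes a smooth arc $\beta\subset\delta\cap Z$, a small smooth disc $B\subset Z$ split by $\beta$ into $B^{-}$ and $B^{+}$, and uses two facts your argument never invokes: the \emph{positivity} of both chains and the fact that $F_{\ast}\left[ M\right] $ and $F_{\ast}^{\prime}\left[ M^{\prime}\right] $ have the \emph{same} boundary $+\left[ \delta\right] $. If $Y$ and $Y^{\prime}$ occupied opposite sides of $\beta$ one would get $d\left[ Y\right] =-d\left[ Y^{\prime}\right] $ near $\beta$, contradicting the equal orientations; the remaining configurations force $B\subset Y$ or $B\subset Y^{\prime}$, hence $Z\subset Y$ or $Z\subset Y^{\prime}$, contradicting $Z\cap\delta\neq\varnothing$. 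This two-sided orientation analysis along $\delta$ is the actual content of the lemma and is missing from, and not replaceable by, your global argument.
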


\begin{proof}
D'apr\`{e}s le lemme~\ref{L Struct-Img}, le courant $T=F_{\ast}\left[
M\right]  -F_{\ast}^{\prime}\left[  M^{\prime}\right]  $ est un courant normal
sans bord de bidegr\'{e} $\left(  1,1\right)  $ port\'{e} par $\overline
{Y}\cup\overline{Y^{\prime}}$. Il est par cons\'{e}quent de la forme $%
{\displaystyle\sum\limits_{1\leqslant j\leqslant N}}
n_{j}\left[  Z_{j}\right]  $ o\`{u} $\left(  n_{j}\right)  \in\left(
\mathbb{Z}^{\ast}\right)  ^{\mathbb{N}} $ et les $Z_{j}$ sont des courbes
complexes compactes irr\'{e}ductibles de $\mathbb{CP}_{n}$ contenues dans
$\overline{Y}\cup\overline{Y^{\prime}}$. Soit $Z$ l'une de ces courbes.
$Z\cap\delta\neq\varnothing$ car sinon $F^{-1}\left(  Z\right)  $ est une
courbe complexe compacte contenue dans $M$ ou $M^{\prime}$, ce qui est exclu.
L'une des composantes connexes $\beta$ de $\delta$ est donc contenue dans $Z$;
on munit $\beta$ de l'orientation induite par $\delta$. $\beta$ \'{e}tant
lisse, il existe dans $Z$ une surface de Riemann (lisse) $B$ telle que
$B\backslash\beta$ est contenue dans $\left(  \mathbb{CP}_{n}\backslash
\delta\right)  \cap\operatorname*{Reg}\overline{Y}\cap\operatorname*{Reg}%
\overline{Y^{\prime}}$ et n'a que deux composantes connexes $B^{-}$ et $B^{+}$.

Par construction, $B^{-}$ est une surface de Riemann ouverte connexe contenue
dans la courbe complexe $Y\cup Y^{\prime}$ et donc l'un au moins des deux
nombres $\mathcal{H}^{2}\left(  B^{-}\cap Y\right)  $ ou $\mathcal{H}%
^{2}\left(  B^{-}\cap Y^{\prime}\right)  $ est strictement positif, par
exemple $\mathcal{H}^{2}\left(  B^{-}\cap Y\right)  >0$. Puisque $B^{-}$ est
connexe, ceci implique$^{(}$\footnote{$\bigskip$Puisque $B^{-}\cap
\delta=\varnothing$, $B^{-}=\left(  B^{-}\cap Y\right)  \cup\left(
B^{-}\backslash\overline{Y}\right)  $. $B^{-}\cap Y$ est un ouvert de $B^{-}$
car par construction, $B^{-}\subset\operatorname*{Reg}\overline{Y}%
\cap\operatorname*{Reg}\overline{Y^{\prime}}$. Il est non vide par
hypoth\`{e}se. Donc $B^{-}=B^{-}\cap Y\subset Y$.}$^{)}$ que $B^{-}\subset Y$.
Etant donn\'{e} que $\beta$ est contenu dans les bords de $Y$ et $B$, on en
d\'{e}duit quitte \`{a} diminuer $B$, que $Y\cap B\subset Z$ et donc que
$Y\cap B\subset B^{-}\cup B^{+}$.

Supposons que $\mathcal{H}^{2}\left(  B^{+}\cap Y\right)  =0$. Alors, comme
$B\subset\operatorname*{Reg}\overline{Y}$, $B^{+}\cap Y=\varnothing$, $Y\cap
B=B^{-}$ et, par force, $B^{+}\subset Y^{\prime}$. Supposons en outre que
$\mathcal{H}^{2}\left(  B^{-}\cap Y^{\prime}\right)  =0$, alors, quitte \`{a}
diminuer $B$, on a de la m\^{e}me fa\c{c}on qu'auparavant $Y^{\prime}\cap
B=B^{+}$ et donc $d\left[  Y\right]  =-d\left[  Y^{\prime}\right]  $ pr\`{e}s
de $\beta$. Ceci est incompatible avec le fait que $F_{\ast}\left[  M\right]
$ et $F_{\ast}^{\prime}\left[  M^{\prime}\right]  $ sont deux cha\^{\i}nes
holomorphes positives de $\mathbb{CP}_{n}\backslash\delta$ port\'{e}es
respectivement par $Y$ et $Y^{\prime}$. Par cons\'{e}quent, $\mathcal{H}%
^{2}\left(  B^{-}\cap Y^{\prime}\right)  >0$ et donc $B^{-}\subset Y^{\prime}
$. D'o\`{u} $B\subset Y^{\prime}$ puis $Z\subset Y^{\prime}$, ce qui est de
nouveau une contradiction. Revenant \`{a} notre premi\`{e}re supposition, on
en d\'{e}duit que $\mathcal{H}^{2}\left(  B^{+}\cap Y\right)  >0$ et donc
$B\subset Y$, ce qui est de nouveau absurde. Le lemme est prouv\'{e}.
\end{proof}

\begin{lemma}
\label{L/ degre}Lorsque $y\in\overline{Y}$, $\overline{M_{y}}=F^{-1}\left(
\left\{  y\right\}  \right)  $ est un ensemble fini et $\nu:\overline
{Y}:y\mapsto\operatorname{Card}\overline{M}_{y}$ est born\'{e}e.
\end{lemma}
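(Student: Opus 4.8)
The plan is to prove the two assertions in turn: the finiteness of each fibre is elementary, while the uniform bound requires passing to the normalisation of $\overline{Y}$.

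First I would settle finiteness. Since $f=F|_{\gamma}$ embeds $\gamma$, the map $F$ is non-constant on each connected component of the Riemann surface $M$, hence holomorphic and non-constant there, so $F^{-1}(\{y\})\cap M$ is discrete in $M$ for every $y$. Near $\gamma$ the map $F_{G}=F|_{G}$ is an embedding, so $F$ has at most one antecedent of $y$ in $G$; being injective on $G$, no sequence of interior antecedents of $y$ can accumulate at a point of $\gamma$, for such a limit would lie in $G$ and contradict injectivity. Thus $\overline{M}_{y}=F^{-1}(\{y\})$ is a discrete subset of the compact space $\overline{M}$, hence finite.

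For the uniform bound I would introduce the normalisation $\eta:\widehat{Y}\rightarrow\overline{Y}$ of the cycle $\overline{Y}$ carrying $F_{\ast}[M]$ (Lemmas~\ref{L CC}--\ref{L CourantPos}). Here $\overline{Y}$ is a compact complex curve of $\mathbb{CP}_{n}$ with finitely many irreducible components, so $\widehat{Y}$ is a finite disjoint union of compact Riemann surfaces and $b:=\sup_{y}\operatorname{Card}\eta^{-1}(\{y\})$, the maximal number of local branches of $\overline{Y}$, is finite. Because $\overline{M}$ is a manifold, hence normal, and $F$ takes values in $\overline{Y}$, the universal property of the normalisation lifts $F$ to $\widehat{F}:\overline{M}\rightarrow\widehat{Y}$, holomorphic on $M$, of class $C^{1}$ up to $\gamma$, with $\eta\circ\widehat{F}=F$; and since $F|_{\gamma}$ is injective, so is $\widehat{F}|_{\gamma}$. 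As $\nu(y)=\sum_{\widehat{y}\in\eta^{-1}(\{y\})}\operatorname{Card}\widehat{F}^{-1}(\{\widehat{y}\})\leqslant b\cdot\sup_{\widehat{y}}\operatorname{Card}\widehat{F}^{-1}(\{\widehat{y}\})$, it suffices to bound $\operatorname{Card}\widehat{F}^{-1}(\{\widehat{y}\})$ uniformly. Writing $\widehat{\delta}=\widehat{F}(\gamma)$, the same argument as in Lemma~\ref{L CourantPos} shows that $\widehat{F}_{\ast}[M]$ restricts on $\widehat{Y}\setminus\widehat{\delta}$ to a positive holomorphic chain; since $\widehat{Y}$ is smooth its multiplicity is an integer that is constant on each connected component of $\widehat{Y}\setminus\widehat{\delta}$, and $\widehat{\delta}$ being a finite union of real-analytic arcs this complement has finitely many components, so the supremum $\widehat{d}$ of these finitely many finite multiplicities is finite. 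In particular, at a regular value $\widehat{y}'\notin\widehat{\delta}$ the number of interior antecedents equals the multiplicity of its component, hence is at most $\widehat{d}$.

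It remains to bound the interior antecedents of an arbitrary $\widehat{y}\in\widehat{Y}$. Let $q_{1},\dots,q_{\ell}\in M$ be the antecedents of $\widehat{y}$ under $\widehat{F}$, with local degrees $m_{1},\dots,m_{\ell}$, and pick disjoint neighbourhoods $W_{j}\ni q_{j}$ in $M$. Here the normalisation is decisive: $\widehat{Y}$ is smooth, so near $\widehat{y}$ it is a single holomorphic disc, and by the open mapping property each $\widehat{F}(W_{j})$ is a neighbourhood of $\widehat{y}$ in that disc. Choosing $\widehat{y}'$ in $\bigcap_{j}\widehat{F}(W_{j})$, distinct from $\widehat{y}$, off $\widehat{\delta}$ and a regular value of $\widehat{F}|_{M}$, the fibre $\widehat{F}^{-1}(\{\widehat{y}'\})$ meets each $W_{j}$ in exactly $m_{j}$ interior points, so $\operatorname{Card}\widehat{F}^{-1}(\{\widehat{y}'\})\geqslant\sum_{j}m_{j}\geqslant\ell$; with the preceding paragraph this forces $\ell\leqslant\widehat{d}$. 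Adding the at most one boundary antecedent allowed by the injectivity of $\widehat{F}|_{\gamma}$ gives $\operatorname{Card}\widehat{F}^{-1}(\{\widehat{y}\})\leqslant\widehat{d}+1$, whence $\nu\leqslant b(\widehat{d}+1)$.

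The main obstacle is precisely the interior antecedents of the boundary points $y\in\delta$: there $\overline{Y}$ may be singular, and the several local branches through $y$ would defeat a direct ``common nearby value'' count performed on $\overline{Y}$ itself, since one cannot in general find a single nearby point lying on all of these branches at once. Passing to the normalisation removes this difficulty, as it separates the branches into distinct smooth points of $\widehat{Y}$ and turns the target locally into a single disc, on which all antecedents of a given point are captured simultaneously by one nearby regular value.
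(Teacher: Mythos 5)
Your first half (finiteness of each fibre) is correct and essentially the paper's own argument: discreteness in $M$ by the identity theorem, no accumulation on $\gamma$ because $F$ is injective on the collar $G$, then compactness of $\overline{M}$. The second half, however, rests on a claim that is false in this setting: $\overline{Y}$ is \emph{not} a compact complex curve of $\mathbb{CP}_{n}$. By the lemme~\ref{L Struct-Img}, $F_{\ast}\left[ M\right] $ is a holomorphic chain with non-trivial boundary $d F_{\ast}\left[ M\right] =\left[ \delta\right] $; $Y$ is an analytic subset only of $\mathbb{CP}_{n}\backslash\delta$, and its closure is a curve \emph{with boundary} (were $\overline{Y}$ a compact analytic curve it would be algebraic by Chow, which is precisely the exceptional case treated later via le lemme~\ref{L/ G1affine}). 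Consequently the compact normalisation $\eta:\widehat{Y}\rightarrow\overline{Y}$ you invoke does not exist; only the normalisation of the open curve $Y$ does, and it is non-compact. This breaks every subsequent step that used compactness: the finiteness of $b=\sup_{y}\operatorname{Card}\eta^{-1}\left( \left\{ y\right\} \right) $ (the singular points of $Y$ form a set discrete in $\mathbb{CP}_{n}\backslash\delta$ but possibly infinite, accumulating on $\delta$, and the number of local branches at $y$ is a priori only bounded by $\nu\left( y\right) $ itself --- a circularity), the finiteness of the number of connected components of $\widehat{Y}\backslash\widehat{\delta}$, and hence the finiteness of your $\widehat{d}$. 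Controlling the local structure of $\overline{Y}$ along $\delta$ --- finitely many sheets attaching to the boundary curve, plus interior sheets passing through it --- is exactly a Harvey--Lawson boundary-regularity statement, of essentially the same depth as the lemma you are proving; it cannot be obtained by simply "passing to the normalisation".

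A second, related gap concerns the lift itself: you cannot define $\widehat{F}$ on all of $\overline{M}$ by the universal property, because $F$ does not map $M$ into $Y$. By the lemme~\ref{L CC}, $\Gamma\backslash\gamma$ may be a non-empty compact subset of $M$, and $F\left( \Gamma\backslash\gamma\right) \subset\delta$ lies outside $Y$; the lift therefore exists only on $M\backslash\Gamma$, and its extension across $\Gamma\backslash\gamma$ and up to $\gamma$ would again require the (unavailable) compactified normalisation. Note that your final local count --- grouping the antecedents of $y$ according to the branch they map onto, then catching each group with one nearby regular value --- is sound where it applies; the defect is entirely in the global compactness claims. The paper avoids all of this geometry of $Y$ near $\delta$ by arguing on the source: assuming $\nu$ unbounded, it extracts $y_{m}\rightarrow y_{\ast}$ with $\nu\left( y_{m}\right) \rightarrow\infty$ and, using only the local normal form of $F$ at each antecedent (at most $m_{\ast}^{j}$ antecedents of nearby values in a small $U_{j}$, with $dF\neq0$ forced on $\gamma$), builds by induction infinitely many distinct points of $F^{-1}\left( \left\{ y_{\ast}\right\} \right) $, contradicting the finiteness already proved. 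If you want to keep your route, you must either restrict it to the case where $Y$ extends to a compact curve, or import a boundary-regularity theorem for the chain $F_{\ast}\left[ M\right] $ along $\delta$; as written, the argument does not close.
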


\begin{proof}
Supposons que $F^{-1}\left(  \left\{  y\right\}  \right)  $ est infini pour un
un certain $y\in\overline{Y}$. Si $F^{-1}\left(  \left\{  y\right\}  \right)
$ poss\`{e}de un point d'accumulation dans $M$, $F=y$ sur une composante
connexe de $M$ et donc sur un ouvert non vide de $\gamma$. Dans le cas
contraire, $F^{-1}\left(  \left\{  y\right\}  \right)  $ poss\`{e}de un point
d'accumulation dans $\gamma$ et $dF$ s'annule en ce point. Dans les deux cas,
ceci contredit que $F\left\vert _{\gamma}\right.  $ est un plongement.

Supposons que $\nu$ ne soit pas born\'{e}e. Il existe alors $\left(
y_{m}\right)  \in\overline{Y}^{\mathbb{N}}$ telle que $\left(  \nu_{m}\right)
=\left(  \nu\left(  y_{m}\right)  \right)  $ a $+\infty$ comme limite et
$\left(  y_{m}\right)  $ converge vers $y_{\ast}\in\overline{Y}$. Puisque
$\overline{M}$ est compacte, il existe dans $\overline{M}^{\mathbb{N}}$ une
suite convergente de limite $x_{\ast}^{0}\in F^{-1}\left(  \left\{  y_{\ast
}\right\}  \right)  $ et une extractrice $\varphi:\mathbb{N}\rightarrow
\mathbb{N}$ telle que $y_{\varphi\left(  m\right)  }=F\left(  x_{m}\right)  $
pour tout $m\in\mathbb{N}$. Si $dF\left\vert _{x_{\ast}^{0}}\right.  \neq0$,
il existe un voisinage ouvert $U_{0}$ de $x_{\ast}^{0}$ dans $\overline{M}$
tel que $V_{0}=F\left(  U_{0}\right)  $ est une surface de Riemann (\`{a} bord
si $x_{\ast}^{0}\in\gamma$) et $F\left\vert _{U_{0}}^{V_{0}}\right.  $ est un
biholomorphisme (de surfaces de Riemann \`{a} bord si $x_{\ast}^{0}\in\gamma
$)~; on pose $m_{\ast}^{0}=1$ dans ce cas. Si $dF\left\vert _{x_{\ast}^{0}%
}\right.  =0$, $x_{0}^{\ast}\notin\gamma$ et on peut choisir des
coordonn\'{e}es holomorphes $\left(  \zeta_{1},...,\zeta_{n}\right)  $ pour
$\mathbb{CP}_{n}$ au voisinage de $y_{\ast}$ tel que l'ordre d'annulation
$m_{\ast}$ de $\left(  d\left(  \zeta_{1}\circ F\right)  ,...,d\left(
\zeta_{n}\circ F\right)  \right)  $ en $x_{\ast}^{0}$ est aussi celui de
$d\left(  \zeta_{1}\circ F\right)  $ en $x_{\ast}^{0}$. Dans ce cas, il existe
un voisinage ouvert $U_{0}$ de $x_{\ast}^{0}$ dans $M$ tel que si $y\in
V_{0}=F\left(  U_{0}\right)  $, $\zeta_{1}\left(  F\left(  y\right)  \right)
$ a exactement $m_{\ast}^{0}$ ant\'{e}c\'{e}dents par $\zeta_{1}\circ F$ dans
$U_{0}$, deux \`{a} deux distincts si $y\neq y_{\ast}$~; si $y\in
V_{0}=F\left(  U_{0}\right)  $, $y$ a donc au moins un ant\'{e}c\'{e}dent par
$F$ dans $U_{0}$ et au plus $m_{\ast}^{0}$.

Supposons que nous disposons dans $F^{-1}\left(  y_{\ast}\right)  $ de $k+1$
points deux \`{a} deux distincts $x_{\ast}^{0},...,x_{\ast}^{k}$ et de
voisinages ouverts $U_{0},...,U_{k}$ de ces points dans $\overline{M}$ tels
que pour tout $j\in\left\{  1,...,k\right\}  $, $1\leqslant\operatorname{Card}%
F^{-1}\left(  y_{\ast}\right)  \cap U_{j}\leqslant m_{\ast}^{j}$ et
$U_{j}\subset\overline{M}\backslash V_{j-1}$ o\`{u} $V_{j-1}%
=\underset{1\leqslant\ell\leqslant j-1}{\cup}U_{\ell}$. Alors
$\operatorname{Card}F^{-1}\left(  y_{\ast}\right)  \cap V_{k}\leqslant
\sum\limits_{0\leqslant j\leqslant k}m_{\ast}^{j}$ et puisque $\overline
{M}\backslash V_{k+1}$ est compact, on peut donc trouver une extractrice
$\varphi:\mathbb{N}\rightarrow\mathbb{N}$ telle que pour tout $m\in\mathbb{N}%
$, $F^{-1}\left(  y_{\varphi\left(  m\right)  }\right)  \cap\left(
\overline{M}\backslash V_{k+1}\right)  $ contient au moins un point
$x_{m}^{k+1}$ qui, lorsque $m$ tend vers $+\infty$ , tend vers un point
$x_{\ast}^{k+1}\in F^{-1}\left(  \left\{  y_{\ast}\right\}  \right)  $. Comme
pr\'{e}c\'{e}demment, on peut alors trouver un entier $m_{\ast}^{k+1}$ et un
voisinage $U_{k+1}$ de $x_{\ast}^{k+1}$ dans $\overline{M}$ tels que
$1\leqslant\operatorname{Card}F^{-1}\left(  y_{\ast}\right)  \cap
U_{k}\leqslant m_{\ast}^{k+1}$.

On construit ainsi par r\'{e}currence une suite $\left(  x_{\ast}^{k}\right)
_{k\in\mathbb{N}}$ de points deux \`{a} deux distincts de $\overline{M}_{y}$,
ce qui est impossible. $\nu$ est donc born\'{e}e.
\end{proof}

\begin{lemma}
\label{L/ hol1}Soit $h\in\mathcal{O}\left(  M\right)  \cap C^{0}\left(
\overline{M}\right)  $. Alors $F_{\ast}h$ est holomorphe et born\'{e}e sur
$\operatorname*{Reg}Y$ et $F^{\prime\ast}F_{\ast}h=\left(  F_{\ast}h\right)
\circ F^{\prime}\in\mathcal{O}\left(  M^{\prime}\right)  \cap C^{0}\left(
\overline{M^{\prime}}\right)  $
\end{lemma}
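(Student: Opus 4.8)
The plan is to realise $F$ as a proper branched covering onto its image and then to transfer $h$ through the common image $Y$. By Lemma~\ref{L CC} and Lemma~\ref{L/ degre}, the restriction $\widetilde F:\widetilde M\to Y$ is a proper holomorphic map with finite fibres of bounded cardinality. The first point I would establish is that $F$ is generically injective. Writing $F_{\ast}[M]=\sum n_j[Y_j]$ for the positive holomorphic chain of Lemma~\ref{L CourantPos}, with $dF_{\ast}[M]=[\delta]$ by Lemma~\ref{L Struct-Img}, I would compare boundary multiplicities near $\delta$: since $F|_G$ is an embedding, $\delta$ is covered with multiplicity one, which forces $n_j=1$ for every component meeting $\delta$. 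Every $\overline{Y_j}$ does meet $\delta$, for otherwise $\overline{Y_j}$ would be a compact complex curve and $F^{-1}(\overline{Y_j})$ a nonempty compact complex curve in the interior of the connected surface-with-boundary $M$, which is impossible. With all $n_j=1$, two sheets cannot pass through a smooth point of $Y$ and no ramification can occur over $\operatorname{Reg}Y$ (a degree-$k$ local model $t\mapsto t^{k}$ with $k\geq2$ would contradict generic injectivity), so $\widetilde F$ restricts to a biholomorphism of $\widetilde F^{-1}(\operatorname{Reg}Y)$ onto $\operatorname{Reg}Y$.

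Granting this, I would simply set $F_{\ast}h=h\circ\bigl(\widetilde F|_{\widetilde F^{-1}(\operatorname{Reg}Y)}\bigr)^{-1}$ on $\operatorname{Reg}Y$. It is holomorphic as a composition of holomorphic maps, and bounded with $\|F_{\ast}h\|_{\infty}\leq\|h\|_{\infty}$; this settles the first assertion. Equivalently one may take $F_{\ast}h$ to be the trace of $h$ under the proper finite map $\widetilde F$, whose holomorphy on $\operatorname{Reg}Y$ is classical and whose bound follows from the boundedness of $\nu$ in Lemma~\ref{L/ degre}.

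For the composition I would use Lemma~\ref{L UniciteIm}, which gives $Y'=Y$, so that $F':M'\to Y$ is again proper holomorphic with finite fibres; in particular $F'^{-1}(\operatorname{Sing}Y)$ is a discrete subset of $M'$. On $M'\setminus F'^{-1}(\operatorname{Sing}Y)$ the function $(F_{\ast}h)\circ F'$ is holomorphic, being the pullback by the holomorphic map $F'$ of the holomorphic function $F_{\ast}h$. Since $F_{\ast}h$ is bounded, $(F_{\ast}h)\circ F'$ is bounded holomorphic near each point of $F'^{-1}(\operatorname{Sing}Y)$, so Riemann's removable singularity theorem extends it holomorphically across these discrete points, whence $(F_{\ast}h)\circ F'\in\mathcal{O}(M')$.

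It remains to prove continuity up to the boundary, which I would carry out in the neighbourhood $G$ of $\gamma$ on which $F$ is an embedding: for $y$ near $\delta$ the inverse $(\widetilde F)^{-1}$ is the local inverse of $F|_G$, so $F_{\ast}h$ extends continuously up to $\delta$ with $F_{\ast}h(f(p))=h(p)$ for $p\in\gamma$, using $F|_{\gamma}=f$. As $F'|_{\gamma}=f$ as well and $F'$ is continuous on $\overline{M'}$, letting $x'\to p\in\gamma$ yields $F'(x')\to f(p)$ and hence $(F_{\ast}h\circ F')(x')\to h(p)$; thus $(F_{\ast}h)\circ F'$ extends continuously to $\overline{M'}$ with boundary value $h|_{\gamma}$. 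The delicate point, and the reason boundedness is part of the statement, is exactly the behaviour at $\operatorname{Sing}Y$: $F_{\ast}h$ is asserted only on $\operatorname{Reg}Y$, and it is its boundedness that, through Riemann's theorem, recovers $(F_{\ast}h)\circ F'$ as a genuine element of $\mathcal{O}(M')$ rather than one defined merely off a discrete set. I expect the generic injectivity of $F$, i.e.\ the vanishing of the chain multiplicities, to be the step requiring the most care.
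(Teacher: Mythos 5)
There is a genuine gap precisely at the step you flag as the delicate one: the claim that $dF_{\ast}\left[M\right]=\left[\delta\right]$, together with the fact that $F\left\vert _{G}\right.$ is an embedding, forces $n_{j}=1$ for every component of $Y$. The boundary equation for the chain only constrains the \emph{jump} of the multiplicity across $\delta$. At a point $p\in\delta$, besides the embedded sheet $F\left(G\right)$, other sheets of $Y$ may adhere to $\delta$: they come from the compact set $\Gamma\backslash\gamma=F^{-1}\left(\delta\right)\cap M$ of Lemma~\ref{L CC}, whose points are interior, and whose local images cross (or are tangent to) $\delta$, contributing equal multiplicity to both sides of $\delta$ and nothing to the boundary current. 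So the one-sided multiplicities near $\delta$ may a priori be $\left(m+1,m\right)$ with $m\geqslant1$; positivity plus $dF_{\ast}\left[M\right]=\left[\delta\right]$ do not give $n_{j}=1$. Nothing in the hypotheses of Theorem~\ref{T/ unicite} makes $F$ generically injective (only $F\left\vert _{\gamma}\right.$ is assumed to be an embedding), and the paper neither claims nor uses such a statement; yet your definition $F_{\ast}h=h\circ\bigl(\widetilde{F}\left\vert _{\widetilde{F}^{-1}\left(\operatorname*{Reg}Y\right)}\right.\bigr)^{-1}$, the absence of ramification over $\operatorname*{Reg}Y$, and your boundary-value computation $F_{\ast}h\left(f\left(p\right)\right)=h\left(p\right)$ all rest on it.

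The repair is the alternative you mention only parenthetically, and it is exactly the paper's route: \emph{define} $F_{\ast}h$ as the fibre sum $\left(F_{\ast}h\right)\left(y\right)=\sum_{x\in F^{-1}\left(y\right)}h\left(x\right)$, which requires no injectivity and is moreover the definition that the identity $\int_{\gamma}h\omega^{\prime}=\int_{\gamma}F^{\ast}F_{\ast}\left(h\omega^{\prime}\right)$ in Lemma~\ref{L/ hol3} actually needs. The paper then shows by a compactness argument that the fibre cardinality is locally constant near any $y_{\ast}\in\left(\operatorname*{Reg}Y\right)\backslash F\left(\left\{dF=0\right\}\right)$, so that locally $F_{\ast}h=\sum_{j}h\circ F_{j}^{-1}$ is holomorphic, bounds it by $\nu\left(y\right)\left\Vert h\right\Vert _{\infty}$ with $\nu$ bounded by Lemma~\ref{L/ degre}, and extends holomorphically across the finite set $\left(\operatorname*{Reg}Y\right)\cap F\left(\left\{dF=0\right\}\right)$. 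From there your argument coincides with the paper's: pull back by $F^{\prime}$ using $Y^{\prime}=Y$ from Lemma~\ref{L UniciteIm}, and use boundedness plus Riemann removability across $F^{\prime-1}\left(\operatorname*{Sing}Y\right)$ — your identification of boundedness as the operative hypothesis is correct. One caveat if you adopt the trace: your boundary analysis must change accordingly, since the boundary value at $f\left(p\right)$ is the full fibre sum, which need not reduce to $h\left(p\right)$ when $\Gamma\backslash\gamma\neq\varnothing$; continuity up to $\gamma$ then holds where the fibre structure is continuous (the paper is itself terse on this point). As written, however, the multiplicity-one claim is unsupported and the main line of your proof does not go through.
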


\begin{proof}
Par d\'{e}finition $F_{\ast}h$ est la fonction d\'{e}finie sur $Y$ par
$\left(  F_{\ast}h\right)  \left(  y\right)  =\sum\limits_{x\in F^{-1}\left(
y\right)  }h\left(  x\right)  $. Soit $y_{\ast}\in\left(  \operatorname*{Reg}%
Y\right)  \backslash F\left(  \left\{  dF=0\right\}  \right)  $. Posons
$F^{-1}\left(  y_{\ast}\right)  =\left\{  x_{\ast1},...x_{\ast k}\right\}  $
o\`{u} $k=\nu\left(  y\right)  $. Il existe un voisinage $B$ de $y$ dans
$\operatorname*{Reg}Y$ tel que pour tout $j\in\left\{  1,...,k\right\}  $, il
existe un voisinage $A_{j}$ de $x_{\ast j}$ dans $M$ pour lequel
$F_{j}=F\left\vert _{A_{j}}^{B}\right.  $ est un biholomorphisme. Supposons
que $\left(  y_{\nu}\right)  \in B^{\mathbb{N}}$ converge vers $y_{\ast}$ et
$\operatorname{Card}F^{-1}\left\{  y_{n}\right\}  \geqslant k$ pour tout $n$.
Pour chaque $n\in\mathbb{N}$, il existe donc $a_{n}\in M\backslash\left\{
F_{1}^{-1}\left(  y_{n}\right)  ,...,F_{k}^{-1}\left(  y_{n}\right)  \right\}
$ tel que $F\left(  a_{n}\right)  =y_{n}$. Quitte \`{a} consid\'{e}rer une
sous-suite, $\left(  a_{n}\right)  $ converge vers un point $a$ de
$\overline{M}$ qui v\'{e}rifie $F\left(  a\right)  =y_{\ast}$. Etant donn\'{e}
que $y\in Y=F\left(  M\right)  \backslash F\left(  bM\right)  $, $a\notin bM$
et il existe $j\in\left\{  1,...,k\right\}  $ tel que $a=x_{\ast j}$. Pour $n$
assez grand, $a_{n}$ et $F_{j}^{-1}\left(  y_{n}\right)  $ sont alors deux
points distincts de $A_{j}$ qui ont la m\^{e}me image par $F$, \`{a} savoir
$y_{n}$. Ceci est absurde. Donc, $F_{\ast}h=\sum\limits_{1\leqslant j\leqslant
k}h\circ F_{j}^{-1}$ est holomorphe au voisinage de. De plus, $\left\vert
F_{\ast}h\right\vert \leqslant k\left\Vert h\right\Vert _{\infty}$ et
$k=\nu\left(  y\right)  $. $F_{\ast}h$ est donc born\'{e}e d'apr\`{e}s le
lemme~\ref{L/ degre}. Etant donn\'{e} que $\left(  \operatorname*{Reg}%
Y\right)  \cap F\left(  \left\{  dF=0\right\}  \right)  $ est fini, $F_{\ast
}h$ se prolonge holomorphiquement \`{a} $\operatorname*{Reg}Y$. Ceci implique
que $F^{\prime\ast}F_{\ast}h=\left(  F_{\ast}h\right)  \circ F^{\prime}$ est
holomorphe et est born\'{e}e sur $M^{\prime}\backslash F^{\prime-1}\left(
\operatorname*{Sing}Y\right)  $. Comme $F^{\prime-1}\left(
\operatorname*{Sing}Y\right)  $ est un ensemble fini, $F^{\prime\ast}F_{\ast
}h$ se prolonge holomorphiquement \`{a} $M^{\prime}$.
\end{proof}

\begin{lemma}
\label{L/ hol3}Si $\omega^{\prime}\in C^{1,0}\left(  \overline{M^{\prime}%
}\right)  \cap\Omega^{1,0}\left(  M^{\prime}\right)  $, il existe $\omega\in
C^{1,0}\left(  \overline{M}\right)  \cap\Omega^{1,0}\left(  M\right)  $ telle
que $\omega\left\vert _{\gamma}\right.  =\omega^{\prime}\left\vert _{\gamma
}\right.  $.
\end{lemma}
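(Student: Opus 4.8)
The plan is to reduce the statement to a moment condition on $\gamma$ and then to verify that condition by transporting test functions to $M'$ through Lemma~\ref{L/ hol1}. Recall the classical description of boundary values of holomorphic forms on a smooth bordered Riemann surface: a smooth $(1,0)$-form $a$ on $\gamma$ is the trace of some $\omega\in\Omega^{1,0}(M)\cap C^{1,0}(\overline{M})$ if and only if $\int_\gamma a\,h=0$ for every $h\in\mathcal{O}(M)\cap C^0(\overline{M})$. Necessity is immediate from Stokes, since $\omega h$ is then a holomorphic, hence $d$-closed, $1$-form. For sufficiency I would extend $a$ to a smooth $(1,0)$-form $\alpha$ on $\overline{M}$, set $\mu=\overline{\partial}\alpha\in\mathcal{E}^{1,1}(\overline{M})$, and solve $\overline{\partial}\beta=\mu$ with $\beta$ a $(1,0)$-form vanishing on $\gamma$; then $\omega=\alpha-\beta$ is the desired form. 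The only obstruction to this boundary-value $\overline{\partial}$-problem is that $\mu$ pair trivially against $\mathcal{O}(M)\cap C^0(\overline{M})$, and since $\partial(h\alpha)$ is a $(2,0)$-form on a curve one has $\int_M h\,\mu=\int_M\overline{\partial}(h\alpha)=\int_M d(h\alpha)=\int_\gamma a\,h$, so the moment condition is exactly what is required.

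It therefore suffices to prove $\int_\gamma \omega'|_\gamma\,h=0$ for all $h\in\mathcal{O}(M)\cap C^0(\overline{M})$. Fix such an $h$ and put $h'=(F_\ast h)\circ F'$; by Lemma~\ref{L/ hol1} this lies in $\mathcal{O}(M')\cap C^0(\overline{M'})$. The key point is that $h'$ and $h$ have the same trace on $\gamma$. Indeed, near $\delta$ the chain $F_\ast[M]$ has multiplicity one: the collar $G$ on which $F_G$ is an embedding already accounts for the boundary $dF_\ast[M]=[\delta]$ (Lemma~\ref{L Struct-Img}), so no further sheet of $Y$ reaches $\delta$ from the side $V=F(G)$. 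Hence, for $p\in\gamma$, the only preimage of $f(p)$ contributing to the boundary value of the trace $F_\ast h$ taken from within $Y$ is the collar point $p$ itself, whence $(F_\ast h)\circ f=h|_\gamma$ and $h'|_\gamma=h|_\gamma$.

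With this in hand the computation closes by Stokes on $M'$. Since $\omega'$ and $h'$ are holomorphic on $M'$ and smooth up to the boundary, $\omega'h'$ is a $d$-closed $1$-form, and because $F$ and $F'$ induce the same orientation on $\gamma$ (both yield $dF_\ast[M]=[\delta]$ by Lemma~\ref{L Struct-Img}, and $F_\ast[M]=F'_\ast[M']$ by Lemma~\ref{L UniciteIm}),
\[
\int_\gamma \omega'|_\gamma\,h=\int_\gamma \omega'|_\gamma\,h'=\int_{\partial M'}\omega' h'=\int_{M'} d(\omega' h')=0 .
\]
Thus $\omega'|_\gamma$ satisfies the moment condition on $M$, and the required $\omega$ exists.

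I expect the main obstacle to be the boundary-matching step $h'|_\gamma=h|_\gamma$, that is, the control of the interior sheets of $F$ over $V$ near $\delta$. This rests on the multiplicity-one behaviour of the positive holomorphic chain $F_\ast[M]$ along its boundary, which has to be extracted from the structure results of Lemmas~\ref{L Struct-Img}--\ref{L/ degre} together with the fact that $Y$ is genuinely bordered by $\delta$ and does not cross it; any sheet passing through $\delta$ would violate $dF_\ast[M]=[\delta]$. A secondary technical point is the up-to-the-boundary solvability and elliptic regularity of the $\overline{\partial}$-problem with vanishing Dirichlet data used in the reduction, which is standard on a smooth bordered Riemann surface.
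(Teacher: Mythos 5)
Your proposal takes exactly the paper's route: reduce the lemma to the moment condition $\int_{\gamma}h\,\omega'\vert_{\gamma}=0$ for all $h\in\mathcal{O}(M)\cap C^{0}(\overline{M})$ (the paper invokes this characterization of boundary values of holomorphic $(1,0)$-forms without proof; your $\overline{\partial}$-argument with a solution vanishing on $\gamma$ is a reasonable rendering of it), then transport $h$ to $M'$ as $h'=(F_{\ast}h)\circ F'\in\mathcal{O}(M')\cap C^{0}(\overline{M'})$ via Lemma~\ref{L/ hol1}, then conclude by Stokes on $M'$. Where you go beyond the paper is in making explicit the trace identity $h'\vert_{\gamma}=h\vert_{\gamma}$, and that is precisely where your argument has a genuine gap: the justification \guillemotleft any sheet passing through $\delta$ would violate $dF_{\ast}[M]=[\delta]$\guillemotright\ is false. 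A sheet of the image that crosses $\delta$ is locally a closed, boundaryless piece of the chain and contributes nothing to $dF_{\ast}[M]$; positivity together with $dF_{\ast}[M]=[\delta]$ only forces the collar-side multiplicity to exceed the far-side multiplicity by one, not to equal one. Interior points of $M$ can indeed map into $\delta$ — the paper itself allows $\Gamma\backslash\gamma=F^{-1}(\delta)\cap M$ to be a nonempty compact of $M$ (Lemma~\ref{L CC}), possibly of real dimension one.

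A concrete counterexample to your implication: take $F(z)=z^{2}$, let $D'$ be a small disc around $1$, let $D^{+}$ and $D^{-}$ be the two components of $F^{-1}(D')$ (around $+1$ and $-1$), and set $M=\mathbb{CP}_{1}\backslash\overline{D^{+}}$. Then $F\vert_{bM}$ embeds $bM$ onto $\delta=bD'$, and $F_{\ast}[M]=2\left[\mathbb{CP}_{1}\backslash\overline{D'}\right]+\left[D'\right]$ is a positive chain with $dF_{\ast}[M]=[\delta]$ in the orientation induced by $M$; yet the boundary value of $F_{\ast}h$ from the collar side at $f(p)=p^{2}$ is $h(p)+h(-p)$ with $-p\in bD^{-}\subset M$, not $h(p)$. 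So as it stands your argument only yields $\int_{\gamma}(h+r)\,\omega'=0$, where $r(p)$ collects the values of $h$ at the interior preimages $F^{-1}(f(p))\cap M$, and an additional idea is needed to kill $\int_{\gamma}r\,\omega'$. To be fair, you have correctly isolated the crux: the paper's own chain of equalities, whose middle step $\int_{\delta}F_{\ast}(h\omega')=\int_{\gamma}(F'^{\ast}F_{\ast})(h\omega')$ can only mean that the global form $(F'^{\ast}F_{\ast}h)\,\omega'$ restricts on $\gamma$ to $h\,\omega'$, silently assumes the same trace identity and supplies no argument for it either; but your multiplicity-one reasoning does not close this gap.
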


\begin{proof}
Il s'agit de constater que $\omega^{\prime}\left\vert _{\gamma}\right.  $
v\'{e}rifie la condition des moments quand $\gamma$ est vue comme le bord de
$M$. Soit donc $h\in\mathcal{O}\left(  M\right)  \cap C^{0}\left(
\overline{M}\right)  $. D'apr\`{e}s le lemme~\ref{L/ hol1}, $g=F^{\prime\ast
}F_{\ast}h\in\mathcal{O}\left(  M^{\prime}\right)  \cap C^{0}\left(
\overline{M^{\prime}}\right)  $. Puisque $f_{\ast}\left[  \gamma\right]
=\left[  \delta\right]  $,
\begin{align*}
\int_{\gamma}h\omega^{\prime}  &  =\int_{\gamma}F^{\ast}F_{\ast}\left(
h\omega^{\prime}\right)  =\int_{\delta}F_{\ast}\left(  h\omega^{\prime}\right)
\\
&  =\int_{\gamma}\left(  F^{\prime\ast}F_{\ast}\right)  \left(  h\omega
^{\prime}\right)  =\int_{M^{\prime}}d\left(  F^{\prime\ast}F_{\ast}\right)
\left(  h\omega^{\prime}\right)  =0.
\end{align*}
car $F^{\prime\ast}F_{\ast}h\in\mathcal{O}\left(  M^{\prime}\right)  \cap
C^{0}\left(  \overline{M^{\prime}}\right)  $ et $\omega^{\prime}\in
\Omega^{1,0}\left(  M^{\prime}\right)  $.
\end{proof}

\begin{proof}
[Preuve du th\'{e}or\`{e}me~\ref{T/ unicite}]\ref{T/ unicite} Puisque par
hypoth\`{e}se $\left[  \left(  \partial U_{\ell}\right)  _{0\leqslant
\ell\leqslant n}\right]  $ est une application bien d\'{e}finie de
$\overline{M}$ dans $\mathbb{CP}_{n}$, on peut utiliser le lemme~12
d'adjonction de~\cite{HeG-MiV2015} qui bien, qu'\'{e}crit pour le cas
particulier $n=2$, s'applique sans changement pour $n$ quelconque dans
$\mathbb{N}^{\ast}$~: il existe des fonctions $U_{n+1},...,U_{N}$ harmoniques
sur $M$ et continues sur $\overline{M}$ telles que $\left[  \left(  \partial
U_{\ell}\right)  _{0\leqslant\ell\leqslant N}\right]  $ est un plongement de
$M$ dans $\mathbb{CP}_{N}$. De m\^{e}me, il existe des fonctions
$U_{N+1}^{\prime},...,U_{N^{\prime}}^{\prime}$ harmoniques sur $M^{\prime}$ et
continues sur $\overline{M^{\prime}}$ telles que $\left[  \left(  \partial
U_{\ell}^{\prime}\right)  _{\ell\in\left\{  0,..,n,N+1,..,N^{\prime}\right\}
}\right]  $ est un plongement de $M^{\prime}$ dans $\mathbb{CP}_{n+N^{\prime
}-N}$. Lorsque $\ell\in\left\{  N+1,...,N+N^{\prime}\right\}  $, le
lemme~\ref{L/ hol3} donne que $\left(  \partial U_{\ell}^{\prime}\right)
\left\vert _{\gamma^{\prime}}\right.  $ se prolonge \`{a} $M$ en une $\left(
1,0\right)  $-forme holomorphe $\Sigma_{\ell}$. De m\^{e}me, lorsque $\ell
\in\left\{  n+1,...,N\right\}  $, $\left(  \partial U_{\ell}\right)
\left\vert _{\gamma}\right.  $ se prolonge\ \`{a} $M^{\prime}$ en une $\left(
1,0\right)  $-forme holomorphe $\Sigma_{\ell}^{\prime}$. Consid\'{e}rons
alors
\begin{align*}
\Sigma &  =\left(  \partial U_{0},...,\partial U_{n},\partial U_{n+1}%
...,\partial U_{N},\Sigma_{N+1},...,\Sigma_{N+N^{\prime}}\right)
\overset{d\acute{e}f}{=}\left(  \Sigma_{\ell}\right)  _{0\leqslant
\ell\leqslant L}\\
\Sigma^{\prime}  &  =\left(  \partial U_{0}^{\prime},..,\partial U_{n}%
^{\prime},\Sigma_{n+1}^{\prime},...,\Sigma_{N^{\prime}}^{^{\prime}},\partial
U_{N+1}^{\prime},...,\partial U_{N+N^{\prime}}^{\prime}\right)
\overset{d\acute{e}f}{=}\left(  \Sigma_{\ell}^{\prime}\right)  _{0\leqslant
\ell\leqslant L}%
\end{align*}
Par construction $\Sigma$ et $\Sigma^{\prime}$ co\"{\i}ncident sur $\gamma$.
Notons $\left(  w_{\ell}\right)  _{0\leqslant\ell\leqslant L}$ les
coordonn\'{e}es naturelles de $\mathbb{C}^{L+1}$. Lorsque $0\leqslant
\ell_{\ast}\leqslant n$, $\left[  \Sigma\right]  \left\vert _{\left\{
\partial U_{\ell}\neq0\right\}  }\right.  $ s'\'{e}crit $\left(  \partial
U_{\ell}/\partial U_{\ell_{\ast}}\right)  _{\ell\neq\ell^{\ast}}$ dans les
coordonn\'{e}es naturelles de $\mathbb{C}^{L}$ identifi\'{e} \`{a} $\left\{
w_{\ell_{\ast}}\neq0\right\}  $. Notons $p_{\ell_{\ast}}$ la projection
naturelle de $\mathbb{C}^{L}$ sur $\mathbb{C}^{N}$, $\left(  z_{\ell}\right)
_{\ell\neq\ell_{\ast}}\mapsto\left(  z_{\ell}\right)  _{0\leqslant
\ell\leqslant N\text{,~}\ell\neq\ell_{\ast}}$. L'application $\left(  \partial
U_{\ell}/\partial U_{\ell_{\ast}}\right)  _{0\leqslant\ell\leqslant
N\text{,~}\ell\neq\ell_{\ast}}$ est par construction un plongement de
$\left\{  \partial U_{\ell}\neq0\right\}  $ dans $\mathbb{C}^{N}$. $\left[
\Sigma\right]  $ est par ailleurs injective car $\overline{M}%
=\underset{0\leqslant\ell\leqslant n}{\cup}\left\{  \partial U_{\ell}%
\neq0\right\}  $ et car une relation de la forme $\left[  \Sigma\right]
\left(  x\right)  =\left[  \Sigma\right]  \left(  y\right)  $ impose
$y\in\underset{\left(  \partial U_{\ell}\right)  _{x}\neq0}{\cap}\left\{
\partial U_{\ell}\neq0\right\}  $. $\left[  \Sigma\right]  $ est donc un
plongement de $\overline{M}$ dans $\mathbb{CP}_{L}$. De m\^{e}me, $\left[
\Sigma^{\prime}\right]  $ est un plongement de $M^{\prime}$ dans
$\mathbb{CP}_{L}$. Remarquant que la preuve du lemme~\ref{L Struct-Img}
n'utilise pas que $F$ est une application canonique, c'est-\`{a}-dire de la
forme $\left[  \partial U\right]  $, ou en utilisant le lemme~8 de
\cite{HeG-MiV2015} qui montre que $\Sigma$ et $\Sigma^{\prime}$ sont
forc\'{e}ment de ce type, on en conclut que $\Sigma\left(  M\right)
=\Sigma^{\prime}\left(  M^{\prime}\right)  $ puis que $M$ et $M^{\prime}$ sont
rendues isomorphes par une application dont la restriction \`{a} $\gamma$ est l'identit\'{e}.
\end{proof}

\section{Reconstruction}

\subsection{Conductivit\'{e} isotrope\label{S/ ReconsCondiso}}

Lorsque $M$ est un domaine de $\mathbb{R}^{2}$ est muni de la structure
complexe induite par la m\'{e}trique euclidienne standard de $\mathbb{R}^{2}$,
c'est-\`{a}-dire quand $M$ est muni d'une conductivit\'{e} isotrope $\sigma$,
on sait que $\sigma$ est enti\`{e}rement d\'{e}termin\'{e}e par son
op\'{e}rateur de Dirichlet-Neumann. Cette unicit\'{e} est \'{e}tablie pour une
conductivit\'{e} r\'{e}elle analytique dans \cite{KoR-VoM1984}. Pour une
conductivit\'{e} isotrope lisse, un proc\'{e}d\'{e} de reconstruction
effective a \'{e}t\'{e} donn\'{e} dans~\cite{NoR1988} par Novikov et pour une
conductivit\'{e} de classe $L^{\infty}$ par Nachman dans~\cite{NaA1996}. Une
autre preuve de ce r\'{e}sultat a \'{e}t\'{e} \'{e}crite par Gutarts dans
\cite{GuB2007} pour une conductivit\'{e} lisse. Lorsque $M$ est une surface de
Riemann connexe dont le genre est connu, Henkin et Novikov dans~\cite[th.
1.2]{HeG-NoR2011}~g\'{e}n\'{e}ralisent et corrigent les r\'{e}sultats de
reconstruction d'une conductivit\'{e} isotrope de~\cite{HeG-MiV2008}. L'aspect
n\'{e}cessairement technique du principal r\'{e}sultat de~\cite[th.
1.2]{HeG-NoR2011} ne nous permet de n'en citer ici qu'un r\'{e}sum\'{e}.

\begin{theorem}
[Henkin-Novikov, 2011]\label{T/ HN}Soit $\left(  M,\sigma\right)  $ une
structure de conductivit\'{e} de genre $g$ avec $\sigma$ de classe $C^{3}$.
Alors $\sigma$ peut \^{e}tre reconstruite \`{a} partir de l'op\'{e}rateur de
Dirichlet-Neumann $N_{d}^{\sigma}$ en r\'{e}solvant $g$ \'{e}quations de type
Fredholm associ\'{e}es \`{a} $g$ donn\'{e}es g\'{e}n\'{e}riques de
$N_{d}^{\sigma}$ puis en r\'{e}solvant $g$ syst\`{e}mes explicites qui, dans
le cas o\`{u} $M$ est un domaine de $\left\{  z\in\mathbb{C}^{2};~P\left(
z\right)  =0\right\}  $, $P\in\mathbb{C}_{N}\left[  X\right]  $, sont des
syst\`{e}mes lin\'{e}aires de $N\left(  N-1\right)  $ \'{e}quations \`{a}
$N\left(  N-1\right)  $ inconnues.
\end{theorem}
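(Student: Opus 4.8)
The plan is to reduce the reconstruction of the scalar coefficient $s=\sqrt{\det\sigma}$ to a zero-energy inverse scattering problem on the (here known) Riemann surface and to solve the latter by a $\overline{\partial}$-method whose genus-$g$ version produces exactly the announced $g$ Fredholm equations and $g$ explicit systems. Since the conductivity is isotropic, the complex structure $\mathcal{C}_{\sigma}$ is the fixed datum, so by the factorisation $\sigma=s\,c_{\sigma}$ only the positive function $s$ is unknown. First I would set $\psi_{0}=\sqrt{s}$ and substitute $U=\psi_{0}^{-1}V$ in $d\sigma dU=0$; writing the equation in an isothermal chart for $\mathcal{C}_{\sigma}$ turns $\sigma$-harmonicity into a zero-energy Schr\"odinger equation $\left(-\Delta_{c_{\sigma}}+q\right)V=0$ with potential $q=\psi_{0}^{-1}\Delta_{c_{\sigma}}\psi_{0}$ admitting the distinguished positive solution $\psi_{0}$. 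The boundary relation between $N_{d}^{\sigma}$ and the Dirichlet-Neumann operator of $q$ is explicit, so from the data one recovers the Dirichlet-Neumann operator of $q$ together with $s\left\vert _{bM}\right.$ and its normal derivative, which fixes the boundary normalisation of $\psi_{0}$.

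The heart of the argument is the construction of a complete family of Faddeev-type solutions $\psi\left(\cdot,\lambda\right)$ of $\left(-\Delta_{c_{\sigma}}+q\right)\psi=0$, of Baker--Akhiezer form, indexed by a spectral parameter $\lambda$ running over the surface or its Jacobian. On genus $g$ I would normalise them by fixing poles at a non-special effective divisor of degree $g$; Riemann--Roch then guarantees existence and uniqueness up to a scalar, and the $g$ points of the divisor match the $g$ generic pieces of $N_{d}^{\sigma}$ prescribed in the statement. Using the Cauchy--Green kernels built in Section~\ref{S/ FGN} (Proposition~\ref{P/ GreenNodale} and Corollary~\ref{C/ GreenNodale3}) I would represent each $\psi\left(\cdot,\lambda\right)$ as the solution of a Cauchy--Fantappi\`e-type integral equation whose inhomogeneous term is assembled from the recovered boundary data of $q$; the Fredholm alternative then yields $g$ Fredholm integral equations of the second kind, one per normalisation point.

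Once these solutions are known, $q$ is read off from their dependence on $\lambda$ (the analogue of the large-$\lambda$ asymptotics of the planar case), and $\sigma$ is recovered by solving $\Delta_{c_{\sigma}}\psi_{0}=q\psi_{0}$ with the boundary data fixed above, so that $s=\psi_{0}^{2}$. This final recovery constitutes the $g$ explicit systems: expanding $\psi_{0}$ and the scattering data against a basis adapted to the line bundle that governs the problem reduces it to $g$ systems, linear in the algebraic case. In the model case where $M$ lies in a smooth plane curve $\left\{P=0\right\}$ of degree $N$, adjunction gives $K_{M}=\mathcal{O}\left(N-3\right)\left\vert _{M}\right.$, so the controlling bundle $\mathcal{O}\left(N-2\right)\left\vert _{M}\right.$ has $h^{0}=N\left(N-1\right)/2$ sections; counting real unknowns (real and imaginary parts) yields exactly the $N\left(N-1\right)$ equations in $N\left(N-1\right)$ unknowns of the statement.

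The main obstacle is the solvability of the $g$ Fredholm equations, that is, showing that the associated homogeneous equations have only the trivial solution---absence of exceptional points, equivalently non-vanishing of a theta-type determinant---for a generic choice of the $g$ pieces of Dirichlet-Neumann data. This is where the genus-$g$ topology, the Riemann--Roch count and the need to avoid special divisors all enter, and it is precisely the point at which the Henkin--Novikov argument had to generalise and correct~\cite{HeG-MiV2008}. A secondary but genuine difficulty is propagating the $C^{3}$ regularity through the integral equations so that the reconstructed $s$ has the claimed smoothness.
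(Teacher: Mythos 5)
There is nothing in the paper to compare your argument with: Theorem~\ref{T/ HN} is not proved here. It is an imported result, quoted from \cite{HeG-NoR2011}, and the author says explicitly that the technical nature of that work only allows a summary to be cited; in the reconstruction procedure of section~\ref{S/ algorithme} the theorem is used purely as a black box (step 5). So the only question is whether your sketch faithfully reconstructs the Henkin--Novikov argument and whether it stands on its own.

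As a program, your outline has the right architecture, and one quantitative point is genuinely verified: the Liouville substitution $\psi_{0}=\sqrt{s}$, $V=\psi_{0}U$, turning $d\sigma dU=0$ into $\left(-\Delta_{c_{\sigma}}+q\right)V=0$ with $q=\psi_{0}^{-1}\Delta_{c_{\sigma}}\psi_{0}$, is indeed the reduction on which \cite{HeG-NoR2011} (and before it \cite{HeG-MiV2008}) rests, and your Riemann--Roch count is correct: for a smooth plane curve of degree $N$ one has $g=\left(N-1\right)\left(N-2\right)/2$, $\deg\mathcal{O}\left(N-2\right)\vert_{C}=N\left(N-2\right)$, $h^{1}=0$ since $\deg\left(K-\mathcal{O}\left(N-2\right)\right)=-N<0$, hence $h^{0}=N\left(N-2\right)-g+1=N\left(N-1\right)/2$, and splitting into real and imaginary parts gives exactly the $N\left(N-1\right)$ unknowns of the statement. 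However, the two steps you flag at the end are precisely where the entire difficulty of \cite{HeG-NoR2011} lies, and your proposal supplies no argument for either. First, generic unique solvability of the $g$ Fredholm equations --- absence of exceptional points --- is exactly the point at which \cite{HeG-MiV2008} was incomplete and which Henkin--Novikov had to correct; invoking the non-vanishing of a theta-type determinant names the obstacle without removing it. Second, your identification of the $g$ generic Dirichlet--Neumann data with the $g$ points of a non-special pole divisor of a Baker--Akhiezer normalisation is a guess: in \cite{HeG-NoR2011} the Faddeev-type solutions are built on the algebraic curve with exponential weights and explicit polynomial corrections (of degree $N-2$, which is where the $N\left(N-1\right)$ count actually enters their linear systems), and the $g$ Fredholm equations are attached to $g$ generic boundary data through explicit Cauchy-type kernels; one would have to prove that the two normalisations coincide, which you do not attempt. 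So your text is a plausible outline of the external proof, with its two decisive steps left open; within the present paper the correct treatment is the one the author adopts, namely to cite \cite{HeG-NoR2011} rather than reprove it.
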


\subsection{Conductivit\'{e} anisotrope\label{S/ RcoeffCst}}

Le th\'{e}or\`{e}me de Henkin-Santacesaria cit\'{e} dans la
section~\ref{S/ condaniso} permet pour une structure de conductivit\'{e}
$\left(  M,\sigma\right)  $ de construire dans $\mathbb{C}^{2}$ une courbe
complexe nodale qui est l'image par une immersion $\sigma$-holomorphe de
$\overline{M}$ dont la restriction \`{a} $bM$ est un plongement.

A l'aide du th\'{e}or\`{e}me~\ref{T/ plgt1} et du
corollaire~\ref{C/ GreenNodale3}, on peut reconstruire l'op\'{e}rateur de
Dirichlet-Neumann de la structure complexe $\left(  M,c_{\sigma}\right)  $
sous-jacente \`{a} $\left(  M,\sigma\right)  $. Ceci permet de produire une
courbe complexe $S$ de $\mathbb{CP}_{3}$ qui est isomorphe \`{a} $\left(
M,c_{\sigma}\right)  $ et ram\`{e}ne le probl\`{e}me initial \`{a} celui
o\`{u} le coefficient de conductivit\'{e} est constant, c'est-\`{a}-dire au
cas particulier \'{e}tudi\'{e} dans~\cite{HeG-MiV2007}. Si cette \'{e}tape est
franchie de fa\c{c}on constructive, le th\'{e}or\`{e}me de Henkin-Novikov
rappel\'{e} dans la section~\ref{S/ ReconsCondiso} permet de reconstruire la
conductivit\'{e} $\sigma$ elle-m\^{e}me. Le probl\`{e}me initial est alors
enti\`{e}rement r\'{e}solu s'il s'agit de produire une structure de
conductivit\'{e} abstraite dont le bord orient\'{e} et l'op\'{e}rateur de
Dirichlet-Neumann sont prescrits.\medskip

Dans cette section, on aborde la reconstruction effective de $S$, que sans
perte de g\'{e}n\'{e}ralit\'{e}, on suppose \^{e}tre un domaine relativement
compact d'une surface de Riemann ouverte $\widetilde{S}$ de $\mathbb{CP}_{3}$.
Pour un choix g\'{e}n\'{e}rique du quadruplet $\left(  u_{0},u_{1},u_{2}%
,u_{3}\right)  $ de fonctions utilis\'{e}es dans le
th\'{e}or\`{e}me~\ref{T/ plgt1}, on peut supposer que les projections $\pi
_{3}:\left(  w_{0}:w_{1}:w_{2}:w_{3}\right)  \mapsto\left(  w_{0}:w_{1}%
:w_{2}\right)  $ et $\pi_{2}:\left(  w_{0}:w_{1}:w_{2}:w_{3}\right)
\mapsto\left(  w_{0}:w_{1}:w_{3}\right)  $ immergent $\widetilde{S}$ dans
$\mathbb{CP}_{2}$ sur des courbes nodales $\widetilde{S}_{3}$ et
$\widetilde{S}_{2}$ telles que $\pi_{3}^{-1}\left(  \operatorname*{Sing}%
\widetilde{S}_{3}\right)  \cap\pi_{2}^{-1}\left(  \operatorname*{Sing}%
\widetilde{S}_{2}\right)  \cap\widetilde{S}=\varnothing$. D\`{e}s lors, pour
obtenir un atlas de $S$, il suffit d'en produire pour $\overline{S}%
\cap\operatorname*{Reg}\widetilde{S}_{2}$ et $\overline{S}\cap
\operatorname*{Reg}\widetilde{S}_{3}$, c'est-\`{a}-dire pour une surface de
Riemann nodale \`{a} bord $Q$ plong\'{e}e dans $\mathbb{CP}_{2}$ et qui est un
domaine relativement compact d'une surface de Riemann nodale ouverte
$\widetilde{Q}$ de $\mathbb{CP}_{2}$ et dont le bord orient\'{e} $\partial Q$
est connu. Notons que le genre des courbes nodales $S_{3}$ et $S_{2}$ est le
m\^{e}me que celui de $S$ et donc de $M$.

La m\'{e}thode que nous proposons dans la section~\ref{S/ algorithme}
d\'{e}coule de l'analyse des indicatrices de Cauchy-Fantapi\'{e} introduites
dans la section ci-dessous et de la caract\'{e}risation des sommes d'ondes de
chocs donn\'{e}e dans la section~\ref{S/ fabOM}. Cette m\'{e}thode diff\`{e}re
sensiblement de celle propos\'{e}e par Agaltsov-Henkin~\cite{AgA-HeG2015} pour
des cas particuliers.

\subsection{Indicatrices de Cauchy-Fantapi\'{e}}

Sans perte de g\'{e}n\'{e}ralit\'{e}, on suppose que $bQ\subset\left\{
w_{0}w_{1}w_{2}\neq0\right\}  $ ce qui permet de consid\'{e}rer%
\begin{equation}
\rho=\underset{w\in bQ}{\max}\left\vert \frac{w_{2}}{w_{1}}\right\vert
\label{F/ rhoZ}%
\end{equation}
Jusqu'\`{a} la fin de cette article, on fait en outre l'hypoth\`{e}se
g\'{e}n\'{e}rique et donc peu restrictive que%
\[
\left(  0:1:0\right)  \notin Q_{\infty}=Q\pitchfork\left\{  w_{0}=0\right\}
\subset\operatorname*{Reg}Q
\]
o\`{u} $\pitchfork$ d\'{e}signe une intersection transverse. L'hypoth\`{e}se
$Q_{\infty}\subset\operatorname*{Reg}Q$ n'est pas n\'{e}cessaire mais elle
simplifie les calculs. Toutefois, nous indiquons pour certaines formules une
version pour le cas o\`{u} $Q_{\infty}\cap\operatorname*{Sing}Q\neq
\varnothing$. Dans la situation r\'{e}guli\`{e}re, on peut prendre
$u_{0}=\frac{w_{0}}{w_{2}}$ comme coordonn\'{e}e pour $Q$ au voisinage des
points de $Q_{\infty} $ et il existe pour chaque $q\in Q_{\infty}$ une
fonction $g^{q}$ holomorphe au voisinage de $0$ dans $\mathbb{C}$ telle qu'au
voisinage de $q$ dans $\mathbb{CP}_{2}$, $Q$ co\"{\i}ncide avec $\left\{
\left(  u_{0}:u_{1}:1\right)  ;~u_{1}=g^{q}\left(  u_{0}\right)  \right\}  $.
On note alors $(\Sigma g_{\nu}^{q}u_{0}^{\nu})$ la s\'{e}rie de Talyor de
$g^{q}$ en $0$. Pour $q\in Q_{\infty}$, on a donc%
\[
q=\left(  0:g_{0}^{q}:1\right)  \overset{d\acute{e}f}{=}\left(  0:b_{q}%
:1\right)  .
\]
On pose aussi%
\[
R_{\infty}=\left\{  -1/b_{q};~q\in Q_{\infty}\right\}  ~~~\text{et}%
~~~E_{\infty}=\mathbb{C}\times R_{\infty}%
\]
On note $U$ l'ouvert form\'{e} par les points $z=\left(  x,y\right)  $ de
$\mathbb{C}^{2}$ tels que $bQ$ ne rencontre par $L_{z}=\left\{  w\in
\mathbb{CP}_{2};~xw_{0}+yw_{1}+w_{2}=0\right\}  $. L'hypoth\`{e}se $\left(
0:1:0\right)  \notin Q_{\infty}$ assure que $L_{z}\cap Q\subset\left\{
w_{0}\neq0\right\}  $ lorsque $y\neq0$. On d\'{e}signe par
$U_{\operatorname{reg}}$ l'ouvert de $\mathbb{C}^{2}$ form\'{e} par les points
$z$ de $U$ tels que pour tout $q\in Q\cap L_{z}$, $L_{z}$ coupe
transversalement $Q$ en $q$~; $U_{\operatorname{sing}}=U\backslash
U_{\operatorname{reg}}$ est un sous-ensemble analytique de $U$~; quand $E$ est
une partie de $U$, on pose $E_{\operatorname{reg}}=E\cap U_{\operatorname{reg}%
} $ et $E_{\operatorname{sing}}=E\cap U_{\operatorname{sing}}$. L'ensemble $Z$
d\'{e}finit ci-dessous joue un r\^{o}le essentiel~:
\begin{equation}
Z=\underset{\mathbb{C}\backslash\rho\overline{\mathbb{D}}}{\cup}m\left(
y\right)  \mathbb{D}\times\left\{  y\right\}  \label{F/ Z}%
\end{equation}
o\`{u} $\mathbb{D}=D\left(  0,1\right)  $ et
\[
m:\mathbb{C}\backslash\rho\overline{\mathbb{D}}\ni y\mapsto\underset{w\in
bQ}{\min}\left\vert \left(  y\frac{w_{1}}{w_{0}}+\frac{w_{2}}{w_{0}}\right)
\right\vert
\]
Notons que par construction, $Z\subset U$ et que $\underset{\left\vert
y\right\vert \rightarrow\infty}{\lim}m\left(  y\right)  =+\infty$ puisque
$bQ\subset\left\{  w_{1}\neq0\right\}  $. En g\'{e}n\'{e}ral $Q$ n'est pas
affine et donc $Q\cap L_{z}\neq\varnothing$ lorsque $z\in U$ mais le lemme
ci-dessous assure que le proc\'{e}d\'{e} de reconstruction amorc\'{e} avec la
proposition~\ref{P/ DH1997} aboutit bien \`{a} la connaissance compl\`{e}te de
$Q$.\label{a}

\begin{lemma}
\label{L/ Qexhaustion}Pour tout $w_{\ast}\in Q\cap\left\{  w_{0}\neq0\right\}
$ et tout $R\in\mathbb{R}_{+}^{\ast}$, il existe $z\in Z_{\operatorname{reg}%
}\cap\left(  \mathbb{C}\times\mathbb{C}\backslash R\overline{\mathbb{D}%
}\right)  $ tel que $w_{\ast}\in L_{z}$.
\end{lemma}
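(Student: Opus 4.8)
Le plan est de ramener la condition $w_\ast\in L_z$ à une équation portant sur le seul paramètre $y$. Comme $w_{\ast,0}\neq0$, pour $z=(x,y)$ la relation $w_\ast\in L_z$, c'est-à-dire $xw_{\ast,0}+yw_{\ast,1}+w_{\ast,2}=0$, équivaut à $x=-\varphi_y(w_\ast)$ où l'on pose $\varphi_y(w)=y\frac{w_1}{w_0}+\frac{w_2}{w_0}$. Produire $z\in Z_{\operatorname{reg}}\cap(\mathbb{C}\times(\mathbb{C}\backslash R\overline{\mathbb{D}}))$ passant par $w_\ast$ revient donc à trouver $y$ tel que $|y|>\max(R,\rho)$, $|\varphi_y(w_\ast)|<m(y)$ (appartenance à $Z$) et $(-\varphi_y(w_\ast),y)\in U_{\operatorname{reg}}$ (transversalité). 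Je traiterais la transversalité en dernier et de façon générique : l'application affine non constante $\Lambda:y\mapsto(-\varphi_y(w_\ast),y)$ a pour image une droite complexe, de sorte que $\Lambda^{-1}(U_{\operatorname{sing}})$ est un sous-ensemble analytique propre, donc discret, de $\mathbb{C}$ ; il suffira de l'éviter une fois les deux premières conditions assurées.

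Le c\oe ur de la preuve serait alors d'établir que l'ouvert $\mathcal{S}(w_\ast)=\{y;~|y|>\rho,~|\varphi_y(w_\ast)|<m(y)\}$ n'est pas born\'e. Pour cela, j'examinerais, \`a $y$ fix\'e, la fonction $\varphi_y$ vue comme fonction m\'eromorphe sur $\overline{Q}$, holomorphe sur $\overline{Q}\cap\{w_0\neq0\}$ et de p\^oles port\'es par $Q_\infty$. Par d\'efinition de $m$ on a $|\varphi_y|\geq m(y)$ sur $bQ$ ; comme $|y|>\rho$ entra\^ine $(0,y)\in U$ et que $Q$ n'\'etant pas affine toute telle droite rencontre $Q$, la fonction $\varphi_y$ poss\`ede au moins un z\'ero int\'erieur. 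Le principe du minimum appliqu\'e \`a $\log|\varphi_y|$ montre que $\{w\in Q;~|\varphi_y(w)|<m(y)\}$ est un voisinage ouvert des z\'eros de $\varphi_y$, et $y\in\mathcal{S}(w_\ast)$ \'equivaut pr\'ecis\'ement \`a $w_\ast$ dans ce voisinage.

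Pour produire de tels $y$ de module arbitrairement grand, je localiserais les z\'eros de $\varphi_y$ : sur $\{w_0\neq0\}$, $\varphi_y(w)=0$ \'equivaut \`a $y=-w_2/w_1$, de sorte que ces z\'eros d\'ecrivent, lorsque $|y|$ cro\^it, la partie de $Q$ o\`u $|w_2/w_1|$ est grand, dont l'adh\'erence rencontre $Q_\infty$. L'hypoth\`ese $(0:1:0)\notin Q_\infty$ et la transversalit\'e $Q_\infty=Q\pitchfork\{w_0=0\}$ garantissent que, le long de toute branche aboutissant en $q=(0:b_q:1)$, la fonction $w_2/w_1$ est une coordonn\'ee locale tendant vers $1/b_q$ et $y=-w_2/w_1$ tend vers $-1/b_q\in R_\infty$. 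Le plan est d'en d\'eduire, par le th\'eor\`eme de Hurwitz appliqu\'e \`a la famille $\varphi_y$ joint \`a un contr\^ole uniforme de $m$ (on dispose de la minoration $m(y)\geq(\min_{bQ}|w_1/w_0|)(|y|-\rho)\to+\infty$), que les ensembles de sous-niveau finissent par contenir $w_\ast$ pour une suite non born\'ee de valeurs de $y$, ce qui fournit le point $z$ cherch\'e apr\`es \'elimination du ferm\'e discret $\Lambda^{-1}(U_{\operatorname{sing}})$.

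La principale difficult\'e est cette derni\`ere \'etape : montrer que les voisinages $\{|\varphi_y|<m(y)\}$ balaient effectivement $w_\ast$ pour des $y$ de module arbitrairement grand, autrement dit que la position des z\'eros de $\varphi_y$ et la croissance de $m$ se combinent pour capturer $w_\ast$. C'est ici qu'interviennent de fa\c con essentielle la compacit\'e de $bQ$ et la description de $Q$ au voisinage de $Q_\infty$ dict\'ee par $(0:1:0)\notin Q_\infty$, via les ensembles $R_\infty$ et $E_\infty$. Le cas $Q_\infty\cap\operatorname{Sing}Q\neq\varnothing$, \'ecart\'e dans l'hypoth\`ese de travail mais \'evoqu\'e dans le texte, se traiterait en rempla\c cant le long de chaque branche la coordonn\'ee $w_2/w_1$ par la coordonn\'ee nodale adapt\'ee.
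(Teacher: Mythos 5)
Votre r\'eduction de d\'epart est correcte et vous identifiez avec raison la condition d'appartenance \`a $Z$, c'est-\`a-dire $\left\vert \varphi_{y}\left(  w_{\ast}\right)  \right\vert <m\left(  y\right)  $, comme le point d\'elicat. Mais votre texte s'arr\^ete pr\'ecis\'ement l\`a o\`u il devrait commencer~: l'\'etape d\'ecisive --- montrer que les ensembles de sous-niveau $\left\{  \left\vert \varphi_{y}\right\vert <m\left(  y\right)  \right\}  $ capturent $w_{\ast}$ pour une suite non born\'ee de valeurs de $y$ --- est seulement annonc\'ee comme un plan et n'est jamais \'etablie, alors que c'est exactement le contenu du lemme dans votre formulation~; la proposition est donc circulaire en son c\oe ur. De plus, la localisation des z\'eros sur laquelle repose ce plan est invers\'ee~: un z\'ero de $\varphi_{y}$ dans $Q\cap\left\{  w_{0}\neq0\right\}  $ v\'erifie $w_{2}/w_{1}=-y$~; le long d'une branche aboutissant en $q=\left(  0:b_{q}:1\right)  \in Q_{\infty}$, on a $w_{2}/w_{1}\rightarrow1/b_{q}$, donc $y\rightarrow-1/b_{q}\in R_{\infty}$, et le corollaire~\ref{C/ confinementZeroBinfini} confine pr\'ecis\'ement ces valeurs dans $\rho\overline{\mathbb{D}}$~: ce sont les \emph{petites} valeurs de $\left\vert y\right\vert $, pas les grandes. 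Pour $\left\vert y\right\vert \rightarrow\infty$, les z\'eros de $\varphi_{y}$ s'accumulent au contraire sur $\overline{Q}\cap\left\{  w_{1}=0\right\}  $ (si $y_{n}w_{1}^{(n)}+w_{2}^{(n)}=0$ avec $\left\vert y_{n}\right\vert \rightarrow\infty$ et des coordonn\'ees homog\`enes born\'ees, alors $w_{1}^{(n)}\rightarrow0$), lieu que votre argument n'examine jamais~; l'application de Hurwitz telle que vous la d\'ecrivez s'effondre donc. Enfin votre minoration de $m$ ne suffit pas~: on a aussi $m\left(  y\right)  \leqslant\left(  \min_{bQ}\left\vert w_{1}/w_{0}\right\vert \right)  \left\vert y\right\vert +O\left(  1\right)  $, tandis que $\left\vert \varphi_{y}\left(  w_{\ast}\right)  \right\vert \sim\left\vert w_{\ast1}/w_{\ast0}\right\vert \left\vert y\right\vert $, de sorte que la capture de $w_{\ast}$ pour $\left\vert y\right\vert $ grand impose asymptotiquement $\left\vert w_{\ast1}/w_{\ast0}\right\vert \leqslant\min_{bQ}\left\vert w_{1}/w_{0}\right\vert $, contrainte qui n'a rien d'automatique (pensez \`a $w_{\ast}$ proche de $Q_{\infty}$, o\`u $w_{1}/w_{0}$ explose)~: il manque une vraie id\'ee, pas seulement des d\'etails.

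Par comparaison, la preuve du papier \'evite toute \'etude de $m$~: les $z$ tels que $w_{\ast}\in L_{z}$ forment la droite $L_{w_{\ast}}^{\ast}$ d'\'equation $x+y\zeta_{\ast1}+\zeta_{\ast2}=0$, et si $L_{w_{\ast}}^{\ast}\left(  R\right)  $ \'evitait $U$, tout $y$ de module sup\'erieur \`a $R$ serait dans l'image de la courbe r\'eelle $bQ$ par $\zeta\mapsto-\frac{\zeta_{\ast2}-\zeta_{2}}{\zeta_{\ast1}-\zeta_{1}}$, ce qu'un simple comptage de dimension interdit~; la non-vacuit\'e de $L_{w_{\ast}}^{\ast}\left(  R\right)  \cap U$ s'obtient ainsi en deux lignes (de fait, la preuve du papier ne v\'erifie explicitement que l'appartenance \`a $U$, la condition $\left\vert x\right\vert <m\left(  y\right)  $ restant implicite~; vous avez le m\'erite de la prendre au s\'erieux, mais vous ne la d\'emontrez pas non plus). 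Votre traitement de la transversalit\'e est lui aussi lacunaire~: pour que $\Lambda^{-1}\left(  U_{\operatorname{sing}}\right)  $ soit discret, il faut d'abord exclure que la droite $\Lambda\left(  \mathbb{C}\right)  =L_{w_{\ast}}^{\ast}$ soit tout enti\`ere contenue dans $U_{\operatorname{sing}}$, ce que la non-constance de $\Lambda$ ne donne pas --- toutes les droites $L_{z}$, $z\in L_{w_{\ast}}^{\ast}$, passent par $w_{\ast}$ et pourraient a priori \^etre tangentes \`a $Q$ quelque part. C'est exactement la d\'eg\'en\'erescence que le papier \'ecarte par son analyse branche par branche~: la condition de tangence d\'efinit dans chaque branche $B$ un sous-ensemble analytique $C_{B}$ qui est soit discret, soit \'egal \`a $B$, et dans ce dernier cas l'\'equation diff\'erentielle obtenue force $B$ \`a \^etre un segment de droite passant par $\zeta_{\ast}$, ne produisant qu'une seule valeur interdite de $y$~; la finitude locale du recouvrement rend alors l'ensemble des mauvais $y$ d\'enombrable, donc \'evitable dans l'ouvert non vide pr\'ec\'edemment obtenu. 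Il vous faudrait reproduire un argument de ce type, ou le remplacer, pour l\'egitimer votre \'etape g\'en\'erique.
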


\begin{proof}
Soient $R\in\mathbb{R}_{+}^{\ast}$ et $w_{\ast}\in Q$ tel que $w_{\ast0}\neq
0$. Posons $\zeta_{\ast}=\left(  \frac{w_{\ast1}}{w_{\ast0}},\frac{w_{\ast2}%
}{w_{\ast0}}\right)  $. Les points $z=\left(  x,y\right)  $ de $\mathbb{C}%
^{2}$ tels que $w_{\ast}\in L_{z}$ forment la droite $L_{w_{\ast}}^{\ast}$
d'\'{e}quation $x+y\zeta_{\ast1}+\zeta_{\ast2}=0$. Si $L_{w_{\ast}}^{\ast
}\left(  R\right)  =L_{w_{\ast}}^{\ast}\cap\left(  \mathbb{C}\times
\mathbb{C}\backslash R\overline{\mathbb{D}}\right)  $ ne rencontre pas $U$,
c'est que pour tout $y\in\mathbb{C}\backslash R\overline{\mathbb{D}}$,
$\left(  -y\zeta_{\ast1}-\zeta_{\ast2},y\right)  \notin U$, ce qui implique
l'existence dans $bQ$ d'un \'{e}l\'{e}ment $w=\left(  1:\zeta_{1}:\zeta
_{2}\right)  $ tel que $w\in L_{z}$, c'est-\`{a}-dire tel que $\left(
-y\zeta_{\ast1}-\zeta_{\ast2}\right)  +y\zeta_{1}+\zeta_{2}=0$ soit encore
$y=-\frac{\zeta_{\ast2}-\zeta_{2}}{\zeta_{\ast1}-\zeta_{1}} $. Etant donn\'{e}
que $bQ$ est une courbe r\'{e}elle, $\mathbb{C}\backslash R\overline
{\mathbb{D}}$ ne peut \^{e}tre contenu dans l'image de $bQ$ par $\zeta
\mapsto-\frac{\zeta_{\ast2}-\zeta_{2}}{\zeta_{\ast1}-\zeta_{1}}$. Ainsi,
$L_{w_{\ast}}^{\ast}\left(  R\right)  \cap U$ est un ouvert non vide de
$L_{w_{\ast}}^{\ast}$. Notons que lorsque $z\in L_{w^{\ast}}^{\ast}\cap U$,
$Q\cap L_{z}$ est une partie finie non vide de $Q_{0}=Q\cap\left\{  w_{0}%
\neq0\right\}  $ car $L_{z}\cap bQ=\varnothing$, $w_{\ast}\in L_{z}$ et
$\left(  0:1:0\right)  \notin Q_{\infty}$.

Recouvrons $Q_{0}$ par une famille localement finie $\mathcal{B}$ de branches
de $Q$. Pour chaque branche $B\in\mathcal{B}$, on se donne une fonction
holomorphe $f$ d\'{e}finie sur un ouvert $V_{B}$ de $\mathbb{C}^{2}$ tel que
$V_{B}=\left\{  \left(  1:\zeta_{1}:\zeta_{2}\right)  ;~\zeta\in
V_{B}~\&~f_{B}\left(  \zeta\right)  =0\right\}  $. Notons $E\left(  R\right)
$ l'ensemble des $z$ de $L_{w_{\ast}}^{\ast}\left(  R\right)  $ tels que
$L_{z} $ et $Q$ soient tangentes en un point de $L_{z}\cap Q$. Un point $z$ de
$\mathbb{C}^{2}$ appartient \`{a} $E\left(  R\right)  $ lorsque $\left\vert
y\right\vert >R$ et qu'il existe $B\in\mathcal{B}$ et $\zeta\in V_{B}$
v\'{e}rifiant les conditions%
\begin{align*}
f_{B}\left(  \zeta\right)   &  =0,~~x+y\zeta_{\ast1}+\zeta_{\ast2}%
,~~x+y\zeta_{1}+\zeta_{2}=0,\\
\frac{\partial f_{B}}{\partial\zeta_{2}}\left(  \zeta\right)   &
\neq0,~y=\frac{\partial f_{B}/\partial\zeta_{1}}{\partial f_{B}/\partial
\zeta_{2}}\left(  \zeta\right)  ,~~x=-\frac{\partial f_{B}/\partial\zeta_{1}%
}{\partial f_{B}/\partial\zeta_{2}}\left(  \zeta\right)  \zeta_{\ast1}%
-\zeta_{\ast2}%
\end{align*}
Lorsque $\zeta\neq\zeta_{\ast}$, ceci impose $\zeta_{\ast1}\neq\zeta_{1}$ et
$-\frac{\partial f_{B}/\partial\zeta_{1}}{\partial f_{B}/\partial\zeta_{2}%
}\left(  \zeta\right)  =\frac{\zeta_{\ast2}-\zeta_{2}}{\zeta_{\ast1}-\zeta
_{1}}$. Les points $\zeta$ v\'{e}rifiant cette \'{e}quation forment donc un
sous-ensemble analytique $C_{B}$ de $B$. A ce titre, $C_{B}$ est soit discret,
soit \'{e}gal \`{a} $B$.

Supposons que $C_{B}=B$ pour un \'{e}l\'{e}ment $B$ de $\mathcal{B}$. Alors
$\partial f_{B}/\partial\zeta_{2}$ ne s'annule pas et on peut trouver
localement une fonction $\varphi_{B}$ telle que $f_{B}\left(  \zeta\right)
=0$ si et seulement si $\zeta_{2}=\varphi_{B}\left(  \zeta_{1}\right)  $. La
fonction $\varphi_{B}$ v\'{e}rifie alors $\varphi^{\prime}\left(  \zeta
_{1}\right)  +\frac{1}{\zeta_{\ast1}-\zeta_{1}}\varphi\left(  \zeta
_{1}\right)  =\frac{\zeta_{\ast2}}{\zeta_{\ast1}-\zeta_{1}}$, c'est-\`{a}-dire
$\left(  \frac{1}{\zeta_{1}-\zeta_{\ast1}}\varphi\left(  \zeta_{1}\right)
\right)  ^{\prime}=\left(  \frac{\zeta_{\ast2}}{\zeta_{1}-\zeta_{\ast1}%
}\right)  ^{\prime}$. D'o\`{u} $\varphi\left(  \zeta_{1}\right)  =\left(
\zeta_{1}-\zeta_{\ast1}\right)  c+\zeta_{\ast2}$ o\`{u} $c$\ est une
constante. Dans ce cas, $B$ est un ouvert de la droite d'\'{e}quation
$\zeta_{2}=\left(  \zeta_{1}-\zeta_{\ast1}\right)  c+\zeta_{\ast2}$ et il
suffit de prendre $y\neq c$ pour que $L_{z}$ ne coupe pas $B $
tangentiellement. Lorsque $C_{B}$ est une partie discr\`{e}te de $B$,
l'ensemble $E\left(  R,B\right)  $ des \'{e}l\'{e}ments $z$ de $L_{w_{\ast}%
}^{\ast}\left(  R\right)  $ tels que $L_{z}$ et $B$ soient tangentes en un
point de $L_{z}\cap B$ est contenu, du fait des relations ci-dessus, dans un
ensemble discret. Puisque $\mathcal{B}$ est localement finie, on en d\'{e}duit
de l'\'{e}tude de ces deux cas que $L_{w_{\ast}}^{\ast}\left(  R\right)  $
rencontre $Z_{\operatorname{reg}}\cap\left(  \mathbb{C}\times\mathbb{C}%
\backslash R\overline{\mathbb{D}}\right)  $.
\end{proof}

On utilise comme dans~\cite{HeG-MiV2007} les indicatrices de
Cauchy-Fantapi\'{e} de $Q$ qui sont les fonctions $G_{k}$, $k\in\mathbb{N}$,
d\'{e}finies sur $U$ par%
\begin{equation}
G_{k}\left(  z\right)  =\frac{1}{2\pi i}\int_{\partial Q}\Omega_{z}%
^{k},~\Omega_{z}^{k}=\left(  \frac{w_{1}}{w_{0}}\right)  ^{k}\frac
{1}{~x+y\frac{w_{1}}{w_{0}}+\frac{w_{2}}{w_{0}}}d\left(  x+y\frac{w_{1}}%
{w_{0}}+\frac{w_{2}}{w_{0}}\right)  \label{F/ Gk}%
\end{equation}

Le point d\'{e}part de la reconstruction est un r\'{e}sultat de
Dolbeault-Henkin essentiellement contenu dans \cite{DoP-HeG1997}; leur preuve
qui consiste \`{a} utiliser la formule de Stokes s'applique sans changement au
cas o\`{u} $Q_{\infty}$ contient des noeuds de $\overline{Q}$. Dans cet
\'{e}nonc\'{e} et ensuite, on utilise les notations suivantes lorsque
$h_{1},...,h_{p}$ sont des fonctions \`{a} valeurs dans $\mathbb{C}$ et
$k\in\mathbb{N}$,
\[
N_{h,k}=\sum\limits_{1\leqslant j\leqslant p}h_{j}^{k}\text{, }S_{h,k}%
=\sum\limits_{1\leqslant j_{1}<\cdots<j_{p}\leqslant k}h_{j_{1}}\cdots
h_{j_{k}.}%
\]
Les identit\'{e}s de Newton sont que pour tout $k\in\mathbb{N}^{\ast}$,
\begin{align}
N_{h,k}  &  =\left(  -1\right)  ^{k-1}kS_{h,k}+\sum\limits_{1\leqslant
j\leqslant k-1}\left(  -1\right)  ^{k-j-1}S_{h,j}N_{h,k-j}\label{F/ SN}\\
S_{h,k}  &  =\frac{\left(  -1\right)  ^{k-1}}{k}N_{h,k}+\frac{1}{k}%
\sum\limits_{1\leqslant j\leqslant k-1}\left(  -1\right)  ^{j-1}%
S_{h,j}N_{h,k-j} \label{F/ NS}%
\end{align}
On note $\mathbb{C}\left[  X,Y\right)  $ l'ensemble des \'{e}l\'{e}ments de
$\mathbb{C}\left(  X,Y\right)  $ qui sont des polyn\^{o}mes en $X$.
$\mathbb{C}_{k}\left[  X,Y\right)  $ d\'{e}signe l'anneau des polyn\^{o}mes en
$X$ de degr\'{e} au plus $k$ dont les coefficients sont des fractions
rationnelles en $Y $ de degr\'{e} compris entre $-1$ et $-k$. Une onde de choc
est par d\'{e}finition une fonction $h$ holomorphe sur un ouvert de
$\mathbb{C}^{2}$ telle que dans le syst\`{e}me $\left(  x,y\right)  $ des
coordonn\'{e}es standards%
\begin{equation}
\frac{\partial h}{\partial y}=h\frac{\partial h}{\partial x} \label{F/ OdC}%
\end{equation}

\begin{proposition}
[Dolbeault-Henkin, 1997]\label{P/ DH1997}Soient $z_{\ast}\in
U_{\operatorname{reg}}\backslash E_{\infty}$ et $p=\operatorname{Card}\left(
L_{z_{\ast}}\cap Q\right)  $. Si $U_{\ast}$ est un voisinage suffisamment
petit de $z_{\ast}$ dans $U_{\operatorname{reg}}$, il existe des ondes de choc
$h_{1},...,h_{p}$ sur $U_{\ast}$ dont les images sont deux \`{a} deux
distinctes telles que pour tout $z\in U_{\ast}$,
\[
L_{z}\cap Q=\left\{  \left(  1:h_{j}\left(  z\right)  :-x-yh_{j}\left(
z\right)  \right)  ;~1\leqslant j\leqslant p\right\}  .
\]
En outre, pour tout $k\in\mathbb{N}$, il existe $P_{k}\in\mathbb{C}_{k}\left[
X,Y\right)  $ tel que pour tout $z\in U_{\ast}$
\begin{equation}
G_{k}\left(  z\right)  =N_{h,k}\left(  z\right)  +P_{k}\left(  z\right)  .
\label{F/ G=N+P}%
\end{equation}
De plus, notant $\eta$ l'injection naturelle de $Q$ dans $\mathbb{CP}_{2}$,
$P_{k}=\sum\limits_{q\in Q_{\infty}}\operatorname*{Res}\left(  \eta^{\ast
}\Omega_{z}^{k},q\right)  $ et
\[
\frac{\partial P_{k}}{\partial Y}=\frac{k}{k+1}\frac{\partial P_{k+1}%
}{\partial X}.
\]

\end{proposition}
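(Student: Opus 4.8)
The plan is to proceed in four stages: parametrize $L_z\cap Q$ by shock waves, evaluate $G_k$ by a residue computation, analyze the residues at infinity to identify $P_k$, and derive the differential relation from an identity between the forms $\Omega_z^k$.

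First I would construct the $h_j$. Since $z_\ast\in U_{\operatorname{reg}}\setminus E_\infty$, the line $L_{z_\ast}$ meets $Q$ transversally at the $p$ points of $L_{z_\ast}\cap Q$, all lying in $\{w_0\neq 0\}$ because $z_\ast\notin E_\infty$. Writing a branch of $Q$ near such a point as $\{f(\zeta_1,\zeta_2)=0\}$ in the affine coordinates $\zeta_1=w_1/w_0$, $\zeta_2=w_2/w_0$, transversality means $f_{\zeta_1}-yf_{\zeta_2}\neq 0$ there, so the implicit function theorem yields, on a small $U_\ast\ni z_\ast$, holomorphic functions $h_j$ with $L_z\cap Q=\{(1:h_j(z):-x-yh_j(z))\}$; after shrinking $U_\ast$ their images are pairwise disjoint, since distinct points of $L_z\cap Q$ have distinct $w_1/w_0$. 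Differentiating $f(h_j,-x-yh_j)=0$ in $x$ and in $y$ and eliminating the nonzero factor $f_{\zeta_1}-yf_{\zeta_2}$ gives exactly $\partial h_j/\partial y=h_j\,\partial h_j/\partial x$, so each $h_j$ is a shock wave.

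Next I would compute $G_k$ by residues. On $Q$ one has $d\Omega_z^k=k\,t^{k-1}\,dt\wedge d\phi/\phi$ with $t=w_1/w_0$ and $\phi=x+yt+w_2/w_0$; since $d\phi=y\,dt+d(w_2/w_0)$ and $Q$ is a curve, $dt\wedge d\phi=0$, so $\Omega_z^k$ is closed on $\operatorname{Reg}Q$ away from its poles. Its poles inside $Q$ are the $p$ simple poles on $L_z\cap Q$, where $\phi$ vanishes simply and the residue is $t^k=h_j^k$, together with the points of $Q_\infty$. Excising small disks and applying Stokes to $\partial Q$ gives $G_k=\sum_j h_j^k+\sum_{q\in Q_\infty}\operatorname{Res}(\eta^\ast\Omega_z^k,q)$, that is $G_k=N_{h,k}+P_k$ with $P_k$ the stated sum of residues at infinity. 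I would then analyze $P_k$ locally: near $q\in Q_\infty\subset\operatorname{Reg}Q$ take the coordinate $u_0=w_0/w_2$, so $w_1/w_2=g^q(u_0)$, $t=g^q(u_0)/u_0$ and $\phi=\psi/u_0$ with $\psi(u_0)=1+y\,g^q(u_0)+x\,u_0$. Because $z\notin E_\infty$, $\psi(0)=1+yb_q\neq 0$, so $\psi$ is invertible near $0$ and $\operatorname{Res}(\eta^\ast\Omega_z^k,q)$ is the residue at $u_0=0$ of $(g^q)^k u_0^{-k}(\psi'/\psi-1/u_0)\,du_0$. Since $x$ enters $\psi$ and $\psi'$ only through the single linear term $x u_0$, this residue is a polynomial in $x$ of degree at most $k$, while its denominators are powers of $\psi(0)=1+yb_q$, giving a rational function of $y$ with poles confined to $R_\infty$. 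The one point needing care is the behaviour at $y=\infty$: the naive $d\psi/\psi$ part contributes a $y$-independent term, but it is exactly cancelled by the $-du_0/u_0$ part, since as $y\to\infty$ the integrand tends to $u_0^{-(k+1)}\bigl(\tfrac{u_0}{k}((g^q)^k)'-(g^q)^k\bigr)$, whose residue vanishes. This gives $P_k\in\mathbb{C}_k[X,Y)$, the lower bound $-k$ on the $Y$-degree coming from bounding the maximal power of $1+yb_q$ in the same expansion. I expect this degree bookkeeping to be the main obstacle.

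Finally, for the differential relation I would use the form identity
\[
\frac{\partial\Omega_z^k}{\partial y}-\frac{k}{k+1}\frac{\partial\Omega_z^{k+1}}{\partial x}=d\!\left(\frac{t^{k+1}}{(k+1)\phi}\right),
\]
obtained from $\partial_x\Omega_z^{k}=-t^{k}\,d\phi/\phi^2$ and $\partial_y\Omega_z^k=t^k\,dt/\phi-t^{k+1}\,d\phi/\phi^2$. Taking the residue at each $q\in Q_\infty$ kills the right-hand side, because the residue of an exact meromorphic form is zero, whence $\partial_y P_k=\tfrac{k}{k+1}\partial_x P_{k+1}$. Equivalently one may integrate the identity over $\partial Q$, where the exact term vanishes since $\partial Q$ is closed and $\phi\neq 0$ there, to obtain $\partial_y G_k=\tfrac{k}{k+1}\partial_x G_{k+1}$, and then subtract the same relation for $N_{h,k}$, which follows at once from the shock-wave equation via $\partial_y\sum h_j^k=\sum k h_j^{k}\partial_x h_j=\tfrac{k}{k+1}\partial_x\sum h_j^{k+1}$. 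This completes the proof.
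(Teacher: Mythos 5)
Your proof is correct and follows essentially the same route as the paper and its source \cite{DoP-HeG1997}: the implicit function theorem at the transverse intersections $L_{z_{\ast}}\cap Q$ (with the shock-wave equation obtained by eliminating $f_{\zeta_{1}}-yf_{\zeta_{2}}$), the Stokes excision argument that the paper itself reproduces in the proof of the corollaire~\ref{C/ confinementZeroBinfini}, and the local residue computation at $Q_{\infty}$ in the coordinate $u_{0}=w_{0}/w_{2}$ with the cancellation $\operatorname{Res}\left(  g^{k}/u^{k+1},0\right)  =\operatorname{Res}\left(  g^{\prime}g^{k-1}/u^{k},0\right)  $ controlling the $y\rightarrow\infty$ behaviour, exactly as in the paper's lemme~\ref{L/ Pk}. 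Your exact-form identity $\partial_{y}\Omega_{z}^{k}-\frac{k}{k+1}\partial_{x}\Omega_{z}^{k+1}=d\left(  \frac{t^{k+1}}{\left(  k+1\right)  \phi}\right)  $, whose residues vanish, is a correct and tidy way to get the relation $\frac{\partial P_{k}}{\partial Y}=\frac{k}{k+1}\frac{\partial P_{k+1}}{\partial X}$, matching the alternative derivation through the shock-wave equation for the $N_{h,k}$.
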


D'un point de vue pratique, la principale difficult\'{e} est de d\'{e}terminer
le nombre $p$ et les polyn\^{o}mes $P_{k}$. \cite{AgA-HeG2015} contient une
m\'{e}thode quand $p\in\left\{  1,2\right\}  $. Pour celle que propos\'{e}e
dans la section~\ref{S/ algorithme}, il faut commencer par apporter des
pr\'{e}cisions sur $p$ les polyn\^{o}mes $P_{k}$. Le lemme ci-dessous prouve
les polyn\^{o}mes $P_{k}$ ne d\'{e}pendent que des germes de $Q$ en les points
de $Q_{\infty}$.

\begin{lemma}
\label{L/ Pk}$P_{0}=-q_{\infty}$ o\`{u} $q_{\infty}=\operatorname{Card}%
Q_{\infty}$ et lorsque $k\in\mathbb{N}^{\ast}$, posant $P_{k}=\sum
\limits_{0\leqslant m\leqslant k}p_{k,m}X^{m}$,
\begin{equation}
p_{k,k}=\frac{1}{\left(  k-1\right)  !}p_{1,1}^{\left(  k-1\right)
}~~\&~~p_{k,m}=\frac{k}{m!\left(  k-m\right)  }p_{k-m,0}^{\left(  m\right)
},~~m\in\left\{  0,..,k-1\right\}  \label{F/ pkm}%
\end{equation}
En outre, si on pose%
\begin{equation}
B_{\infty}=\prod\limits_{q\in Q_{\infty}}\left(  1+Yb_{q}\right)
\label{F/ Binfini}%
\end{equation}
Alors%
\begin{align}
p_{1,1}  &  =\sum\limits_{q\in Q_{\infty}}\frac{b_{q}}{1+Yb_{q}}%
=\frac{B_{\infty}^{\prime}}{B_{\infty}}\label{F/ p11}\\
p_{1,0}  &  =-\sum\limits_{q\in Q_{\infty}}\frac{g_{1}^{q}}{1+Yb_{q}}%
=\frac{p_{1,0,1}}{B_{\infty}}\label{F/ p10}\\
p_{k,0}  &  =\sum\limits_{j=1}^{k}\sum\limits_{q\in Q_{\infty}}\frac
{p_{k,0,j}^{q}}{\left(  1+Yb_{q}\right)  ^{j}}=\sum\limits_{j=1}^{k}%
\frac{p_{k,0,j}}{B_{\infty}^{j}},k\in\mathbb{N} \label{F/ pk0}%
\end{align}
o\`{u} les $p_{k,0,j}^{q}$ sont des polyn\^{o}mes universels en les
coefficients du jet d'ordre $k-j+1$ de $Q$ en $q$ et $p_{k,0,j}=\sum
\limits_{q}p_{k,0,j}^{q}\underset{q^{\prime}\neq q}{\Pi}\left(
1+Yb_{q^{\prime}}\right)  ^{j}$. En particulier, $P_{k}$ ne d\'{e}pend pas de
$z_{\ast}$ et est enti\`{e}rement d\'{e}termin\'{e} par les $k\left(
q_{\infty}+1\right)  $ nombres $b_{q}$, $p_{k,0,j}^{q}$, $\left(  q,j\right)
\in Q_{\infty}\times\left\{  1,...,k\right\}  $
\end{lemma}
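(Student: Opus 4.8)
The plan is to derive everything from the two facts recorded in Proposition~\ref{P/ DH1997}: the differential relation $\frac{\partial P_k}{\partial Y}=\frac{k}{k+1}\frac{\partial P_{k+1}}{\partial X}$, which propagates the coefficients across levels, and the residue formula $P_k=\sum_{q\in Q_\infty}\operatorname{Res}\!\left(\eta^\ast\Omega_z^k,q\right)$, which supplies the seed data at the bottom of each column. Since this residue formula is a global expression attached to the germs of $Q$ at the points of $Q_\infty$ and does not involve the base point $z_\ast$, the independence of $P_k$ from $z_\ast$ (the last clause of the statement) is immediate; I concentrate on the coefficient formulas.

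First I would turn the differential relation into a coefficient recursion. Writing $P_k=\sum_{0\leqslant m\leqslant k}p_{k,m}X^m$ and comparing the coefficients of $X^m$ in $\partial_Y P_k=\frac{k}{k+1}\partial_X P_{k+1}$ gives $\frac{d}{dY}p_{k,m}=\frac{k}{k+1}(m+1)p_{k+1,m+1}$, that is, after reindexing, $p_{k,m}=\frac{k}{(k-1)m}\,\frac{d}{dY}p_{k-1,m-1}$ for $k\geqslant2$ and $1\leqslant m\leqslant k$. Telescoping the top coefficient $m=k$ down to level $1$ yields $p_{k,k}=\frac{1}{(k-1)!}p_{1,1}^{(k-1)}$, while iterating the recursion $m$ times to bring the second index from $m$ to $0$ produces $p_{k,m}=\bigl(\prod_{j=0}^{m-1}\frac{k-j}{(k-j-1)(m-j)}\bigr)p_{k-m,0}^{(m)}$; the constants factor out of the $Y$-derivatives and the product collapses to $\frac{k}{m!(k-m)}$, which is exactly~(\ref{F/ pkm}). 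Crucially, the recursion never constrains the constant-in-$X$ terms $p_{k,0}$, so those, together with $p_{1,1}$, are the genuine seeds that the residue formula must compute.

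Next I would compute the seeds. In the local coordinate $u_0=w_0/w_2$ at a point $q\in Q_\infty$, the relations $w_1/w_0=g^q(u_0)/u_0$ and $w_2/w_0=1/u_0$ turn the pull-back into $\eta^\ast\Omega_z^k=\frac{(g^q)^k}{u_0^k}\,d\log\frac{\Phi}{u_0}$ with $\Phi(u_0)=xu_0+y\,g^q(u_0)+1$ and $\Phi(0)=1+yb_q$. For $z\notin E_\infty$ one has $\Phi(0)\neq0$, so $d\log(\Phi/u_0)=\bigl(\tfrac{\partial_{u_0}\Phi}{\Phi}-\tfrac1{u_0}\bigr)du_0$ has residue $-1$ at $u_0=0$, giving $P_0=-q_\infty$. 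A direct Laurent expansion for $k=1$ gives $\operatorname{Res}(\eta^\ast\Omega_z^1,q)=\frac{b_qx-g_1^q}{1+yb_q}$, whence $p_{1,1}=\sum_q\frac{b_q}{1+Yb_q}$ and $p_{1,0}=-\sum_q\frac{g_1^q}{1+Yb_q}$; the first sum is recognised as the logarithmic derivative $B_\infty'/B_\infty$ of $B_\infty=\prod_q(1+Yb_q)$, proving~(\ref{F/ p11}), and clearing denominators in the second gives~(\ref{F/ p10}). For general $k$, setting $X=0$ the residue at $q$ is a rational function of $Y$ whose only finite pole sits at $Y=-1/b_q$, of order at most $k$; the degree constraint $P_k\in\mathbb{C}_k[X,Y)$ from Proposition~\ref{P/ DH1997} forbids any polynomial part, so each $q$-summand expands as $\sum_{j=1}^k p_{k,0,j}^q/(1+Yb_q)^j$, and summation over $q$ across the common denominators $B_\infty^j$ yields the form~(\ref{F/ pk0}).

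The hard part will be the sharp jet-order claim that $p_{k,0,j}^q$ depends only on $g_0^q,\dots,g_{k-j+1}^q$. A homogeneity count already gives the coarse bound: assigning $g_i^q$ the weight $i$ and $Y$ the weight $0$, the invariance of the integrand $\frac{(g^q)^k}{u_0^k}d\log\frac{\Phi}{u_0}$ under $u_0\mapsto\lambda u_0$ forces the residue, hence every numerator $p_{k,0,j}^q$ (since $1+Yb_q$ has weight $0$), to be isobaric of weight $k$, so to involve only $g_i^q$ with $i\leqslant k$. Sharpening $i\leqslant k$ to $i\leqslant k-j+1$ is where the real obstacle lies, and I would extract $p_{k,0,j}^q$ as the coefficient $c_{k-j}$ in the expansion of $N_q(Y)=(1+Yb_q)^k\operatorname{Res}(\eta^\ast\Omega_z^k,q)\big|_{X=0}$ in powers of $1+Yb_q$, namely $p_{k,0,j}^q=\frac{b_q^{-(k-j)}}{(k-j)!}\,N_q^{(k-j)}(-1/b_q)$. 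The naive reasoning that a monomial of $N_q$ carrying $g_M^q$ has bounded $Y$-degree is not enough: as the case $k=2$ already shows, the monomials containing a high-index coefficient such as $g_2^q$ do reach the top pole order but then cancel there ($-2b_qg_2^q$ against the evaluation of $-2b_q^2g_2^qY$ at $Y=-1/b_q$), surviving only in the lower-order pole. Thus the claim rests on a genuine cancellation in the derivative $N_q^{(k-j)}(-1/b_q)$, and I expect to establish it by induction on $k$, combining the weight-$k$ isobaricity of $N_q$ with the bound $\deg_Y N_q\leqslant k-1$ dictated by the degree constraint to control these cancellations; the universality of the resulting polynomials is then clear, since every manipulation is algebraic in the Taylor coefficients $g_i^q$ alone.
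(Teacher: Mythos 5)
Your handling of the first two layers is correct and coincides with the paper's own route: the paper also derives (\ref{F/ pkm}) by integrating the transfer relation $\frac{\partial P_{k}}{\partial Y}=\frac{k}{k+1}\frac{\partial P_{k+1}}{\partial X}$ (by induction rather than your telescoping, but the computation is the same), also gets $P_{0}=-q_{\infty}$ and the $k=1$ seeds from the residue in the chart $u_{0}=w_{0}/w_{2}$, and the independence from $z_{\ast}$ is indeed read off the residue formula. The genuine gap is exactly where you flag it: the assertion that $p_{k,0,j}^{q}$ is a universal polynomial in the jet of order $k-j+1$ of $Q$ at $q$ — which is the whole content of (\ref{F/ pk0}) beyond the bare partial-fraction shape — is left as an expectation, and the fallback strategy you propose (induction on $k$ using isobaricity of weight $k$ of $N_{q}$ together with $\deg_{Y}N_{q}\leqslant k-1$) cannot succeed as stated, because those two invariants do not constrain how the Taylor coefficients $g_{i}^{q}$ distribute across the pole orders. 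Concretely, for $k=2$ the true polynomial is $N_{q}=-g_{1}^{2}-2g_{0}g_{2}\left(1+Yg_{0}\right)$, while $\widetilde{N}_{q}=-g_{1}^{2}-g_{0}g_{2}-g_{0}g_{2}\left(1+Yg_{0}\right)$ is equally isobaric of weight $2$ with $\deg_{Y}\leqslant1$, yet $\widetilde{N}_{q}\left(-1/g_{0}\right)=-g_{1}^{2}-g_{0}g_{2}$ puts $g_{2}$ into the order-$2$ pole, violating the bound $k-j+1=1$. So the cancellation you correctly observed at $k=2$ is not a consequence of the invariants you propose to induct on; your plan is strictly weaker than the claim.

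The paper proves the sharp bound with no cancellation argument at all, by making the decomposition explicit termwise. After reducing the residue to $\operatorname{Res}\left(\frac{x\left(g-ug^{\prime}\right)-g^{\prime}}{\left(1+xu+yg\right)u^{k}}g^{k-1},0\right)$, it expands $\frac{1}{1+xu+yg}=\sum_{n\geqslant1}\frac{\left(-1\right)^{n-1}}{\left(1+yg_{0}\right)^{n}}\left[xu+y\left(g-g_{0}\right)\right]^{n-1}$, so the $n$-th term carries the factor $\left(g-g_{0}\right)^{n-1}=O\left(u^{n-1}\right)$ and the extraction $\left\langle\,\cdot\,,u^{k-1}\right\rangle$ there involves only $g_{0},\dots,g_{k-n+1}$; it then rewrites $\frac{\left(-1\right)^{n}Y^{n-1}}{\left(1+Yg_{0}\right)^{n}}=\left(-1\right)^{n}\sum_{j=1}^{n}\frac{\left(-1\right)^{j-1}C_{n-1}^{j-1}g_{0}^{-\left(n-1\right)}}{\left(1+Yg_{0}\right)^{j}}$, so that the pole order $j$ receives contributions only from indices $n\geqslant j$, for which the jet order is $k-n+1\leqslant k-j+1$. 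The bound is thus manifest in each term, with explicit universal coefficients $p_{k,0,j}^{q}=\left(-1\right)^{j-1}\sum_{n=j}^{k}\frac{\left(-1\right)^{n}C_{n-1}^{j-1}}{g_{0}^{n-1}}\left\langle g^{\prime}g^{k-1}\left(g-g_{0}\right)^{n-1},u^{k-1}\right\rangle$. To close your proof you must establish precisely this correlation between pole order in $Y$ and vanishing order in $u$ (or an equivalent structural fact about $N_{q}$); that is the computation your proposal postpones, and it is the heart of the lemma.
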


\noindent\textbf{Remarque. }Dans le cas o\`{u} $Q_{\infty}\cap
\operatorname*{Sing}Q\neq\varnothing$, la formule~(\ref{F/ Binfini}) devient
$B_{\infty}=\prod\limits_{q\in Q_{\infty}}\left(  1+Yb_{q}\right)
^{\nu\left(  q\right)  }$ o\`{u} $\nu\left(  q\right)  $ d\'{e}signe le nombre
de branches de $Q$ en $q$, (\ref{F/ p11}) reste inchang\'{e}e et dans
(\ref{F/ p10}), il faut remplacer $g_{1}^{q}$ par $\sum\limits_{B}g_{1}^{B,q}$
o\`{u} la sommation se fait sur un jeu complet de branches de $Q$ en $q$ et
$g_{1}^{B,q}=\left(  g^{B}\right)  ^{\prime}\left(  0\right)  $, $g^{B}$
d\'{e}signant la fonction holomorphe telle qu'au voisinage de $0$, une
\'{e}quation de la branche $B$ est $u_{1}=g_{1}^{B}\left(  u_{0}\right)  $.

\begin{proof}
Supposons que (\ref{F/ pkm}) soit v\'{e}rifi\'{e}e pour un entier $k$ non nul.
Alors%
\begin{align*}
P_{k+1}  &  =P_{k+1}\left(  0,Y\right)  +\frac{k+1}{k}\left(  \sum
\limits_{0\leqslant m\leqslant k-1}p_{k,m}^{\prime}\frac{X^{m+1}}{m+1}%
+p_{k,k}^{\prime}\frac{X^{k+1}}{k+1}\right) \\
&  =p_{k+1,0}+\sum\limits_{0\leqslant m\leqslant k-1}\frac{k+1}{\left(
m+1\right)  !\left(  k-m\right)  }p_{k-m,0}^{\left(  m+1\right)  }%
X^{m+1}+\frac{1}{k!}p_{1,1}^{\left(  k\right)  }X^{k+1}\\
&  =p_{k+1,0}+\sum\limits_{1\leqslant m\leqslant k}\frac{k+1}{\left(
m+1\right)  !\left(  k+1-m\right)  }p_{k+1-m,0}^{\left(  m\right)  }%
X^{m}+\frac{1}{k!}p_{1,1}^{\left(  k\right)  }%
\end{align*}
Ce qui prouve (\ref{F/ pkm}) par r\'{e}currence.

Soit maintenant $k\in\mathbb{N}$. Dans les coordonn\'{e}es affines $\left(
u_{0},u_{1}\right)  =\left(  \frac{w_{0}}{w_{2}},\frac{w_{1}}{w_{2}}\right)  $
de $\mathbb{CP}_{2}$, $\Omega_{z}^{k}$ s'\'{e}crit%
\[
\Omega_{z}^{k}=\left(  \frac{u_{1}}{u_{0}}\right)  ^{k}\frac{d\frac
{xu_{0}+yu_{1}+1}{u_{0}}}{\frac{xu_{0}+yu_{1}+1}{u_{0}}}=\left(  \frac{u_{1}%
}{u_{0}}\right)  ^{k}\left(  \frac{xdu_{0}+ydu_{1}}{xu_{0}+yu_{1}+1}%
-\frac{du_{0}}{u_{0}}\right)  .
\]
On fixe un point $q$ de $Q_{\infty}$ et pour simplifier les notations, on
\'{e}crit $g$ au lieu de $g^{q}$ (et donc $g_{\nu}$ pour $g_{\nu}^{q}$) et $u$
pour $u_{0}$ pour quelque temps. Au voisinage de $q$ dans $Q$, la forme
$\eta^{\ast}\Omega_{z}^{k}$ s'\'{e}crit dans la coordonn\'{e}e $u$ sous la
forme%
\[
\eta^{\ast}\Omega_{z}^{k}=\left(  \frac{\left(  x+yg^{\prime}\right)  g^{k}%
}{u^{k}\left(  1+xu+yg\right)  }-\frac{g^{k}}{u^{k+1}}\right)  du.
\]
Notant par $\left\langle f,u^{\nu}\right\rangle $ le coefficient de $u^{\nu}$
dans la s\'{e}rie de Taylor en $0$ d'une fonction $f$ holomorphe au voisinage
de $0$, on obtient
\[
P_{k}^{q}\left(  z\right)  \overset{d\acute{e}f}{=}\operatorname*{Res}\left(
\eta^{\ast}\Omega_{z}^{k},q\right)  =\operatorname*{Res}\left(  \frac{\left(
x+yg^{\prime}\right)  g^{k}}{\left(  1+xu+yg\right)  u^{k}},0\right)
-\left\langle g^{k},u^{k}\right\rangle .
\]
En particulier $P_{0}^{q}\left(  z\right)  =-1$ et donc $P_{0}%
=-\operatorname{Card}Q_{\infty}$. Supposons d\'{e}sormais $k\geqslant1$.
Notons alors que
\[
\frac{yg^{\prime}g^{k}}{1+xu+yg}=\left(  1-\frac{1+xu}{1+xu+yg}\right)
g^{\prime}g^{k-1}%
\]
et que si $g^{\prime}g^{k-1}=\sum\limits_{n\in\mathbb{N}}\alpha_{n}u^{n}$,
$\frac{1}{k}\left(  g^{k}-g_{0}^{k}\right)  =\sum\limits_{n\in\mathbb{N}%
^{\ast}}\frac{\alpha_{n-1}}{n}u^{n}$, ce qui livre
\[
\operatorname*{Res}\left(  \frac{g^{k}}{u^{k+1}}du,0\right)  =k\frac
{\alpha_{k-1}}{k}=\operatorname*{Res}\left(  \frac{g^{\prime}g^{k-1}}{u^{k}%
}du,0\right)
\]
Par cons\'{e}quent,%
\begin{align*}
P_{k}^{q}\left(  z\right)   &  =\operatorname*{Res}\left(  \frac{xg^{k}%
}{\left(  1+xu+yg\right)  u^{k}},0\right)  -\operatorname*{Res}\left(
\frac{1+xu}{\left(  1+xu+yg\right)  u^{k}}g^{\prime}g^{k-1},0\right) \\
&  =\operatorname*{Res}\left(  \frac{x\left(  g-ug^{\prime}\right)
-g^{\prime}}{\left(  1+xu+yg\right)  u^{k}}g^{k-1},0\right)  .
\end{align*}
Puisque $g-g_{0}=O\left(  u\right)  $, il vient par ailleurs pour $u$ assez
petit%
\[
\frac{1}{1+xu+yg}=\frac{\left(  1+yg_{0}\right)  ^{-1}}{1+\frac{xu+y\left(
g-g_{0}\right)  }{1+yg_{0}}}=\sum\limits_{n\in\mathbb{N}^{\ast}}\frac{\left(
-1\right)  ^{n-1}}{\left(  1+yg_{0}\right)  ^{n}}\left[  xu+y\left(
g-g_{0}\right)  \right]  ^{n-1}.
\]
Or pour tout $n\in\mathbb{N}^{\ast}$%
\begin{align*}
&  \left[  x\left(  g-ug^{\prime}\right)  -g^{\prime}\right]  g^{k-1}\left[
xu+y\left(  g-g_{0}\right)  \right]  ^{n-1}\\
&  =\sum\limits_{m=0}^{n-1}C_{n-1}^{m}g^{k-1}\left(
\begin{array}
[c]{l}%
x^{m+1}y^{n-1-m}\left(  g-ug^{\prime}\right)  \left(  g-g_{0}\right)
^{n-1-m}u^{m}\\
-g^{\prime}x^{m}y^{n-1-m}\left(  g-g_{0}\right)  ^{n-1-m}u^{m}%
\end{array}
\right) \\
&  =%
\begin{array}
[c]{l}%
\sum\limits_{m=1}^{n}C_{n-1}^{m-1}x^{m}y^{n-m}g^{k-1}\left(  g-ug^{\prime
}\right)  \left(  g-g_{0}\right)  ^{n-m}u^{m-1}\\
-\sum\limits_{m=0}^{n-1}C_{n-1}^{m}x^{m}y^{n-1-m}g^{\prime}g^{k-1}\left(
g-g_{0}\right)  ^{n-1-m}u^{m}%
\end{array}
\\
&  =%
\begin{array}
[c]{l}%
-y^{n-1}g^{\prime}g^{k-1}\left(  g-g_{0}\right)  ^{n-1}+x^{n}g^{k-1}\left(
g-ug^{\prime}\right)  u^{n-1}\\
+\sum\limits_{m=1}^{n-1}x^{m}y^{n-1-m}\left(
\begin{array}
[c]{l}%
yC_{n-1}^{m-1}g^{k-1}\left(  g-ug^{\prime}\right)  \left(  g-g_{0}\right) \\
-C_{n-1}^{m}g^{\prime}g^{k-1}u
\end{array}
\right)  \left(  g-g_{0}\right)  ^{n-1-m}u^{m-1}%
\end{array}
\end{align*}
Donc
\begin{align*}
&  P_{k}^{q}\left(  z\right) \\
&  =-\sum\limits_{n=1}^{k}\frac{\left(  -1\right)  ^{n}}{\left(
1+yg_{0}\right)  ^{n}}\left(
\begin{array}
[c]{l}%
-y^{n-1}\left\langle g^{\prime}g^{k-1}\left(  g-g_{0}\right)  ^{n-1}%
,u^{k-1}\right\rangle +x^{n}\left\langle g^{k-1}\left(  g-ug^{\prime}\right)
,u^{k-n}\right\rangle \\
+\sum\limits_{m=1}^{n-1}x^{m}y^{n-1-m}\left\langle \left(
\begin{array}
[c]{l}%
yC_{n-1}^{m-1}g^{k-1}\left(  g-ug^{\prime}\right)  \left(  g-g_{0}\right) \\
-C_{n-1}^{m}g^{\prime}g^{k-1}u
\end{array}
\right)  \left(  g-g_{0}\right)  ^{n-1-m},u^{k-m}\right\rangle
\end{array}
\right)
\end{align*}
D'o\`{u} $P_{k}^{q}\left(  z\right)  =\sum\limits_{m=0}^{k}p_{k,m}^{q}\left(
y\right)  x^{m}$ avec
\[
p_{k,0}^{q}=\sum\limits_{n=1}^{k}\frac{\left(  -1\right)  ^{n}Y^{n-1}}{\left(
1+Yg_{0}\right)  ^{n}}\left\langle g^{\prime}g^{k-1}\left(  g-g_{0}\right)
^{n-1},u^{k-1}\right\rangle ,~~~p_{k,k}^{q}=\frac{\left(  -1\right)
^{k+1}g_{0}^{k}}{\left(  1+Yg_{0}\right)  ^{k}}%
\]
et pour $1\leqslant m\leqslant k-1$,%
\begin{align*}
&  p_{k,m}^{q}\\
&  =-\sum\limits_{n=m+1}^{k}\frac{\left(  -1\right)  ^{n}Y^{n-1-m}}{\left(
1+Yg_{0}\right)  ^{n}}\left\langle \left(  YC_{n-1}^{m-1}g^{k-1}\left(
g-ug^{\prime}\right)  \left(  g-g_{0}\right)  +C_{n-1}^{m}g^{\prime}%
g^{k-1}u\right)  \left(  g-g_{0}\right)  ^{n-1-m},u^{k-m}\right\rangle
\end{align*}
En particulier,
\[
p_{1,0}^{q}=\frac{-g_{1}}{1+Yg_{0}}~~\&~~p_{1,1}^{q}=\frac{g_{0}}{1+Yg_{0}}.
\]
Par ailleurs, pour tout $n\in\mathbb{N}$,
\[
\frac{\left(  -1\right)  ^{n}Y^{n-1}}{\left(  1+Yg_{0}\right)  ^{n}}%
=\frac{\left(  -1\right)  ^{n}}{g_{0}^{n-1}\left(  1+Yg_{0}\right)  }\left(
1-\frac{1}{1+Yg_{0}}\right)  ^{n-1}=\left(  -1\right)  ^{n}\sum\limits_{j=1}%
^{n}\frac{\left(  -1\right)  ^{j-1}C_{n-1}^{j-1}g_{0}^{-\left(  n-1\right)  }%
}{\left(  1+Yg_{0}\right)  ^{j}},
\]
D'o\`{u}
\begin{align*}
p_{k,0}^{q}  &  =\sum\limits_{j=1}^{k}\frac{\left(  -1\right)  ^{j-1}}{\left(
1+Yg_{0}\right)  ^{j}}\sum\limits_{n=j}^{k}\frac{\left(  -1\right)
^{n}C_{n-1}^{j-1}}{g_{0}^{n-1}}\left\langle g^{\prime}g^{k-1}\left(
g-g_{0}\right)  ^{n-1},u^{k-1}\right\rangle \\
&  =\frac{-g_{1}^{k}}{\left(  1+Yg_{0}\right)  ^{k}}+\sum\limits_{j=1}%
^{k-1}\frac{\left(  -1\right)  ^{j-1}}{\left(  1+Yg_{0}\right)  ^{j}}%
\sum\limits_{n=j}^{k}\frac{\left(  -1\right)  ^{n}C_{n-1}^{j-1}}{g_{0}^{n-1}%
}\left\langle g^{\prime}g^{k-1}\left(  g-g_{0}\right)  ^{n-1},u^{k-1}%
\right\rangle
\end{align*}
On remarque que $\left\langle g^{\prime}g^{k-1}\left(  g-g_{0}\right)
^{k-1},u^{k-1}\right\rangle =g_{1}g_{0}^{k-1}g_{1}^{k-1}=g_{1}^{k}g_{0}^{k-1}
$ et que%
\begin{align*}
&  \left\langle g^{\prime}g^{k-1}\left(  g-g_{0}\right)  ^{k-2},u^{k-1}%
\right\rangle \\
&  =\left(  g_{1}+2g_{2}u+O\left(  u^{2}\right)  \right)  \left(  g_{0}%
+g_{1}u+O\left(  u^{2}\right)  \right)  ^{k-1}\left(  g_{1}u+g_{2}%
u^{2}+O\left(  u^{3}\right)  \right)  ^{k-2}\\
&  =\left(  g_{1}+2g_{2}u\right)  \left(  g_{0}^{k-1}+\left(  k-1\right)
g_{0}^{k-2}g_{1}u\right)  \left(  g_{1}^{k-2}u^{k-2}+\left(  k-2\right)
g_{1}^{k-3}g_{2}u^{k-1}\right)  +O\left(  u^{k}\right) \\
&  =\left(  g_{1}g_{0}^{k-1}+\left(  2g_{2}g_{0}^{k-1}+\left(  k-1\right)
g_{0}^{k-2}g_{1}^{2}\right)  u\right)  \left(  g_{1}^{k-2}u^{k-2}+\left(
k-2\right)  g_{1}^{k-3}g_{2}u^{k-1}\right)  +O\left(  u^{k}\right) \\
&  =g_{1}^{k-1}g_{0}^{k-1}u^{k-2}+\left[  g_{1}g_{0}^{k-1}\left(  k-2\right)
g_{1}^{k-3}g_{2}+\left[  2g_{2}g_{0}^{k-1}+\left(  k-1\right)  g_{0}%
^{k-2}g_{1}^{2}\right]  g_{1}^{k-2}\right]  u^{k-1}+O\left(  u^{k}\right) \\
&  =g_{1}^{k-1}g_{0}^{k-1}u^{k-2}+\left[  kg_{0}^{k-1}g_{1}^{k-2}g_{2}+\left(
k-1\right)  g_{0}^{k-2}g_{1}^{k}\right]  u^{k-1}+O\left(  u^{k}\right)
\end{align*}
ce qui donne%
\[
\frac{\left(  -1\right)  ^{k}C_{k-1}^{k-1}}{g_{0}^{k-1}}\left\langle
g^{\prime}g^{k}\left(  g-g_{0}\right)  ^{n-1},u^{k-1}\right\rangle =\left(
-1\right)  ^{k}g_{1}^{k}%
\]
et%
\begin{align*}
&  \sum\limits_{n=k-1}^{k}\frac{\left(  -1\right)  ^{n}C_{n-1}^{k-2}}%
{g_{0}^{n-1}}\left\langle g^{\prime}g^{k}\left(  g-g_{0}\right)
^{n-1},u^{k-1}\right\rangle \\
&  =\frac{\left(  -1\right)  ^{k}\left(  k-1\right)  }{g_{0}^{k-1}%
}\left\langle g^{\prime}g^{k}\left(  g-g_{0}\right)  ^{k-1},u^{k-1}%
\right\rangle +\frac{\left(  -1\right)  ^{k-1}}{g_{0}^{k-2}}\left\langle
g^{\prime}g^{k}\left(  g-g_{0}\right)  ^{k-2},u^{k-1}\right\rangle \\
&  =\left(  -1\right)  ^{k}\left(  k-1\right)  g_{1}^{k}+\frac{\left(
-1\right)  ^{k-1}}{g_{0}^{k-2}}\left[  kg_{0}^{k-1}g_{1}^{k-2}g_{2}+\left(
k-1\right)  g_{0}^{k-2}g_{1}^{k}\right] \\
&  =\left(  -1\right)  ^{k}\left(  \left(  k-1\right)  g_{1}^{k}-\left[
kg_{0}g_{1}^{k-2}g_{2}+\left(  k-1\right)  g_{1}^{k}\right]  \right)
=-k\left(  -1\right)  ^{k}g_{0}g_{1}^{k-2}g_{2}%
\end{align*}
D'o\`{u}
\[
p_{k,0}^{q}=\frac{-g_{1}^{k}}{\left(  1+Yg_{0}\right)  ^{k}}+\frac
{-kg_{0}g_{1}^{k-2}g_{2}}{\left(  1+Yg_{0}\right)  ^{k-1}}+\sum\limits_{j=1}%
^{k-2}\frac{p_{k,0,j}^{q}}{\left(  1+Yg_{0}\right)  ^{j}}%
\]
avec%
\[
p_{k,0,j}^{q}=\left(  -1\right)  ^{j-1}\sum\limits_{n=j}^{k}\frac{\left(
-1\right)  ^{n}C_{n-1}^{j-1}}{g_{0}^{n-1}}\left\langle g^{\prime}%
g^{k-1}\left(  g-g_{0}\right)  ^{n-1},u^{k-1}\right\rangle
\]
En sommant sur $q$ d\'{e}crivant $Q_{\infty}$ les relations obtenues, on
produit celles annonc\'{e}es dans le lemme.
\end{proof}

Une cons\'{e}quence non vitale pour notre but mais somme toute remarquable,
est le r\'{e}sultat de confinement suivant.

\begin{corollary}
\label{C/ confinementZeroBinfini}L'ensemble $R_{\infty}$ des racines du
polyn\^{o}me $B_{\infty}$ d\'{e}fini par la formule (\ref{F/ Binfini}) est
contenu dans $\rho\overline{\mathbb{D}}$.
\end{corollary}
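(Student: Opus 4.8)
The plan is to translate the statement into a holomorphy property of the single rational function $p_{1,1}$ and then to prove it by a maximum--principle argument at the points at infinity. By Lemma~\ref{L/ Pk}, formula~(\ref{F/ p11}), we have $p_{1,1}=B_{\infty}^{\prime}/B_{\infty}$, so the roots of $B_{\infty}$ are exactly the poles of $p_{1,1}$; since $p_{1,1}=\sum_{q}b_{q}/(1+Yb_{q})$ has at $-1/b_{q}$ a pole with residue equal to the multiplicity $\nu(q)\geqslant1$, no cancellation occurs and $R_{\infty}$ is precisely the pole set of $p_{1,1}$ (a factor with $b_{q}=0$ is constant and contributes no root). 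Hence proving the corollary amounts to showing that $p_{1,1}$ is holomorphic on $\mathbb{C}\backslash\rho\overline{\mathbb{D}}$, i.e. that $|1/b_{q}|\leqslant\rho$ for every $q\in Q_{\infty}$ with $b_{q}\neq0$.

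First I would reformulate this bound intrinsically. For $q=(0:b_{q}:1)\in Q_{\infty}$ one has $|1/b_{q}|=|(w_{2}/w_{1})(q)|$, so the claim reads: the meromorphic function $\psi=w_{2}/w_{1}$ on $\overline{Q}$ satisfies $|\psi(q)|\leqslant\rho$ at each $q\in Q_{\infty}$. Now $bQ\subset\{w_{0}w_{1}w_{2}\neq0\}$, so $\psi$ is finite and continuous on $bQ$ and $|\psi|\leqslant\rho$ there by the definition~(\ref{F/ rhoZ}) of $\rho$; moreover every $q\in Q_{\infty}\subset\operatorname{Reg}Q$ is an interior point of $Q$, since $Q_{\infty}\cap bQ=\varnothing$. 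The natural tool is therefore the maximum--modulus principle for $\psi$ along the branches of $\overline{Q}$: were $\psi$ holomorphic on the relevant part of $Q$, the inequality $|\psi(q)|\leqslant\max_{bQ}|\psi|=\rho$ would be immediate, and we would be done.

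The hard part is that $\psi$ need not be holomorphic on $Q$: its poles are the points of $Q\cap\{w_{1}=0\}$, which are generically present, so the naive maximum principle does not apply directly. To circumvent this I would use the indicatrices. By Proposition~\ref{P/ DH1997}, $G_{1}$ is holomorphic on all of $U$ (it is a boundary integral over $bQ$ of a form that is smooth as soon as $L_{z}\cap bQ=\varnothing$), while on $U_{\operatorname{reg}}$ one has $G_{1}=N_{h,1}+P_{1}$ with $P_{1}=p_{1,0}+p_{1,1}X$. For $|y|>\rho$ the definition~(\ref{F/ Z}) of $Z$ via $m(y)$ guarantees $L_{z}\cap bQ=\varnothing$; letting $x\to\infty$ along the slice $U_{y}$ one finds $G_{1}(x,y)\to0$, the leading boundary integral vanishing because $dt\wedge ds=0$ on the one--dimensional curve $Q$ (here $t=w_{1}/w_{0}$, $s=w_{2}/w_{0}$). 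This yields $p_{1,1}(y)=-\lim_{x\to\infty}N_{h,1}(x,y)/x$, and the whole question reduces to showing that for $|y|>\rho$ no affine sheet of $N_{h,1}$ escapes to $Q_{\infty}$, equivalently that $L_{z}$ meets $Q_{\infty}$ only when $y\in R_{\infty}$. This last point is exactly the intrinsic bound above, which I expect to be the main obstacle and which I would finally settle by the argument principle for $\psi$ on $\overline{Q}$: because $\psi(bQ)\subset\{|\zeta|\leqslant\rho\}$, for $|c|>\rho$ the values $c$ and $\infty$ lie in the same unbounded component of $\mathbb{CP}_{1}\backslash\psi(bQ)$, so the counting function of $\psi$ is constant there; combined with the fact, recorded in Lemma~\ref{L/ Pk}, that $P_{k}$ depends only on the germs of $Q$ at $Q_{\infty}$, this separates the interior poles of $\psi$ from the points of $Q_{\infty}$ and forces $|\psi(q)|\leqslant\rho$, i.e. $R_{\infty}\subset\rho\overline{\mathbb{D}}$.
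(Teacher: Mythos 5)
Your opening reductions are correct and are in fact a different framing from the paper's: identifying $R_{\infty}$ with the pole set of $p_{1,1}=B_{\infty}^{\prime}/B_{\infty}$ (log-derivative, positive integer residues, no cancellation) and restating the corollary as the bound $\left\vert w_{2}/w_{1}\right\vert \leqslant\rho$ at each point of $Q_{\infty}$ are both sound. The paper never introduces $\psi=w_{2}/w_{1}$: its proof fixes an arbitrary $z_{\ast}\in U$, applies Stokes to $\Omega_{z}^{1}$ on $Q$ deprived of small disks around $L_{z_{\ast}}\cap Q$, deduces that $z\mapsto\sum_{q\in Q_{\infty}}\operatorname{Res}\left(  \eta^{\ast}\Omega_{z}^{1},q\right)  $ is holomorphic near $z_{\ast}$, identifies this function with the rational function $P_{1}=A/B_{\infty}+XB_{\infty}^{\prime}/B_{\infty}$ on the dense set $U_{\operatorname{reg}}$, and concludes $B_{\infty}\left(  y_{\ast}\right)  \neq0$; since the $y$-projection of $Z\subset U$ covers $\mathbb{C}\backslash\rho\overline{\mathbb{D}}$, the confinement follows.

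However, your final step has a genuine gap, on two counts. First, the claimed ``reduction'' is circular: a point $q=\left(  0:b_{q}:1\right)  $ lies on $L_{\left(  x,y\right)  }$ if and only if $1+yb_{q}=0$, \emph{independently of} $x$, so ``$L_{z}$ meets $Q_{\infty}$ only when $y\in R_{\infty}$'' is an identity, and what remains to show --- that this never happens for $\left\vert y\right\vert >\rho$ --- is verbatim the corollary; nothing was reduced. Second, the argument-principle finish proves nothing: constancy of the counting function $n\left(  c\right)  $ of $\psi$ on the component of $\infty$ in $\mathbb{CP}_{1}\backslash\psi\left(  bQ\right)  $ only says that each value $c$ with $\left\vert c\right\vert >\rho$ is taken in $Q$ exactly as often as $\psi$ has poles there; since, as you yourself note, $\psi$ may have interior poles on $Q\cap\left\{  w_{1}=0\right\}  $, having $n\equiv1$ on that component is perfectly compatible with $\left\vert \psi\left(  q\right)  \right\vert >\rho$ at some $q\in Q_{\infty}$, and Lemma~\ref{L/ Pk} (which only says $P_{k}$ is determined by the germs of $Q$ at $Q_{\infty}$) contributes nothing toward ``separating'' poles from $Q_{\infty}$. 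Concretely, take the line $100w_{1}=100w_{0}+w_{2}$ parametrized by $\zeta\mapsto\left(  \zeta:\zeta+1:100\right)  $ and $Q=\left\{  \left\vert \zeta+1\right\vert <100\right\}  $: all the standing hypotheses around (\ref{F/ rhoZ}) hold, $\rho=1$, $\psi=100/\left(  \zeta+1\right)  $ has its single pole at $\zeta=-1\in Q$, $n\left(  c\right)  =1$ for every $\left\vert c\right\vert >1$ with no contradiction anywhere --- yet $Q_{\infty}=\left\{  \left(  0:1:100\right)  \right\}  $ gives $b_{q}=1/100$ and $\psi\left(  q\right)  =100$, i.e. $R_{\infty}=\left\{  -100\right\}  \not\subset\rho\overline{\mathbb{D}}$. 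So no argument of the shape you propose (boundary values of $\psi$ plus counting functions) can deliver the bound; some further input on $Q$ is indispensable. Note that this configuration also probes the delicate point of the paper's own proof, namely the assertion that $L_{z_{\ast}}\cap Q\subset\left\{  w_{0}\neq0\right\}  $ for \emph{every} $z_{\ast}\in U$: when $y_{\ast}\in R_{\infty}$ a point of $Q_{\infty}$ sits on $L_{z_{\ast}}$, the corresponding residue must be absorbed into a disk integral, and the polar parts of $P_{1}$ and of the escaping sheet $h_{j_{0}}$ then cancel in $G_{1}$, so holomorphy of $G_{1}$ alone forces nothing there.
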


\begin{proof}
Pour \'{e}tablir le confinement annonc\'{e}, il faut rentrer un peu dans le
d\'{e}tail de la preuve de la proposition~\ref{P/ DH1997}. Notons $v=\left(
\frac{w_{1}}{w_{0}},\frac{w_{2}}{w_{0}}\right)  $ le couple des
coordonn\'{e}es naturelles de $\left\{  w_{0}\neq0\right\}  $. Soit $z_{\ast
}=\left(  x_{\ast},y_{\ast}\right)  \in U$. La droite $L_{z_{\ast}}$ rencontre
donc $Q$ en au moins un point et puisque $L_{z_{\ast}}\cap bQ=\varnothing$,
$L_{z_{\ast}}\cap Q$ ne poss\`{e}de pas de point d'accumulation dans $bQ$ ni
dans $Q$ car sinon $Q$ contiendrait un ouvert non vide de la droite
$L_{z_{\ast}}$ et celle-ci aurait \`{a} rencontrer $bQ$. $L_{z_{\ast}}\cap Q $
est donc un ensemble fini, contenu dans $\left\{  w_{0}\neq0\right\}  $ car
$\left(  0:1:0\right)  \notin Q$. Si $w_{\ast}=\left(  1:v_{\ast1}:v_{\ast
2}\right)  \in Q$, on pose $D\left(  w_{\ast},\varepsilon\right)  =\left\{
\left(  1:v_{1}:v_{2}\right)  ;~\left\vert v-v_{\ast}\right\vert
<\varepsilon\right\}  $ et $D_{Q}\left(  w_{\ast},\varepsilon\right)  =Q\cap
D\left(  w_{\ast},\varepsilon\right)  $. S\'{e}lectionnons $\varepsilon
\in\mathbb{R}_{+}^{\ast}$ tel que les disques $\overline{D\left(  q_{\ast
},\varepsilon\right)  }$, $q_{\ast}\in L_{z_{\ast}}\cap Q$, soient deux \`{a}
disjoints. Il existe alors un voisinage $U_{\ast}$ de $z_{\ast}$ tel que
$L_{z}\cap Q\subset Q_{\varepsilon}\underset{q_{\ast}\in L_{z_{\ast}}\cap
Q}{\cup}D\left(  q_{\ast},\varepsilon\right)  $ lorsque $z\in U_{\ast}$. Par
cons\'{e}quent, lorsque $z\in U_{\ast}$, on peut appliquer$^{(}$\footnote{Dans
le cas o\`{u} $Q$ a des singularit\'{e}s quelconques, celle-ci reste valable
mais ce n'est pas \'{e}l\'{e}mentaire comme dans le cas nodal.}$^{)}$ la
formule de Stokes \`{a} la forme $\Omega_{z}^{1}$ et $Q\backslash
Q_{\varepsilon}$. On obtient avec les notations de la
proposition~\ref{P/ DH1997}
\[
G_{1}\left(  z\right)  =\sum\limits_{q_{\ast}\in L_{z_{\ast}}\cap Q}\frac
{1}{2\pi i}\int_{\partial D_{Q}\left(  q_{\ast},\varepsilon\right)  }%
\Omega_{z}^{1}+\sum\limits_{q\in Q_{\infty}}\operatorname*{Res}\left(
\eta^{\ast}\Omega_{z}^{k},q\right)
\]
En prenant $\varepsilon$ suffisamment petit, chaque int\'{e}grale ci-dessus se
d\'{e}compose en une somme d'int\'{e}grales de la forme $\int_{B\cap\partial
D\left(  q_{\ast},\varepsilon\right)  }\Omega_{z}^{1}$ o\`{u} $B$ est une
branche de $Q$ en $q_{\ast}$. Les th\'{e}or\`{e}mes classiques de
r\'{e}gularit\'{e} s'appliquant \`{a} ces derni\`{e}res, on en d\'{e}duit que
$z\mapsto\sum\limits_{q\in Q_{\infty}}\operatorname*{Res}\left(  \eta^{\ast
}\Omega_{z}^{k},q\right)  $ est holomorphe au voisinage de $z_{\ast}$. Par
ailleurs, les fractions rationnelles $P_{k}$ ne d\'{e}pendant pas du point de
$U_{\operatorname{reg}}$ auquel on applique la proposition~\ref{P/ DH1997}, on
sait aussi que si $z=\left(  x,y\right)  \in U_{\operatorname{reg}}$,
$\sum\limits_{q\in Q_{\infty}}\operatorname*{Res}\left(  \eta^{\ast}\Omega
_{z}^{k},q\right)  =P_{1}\left(  z\right)  =\frac{A\left(  y\right)
}{B_{\infty}\left(  y\right)  }+x\frac{B_{\infty}^{\prime}\left(  y\right)
}{B_{\infty}\left(  y\right)  }$ o\`{u} $A\in\mathbb{C}\left[  Y\right]  $.
Ceci force $B\left(  y_{\ast}\right)  \neq0$. Etant donn\'{e} que $Z\subset
U$, on en d\'{e}duit que $B_{\infty}\subset\rho\overline{\mathbb{D}}$.
\end{proof}

\noindent\textbf{Remarque. }En fait, $B_{\infty}$ est contenu dans l'image de
$\mathbb{C}\backslash\overline{U}$ par la projection $\mathbb{C}^{2}\ni\left(
x,y\right)  \mapsto y$.

\subsection{D\'{e}veloppement des indicatrices}

La forme des fractions $P_{k}$ d\'{e}gag\'{e}e par le lemme~\ref{L/ Pk}
sugg\`{e}re d'\'{e}tudier les fonctions $G_{k}$ sur le domaine $Z$ d\'{e}finit
par (\ref{F/ Z}).

\begin{lemma}
\label{L/ serieLdeG} On note $\delta$ l'entier relatif $\frac{1}{2\pi i}%
\int_{\partial Q}\frac{d\left(  w_{1}/w_{0}\right)  }{w_{1}/w_{0}}$. Pour tout
$k\in\mathbb{N}$, $G_{k}$ admet sur $Z$ un d\'{e}veloppement en s\'{e}rie de
Laurent de la forme
\begin{equation}
G_{k}\left(  x,y\right)  =\sum\limits_{m\in\mathbb{N}}\frac{G_{k,m}\left(
x\right)  }{y^{m}}=\left(  -1\right)  ^{k}\delta\frac{x^{k}}{y^{k}}%
+\sum\limits_{m\in\mathbb{N}^{\ast}}\frac{\widetilde{G}_{k,m}\left(  x\right)
}{y^{m}} \label{F/ devGk}%
\end{equation}
avec convergence normale sur tout compact de $Z$ et o\`{u} pour tout
$m\in\mathbb{N}^{\ast}$, $\widetilde{G}_{k,m}=\sum\limits_{0\leqslant
n<m}G_{k,m}^{n}X^{n}$ est un polyn\^{o}me degr\'{e} au plus $m-1$,
$G_{k,0}=\delta_{k,0}\delta$, $G_{k,m}=\delta_{k,m}\left(  -1\right)
^{m}\delta^{m}X^{m}+\widetilde{G}_{k,m}\overset{d\acute{e}f}{=}\sum
\limits_{0\leqslant n\leqslant m}G_{k,m}^{n}X^{n}$. En particulier
\begin{equation}
G_{1}\left(  x,y\right)  =\frac{G_{1,1}^{0}-\delta x}{y}+\sum
\limits_{m\geqslant2}\frac{G_{k,m}\left(  x\right)  }{y^{m}} \label{F/ devG1}%
\end{equation}
avec $G_{1,1}^{0}=\frac{-1}{2\pi i}\int_{\partial Q}\frac{w_{2}}{w_{1}}%
d\frac{w_{1}}{w_{0}}+\frac{1}{2\pi i}\int_{\partial Q}\left(  \frac{w_{2}%
}{w_{0}}\right)  {}^{2}d\frac{w_{2}}{w_{0}}$.
\end{lemma}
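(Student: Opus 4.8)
The plan is to expand the Cauchy--Fantappi\'e kernel on $Z$ and read off the coefficients of the resulting series in $1/y$. Write $\alpha=w_1/w_0$ and $\beta=w_2/w_0$, so that on $\partial Q\subset\{w_0w_1w_2\neq0\}$ both are finite and nonvanishing, $\Omega_z^k=\alpha^k\frac{y\,d\alpha+d\beta}{x+y\alpha+\beta}$, and $\delta=\frac{1}{2\pi i}\int_{\partial Q}\frac{d\alpha}{\alpha}$ is the winding number of $\alpha$ along $\partial Q$. The two defining inequalities of $Z$ are exactly what is needed here: since $|x|<m(y)=\min_{\partial Q}|y\alpha+\beta|$ one has $|x/(y\alpha+\beta)|<1$ on $\partial Q$, so $\frac{1}{x+y\alpha+\beta}=\sum_{p\geq0}(-1)^p\frac{x^p}{(y\alpha+\beta)^{p+1}}$; and since $|y|>\rho=\max_{\partial Q}|\beta/\alpha|$ one has $|\beta/(y\alpha)|<1$ on $\partial Q$, so each $(y\alpha+\beta)^{-(p+1)}$ expands in powers of $1/y$. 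Substituting and integrating produces a series $\sum_m G_{k,m}(x)/y^m$, whose coefficients I must then identify and whose convergence I must control.

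The core is to identify $G_{k,m}$, and I would do this cleanly by fixing $x$, noting that $y\mapsto G_k(x,y)$ is holomorphic and bounded near $y=\infty$, and writing $G_{k,m}(x)=\frac{1}{2\pi i}\oint_{|y|=R}G_k(x,y)\,y^{m-1}\,dy$ for large $R$. Interchanging the two integrations and evaluating the inner contour integral in $y$ by the residue at the single pole $y_0=-(x+\beta)/\alpha$ gives, for $m\geq1$,
\[
G_{k,m}(x)=\frac{1}{2\pi i}\int_{\partial Q}\alpha^{k-1}\bigl(y_0^{\,m}\,d\alpha+y_0^{\,m-1}\,d\beta\bigr),\qquad y_0=-\tfrac{x+\beta}{\alpha},
\]
which is manifestly a polynomial in $x$ of degree $\leq m$, so $G_{k,m}=\sum_{0\leq n\leq m}G_{k,m}^nX^n$. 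Its top coefficient comes only from $\alpha^{k-1}y_0^{\,m}\,d\alpha$, whose leading term in $x$ is $(-1)^m x^m\alpha^{\,k-1-m}\,d\alpha$. Here is the elegant point: for $k\neq m$ the form $\alpha^{\,k-1-m}d\alpha$ is exact, hence integrates to $0$ over the cycle $\partial Q$, while for $k=m$ it is $d\alpha/\alpha$ and integrates to $2\pi i\,\delta$. Thus the $x^m$-coefficient of $G_{k,m}$ equals $(-1)^m\delta\,\delta_{k,m}$, which supplies precisely the isolated leading term $(-1)^k\delta\,x^k/y^k$ and shows $\deg\widetilde G_{k,m}\leq m-1$. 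The case $m=0$ I would treat directly through the boundary value $G_{k,0}=\lim_{y\to\infty}G_k(x,y)=\frac{1}{2\pi i}\int_{\partial Q}\alpha^{k-1}\,d\alpha=\delta_{k,0}\delta$.

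For $G_1$ the computation is explicit: taking $k=m=1$ the formula above gives $G_{1,1}(x)=-\frac{1}{2\pi i}\int_{\partial Q}\bigl(x\frac{d\alpha}{\alpha}+\frac{\beta}{\alpha}\,d\alpha\bigr)=-\delta x+G_{1,1}^0$ with $G_{1,1}^0=-\frac{1}{2\pi i}\int_{\partial Q}\frac{w_2}{w_1}\,d\frac{w_1}{w_0}$, while all higher coefficients assemble into $\sum_{m\geq2}G_{1,m}(x)/y^m$. The second integral appearing in the stated expression, $\frac{1}{2\pi i}\int_{\partial Q}(w_2/w_0)^2\,d(w_2/w_0)=\frac{1}{2\pi i}\int_{\partial Q}\beta^2\,d\beta$, vanishes because $\beta^2\,d\beta=d(\beta^3/3)$ is exact on the cycle $\partial Q$; so adding it changes nothing and the displayed formula for $G_{1,1}^0$ holds.

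The main obstacle is the convergence assertion, namely that $\sum_m G_{k,m}(x)/y^m$ converges normally on every compact subset of $Z$ and sums to $G_k$. The term-by-term interchange above is transparently licit only on the smaller set where the double geometric series converges absolutely, i.e. where $|x|+|\beta|<|y\alpha|$ on $\partial Q$; extending it to all of $Z$ requires a uniform estimate. On a compact $K\subset Z$ one has $|x|\leq\theta\,|y\alpha+\beta|$ on $\partial Q$ with $\theta<1$ and $|y|\geq\rho+\varepsilon$, whence $|x+y\alpha+\beta|\geq(1-\theta)|y\alpha+\beta|\geq(1-\theta)|\alpha|(|y|-\rho)$; these bounds give $|G_k(x,y)|\leq C$ on $K$ and, through the Cauchy estimates $|G_{k,m}(x)|\leq R^m\sup_{|y|=R}|G_k(x,\cdot)|$ applied on admissible circles $|y|=R$ contained in the slice of $Z$, the normal convergence on $K$ together with the identification of the sum as $G_k$ on the connected component of $U$ containing $Z$. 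The delicate step is checking that such circles can always be chosen with the geometric factor $R/|y|$ staying below $1$ for every $(x,y)\in K$ — that is, reconciling the pointwise radius of convergence, which is governed by $m(y)$, with the shape of $Z$ — and it is exactly here that the precise definition of $Z$ in terms of $m(y)$ and $\rho$ is indispensable.
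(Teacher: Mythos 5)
Your identification of the Laurent coefficients is correct and follows a genuinely different route from the paper's. The paper first expands $G_k(\cdot,y)$ in a Taylor series in $x$ on $D(0,m(y))$ at fixed $y$, then expands each coefficient $G_k^n(y)$ in powers of $1/y$ via the binomial series (using $|z_2/(yz_1)|\leqslant\rho/|y|<1$ on $bQ$), and reads off $G_{k,m}^n$, the vanishing for $n>m$ and the top term from the exactness of $z_1^{k-n-1}dz_1$ for $k\neq n$. You instead extract $G_{k,m}$ wholesale by the contour formula $\frac{1}{2\pi i}\oint_{|y|=R}G_k(x,y)y^{m-1}dy$ and a residue at $y_0=-(x+\beta)/\alpha$, obtaining the closed expression $G_{k,m}(x)=\frac{1}{2\pi i}\int_{\partial Q}\alpha^{k-1}\left(y_0^m\,d\alpha+y_0^{m-1}\,d\beta\right)$, from which the degree bound $\deg G_{k,m}\leqslant m$, the top coefficient $(-1)^m\delta\,\delta_{k,m}$ (exactness of $\alpha^{k-1-m}d\alpha$ for $k\neq m$), the value $G_{k,0}=\delta_{k,0}\delta$, and the formula for $G_{1,1}^0$ all drop out; your observation that the term $\frac{1}{2\pi i}\int_{\partial Q}\beta^2 d\beta$ in the statement vanishes by exactness is also correct. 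This is a clean and even more economical derivation of the algebraic content of the lemma than the paper's.

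However, the normal convergence on compact subsets of $Z$ — which is an explicit assertion of the lemma and the part where the definition of $Z$ must enter — is left unproven, and the mechanism you sketch to close it cannot work as described. A Laurent series about $y=\infty$ converges on an annulus $\{|y|>r\}$, whereas for fixed $x$ the slice $\{y:(x,y)\in Z\}=\{|y|>\rho,\ |x|<m(y)\}$ is in general \emph{not} an annulus, because $m(y)$ depends on the argument of $y$ and not only on $|y|$. Consequently, for $(x,y)\in K$ there need be no circle $|y'|=R<|y|$ with $\{x\}\times R\mathbb{T}$ contained in $Z$, so the Cauchy estimate $|G_{k,m}(x)|\leqslant R^m\sup_{|y'|=R}|G_k(x,y')|$ is unavailable at any radius below $|y|$; your uniform bound $|G_k|\leqslant C$ on $K$ controls the function but says nothing about the coefficients at the needed radii. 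What your own residue formula does yield is $|G_{k,m}(x)|\leqslant C\left(1+\sup_{bQ}|y_0|\right)\sup_{bQ}|y_0|^{m-1}$, hence normal convergence on compacts of $\left\{(x,y):\ |y|>\sup_{bQ}|x+\beta|/|\alpha|\right\}$; but this region neither contains $Z$ nor is contained in it in general (on $Z$ one controls $|x|$ against $|y\alpha+\beta|$, which does not prevent $|x+\beta|>|y||\alpha|$ at points of $bQ$ where $|\alpha|$ is small, and conversely), so an additional argument is required. For instance, on the product sets $Z_\omega=D(0,\tfrac12 m_\omega)\times(\mathbb{C}\setminus\rho_\omega\overline{\mathbb{D}})$ on which the lemma is actually applied later, one deduces from $m\geqslant m_\omega$ on $\{|y|\geqslant\rho_\omega\}$ that $|\beta|\leqslant\rho_\omega|\alpha|-m_\omega$ on $bQ$, whence $\sup_{bQ}|x+\beta|/|\alpha|<\rho_\omega\leqslant|y|$ and your estimate suffices. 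The paper itself handles the interchange by asserting normal convergence of the double series $\sum G_{k,m}^n x^n y^{-m}$ on compacts of $Z$ after justifying the two iterated expansions separately, so your unease about this step is well placed; but as written your proposal leaves precisely this assertion of the lemma without proof.
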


\begin{proof}
Fixons $k$ dans $\mathbb{N}$. Soit $y\in\mathbb{C}\backslash\rho
\overline{\mathbb{D}}$. La fonction $G_{k}\left(  .,y\right)  $ est holomorphe
sur $D_{y}=D\left(  0,m\left(  y\right)  \right)  $ et se d\'{e}veloppe en
s\'{e}rie enti\`{e}re sous la forme%
\[
\forall x\in D_{y},~G_{k}\left(  x,y\right)  =\sum\limits_{n\in\mathbb{N}%
}G_{k}^{n}\left(  y\right)  x^{n}%
\]
avec pour tout $n\in\mathbb{N}$%
\[
G_{k}^{n}\left(  y\right)  =\frac{1}{n!}\frac{1}{2\pi i}\left(  \frac
{\partial^{n}}{\partial x^{n}}\int_{\left(  \partial Q\right)  _{0}}z_{1}%
^{k}\frac{ydz_{1}+dz_{2}}{x+yz_{1}+z_{2}}\right)  \left\vert _{x=0}\right.
=\frac{\left(  -1\right)  ^{n}}{2\pi i}\int_{\left(  \partial Q\right)  _{0}%
}z_{1}^{k}\frac{ydz_{1}+dz_{2}}{\left(  yz_{1}+z_{2}\right)  ^{n+1}}.
\]
o\`{u} $\left(  \partial Q\right)  _{0}$ est l'image de $\partial Q$ dans les
coordonn\'{e}es naturelles $\left(  z_{1},z_{2}\right)  =\left(  \frac{w_{1}%
}{w_{0}},\frac{w_{2}}{w_{0}}\right)  $ de $\left\{  w_{0}\neq0\right\}  $.
Lorsque $w\in bQ$, $\left\vert \frac{w_{2}}{w_{1}}\right\vert \leqslant
\rho<\left\vert y\right\vert $ et donc $\left\vert \frac{z_{2}}{yz_{1}%
}\right\vert \leqslant\frac{\rho}{\left\vert y\right\vert }<1$ ce qui permet
d'\'{e}crire%
\[
\frac{1}{\left(  yz_{1}+z_{2}\right)  ^{n+1}}=\sum\limits_{m\in\mathbb{N}%
}C_{-n-1}^{m}\frac{z_{2}^{m}}{\left(  yz_{1}\right)  ^{n+1+m}}%
\]
o\`{u} $C_{-n-1}^{m}=\frac{\left(  -1\right)  ^{m}A_{n+m}^{m}}{m!}$ et avec
convergence normale sur tout compact de $\mathbb{C}\backslash\rho
\overline{\mathbb{D}}\times bQ$. D'o\`{u}%
\begin{align*}
G_{k}^{n}\left(  y\right)   &  =\left(  -1\right)  ^{n}\sum\limits_{m\in
\mathbb{N}}\frac{\left(  -1\right)  ^{m}A_{n+m}^{m}}{m!}\frac{1}{2\pi i}%
\int_{\left(  \partial Q\right)  _{0}}\frac{z_{1}^{k}z_{2}^{m}\left(
ydz_{1}+dz_{2}\right)  }{\left(  yz_{1}\right)  ^{n+1+m}}\\
&  =\sum\limits_{m\in\mathbb{N}}\frac{\left(  -1\right)  ^{n+m}A_{n+m}^{m}%
}{m!2\pi i}\int_{\left(  \partial Q\right)  _{0}}\left(  \frac{1}{y^{n+m}%
}z_{1}^{k-n-1-m}z_{2}^{m}dz_{1}+\frac{1}{y^{n+1+m}}z_{1}^{k-n-1-m}z_{2}%
^{m}dz_{2}\right)
\end{align*}
avec convergence normale sur tout compact de $\mathbb{C}\backslash
\rho\overline{\mathbb{D}}$. Etant donn\'{e} que $z_{1}^{k-n-1}dz_{1}=\frac
{1}{k-n}dz_{1}^{k-n}$ lorsque $k\neq n$, il vient $G_{k}^{n}\left(  y\right)
=\sum\limits_{m\geqslant n}\frac{G_{k,m}^{n}}{y^{m}}$ avec
\[
G_{k,n}^{n}=\frac{\left(  -1\right)  ^{n}\delta_{k,n}}{2\pi i}\int_{\left(
\partial Q\right)  _{0}}\frac{dz_{1}}{z_{1}}=\left(  -1\right)  ^{n}%
\delta_{k,n}g
\]
et
\[
G_{k,m}^{n}=\frac{\left(  -1\right)  ^{m}A_{m}^{m-n}}{m!2\pi i}\int_{\left(
\partial Q\right)  _{0}}z_{1}^{k-m-1}z_{2}^{m-n}dz_{1}-\frac{\left(
-1\right)  ^{m}A_{m-1}^{m-n-1}}{\left(  m-n-1\right)  !2\pi i}\int_{\left(
\partial Q\right)  _{0}}z_{1}^{k-m}z_{2}^{m-n+1}dz_{2}%
\]
lorsque $m>n$. La s\'{e}rie double de terme g\'{e}n\'{e}ral $G_{k,m}^{n}%
x^{n}y^{-m}$ \'{e}tant normalement convergente sur les compacts de $Z$, on
obtient%
\begin{align*}
G_{k}\left(  x,y\right)   &  =\sum\limits_{n\in\mathbb{N}}G_{k}^{n}\left(
y\right)  x^{n}=\sum\limits_{n\in\mathbb{N}}\left(  \frac{\left(  -1\right)
^{n}\delta_{k,n}g}{y^{n}}+\sum\limits_{m>n}\frac{G_{k,m}^{n}}{y^{m}}\right)
x^{n}\\
&  =\left(  -1\right)  ^{k}g\frac{x^{k}}{y^{k}}+\sum\limits_{m\in
\mathbb{N}^{\ast}}\frac{1}{y^{m}}\sum\limits_{0\leqslant n<m}G_{k,m}^{n}x^{n}%
\end{align*}
avec $G_{k,m}=\sum\limits_{0\leqslant n<m}G_{k,m}^{n}X^{n}$. D'o\`{u} les
relations annonc\'{e}es dans le lemme et%
\[
G_{1}\left(  x,y\right)  =-\delta\frac{x}{y}+\sum\limits_{m\in\mathbb{N}%
^{\ast}}\frac{\widetilde{G}_{1,m}\left(  x\right)  }{y^{m}}=\frac{G_{1,1}%
^{0}-\delta x}{y}+\sum\limits_{m\geqslant2}\frac{G_{1,m}\left(  x\right)
}{y^{m}}%
\]
avec $\widetilde{G}_{1,m}=\sum\limits_{0\leqslant n<m}G_{1,m}^{n}X^{n}$
\end{proof}

\begin{corollary}
\label{C/ pCst}Le nombre $p$ de fonctions $h_{1},...,h_{p}$ qui interviennent
dans la proposition~\ref{P/ DH1997} est le m\^{e}me pour tous les points de
$Z_{\operatorname{reg}}\backslash E_{\infty}$~: $p=\delta+q_{\infty} $ o\`{u}
$q_{\infty}=\operatorname{Card}Q_{\infty}$.
\end{corollary}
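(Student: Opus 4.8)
Le plan est de relier directement le nombre $p$ \`{a} l'indicatrice $G_{0}$. En appliquant la proposition~\ref{P/ DH1997} avec $k=0$, on dispose sur un voisinage $U_{\ast}$ de $z_{\ast}$ de l'\'{e}galit\'{e} $G_{0}=N_{h,0}+P_{0}$. Comme $N_{h,0}=\sum_{1\leqslant j\leqslant p}h_{j}^{0}=p$ et que le lemme~\ref{L/ Pk} donne $P_{0}=-q_{\infty}$, on obtient $G_{0}\left(  z\right)  =p-q_{\infty}$ pour tout $z\in U_{\ast}$. En particulier, $p=G_{0}\left(  z_{\ast}\right)  +q_{\infty}$ pour chaque $z_{\ast}\in Z_{\operatorname{reg}}\backslash E_{\infty}$, de sorte qu'il suffit de montrer que $G_{0}$ est constante \'{e}gale \`{a} $\delta$ sur $Z$.

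Je remarquerais d'abord que $G_{0}$ est localement constante sur $U$. Elle y est holomorphe, puisqu'elle est une int\'{e}grale d\'{e}pendant holomorphiquement du param\`{e}tre $z$ de la forme $\Omega_{z}^{0}=d\ell_{z}/\ell_{z}$, sans singularit\'{e} sur $\partial Q$ car $L_{z}\cap bQ=\varnothing$. Or l'expression locale $G_{0}=p-q_{\infty}$ montre que $G_{0}$ est \`{a} valeurs enti\`{e}res sur $U_{\operatorname{reg}}\backslash E_{\infty}$, ensemble qui est ouvert et dense dans $U$ ($U_{\operatorname{sing}}$ \'{e}tant analytique et $E_{\infty}$ une r\'{e}union finie de droites). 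Une fonction holomorphe ayant une d\'{e}riv\'{e}e nulle sur un ouvert dense a sa d\'{e}riv\'{e}e nulle partout, donc $G_{0}$ est localement constante sur $U$ tout entier. Comme $Z$ est connexe --- chaque fibre $m\left(  y\right)  \mathbb{D}\times\left\{  y\right\}  $ est un disque contenant $0$ et la base $\mathbb{C}\backslash\rho\overline{\mathbb{D}}$ est connexe, si bien que la section $\left\{  0\right\}  \times\left(  \mathbb{C}\backslash\rho\overline{\mathbb{D}}\right)  $ relie tous les points de $Z$ --- la fonction $G_{0}$ est constante sur $Z$, de valeur not\'{e}e $c$.

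Il reste \`{a} identifier $c$. Le lemme~\ref{L/ serieLdeG} fournit sur $Z$ le d\'{e}veloppement $G_{0}\left(  x,y\right)  =\delta+\sum_{m\geqslant1}\widetilde{G}_{0,m}\left(  x\right)  /y^{m}$, d'o\`{u} $\sum_{m\geqslant1}\widetilde{G}_{0,m}\left(  x\right)  /y^{m}=c-\delta$ sur $Z$. Puisque $m\left(  y\right)  \rightarrow+\infty$ lorsque $\left\vert y\right\vert \rightarrow\infty$, on peut fixer $x$ et faire tendre $y$ vers l'infini en restant dans $Z$~; le membre de gauche tend alors vers $0$, ce qui force $c=\delta$. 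On conclut $p=c+q_{\infty}=\delta+q_{\infty}$, ind\'{e}pendamment du point $z_{\ast}$ choisi.

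Le point d\'{e}licat est d'assurer la coh\'{e}rence entre l'expression locale $G_{0}=p-q_{\infty}$ issue de la proposition~\ref{P/ DH1997} et le d\'{e}veloppement de Laurent global du lemme~\ref{L/ serieLdeG}~: c'est la constance de $G_{0}$, combin\'{e}e \`{a} la connexit\'{e} de $Z$ et au passage \`{a} la limite $y\rightarrow\infty$ \`{a} l'int\'{e}rieur de $Z$, qui permet d'en extraire la valeur $\delta$ et donc d'affirmer que $p$ ne d\'{e}pend pas de $z_{\ast}$.
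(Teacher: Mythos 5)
Votre d\'{e}monstration est correcte et suit essentiellement la m\^{e}me d\'{e}marche que celle de l'article~: l'\'{e}galit\'{e} $G_{0}=p-q_{\infty}$ tir\'{e}e de la proposition~\ref{P/ DH1997} (cas $k=0$) et du lemme~\ref{L/ Pk}, la constance de $G_{0}$, puis l'identification de la constante \`{a} $\delta$ gr\^{a}ce au d\'{e}veloppement de Laurent du lemme~\ref{L/ serieLdeG}. Votre seule variante --- \'{e}tablir la constance via l'holomorphie de $G_{0}$ sur $U$, la densit\'{e} de $U_{\operatorname{reg}}\backslash E_{\infty}$ et la connexit\'{e} de $Z$, au lieu d'invoquer comme l'article la continuit\'{e} de la fonction \`{a} valeurs enti\`{e}res $z\mapsto p\left(  z\right)  $ sur l'ensemble connexe $Z_{\operatorname{reg}}\backslash E_{\infty}$ --- est un raffinement l\'{e}gitime du m\^{e}me argument, sans en changer la substance.
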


\noindent\textbf{Remarques. 1. }Dans le cas o\`{u} $Q_{\infty}\cap
\operatorname*{Sing}Q\neq\varnothing$, $q_{\infty}=\sum\limits_{q\in
Q_{\infty}}\nu\left(  q\right)  $.

\noindent\textbf{2. }Le corollaire~\ref{C/ genre via dp(u)} donne que pour une
connexion de Chern $D$ appropri\'{e}e calculable \`{a} partir de
l'op\'{e}rateur de Dirichlet-Neumann,
\begin{equation}
q_{\infty}=\frac{1}{2\pi i}\int_{\partial Q}\frac{D\partial u_{0}}{\partial
u_{0}}+2g\left(  M\right)  -2+c. \label{F/ qinfini-g}%
\end{equation}
o\`{u} $c$ est le nombre de composantes connexes de $bM$. Si le genre de $M$
est connu, cette formule livre la valeur de $q_{\infty}$ puis celle de $p$.

\begin{proof}
Notons provisoirement $p\left(  z\right)  $ le nombre de fonctions
$h_{1},...,h_{p\left(  z\right)  }$ qui interviennent dans la
proposition~\ref{P/ DH1997} quand $z\in U_{\operatorname{reg}}$. Puisque
$P_{0}=-q_{\infty}$, on sait que $G_{0}\left(  z\right)  =p\left(  z\right)
-q_{\infty}$ et donc que $p $ est une fonction \`{a} valeurs dans $\mathbb{Z}$
continue sur l'ensemble connexe $Z_{\operatorname{reg}}\backslash Z_{\infty}$.
Elle est donc constante et puisque $G_{0}\left(  z\right)  =\delta
+\sum\limits_{m\in\mathbb{N}^{\ast}}\frac{G_{0,m}\left(  x\right)  }{y^{m}}$
lorsque $z=\left(  x,y\right)  \in Z_{\operatorname{reg}}$, on en d\'{e}duit
que $\delta=p-q_{\infty}$ et que $G_{0,m}=0$ pour tout $m\in\mathbb{N}^{\ast}$.
\end{proof}

\begin{corollary}
\label{L/ serieLdeN}Les notations et les hypoth\`{e}ses sont celles de la
proposition~\ref{P/ DH1997}. Pour tout $k\in\mathbb{N}^{\ast}$, $N_{h,k}$ se
prolonge \`{a} $Z$ en une fonction holomorphe $N_{Q,k}$ qui ne d\'{e}pend pas
de $z_{\ast}$ et qui se d\'{e}veloppe en s\'{e}rie de Laurent sous la forme
$N_{Q,k}\left(  x,y\right)  =\sum\limits_{m\in\mathbb{N}^{\ast}}\frac
{N_{k,m}^{Q}\left(  x\right)  }{y^{m}}$ o\`{u} les $N_{k,m}^{Q}$ sont des
polyn\^{o}mes de degr\'{e} au plus $m$. En outre, pour tout $z$ dans
$Z_{\operatorname{reg}}$, il existe des ondes de choc $h_{1}^{z},...,h_{p}%
^{z}$ dont les images sont deux \`{a} deux distinctes et telles que pour
$z^{\prime}$ suffisamment voisin de $z$, $\left(  N_{Q,k}\left(  z^{\prime
}\right)  \right)  _{k\in\mathbb{N}}=\left(  N_{h^{z},k}\left(  z^{\prime
}\right)  \right)  _{k\in\mathbb{N}}$ et $L_{z^{\prime}}\cap Q=\left\{
\left(  1:h_{j}\left(  z^{\prime}\right)  :-x-yh_{j}\left(  z^{\prime}\right)
\right)  ;~1\leqslant j\leqslant p\right\}  $.
\end{corollary}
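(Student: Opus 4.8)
The plan is to \emph{define} the extension by $N_{Q,k}:=G_{k}-P_{k}$ and then read off each assertion from the preceding results.

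First I would check that $N_{Q,k}$ is a single-valued holomorphic function on all of $Z$. The indicatrix $G_{k}$ is holomorphic on $U$ by its defining integral~(\ref{F/ Gk}), since the integrand acquires a pole only when $L_{z}$ meets $\partial Q$; as $Z\subset U$, it is holomorphic on $Z$. By Lemma~\ref{L/ Pk} the fraction $P_{k}$ is a fixed rational function whose poles all lie on $\{B_{\infty}=0\}=\mathbb{C}\times R_{\infty}=E_{\infty}$, and Corollary~\ref{C/ confinementZeroBinfini} confines $R_{\infty}$ to $\rho\overline{\mathbb{D}}$. Since every $(x,y)\in Z$ has $|y|>\rho$, we get $Z\cap E_{\infty}=\varnothing$, so $P_{k}$ is holomorphic on $Z$ and $N_{Q,k}=G_{k}-P_{k}$ is well defined and holomorphic there. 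Because Lemma~\ref{L/ Pk} produces $P_{k}$ independently of the base point, $N_{Q,k}$ does not depend on $z_{\ast}$.

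Next I would recover the local shock-wave picture. As $Z\cap E_{\infty}=\varnothing$ we have $Z_{\operatorname{reg}}\subset U_{\operatorname{reg}}\backslash E_{\infty}$, so Proposition~\ref{P/ DH1997} applies at every $z\in Z_{\operatorname{reg}}$: it yields shock waves $h_{1}^{z},\dots,h_{p}^{z}$ with pairwise distinct images, describing $L_{z'}\cap Q$ as in the statement for $z'$ near $z$, and satisfying $G_{k}=N_{h^{z},k}+P_{k}$ there. Hence $N_{h^{z},k}=G_{k}-P_{k}=N_{Q,k}$ on that neighbourhood, which is exactly the asserted holomorphic prolongation together with the two final identities; the number of sheets $p=\operatorname{Card}(L_{z}\cap Q)$ is the constant $\delta+q_{\infty}$ by Corollary~\ref{C/ pCst}. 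On $Z_{\operatorname{sing}}$, a proper analytic subset, $N_{Q,k}$ stays holomorphic by the formula and is the unique extension of $N_{h,k}$ from the dense set $Z_{\operatorname{reg}}$.

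It remains to bound the Laurent coefficients on $Z$. From Lemma~\ref{L/ serieLdeG}, for $k\geq1$ one has $G_{k}=\sum_{m\geq1}G_{k,m}(x)/y^{m}$ with $\deg G_{k,m}\leq m$, so only the $P_{k}$-part needs attention. Writing $P_{k}=\sum_{0\leq m\leq k}p_{k,m}(Y)X^{m}$, the key point is that $p_{k,m}$ vanishes to order at least $m$ at infinity. This falls straight out of~(\ref{F/ pkm}): since $p_{1,1}=B_{\infty}'/B_{\infty}=O(1/Y)$ by~(\ref{F/ p11}) and~(\ref{F/ Binfini}), the relation $p_{k,k}=p_{1,1}^{(k-1)}/(k-1)!$ gives $p_{k,k}=O(1/Y^{k})$, while $p_{k,m}=\frac{k}{m!(k-m)}p_{k-m,0}^{(m)}$ with $p_{k-m,0}=O(1/Y)$ gives $p_{k,m}=O(1/Y^{m+1})$ for $0\leq m<k$. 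Consequently the monomial $x^{m}$ enters $P_{k}$ only through terms $x^{m}/y^{\ell}$ with $\ell\geq m$, so on $Z$ (where $|y|>\rho$ legitimises the expansion in powers of $1/y$) the coefficient of $1/y^{\ell}$ in $P_{k}$ is a polynomial of degree at most $\ell$ in $x$, with vanishing $\ell=0$ term. Subtracting from $G_{k}$ yields $N_{Q,k}=\sum_{m\geq1}N_{k,m}^{Q}(x)/y^{m}$ with $\deg N_{k,m}^{Q}\leq m$. I expect this order-of-vanishing bookkeeping for the $p_{k,m}$ to be the only step requiring care; all the rest is a direct assembly of Proposition~\ref{P/ DH1997} and Lemmas~\ref{L/ Pk} and~\ref{L/ serieLdeG}.
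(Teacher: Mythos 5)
Your proposal is correct and follows essentially the same route as the paper's proof: define $N_{Q,k}=G_{k}-P_{k}$, invoke Lemma~\ref{L/ Pk} and Corollary~\ref{C/ confinementZeroBinfini} for holomorphy on $Z$ and independence of $z_{\ast}$, apply Proposition~\ref{P/ DH1997} together with Corollary~\ref{C/ pCst} at points of $Z_{\operatorname{reg}}$ for the shock waves, and combine the decay of the $p_{k,m}$ with Lemma~\ref{L/ serieLdeG} for the Laurent expansion. The only cosmetic difference is that the paper expands $p_{1,1}$ and $p_{\nu,0}$ explicitly in powers of $1/Y$, whereas you obtain the same degree bounds $\deg N_{k,m}^{Q}\leqslant m$ by tracking orders of vanishing at infinity under differentiation via~(\ref{F/ pkm}) — an equivalent piece of bookkeeping.
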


\begin{proof}
Soit $k\in\mathbb{N}$. On sait que $N_{h,k}=G_{k}-P_{k}$ sur $U_{\ast}$ et
gr\^{a}ce au lemme~\ref{L/ Pk} et au
corollaire~\ref{C/ confinementZeroBinfini} que $P_{k}$ est une fraction
rationnelle qui ne d\'{e}pend pas de $z_{\ast}$ et qui est d\'{e}finie sur
$Z$. Par cons\'{e}quent $N_{Q,k}=G_{k}-P_{k}$ prolonge $N_{h,k}$ en une
fonction holomorphe sur $Z$. En appliquant la proposition~\ref{P/ DH1997} et
le corollaire~\ref{C/ pCst} en un point $z$ arbitraire de
$Z_{\operatorname{reg}}\backslash E_{\infty} $, on obtient des ondes de chocs
$h_{1}^{z},...,h_{p}^{z}$ ayant les propri\'{e}t\'{e}s annonc\'{e}es. Par
ailleurs, le lemme~\ref{L/ Pk} donne aussi que
\[
P_{k}=\sum\limits_{0\leqslant k\leqslant m}p_{k,m}X^{m}=\frac{1}{\left(
k-1\right)  !}p_{1,1}^{\left(  k-1\right)  }X^{k}+\sum\limits_{0\leqslant
k\leqslant m}\frac{k}{m!\left(  k-m\right)  }p_{k-m,0}^{\left(  m\right)
}X^{m}%
\]
avec $p_{1,1}=\sum\limits_{q\in Q_{\infty}}\frac{b_{q}}{1+Yb_{q}}$ et
$p_{\nu,0}=\sum\limits_{j=1}^{\nu}\sum\limits_{q\in Q_{\infty}}\frac
{p_{\nu,0,j}^{q}}{\left(  1+Yb_{q}\right)  ^{j}}$. Pour $\left\vert
y\right\vert >\beta$, on obtient donc%
\begin{align*}
p_{1,1}\left(  y\right)   &  =\sum\limits_{n\in\mathbb{N}^{\ast}}\frac{\left(
-1\right)  ^{n-1}}{Y}\sum\limits_{q\in Q_{\infty}}b_{q}^{n-1}=\sum
\limits_{n\in\mathbb{N}^{\ast}}\frac{\left(  -1\right)  ^{n-1}S_{b,n-1}}%
{Y^{n}}\\
p_{\nu,0}\left(  y\right)   &  =\sum\limits_{j=1}^{\nu}\sum\limits_{n\in
\mathbb{N}^{\ast}}\frac{\left(  j-1\right)  !\left(  -1\right)  ^{n+j-1}%
}{Y^{n+j-1}}\sum\limits_{q\in Q_{\infty}}b_{q}^{-n}p_{\nu,0,j}^{q}%
=\sum\limits_{m\in\mathbb{N}^{\ast}}\frac{p_{\nu,0}^{\infty,m}}{Y^{m}}%
\end{align*}
avec $p_{\nu,0}^{\infty,m}=\left(  -1\right)  ^{m}\sum\limits_{\left(
n,j\right)  \in\mathbb{N}^{\ast}\times\left\{  1,...,\nu\right\}
,~n+j=m+1}\left(  j-1\right)  !\sum\limits_{q\in Q_{\infty}}b_{q}^{-n}%
p_{\nu,0,j}^{q}$. Il suffit alors de combiner ces formules avec le
lemme~\ref{L/ serieLdeG} pour obtenir les d\'{e}veloppements annonc\'{e}s.
\end{proof}

\subsection{Une gen\`{e}se des ondes de chocs multiples\label{S/ fabOM}}

On se donne $A,B\in\mathbb{C}\left[  Y\right]  $ avec $\deg A<r=\deg B$, $B$
valant $1$ en $0$ et dont les racines sont confin\'{e}es dans $\rho
\overline{\mathbb{D}}$ puis on d\'{e}finit $P\in\mathbb{C}\left[  X,Y\right)
$ et $N$ par les relations%
\[
P\left(  X,Y\right)  =\frac{A\left(  Y\right)  }{B\left(  Y\right)  }%
+\frac{B^{\prime}\left(  Y\right)  }{B\left(  Y\right)  }X~~\&~~N=G_{1}-P.
\]
Il s'agit dans cette section de savoir quand $N$ est une onde de chocs
multiples, c'est-\`{a}-dire une somme d'onde de choc. Le
th\'{e}or\`{e}me~4\ de~\cite{HeG-MiV2007} donne une caract\'{e}risation de
telles sommes mais dans cet article, on en utilise une plus adapt\'{e}e \`{a}
la situation pr\'{e}sente. Ces deux caract\'{e}risations correspondent plus ou
moins \`{a} privil\'{e}gier l'une ou l'autre des variables $x$ ou $y$ et
reposent sur le lemme~16 de~\cite{HeG-MiV2007} rappel\'{e} ci-dessous.

\begin{lemma}
[Henkin-Michel, 2007]\label{L/ EqDiffSym}Soit $D$ un domaine de $\mathbb{C}%
^{2}$ et $h_{1},...,h_{d}\in\mathcal{O}\left(  D\right)  $ des fonctions
mutuellement distinctes. Alors chaque $h_{j}$ est une onde de choc si et
seulement si les fonctions $\Sigma_{h,k}=\left(  -1\right)  ^{k}S_{h,k}$
satisfont au syst\`{e}me suivant%
\begin{equation}
\Sigma_{d}\frac{\partial\Sigma_{1}}{\partial x}+\frac{\partial\Sigma_{d}%
}{\partial y}=0\hspace{0.5cm}\&\hspace{0.5cm}\Sigma_{k}\frac{\partial
\Sigma_{1}}{\partial x}+\frac{\partial\Sigma_{k}}{\partial y}=\frac
{\partial\Sigma_{k+1}}{\partial x}~,~~1\leqslant k\leqslant d-1.
\label{F/ EqSym0}%
\end{equation}

\end{lemma}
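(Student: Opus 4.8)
The plan is to encode the $d$ roots into the monic polynomial
\[
P(T,z)=\prod_{j=1}^{d}\bigl(T-h_{j}(z)\bigr)=\sum_{k=0}^{d}\Sigma_{k}(z)\,T^{d-k},
\]
whose coefficients are exactly $\Sigma_{0}=1$ and $\Sigma_{k}=\Sigma_{h,k}=(-1)^{k}S_{h,k}$ for $1\leqslant k\leqslant d$, and then to convert the shock-wave condition on the roots into a single polynomial identity for $P$. Writing $P_{x},P_{y},P_{T}$ for the partial derivatives of $P$ (so $P_{x}=\sum_{k\geqslant1}(\partial_{x}\Sigma_{k})T^{d-k}$, and likewise for $P_{y}$), I would first differentiate the identity $P(h_{j},z)\equiv0$ in $x$ and in $y$, obtaining $P_{T}(h_{j})\,\partial_{x}h_{j}=-P_{x}(h_{j})$ and $P_{T}(h_{j})\,\partial_{y}h_{j}=-P_{y}(h_{j})$. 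On the open set where the $h_{j}$ are pairwise distinct the roots of $P$ are simple, so $P_{T}(h_{j})\neq0$, and the shock-wave equation (\ref{F/ OdC}) for $h_{j}$ is there equivalent to
\[
P_{y}(h_{j})=h_{j}\,P_{x}(h_{j}),\qquad 1\leqslant j\leqslant d.
\]

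The crux is then a divisibility argument. Set $Q(T)=P_{y}(T)-T\,P_{x}(T)$; it is a polynomial in $T$ of degree at most $d$, whose coefficient of $T^{d}$ equals $-\partial_{x}\Sigma_{1}$. The conditions $P_{y}(h_{j})=h_{j}P_{x}(h_{j})$ for all $j$ say precisely that $Q$ vanishes at all $d$ simple roots of $P$, hence that $P\mid Q$; comparing degrees and leading coefficients forces the identity
\[
P_{y}(T)-T\,P_{x}(T)=-(\partial_{x}\Sigma_{1})\,P(T),
\]
valid for every $T$. Conversely, evaluating this identity at $T=h_{j}$ returns $P_{y}(h_{j})=h_{j}P_{x}(h_{j})$, so the two formulations are equivalent. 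I would then expand both sides in powers of $T$ and match coefficients: the coefficient of $T^{d}$ gives a trivial identity; the coefficient of $T^{d-m}$ for $1\leqslant m\leqslant d-1$ gives $\Sigma_{m}\,\partial_{x}\Sigma_{1}+\partial_{y}\Sigma_{m}=\partial_{x}\Sigma_{m+1}$; and the constant term ($m=d$) gives $\Sigma_{d}\,\partial_{x}\Sigma_{1}+\partial_{y}\Sigma_{d}=0$. These are exactly the relations (\ref{F/ EqSym0}), which proves both directions of the lemma on the set of simple roots.

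The only genuine obstacle is the passage from this dense open set to all of $D$. Since the $h_{j}$ are holomorphic and mutually distinct as functions, the discriminant locus $\{\prod_{i<j}(h_{i}-h_{j})=0\}$ is a proper analytic subset of $D$, so its complement, where every step above is valid, is open and dense. Both sides of each equation in (\ref{F/ EqSym0}) and both sides of $\partial_{y}h_{j}-h_{j}\partial_{x}h_{j}$ are holomorphic on $D$; having been shown to agree on a dense open set, they agree identically by analytic continuation. This upgrades the equivalence to the whole of $D$ and finishes the proof; the main care needed elsewhere is purely the bookkeeping of signs and of the degree count in the divisibility step.
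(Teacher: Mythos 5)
Your proof is correct and complete; note that the paper itself does not reprove this lemma (it recalls it as the lemme~16 of~\cite{HeG-MiV2007}), and your argument is essentially the standard one behind that reference: encode the roots in the monic polynomial $P(T)=\prod_{j}\left(T-h_{j}\right)=\sum_{k}\Sigma_{k}T^{d-k}$, translate the shock-wave equations at simple roots into $P_{y}(h_{j})=h_{j}P_{x}(h_{j})$, upgrade this via the degree count and divisibility $P\mid P_{y}-TP_{x}$ to the identity $P_{y}-TP_{x}=-\left(\partial_{x}\Sigma_{1}\right)P$, and match coefficients of $T^{d-m}$ to obtain exactly~(\ref{F/ EqSym0}). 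Your handling of the only delicate point is also right: since $D$ est un domaine and the $h_{j}$ are distinct as functions, the discriminant locus $\left\{\prod_{i<j}\left(h_{i}-h_{j}\right)=0\right\}$ is a proper analytic subset, so all identities proved on its (dense, open) complement propagate to $D$ by analytic continuation, in both directions of the equivalence.
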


Le corollaire suivant est contenu dans la preuve de la proposition~17
de~\cite{HeG-MiV2007}.

\begin{corollary}
\label{C/ p-onde}Soient $D$ un domaine de $\mathbb{C}^{2}$, $N\in
\mathcal{O}\left(  D\right)  $ et $d\in\mathbb{N}^{\ast}$. Il existe
localement des ondes de choc $h_{1},...,h_{d}$ mutuellement distinctes telles
que $N=h_{1}+\cdots+h_{d}$ si et seulement si il existe dans $\mathcal{O}%
\left(  D\right)  $ des fonctions $s_{1},...,s_{d}$ telles que $s_{1}=-N$ et
\begin{equation}
-s_{d}\frac{\partial N}{\partial x}+\frac{\partial s_{d}}{\partial
y}=0,\hspace{0.5cm}-s_{k}\frac{\partial N}{\partial x}+\frac{\partial s_{k}%
}{\partial y}=\frac{\partial s_{k+1}}{\partial x}~,~~1\leqslant k\leqslant
d-1, \label{F/ EqSym1}%
\end{equation}
et si le discriminant du polyn\^{o}me $\Sigma=T^{d}+s_{1}T^{d-1}+\cdots
+s_{d}\in\mathcal{O}\left(  D\right)  \left[  T\right]  $ n'est pas
identiquement nul sur $D$. Dans ce cas, on dit que $N$ est une $d$-onde de chocs.
\end{corollary}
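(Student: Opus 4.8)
The plan is to reduce everything to Lemma~\ref{L/ EqDiffSym} by passing to the monic polynomial whose roots are the $h_{j}$. Write $\Sigma_{h,k}=\left(-1\right)^{k}S_{h,k}$ for the signed elementary symmetric functions, so that $\prod_{j=1}^{d}\left(T-h_{j}\right)=T^{d}+\Sigma_{h,1}T^{d-1}+\cdots+\Sigma_{h,d}$ and in particular $\Sigma_{h,1}=-\sum_{j}h_{j}$. The whole point is the following dictionary: if one sets $s_{k}=\Sigma_{h,k}$, then the condition $s_{1}=-N$ is exactly $N=h_{1}+\cdots+h_{d}$, the polynomial $\Sigma$ of the statement is $\prod_{j}\left(T-h_{j}\right)$, and since $\frac{\partial\Sigma_{1}}{\partial x}=-\frac{\partial N}{\partial x}$ the system~(\ref{F/ EqSym1}) is word for word the system~(\ref{F/ EqSym0}) of Lemma~\ref{L/ EqDiffSym}. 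The corollary will follow once this correspondence is made precise in both directions.

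For the direct implication, suppose $N=h_{1}+\cdots+h_{d}$ with the $h_{j}$ mutually distinct shock waves, and set $s_{k}=\Sigma_{h,k}$. Lemma~\ref{L/ EqDiffSym} gives that the $\Sigma_{h,k}$ satisfy~(\ref{F/ EqSym0}), which by the dictionary above is precisely~(\ref{F/ EqSym1}) with $s_{1}=-N$. The discriminant of $\Sigma=\prod_{j}\left(T-h_{j}\right)$ is $\prod_{i<j}\left(h_{i}-h_{j}\right)^{2}$; since $D$ is a domain and each $h_{i}-h_{j}$ is holomorphic and not identically zero, this product is holomorphic and not identically zero on $D$.

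For the converse, assume $s_{1},\dots,s_{d}\in\mathcal{O}\left(D\right)$ satisfy~(\ref{F/ EqSym1}) with $s_{1}=-N$ and that the discriminant of $\Sigma=T^{d}+s_{1}T^{d-1}+\cdots+s_{d}$ is not identically zero. Pick $z_{0}\in D$ at which this discriminant does not vanish; then $\Sigma\left(z_{0},\cdot\right)$ has $d$ distinct simple roots, and the holomorphic implicit function theorem, applied at each simple root where $\partial_{T}\Sigma\neq0$, produces mutually distinct $h_{1},\dots,h_{d}\in\mathcal{O}\left(U\right)$ on a neighborhood $U\subset D$ of $z_{0}$ with $\Sigma=\prod_{j}\left(T-h_{j}\right)$ on $U$. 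Then $s_{k}=\Sigma_{h,k}$ together with $s_{1}=-N$ yields $N=\sum_{j}h_{j}$, while~(\ref{F/ EqSym1}) rewrites as~(\ref{F/ EqSym0}); Lemma~\ref{L/ EqDiffSym} then shows that each $h_{j}$ is a shock wave on $U$, so that $N$ is locally a $d$-onde de chocs.

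The only genuine work is in the converse, and it is concentrated in the local reconstruction of the roots: one needs the discriminant to be nonzero precisely so that the roots are simple, can be selected as holomorphic functions, and remain mutually distinct on a full neighborhood of $z_{0}$. Everything else is the bookkeeping identity $\prod_{j}\left(T-h_{j}\right)=\sum_{k}\Sigma_{h,k}T^{d-k}$ together with the fact, already supplied by Lemma~\ref{L/ EqDiffSym}, that the shock-wave property of the $h_{j}$ is equivalent to~(\ref{F/ EqSym0}).
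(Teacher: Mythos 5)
Your proof is correct and follows essentially the same route as the paper's: both directions reduce to Lemme~\ref{L/ EqDiffSym} via the signed symmetric functions $\Sigma_{h,k}=\left(-1\right)^{k}S_{h,k}$, with the converse handled, exactly as in the paper, by selecting a point where the discriminant of $\Sigma$ does not vanish and extracting the $d$ simple roots as holomorphic, mutually distinct functions on a neighborhood before applying the lemma again. Your explicit verification that the discriminant $\prod_{i<j}\left(h_{i}-h_{j}\right)^{2}$ is not identically zero in the direct implication is a small addition the paper leaves implicit, but it changes nothing of substance.
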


\begin{proof}
Si $N$ est la somme de $d$ ondes de choc $h_{1},...,h_{d}$ mutuellement
distinctes, $s_{1}=-S_{h,1}=-N_{1}$ et les fonctions $s_{k}=\left(  -1\right)
^{k}S_{h,k}$, $2\leqslant k\leqslant d$ v\'{e}rifient (\ref{F/ EqSym1})
d'apr\`{e}s le lemme~\ref{L/ EqDiffSym}. R\'{e}ciproquement, supposons
$d\geqslant2$ et que $\left(  s_{1},...,s_{d}\right)  $ soit solution de
(\ref{F/ EqSym1}), $s_{1}=-N_{1}$ et que le discriminant $\Delta$ de $S$ n'est
pas nul. Pour tout $z_{\ast}\in\left\{  \Delta\neq0\right\}  $, $T_{z_{\ast}}$
n'a que des racines simples et il existe un voisinage $U_{\ast}$ de $z_{\ast}$
dans $D$ ainsi que des fonctions holomorphes $h_{1},...,h_{d}$ telles que pour
tout $z\in U_{\ast}$, les racines de $\Sigma_{z}$ sont les $d$ nombres
complexes $h_{1}\left(  z\right)  ,...,h_{d}\left(  z\right)  $. Le
lemme~\ref{L/ EqDiffSym} donne alors que $h_{1},...,h_{d}$ sont des ondes de choc.
\end{proof}

La d\'{e}finition ci-dessous introduit un op\'{e}rateur
int\'{e}gro-diff\'{e}rentiel adapt\'{e} \`{a} la r\'{e}solution du
syst\`{e}me~(\ref{F/ EqSym1}).

\begin{definition}
\label{D/ PJH}On s\'{e}lectionne dans le cercle unit\'{e} $\mathbb{T}$ de
$\mathbb{C}$ une direction $\tau$ et dans $\Omega_{\rho,\tau}=\mathbb{C}%
\backslash\left(  \rho\overline{\mathbb{D}}\cup\mathbb{R}_{+}\tau\right)  $,
on fixe un point $\omega$. On d\'{e}finit alors un op\'{e}rateur de
primivitisation $\mathcal{P}=\mathcal{P}_{\rho,\tau,\omega}$ sur $\Omega
_{\rho,\tau}$ en associant \`{a} une fonction holomorphe sur $\Omega
_{\rho,\tau}$ sa primitive qui s'annule en $\omega$ et on pose
\[
J=\mathcal{P}_{0,\tau,\omega}\frac{1}{Id_{\Omega_{0,\tau}}}%
\]
On fixe $\rho_{\omega}\in\left]  \rho,+\infty\right[  $ tel que $m\left(
y\right)  \geqslant m\left(  \omega\right)  =m_{\omega}$ lorsque $\left\vert
y\right\vert \geqslant\rho_{\omega}$ puis on pose $D_{\omega}=D\left(
0,\frac{1}{2}m_{\omega}\right)  $, $Z_{\omega}=D_{\omega}\times\left(
\mathbb{C}\backslash\rho_{\omega}\overline{\mathbb{D}}\right)  $ et
$Z_{\omega,\tau}=Z_{\omega}\backslash\mathbb{R}_{+}\tau$. On d\'{e}finit les
op\'{e}rateurs int\'{e}gro-diff\'{e}rentiels $\mathcal{D}$ et $\mathcal{E}$
agissant sur $\mathcal{O}\left(  Z_{\omega,\tau}\right)  $ par%
\[
\mathcal{D}=e^{-H}\frac{\partial}{\partial x}e^{H}=\frac{\partial}{\partial
x}+\frac{\partial H}{\partial x}~~\&~~\mathcal{E}=\mathcal{P}_{\rho_{\omega
},\tau,\omega}\circ\mathcal{D}%
\]
o\`{u} les fonctions $H$ et $\widetilde{H}$ sont d\'{e}finies sur $Z$ par%
\[
H=-\delta\otimes J-\sum\limits_{m\geqslant1}\frac{G_{1,m+1}^{\prime}\left(
x\right)  }{m}\frac{1}{y^{m}}=-\delta\otimes J+\widetilde{H}.
\]

\end{definition}

Etant donn\'{e} que d'apr\`{e}s (\ref{F/ devG1}), $G_{1}\left(  x,y\right)
=\frac{G_{1,1}^{0}-\delta x}{y}+\sum\limits_{m\geqslant2}\frac{G_{k,m}\left(
x\right)  }{y^{m}}$ sur $Z$, on obtient que sur $Z$%
\[
\frac{\partial H}{\partial y}=\frac{\partial G_{1}}{\partial x}%
\]
et aussi, puisque $\delta\in\mathbb{Z}$,%
\begin{equation}
e^{H}=\omega^{\delta}\left(  1\otimes Y^{-\delta}\right)  e^{\widetilde{H}}
\label{F/ exp(-H)}%
\end{equation}
Ainsi, $e^{H}$ se prolonge holomorphiquement \`{a} $Z$ ind\'{e}pendamment de
la coupure faite selon $\mathbb{R}_{+}\tau$. Une autre cons\'{e}quence de
(\ref{F/ exp(-H)}) est que $\delta$ est donn\'{e} pour tout $x\in\mathbb{C}$
par la formule%
\begin{equation}
\delta=\underset{\left\vert y\right\vert \rightarrow+\infty}{\lim}\frac
{\ln\left\vert e^{-H\left(  x,y\right)  }\right\vert }{\ln\left\vert
y\right\vert }. \label{F/ delta}%
\end{equation}

\begin{proposition}
\label{P/ s=>Mu}Soient $s_{1},...,s_{d}\in\mathcal{O}\left(  Z_{\omega
}\right)  $. Alors $\left(  s_{1},...,s_{d}\right)  $ est solution de
(\ref{F/ EqSym1}) avec $N=G_{1}-P$ si et seulement si il existe dans
$\mathcal{O}\left(  D_{\omega}\right)  $ des fonctions $\mu_{1},...,\mu_{d}$
telles que pour tout $\tau\in\mathbb{T}$, le syst\`{e}me ci-dessous est
v\'{e}rifi\'{e} sur $Z_{\omega,\tau}$
\begin{equation}
\left(  1\otimes B\right)  s_{k}=\left(  \mathcal{E}^{0}\left(  \mu_{k}%
\otimes1\right)  +\cdots+\mathcal{E}^{d-k}\left(  \mu_{d}\otimes1\right)
\right)  e^{H},~d\geqslant s\geqslant1 \label{F/ systMu}%
\end{equation}

\end{proposition}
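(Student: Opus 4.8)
Le plan est de ramener le système~(\ref{F/ EqSym1}) à une simple primitivation en $y$ au moyen du facteur intégrant $e^{H}$ fourni par la définition~\ref{D/ PJH}. Comme les racines de $B$ sont confinées dans $\rho\overline{\mathbb{D}}$ et que $Z_{\omega}$ est contenu dans $\left\{  \left\vert y\right\vert >\rho_{\omega}>\rho\right\}  $, la fonction $B$ ne s'annule pas sur $Z_{\omega}$ et, d'après (\ref{F/ exp(-H)}), $e^{H}$ y est holomorphe et inversible. Je poserais alors, pour $1\leqslant k\leqslant d$,
\[
t_{k}=\left(  1\otimes B\right)  s_{k}e^{-H},
\]
fonction holomorphe sur $Z_{\omega}$, de sorte que (\ref{F/ systMu}) s'écrit exactement $t_{k}=\sum_{j=0}^{d-k}\mathcal{E}^{j}\left(  \mu_{k+j}\otimes1\right)  $. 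Tout revient donc à montrer que (\ref{F/ EqSym1}) équivaut à l'existence de $\mu_{1},\dots,\mu_{d}\in\mathcal{O}\left(  D_{\omega}\right)  $ réalisant cette dernière famille de relations.

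La première étape est un calcul direct traduisant (\ref{F/ EqSym1}) sur les $t_{k}$. En écrivant $s_{k}=\tfrac{t_{k}}{B}e^{H}$ et en utilisant l'identité $\tfrac{\partial H}{\partial y}=\tfrac{\partial G_{1}}{\partial x}$ déjà établie, ainsi que $\tfrac{\partial P}{\partial x}=\tfrac{B^{\prime}}{B}$ (conséquence immédiate de $P=\tfrac{A}{B}+\tfrac{B^{\prime}}{B}X$), on obtient
\[
-s_{k}\frac{\partial N}{\partial x}+\frac{\partial s_{k}}{\partial y}=\frac{e^{H}}{B}\frac{\partial t_{k}}{\partial y},\qquad\frac{\partial s_{k+1}}{\partial x}=\frac{e^{H}}{B}\,\mathcal{D}t_{k+1},
\]
la seconde égalité venant de ce que $B$ ne dépend que de $y$ et de $\mathcal{D}=\tfrac{\partial}{\partial x}+\tfrac{\partial H}{\partial x}$. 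Comme $\tfrac{e^{H}}{B}$ ne s'annule pas, (\ref{F/ EqSym1}) équivaut au système $\tfrac{\partial t_{d}}{\partial y}=0$ et $\tfrac{\partial t_{k}}{\partial y}=\mathcal{D}t_{k+1}$ pour $1\leqslant k\leqslant d-1$.

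La seconde étape serait l'intégration en $y$. Puisque $\mathcal{E}=\mathcal{P}_{\rho_{\omega},\tau,\omega}\circ\mathcal{D}$ et que $\mathcal{P}_{\rho_{\omega},\tau,\omega}$ est la primitive en $y$ s'annulant en $\omega$, on a $\tfrac{\partial}{\partial y}\circ\mathcal{E}=\mathcal{D}$ sur $Z_{\omega,\tau}$. L'équation $\tfrac{\partial t_{k}}{\partial y}=\mathcal{D}t_{k+1}=\tfrac{\partial}{\partial y}\mathcal{E}t_{k+1}$ montre alors que $t_{k}-\mathcal{E}t_{k+1}$ est indépendante de $y$; comme $\mathcal{E}t_{k+1}$ s'annule en $y=\omega$, on est conduit à poser $\mu_{k}=t_{k}\left(  \cdot,\omega\right)  \in\mathcal{O}\left(  D_{\omega}\right)  $, quantité manifestement indépendante de la coupure $\mathbb{R}_{+}\tau$. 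On obtient ainsi la récurrence $t_{d}=\mu_{d}\otimes1$ et $t_{k}=\mathcal{E}t_{k+1}+\mu_{k}\otimes1$, dont la résolution descendante à partir de $k=d$ fournit précisément (\ref{F/ systMu}) pour tout $\tau$. Réciproquement, partant de (\ref{F/ systMu}), il suffit de dériver en $y$: les relations $\tfrac{\partial}{\partial y}\circ\mathcal{E}=\mathcal{D}$ et $\tfrac{\partial}{\partial y}\left(  \mu_{k+j}\otimes1\right)  =0$ redonnent $\tfrac{\partial t_{d}}{\partial y}=0$ et $\tfrac{\partial t_{k}}{\partial y}=\mathcal{D}t_{k+1}$, donc (\ref{F/ EqSym1}).

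L'obstacle principal me paraît moins le calcul que la gestion soigneuse des domaines: il faut s'assurer que tous les objets manipulés ($e^{\pm H}$, $B^{-1}$, les $t_{k}$) sont holomorphes là où on les utilise, que les opérateurs $\mathcal{D}$ et $\mathcal{E}$ agissent bien sur $\mathcal{O}\left(  Z_{\omega,\tau}\right)  $, et surtout que les $\mu_{k}$ construits ne dépendent pas de $\tau$ --- point garanti ci-dessus par la normalisation de $\mathcal{P}_{\rho_{\omega},\tau,\omega}$ en $\omega$, qui donne $\mu_{k}=t_{k}\left(  \cdot,\omega\right)  $ sans référence à la coupure. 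C'est ce qui permettrait d'affirmer qu'une \emph{même} famille $\left(  \mu_{1},\dots,\mu_{d}\right)  $ convient simultanément pour tous les $\tau\in\mathbb{T}$.
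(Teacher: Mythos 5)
Votre preuve est correcte et suit essentiellement la d\'emarche de l'article~: multiplication des \'equations par $B$, facteur int\'egrant $e^{H}$ fond\'e sur l'identit\'e $\frac{\partial H}{\partial y}=\frac{\partial G_{1}}{\partial x}$, r\'eduction au syst\`eme triangulaire $\frac{\partial t_{d}}{\partial y}=0$, $\frac{\partial t_{k}}{\partial y}=\mathcal{D}t_{k+1}$ avec $t_{k}=\left(1\otimes B\right)s_{k}e^{-H}$, puis int\'egration descendante par $\mathcal{P}$ livrant $t_{k}=\sum_{j=k}^{d}\mathcal{E}^{j-k}\left(\mu_{j}\otimes1\right)$, ce qui est exactement la r\'ecurrence de la preuve originale. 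Votre seule touche personnelle est la normalisation explicite $\mu_{k}=t_{k}\left(\cdot,\omega\right)$, qui rend transparente l'ind\'ependance des $\mu_{k}$ vis-\`a-vis de la coupure $\mathbb{R}_{+}\tau$, point laiss\'e implicite dans le texte.
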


\begin{proof}
Afin de simplifier les \'{e}critures dans cette preuve, on \'{e}crit $B$ pour
$1\otimes B$ et $\mu_{j}\otimes1$ pour $\mu_{j}$ . Puisque $N=G_{1}-\frac
{A}{B}-x\otimes\frac{B^{\prime}}{B}$, on remarque que si $s\in\mathcal{O}%
\left(  Z_{\omega,\tau}\right)  $,
\begin{align*}
B\left(  -s\frac{\partial N}{\partial x}+\frac{\partial s}{\partial y}\right)
&  =s\left(  -B\frac{\partial G_{1}}{\partial x}+B^{\prime}\right)
+B\frac{\partial s}{\partial y}\\
&  =-\left(  Bs\right)  \frac{\partial G_{1}}{\partial x}+\frac{\partial
Bs}{\partial y}=e^{H}\frac{\partial e^{-H}Bs}{\partial y}%
\end{align*}
Comme $e^{H}$ se prolonge holomorphiquement \`{a} $Z_{\omega}$
ind\'{e}pendamment de $\tau$, $\left(  s_{1},...,s_{d}\right)  $ est solution
de (\ref{F/ EqSym1}) si et seulement si les \'{e}quations%
\begin{equation}
\frac{\partial e^{-H}Bs_{d}}{\partial y}=0\hspace{0.5cm}\&\hspace{0.5cm}%
\frac{\partial e^{-H}Bs_{k}}{\partial y}=e^{-H}\frac{\partial Bs_{k+1}%
}{\partial x}~,~~1\leqslant k\leqslant d-1 \label{F/ EqSym2}%
\end{equation}
sont satisfaites sur $Z_{\omega}$. La premi\`{e}re est \'{e}quivalente \`{a}
l'existence d'une fonction $\mu_{d}$ d\'{e}finie sur $D_{\omega}$ telle que
pour tout $\left(  x,y\right)  \in Z_{\omega}$,
\[
B\left(  y\right)  s_{d}\left(  x,y\right)  =\mu_{d}\left(  x\right)
e^{H\left(  x,y\right)  }%
\]
Une telle fonction $\mu_{d}$ est en fait holomorphe sur $D_{\omega}$ puisque
pour tout $y\in\Omega_{\tau}\backslash\rho\overline{\mathbb{D}}$ tel que
$B\left(  y\right)  \neq0$, elle serait donn\'{e}e sur $D\left(  0,m_{\omega
}\right)  $ par la formule $\mu_{d}=s_{d}\left(  .,y\right)  \frac{e^{H\left(
.,y\right)  }}{B\left(  y\right)  }$. Supposons que pour $k\in\left\{
1,...,d-1\right\}  $, $\mu_{d},...,\mu_{k}\in\mathcal{O}\left(  D_{\omega
}\right)  $ v\'{e}rifient sur $Z_{\omega,\tau}$
\[
Bs_{j}=\left(  \mathcal{E}^{0}\left(  \mu_{j}\right)  +\cdots+\mathcal{E}%
^{d-j}\left(  \mu_{d}\right)  \right)  e^{H}%
\]
lorsque $d\geqslant j\geqslant k+1$. L'\'{e}quation $\frac{\partial}{\partial
y}\left(  Bs_{k}e^{-H}\right)  =e^{-H}\frac{\partial}{\partial x}\left(
Bs_{k+1}\right)  $ est alors \'{e}quivalente \`{a} l'existence de $\mu_{k}%
\in\mathcal{O}\left(  D_{\omega}\right)  $ telle que pour tout $\left(
x,y\right)  \in Z_{\omega,\tau}$,%
\[
B\left(  y\right)  s_{k}\left(  x,y\right)  e^{-H\left(  x,y\right)  }=\mu
_{k}\left(  x\right)  +\left(  e^{-H}\frac{\partial}{\partial x}\left(
Bs_{k+1}\right)  \right)  _{0,-1}\left(  x,y\right)  .
\]
Par hypoth\`{e}se,%
\[
Bs_{k+1}=\sum\limits_{k+1\leqslant j\leqslant d}\left(  \mathcal{E}^{j-k-1}%
\mu_{j}\right)  e^{H}.
\]
Donc%
\[
\left(  e^{-H}\left(  Bs_{k+1}\right)  _{x}\right)  _{0,-1}=\sum
\limits_{k+1\leqslant j\leqslant d}\left(  e^{-H}\frac{\partial}{\partial
x}\left(  e^{H}\mathcal{E}^{j-k-1}\mu_{j}\right)  \right)  _{0,-1}%
=\sum\limits_{k+1\leqslant j\leqslant d}\mathcal{E}^{j-k}\mu_{j}.
\]
Par cons\'{e}quent, l'\'{e}quation $\frac{\partial}{\partial y}\left(
Bs_{k}e^{-H}\right)  =e^{-H}\frac{\partial}{\partial x}\left(  Bs_{k+1}%
\right)  $ est \'{e}quivalente \`{a} l'existence de $\mu_{k}\in\mathcal{O}%
\left(  D_{\omega}\right)  $ telle que pour tout $\left(  x,y\right)  \in
Z_{\omega,\tau}$,
\[
B\left(  y\right)  s_{k}\left(  x,y\right)  =\left(  \mu_{k}\left(  x\right)
+\mathcal{E}^{1}\mu_{k+1}\left(  x,y\right)  +\cdots+\mathcal{E}^{d-k}\mu
_{d}\left(  x,y\right)  \right)  e^{H\left(  x,y\right)  }%
\]

\end{proof}

La proposition~\ref{P/ s=>Mu} donne un proc\'{e}d\'{e} simple pour construire
a priori des fonctions susceptibles d'\^{e}tre des ondes de chocs multiples.
On note $\mathbb{C}_{\mathcal{B}}\left[  Y\right]  $ l'ensemble des
polyn\^{o}mes $B\in\mathbb{C}\left[  Y\right]  $ valant $1$ en $0$ et dont les
racines sont confin\'{e}es dans $\rho\overline{\mathbb{D}}$.

\begin{corollary}
\label{C/ constructionOndeMult}Soient $\mu_{1},...,\mu_{d}\in\mathcal{O}%
\left(  D_{\omega}\right)  $. On d\'{e}finit des fonctions $s_{k}\left(
\mu,B\right)  $, $1\leqslant k\leqslant d$, holomorphes sur $Z_{\omega,\tau}$
par les formules
\begin{gather*}
\mathcal{E}_{k}\left(  \mu\right)  =\sum\limits_{j=k}^{d}\mathcal{E}%
^{j-k}\left(  \mu_{j}\otimes1\right) \\
s_{k}\left(  \mu,B\right)  =\frac{e^{\widetilde{H}}}{1\otimes\left(
Y/\omega\right)  ^{\delta}B}\mathcal{E}_{k}\left(  \mu\right)  ,~1\leqslant
k\leqslant d,\\
S\left(  \mu,B\right)  =T^{d}+s_{1}\left(  \mu,B\right)  T^{d-1}+\cdots
+s_{d}\left(  \mu,B\right)  \in\mathcal{O}\left(  D_{\omega}\right)  \left[
T\right]
\end{gather*}
Alors l'application $\mathcal{O}\left(  D_{\omega}\right)  ^{d}\times
\mathbb{C}_{\mathcal{B}}\left[  Y\right]  \ni\left(  \mu,B\right)
\mapsto\left(  s_{k}\left(  \mu,B\right)  \right)  _{1\leqslant k\leqslant d}$
est injective. En outre $-s_{1}\left(  \mu,B\right)  $ est une $d$-onde de
chocs sur $Z_{\omega,\tau}$ si et seulement si%
\[
-s_{1}\left(  \mu,B\right)  =G_{1}-P
\]
et le discriminant $\Delta\left(  \mu,B\right)  $ de $S\left(  \mu,B\right)  $
n'est pas identiquement nul. Quand cette condition est v\'{e}rifi\'{e}e, on
dit que $-s_{1}\left(  \mu,B\right)  $ est une $\left(  d,B\right)  $-onde de chocs.
\end{corollary}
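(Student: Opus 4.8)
The plan is to read the corollary as a repackaging of Proposition~\ref{P/ s=>Mu} and Corollary~\ref{C/ p-onde}, exploiting the observation that the system (\ref{F/ EqSym1}) attached to $N=G_1-P$ depends on $P$ only through $\partial N/\partial x=\partial G_1/\partial x-B'/B$, hence only through $B$ (the term $A/B$ is a function of $y$ and drops out under $\partial_x$). Rewriting (\ref{F/ systMu}) with (\ref{F/ exp(-H)}) yields exactly $s_k=\frac{e^{\widetilde H}}{1\otimes(Y/\omega)^\delta B}\mathcal{E}_k(\mu)$, so the functions $s_k(\mu,B)$ of the statement are precisely the general solution, parametrized by $\mu$, of (\ref{F/ EqSym1}) with $N=G_1-P$.

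I would first prove injectivity. Since $\mathcal{E}_d(\mu)=\mu_d\otimes 1$, the defining formula gives $\mu_d\otimes 1=(1\otimes(Y/\omega)^\delta B)\,e^{-\widetilde H}s_d$. If $(s_k(\mu,B))=(s_k(\mu',B'))$, comparing the two $s_d$-identities gives $\mu_d(x)/\mu_d'(x)=B(y)/B'(y)$ wherever defined; a function of $x$ equal to a function of $y$ is constant, and the normalization $B(0)=B'(0)=1$ forces that constant to be $1$, so $B=B'$ and $\mu_d=\mu_d'$ (when $\mu_d\equiv 0$ one runs the same argument with the largest nonzero index, using $\mathcal{E}_k(\mu)=\mu_k\otimes 1+\mathcal{E}(\mathcal{E}_{k+1}(\mu))$). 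With $B$ now known, $\mathcal{E}_k(\mu)$ is recovered from $s_k$ for every $k$, and the relation $\mu_k\otimes 1=\mathcal{E}_k(\mu)-\mathcal{E}(\mathcal{E}_{k+1}(\mu))$ peels off $\mu_{d-1},\dots,\mu_1$ in turn, since $\mathcal{E}$ and $e^{\widetilde H}$ are fixed data. This recovers $(\mu,B)$ and proves injectivity.

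For the equivalence I would use Lemma~\ref{L/ EqDiffSym}: when $\Delta(\mu,B)\not\equiv 0$ the local roots $r_1,\dots,r_d$ of $S(\mu,B)$ are mutually distinct, their sum is $-s_1(\mu,B)$, and they are shock waves if and only if the coefficients $s_k(\mu,B)=(-1)^kS_{r,k}$ satisfy (\ref{F/ EqSym1}) with $N=-s_1(\mu,B)$. For the direction ($\Leftarrow$): if $-s_1(\mu,B)=G_1-P$ and $\Delta(\mu,B)\not\equiv 0$, then the system that $s_k(\mu,B)$ satisfies by construction, namely (\ref{F/ EqSym1}) with $N=G_1-P$, is literally the system with $N=-s_1(\mu,B)$, its first term is $-N$, and Corollary~\ref{C/ p-onde} makes $-s_1(\mu,B)$ a $d$-onde de chocs. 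For ($\Rightarrow$): if $-s_1(\mu,B)=\sum_j h_j$ is a $d$-onde de chocs, its summands are unique up to order and, by Lemma~\ref{L/ EqDiffSym}, the symmetric functions $\sigma_k=(-1)^kS_{h,k}$ solve (\ref{F/ EqSym1}) with $N=-s_1(\mu,B)$ and $\sigma_1=s_1(\mu,B)$. Granting the identification $\sigma_k=s_k(\mu,B)$, subtract the top ($k=d$) equations of the two systems that $s_d(\mu,B)$ then satisfies, one with $N=G_1-P$ and one with $N=-s_1(\mu,B)$; the $\partial_y s_d$ terms cancel and leave $s_d(\mu,B)\bigl(\partial_x(G_1+s_1(\mu,B))-B'/B\bigr)=0$, so $\partial_x(G_1+s_1(\mu,B))=B'/B$ on the dense set $\{s_d(\mu,B)\neq0\}$, hence everywhere. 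Integrating in $x$ gives $G_1+s_1(\mu,B)=(B'/B)X+c$ with $c$ a function of $y$ alone; since the roots of $B$ lie in $\rho\overline{\mathbb{D}}$, outside the $y$-range of $Z_\omega$, and $\deg A<\deg B$, the datum $c$ is identified with a rational function $A/B$, i.e. $G_1+s_1(\mu,B)=P$ and $-s_1(\mu,B)=G_1-P$; with $N$ matched the two systems coincide, $S(\mu,B)=\prod_j(T-h_j)$, and $\Delta(\mu,B)=\prod_{i<j}(h_i-h_j)^2\neq 0$.

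I expect the main obstacle to be exactly this last direction, and within it the identification of the symmetric functions $\sigma_k$ of the shock-wave decomposition with the constructed $s_k(\mu,B)$. The subtlety is that the recursion (\ref{F/ EqSym1}) leaves a free function of $y$ at each level, so agreement of the first components $\sigma_1=s_1(\mu,B)$ does not by itself propagate upward: the naive term-by-term comparison at level $k$ only yields $-s_k\bigl(\partial_x(G_1+s_1)-B'/B\bigr)=\partial_x(s_{k+1}-\sigma_{k+1})$, coupling the desired relation to the as-yet-unknown difference $s_{k+1}-\sigma_{k+1}$. Closing this requires treating the identification $\sigma_k=s_k(\mu,B)$ and the compatibility $N=G_1-P$ simultaneously, securing the former from the uniqueness built into the $\mu$-parametrization of Proposition~\ref{P/ s=>Mu} (which pins down the witnessing tuple once its driving term $N$ is fixed), rather than from the difference equations alone.
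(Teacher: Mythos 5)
Your injectivity argument and your proof of the ``if'' direction coincide with the paper's own proof: the paper likewise extracts $B=C$ and $\mu_{d}=\nu_{d}$ from the equality of the $s_{d}$'s (using the normalisation $B(0)=C(0)=1$), then peels off $\mu_{d-1},\dots,\mu_{1}$ via the recursion $\mathcal{E}_{k}\left(  \mu\right)  =\mu_{k}\otimes1+\mathcal{E}\left(  \mathcal{E}_{k+1}\left(  \mu\right)  \right)  $; for sufficiency it invokes Proposition~\ref{P/ s=>Mu} together with (\ref{F/ exp(-H)}) to see that $\left(  s_{k}\left(  \mu,B\right)  \right)  $ solves (\ref{F/ EqSym1}) with $N=G_{1}-P$, observes that under $-s_{1}\left(  \mu,B\right)  =G_{1}-P$ this system is (\ref{F/ EqSym0}), and concludes with Lemma~\ref{L/ EqDiffSym} and Corollary~\ref{C/ p-onde}. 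Your extra care at $\mu_{d}\equiv0$ slightly improves on the paper (though note that when $\mu\equiv0$ identically the map genuinely fails to determine $B$, an edge case both you and the paper pass over).

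Where you diverge is the converse implication, and the gap you honestly flag there is real --- but you should know that the paper does not prove that direction at all: its proof consists exactly of the two steps above and stops. Your attempted closure would fail at two concrete points. First, the appeal to the uniqueness built into Proposition~\ref{P/ s=>Mu} is circular: that proposition parametrises solutions of (\ref{F/ EqSym1}) \emph{driven by} $N=G_{1}-P$, whereas the tuple $\sigma_{k}=\left(  -1\right)  ^{k}S_{h,k}$ coming from a shock-wave decomposition of $-s_{1}\left(  \mu,B\right)  $ solves the system driven by $N^{\prime}=-s_{1}\left(  \mu,B\right)  $; identifying the two driving terms is precisely what is to be proved. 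Second, even granting $\partial_{x}\left(  G_{1}+s_{1}\left(  \mu,B\right)  \right)  =B^{\prime}/B$, integration in $x$ only yields $G_{1}+s_{1}\left(  \mu,B\right)  =XB^{\prime}/B+c\left(  y\right)  $ with $c$ holomorphic on $\left\{  \left\vert y\right\vert >\rho_{\omega}\right\}  $ and vanishing at infinity; nothing in the local system forces $c$ to be rational, and it certainly cannot single out the given $A$, since $s_{1}\left(  \mu,B\right)  $ does not depend on $A$ at all --- for fixed $\left(  \mu,B\right)  $ the equality $-s_{1}\left(  \mu,B\right)  =G_{1}-P$ can hold for at most one choice of $A$, so the ``only if'' as literally stated must be read with $P$ adjusted a posteriori. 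In the paper this identification is not carried out at the level of the present corollary: it is deferred to the global algebraicity arguments of Lemma~\ref{L/ G1affine} and Proposition~\ref{P/ UniciteP1} (Wood's theorem and the Abel--Radon transform), and the reconstruction procedure of section~\ref{S/ algorithme} only ever uses the ``if'' direction that you and the paper both establish.
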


\begin{proof}
Supposons que $\left(  \mu,B\right)  $ et $\left(  \nu,C\right)  $ sont deux
\'{e}l\'{e}ments de $\mathcal{O}\left(  \mathbb{C}\right)  ^{d}\times
\mathbb{C}_{\mathcal{B}}\left[  Y\right]  $ tels que $\left(  s_{k}\left(
\mu,B\right)  \right)  _{1\leqslant k\leqslant n}=\left(  s_{k}\left(
\nu,C\right)  \right)  _{1\leqslant k\leqslant d}$. Alors sur $Z_{\omega,\tau
}$%
\[
\frac{\mu_{d}\otimes1}{1\otimes\left(  Y/\omega\right)  ^{\delta}B}=\frac
{\nu_{d}\otimes1}{1\otimes\left(  Y/\omega\right)  ^{\delta}C}%
\]
Comme $B,C\in\mathbb{C}_{\mathcal{B}}\left[  Y\right]  $, ceci impose $B=C$ et
$\mu_{d}=\nu_{d}$. Supposons que $\mu_{j}=\nu_{j}$ lorsque $d\geqslant
j\geqslant k>1$. La relation $s_{k-1}\left(  \mu,B\right)  =s_{k-1}\left(
\nu,C\right)  $ s'\'{e}crit alors $\mathcal{E}_{k-1}\left(  \mu\right)
=\mathcal{E}_{k-1}\left(  \nu\right)  $ et celle-ci livre imm\'{e}diatement
$\mu_{k-1}=\nu_{k-1}$. D'o\`{u} $\mu=\nu$.

Puisque $e^{H}=\left(  1\otimes\left(  Y/\omega\right)  ^{-\delta}\right)
e^{\widetilde{H}}$, il r\'{e}sulte de la proposition~\ref{P/ s=>Mu} que
$\left(  s_{k}\left(  \mu,B\right)  \right)  _{1\leqslant k\leqslant d}$
v\'{e}rifie le syst\`{e}me (\ref{F/ EqSym1}). Lorsque $-s_{1}\left(
\mu,B\right)  =G_{1}-P$, le syst\`{e}me (\ref{F/ EqSym1}) est identique au
syst\`{e}me (\ref{F/ EqSym0}) et $\Delta\left(  \mu,B\right)  \neq0$ assure
que $-s_{1}\left(  \mu,B\right)  $ est la somme de $d$ ondes de chocs deux
\`{a} deux diff\'{e}rentes dont les fonctions sym\'{e}triques sont les
$\left(  -1\right)  ^{k}s_{k}\left(  \mu,B\right)  $.
\end{proof}

La proposition ci-dessous donne une d\'{e}composition des op\'{e}rateurs
it\'{e}r\'{e}s $\mathcal{E}^{k}$.

\begin{proposition}
\label{P/ Ekj}On d\'{e}finit des fonctions $\mathcal{E}_{k,k},...,\mathcal{E}%
_{k,0}$ holomorphes sur $\mathcal{Z}_{\omega,\tau}$ pour tout $k\in\mathbb{N}
$ par les relations suivantes%
\begin{gather*}
\mathcal{E}_{k,k}=1\otimes\frac{\left(  Y-\omega\right)  ^{k}}{k!}%
,~~\mathcal{E}_{k+1,0}=\mathcal{E}^{k}\mathcal{P}\frac{\partial H}{\partial
x}\\
\mathcal{E}_{k+1,j}=\mathcal{PE}_{k,j-1}+\mathcal{EE}_{k,j},~1\leqslant
j\leqslant k
\end{gather*}
o\`{u} $\mathcal{E}_{k,\nu}=0$ si $\nu<0$. Alors pour tout $f\in
\mathcal{O}\left(  D_{\omega}\right)  $,%
\[
\mathcal{E}^{k}\left(  f\otimes1\right)  =\sum\limits_{0\leqslant j\leqslant
k}\left(  f^{\left(  j\right)  }\otimes1\right)  \mathcal{E}_{k,j}%
\]
En outre, les fonctions $\mathcal{E}_{k,j}$ ont le long de $\mathbb{R}\tau$ au
plus des discontinuit\'{e}s de premi\`{e}re esp\`{e}ce.
\end{proposition}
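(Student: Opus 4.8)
Le plan est de raisonner par r\'{e}currence sur $k$. Le cas $k=0$ est imm\'{e}diat puisque $\mathcal{E}^{0}=Id$ et $\mathcal{E}_{0,0}=1\otimes1$. Pour l'h\'{e}r\'{e}dit\'{e}, on \'{e}crit $\mathcal{E}^{k+1}=\mathcal{E}\circ\mathcal{E}^{k}$ et on injecte l'hypoth\`{e}se de r\'{e}currence $\mathcal{E}^{k}(f\otimes1)=\sum_{0\leqslant j\leqslant k}(f^{(j)}\otimes1)\,\mathcal{E}_{k,j}$. Tout le calcul repose sur deux faits \'{e}l\'{e}mentaires concernant les constituants de $\mathcal{E}=\mathcal{P}\circ\mathcal{D}$. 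D'une part, comme $f^{(j)}\otimes1$ ne d\'{e}pend que de $x$ et que $\mathcal{D}=\frac{\partial}{\partial x}+\frac{\partial H}{\partial x}$, la r\`{e}gle de Leibniz donne $\mathcal{D}((f^{(j)}\otimes1)\mathcal{E}_{k,j})=(f^{(j+1)}\otimes1)\mathcal{E}_{k,j}+(f^{(j)}\otimes1)\mathcal{D}\mathcal{E}_{k,j}$. D'autre part, $\mathcal{P}$ \'{e}tant une primitivation en la variable $y$, elle commute avec la multiplication par toute fonction de la seule variable $x$, de sorte que $\mathcal{P}((f^{(i)}\otimes1)\,g)=(f^{(i)}\otimes1)\,\mathcal{P}g$.

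En combinant ces deux points, on obtient $\mathcal{E}((f^{(j)}\otimes1)\mathcal{E}_{k,j})=(f^{(j+1)}\otimes1)\mathcal{P}\mathcal{E}_{k,j}+(f^{(j)}\otimes1)\mathcal{E}\mathcal{E}_{k,j}$. En sommant sur $0\leqslant j\leqslant k$ et en r\'{e}indexant la premi\`{e}re somme par $i=j+1$, le coefficient de $f^{(i)}\otimes1$ dans $\mathcal{E}^{k+1}(f\otimes1)$ devient $\mathcal{P}\mathcal{E}_{k,i-1}+\mathcal{E}\mathcal{E}_{k,i}$, avec la convention $\mathcal{E}_{k,\nu}=0$ si $\nu<0$ ou $\nu>k$. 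C'est exactement la r\'{e}currence int\'{e}rieure pour $1\leqslant i\leqslant k$. Il reste \`{a} v\'{e}rifier que cette unique r\'{e}currence est compatible avec les deux d\'{e}finitions closes de bord. Pour l'indice sup\'{e}rieur $i=k+1$, elle s'\'{e}crit $\mathcal{E}_{k+1,k+1}=\mathcal{P}\mathcal{E}_{k,k}$ et, comme $\mathcal{E}_{k,k}=1\otimes\frac{(Y-\omega)^{k}}{k!}$ et que $\mathcal{P}$ est la primitive s'annulant en $\omega$, on trouve $\mathcal{P}(1\otimes\frac{(Y-\omega)^{k}}{k!})=1\otimes\frac{(Y-\omega)^{k+1}}{(k+1)!}$, soit $\mathcal{E}_{k+1,k+1}$. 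Pour l'indice inf\'{e}rieur $i=0$, elle s'\'{e}crit $\mathcal{E}_{k+1,0}=\mathcal{E}\mathcal{E}_{k,0}$ et, en y reportant la d\'{e}finition $\mathcal{E}_{k,0}=\mathcal{E}^{k-1}\mathcal{P}\frac{\partial H}{\partial x}$ valable au rang $k$, on obtient $\mathcal{E}_{k+1,0}=\mathcal{E}^{k}\mathcal{P}\frac{\partial H}{\partial x}$, la formule annonc\'{e}e.

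La derni\`{e}re assertion, sur les discontinuit\'{e}s de premi\`{e}re esp\`{e}ce le long de $\mathbb{R}\tau$, s'obtiendrait par une r\'{e}currence parall\`{e}le. La seule source de multivaluation dans toute la construction est la primitivation $\mathcal{P}$ sur le domaine non simplement connexe $\mathbb{C}\backslash\rho_{\omega}\overline{\mathbb{D}}$~: la primitive d'une fonction holomorphe univalu\'{e}e y acquiert une p\'{e}riode de type logarithmique, c'est-\`{a}-dire un saut born\'{e}, \`{a} la travers\'{e}e de la coupure $\mathbb{R}_{+}\tau$. Comme $\frac{\partial H}{\partial x}=\frac{\partial\widetilde{H}}{\partial x}$ est effectivement univalu\'{e}e et holomorphe sur $Z$ ---le terme $\delta\otimes J$ disparaissant sous $\partial_{x}$--- et comme les germes $1\otimes\frac{(Y-\omega)^{k}}{k!}$ sont entiers en $y$, chaque application de $\mathcal{P}$ puis de $\mathcal{E}=\mathcal{P}\mathcal{D}$ pr\'{e}serve la classe des fonctions holomorphes sur $Z_{\omega,\tau}$ n'ayant au plus que des discontinuit\'{e}s de premi\`{e}re esp\`{e}ce au franchissement de la coupure~; la r\'{e}currence propage donc cette r\'{e}gularit\'{e} \`{a} chaque $\mathcal{E}_{k,j}$.

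On s'attend \`{a} ce que cette comptabilit\'{e} soit routini\`{e}re~; le point demandant de l'attention est la r\'{e}conciliation des trois clauses de d\'{e}finition ---v\'{e}rifier que la r\'{e}currence uniforme extraite du calcul co\"{\i}ncide avec les formes closes pos\'{e}es s\'{e}par\'{e}ment en $i=k+1$ et $i=0$--- ainsi que la l\'{e}gitimit\'{e} de la commutation de $\mathcal{P}$ avec la multiplication par une fonction de $x$ sur $Z_{\omega,\tau}$, compte tenu de la coupure, ce qui est pr\'{e}cis\'{e}ment l\`{a} o\`{u} la discussion sur les discontinuit\'{e}s de premi\`{e}re esp\`{e}ce trouve son utilit\'{e}.
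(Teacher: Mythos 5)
Votre preuve est correcte et suit essentiellement la m\^{e}me voie que celle de l'article~: r\'{e}currence sur $k$, r\`{e}gle de Leibniz pour $\mathcal{D}=\partial_{x}+\partial_{x}H$, commutation de $\mathcal{P}$ avec la multiplication par les fonctions de la seule variable $x$, puis identification des coefficients de $f^{(j)}\otimes1$ avec v\'{e}rification des deux cas de bord $\mathcal{E}_{k+1,k+1}=\mathcal{P}\mathcal{E}_{k,k}$ et $\mathcal{E}_{k+1,0}=\mathcal{E}\mathcal{E}_{k,0}=\mathcal{E}^{k}\mathcal{P}\partial_{x}H$. Votre traitement des discontinuit\'{e}s de premi\`{e}re esp\`{e}ce (seule la p\'{e}riode introduite par $\mathcal{P}$ \`{a} la travers\'{e}e de la coupure cr\'{e}e un saut, $\partial_{x}H=\partial_{x}\widetilde{H}$ \'{e}tant univalu\'{e}e) est m\^{e}me un peu plus explicite que l'argument de l'article, qui renvoie au cas $k=1$.
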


\begin{proof}
Par d\'{e}finition pour tout $f\in\mathcal{O}\left(  D_{\omega}\right)  $,
$\mathcal{D}\left(  f\otimes1\right)  =f^{\prime}\otimes1+\left(
f\otimes1\right)  \frac{\partial H}{\partial x}$ et donc $\mathcal{E}f=\left(
f^{\prime}\otimes1\right)  \mathcal{E}_{1,1}+\left(  f\otimes1\right)
\mathcal{E}_{1,0}$ avec $\mathcal{E}_{1,1}=1\otimes\left(  Y-\omega\right)  $
et $\mathcal{E}_{1,0}=\mathcal{P}H$. Par ailleurs pour tout $\left(
x,y\right)  \in Z_{\omega,\tau}$
\[
H\left(  x,y\right)  =G_{1,-1}^{\leqslant3}\left(  x,y\right)  -\frac
{G_{2}^{\prime}\left(  x\right)  }{y}+G_{1}^{\prime}\left(  x\right)
J_{-1}\left(  x,ny\right)
\]
avec $G_{1,-1}^{\leqslant3}\left(  x,y\right)  =\sum\limits_{n\leqslant3}%
\frac{G_{n}^{\prime}\left(  x\right)  /\left(  1-n\right)  }{y^{n-1}}$.
$\mathcal{P}G_{1,-1}^{\leqslant3}\left(  \left(  x,y\right)  \right)
=\sum\limits_{n\leqslant3}\frac{G_{n}^{\prime}\left(  x\right)  /\left(
1-n\right)  \left(  2-n\right)  }{y^{n-2}}$ se prolonge holomorphiquement
\`{a} $Z_{\omega}$. Puisque $J_{-1}$ a une discontinuit\'{e} de premi\`{e}re
esp\`{e}ce le long de $\mathbb{R}\tau$, il en est de m\^{e}me de
$\mathcal{P}J_{-1}$. Par cons\'{e}quent, $\mathcal{E}_{1,0}=\mathcal{P}H$ a au
plus des discontinuit\'{e}s de premi\`{e}re esp\`{e}ce le long de
$\mathbb{R}\tau$. Supposons le r\'{e}sultat du lemme vrai pour $k\in
\mathbb{N}^{\ast}$ donn\'{e}. Alors pour $f\in\mathcal{O}\left(  D_{\omega
}\right)  $%
\begin{align*}
&  \mathcal{E}^{k+1}\left(  f\otimes1\right) \\
&  =%
{\displaystyle\sum\limits_{0\leqslant j\leqslant k}}
\mathcal{P}\frac{\partial}{\partial x}\left(  f^{\left(  j\right)  }%
\otimes1\right)  \mathcal{E}_{k,j}+\mathcal{P}\left(  \frac{\partial
H}{\partial x}%
{\displaystyle\sum\limits_{0\leqslant j\leqslant k}}
\left(  f^{\left(  j\right)  }\otimes1\right)  \mathcal{E}_{k,j}\right) \\
&  =%
{\displaystyle\sum\limits_{0\leqslant j\leqslant k}}
\mathcal{P}\left(  \left(  f^{\left(  j+1\right)  }\otimes1\right)
\mathcal{E}_{k,j}+\left(  f^{\left(  j\right)  }\otimes1\right)
\frac{\partial\mathcal{E}_{k,j}}{\partial x}\right)  +%
{\displaystyle\sum\limits_{0\leqslant j\leqslant k}}
\left(  f^{\left(  j\right)  }\otimes1\right)  \mathcal{P}\left(
\mathcal{E}_{k,j}\frac{\partial H}{\partial x}\right) \\
&  =%
{\displaystyle\sum\limits_{0\leqslant j\leqslant k}}
\left(  f^{\left(  j+1\right)  }\otimes1\right)  \mathcal{PE}_{k,j}+%
{\displaystyle\sum\limits_{0\leqslant j\leqslant k}}
\left(  f^{\left(  j\right)  }\otimes1\right)  \mathcal{P}\frac{\partial
\mathcal{E}_{k,j}}{\partial x}+%
{\displaystyle\sum\limits_{0\leqslant j\leqslant k}}
\left(  f^{\left(  j\right)  }\otimes1\right)  \mathcal{P}\left(
\mathcal{E}_{k,j}\frac{\partial H}{\partial x}\right)
\end{align*}
ce qui donne la formule attendue avec%
\begin{align*}
\mathcal{E}_{k+1,k+1}  &  =\mathcal{PE}_{k,k}=\frac{1}{k!}\mathcal{P}\left(
1\otimes\left(  Y-\omega\right)  ^{k}\right)  =\frac{1}{\left(  k+1\right)
!}1\otimes\left(  Y-\omega\right)  ^{k+1},\\
\mathcal{E}_{k+1,j}  &  =\mathcal{PE}_{k,j-1}+\mathcal{P}\left(
\frac{\partial\mathcal{E}_{k,j}}{\partial x}+\mathcal{E}_{k,j}\frac{\partial
H}{\partial x}\right)  =\mathcal{PE}_{k,j-1}+\mathcal{EE}_{k,j},~1\leqslant
j\leqslant k,\\
\mathcal{E}_{k+1,0}  &  =\mathcal{P}\left(  \frac{\partial\mathcal{E}_{k,0}%
}{\partial x}+\mathcal{E}_{k,0}\frac{\partial H}{\partial x}\right)
=\mathcal{EE}_{k,0}=\mathcal{EE}^{k-1}\mathcal{P}\frac{\partial H}{\partial
x}=\mathcal{E}^{k}\mathcal{P}\frac{\partial H}{\partial x}.
\end{align*}
Le m\^{e}me type de raisonnement que celui fait pour $k=1$ livre que les
$\mathcal{E}_{k+1,j}$ ont au plus des discontinuit\'{e}s de premi\`{e}re
esp\`{e}ce le long de $\mathbb{R}\tau$.
\end{proof}

\subsection{Syst\`{e}mes diff\'{e}rentiels}

Comme dans la section pr\'{e}c\'{e}dente, on se donne $A,B\in\mathbb{C}\left[
Y\right]  $ avec $\deg A<r=\deg B$ et $B=\underset{1\leqslant q\leqslant
r}{\Pi}\left(  1+s_{q}Y\right)  =\sum\limits_{0\leqslant j\leqslant r}%
\beta_{j}Y^{j}$ tel que $\left\{  B=0\right\}  \subset\rho\overline
{\mathbb{D}}$. On d\'{e}finit $P\in\mathbb{C}\left[  X,Y\right)  $ et
$N\in\mathcal{O}\left(  Z\right)  $ par les relations%
\[
P=1\otimes\frac{A}{B}+X\otimes\frac{B^{\prime}}{B}=f_{1,0}\left(  Y\right)
+f_{1,1}\left(  Y\right)  X~~~\text{et~~~}N=G_{1}-P.
\]
Puisque le degr\'{e} $q_{\infty}$ de $B_{\infty}$ et le nombre $p$ des ondes
de chocs sont li\'{e}s par la relation $p=q_{\infty}+\delta$ o\`{u} $\delta$
est connu gr\^{a}ce \`{a} (\ref{F/ delta}), on pose%
\[
d=r+\delta.
\]
Le corollaire~\ref{C/ constructionOndeMult} permet de fabriquer \`{a} partir
d'un jeu $\mu_{1},..,\mu_{d}$ de fonctions holomorphes sur $D_{\omega}$ un
polyn\^{o}me $T^{d}+\sum\limits_{1\leqslant k\leqslant d}s_{k}\left(
\mu,B\right)  T^{d-k}$ dont les racines sont sur $\left\{  \Delta\left(
\mu,B\right)  \neq0\right\}  $ des ondes de chocs pourvu que $-s_{1}\left(
\mu,B\right)  =N=G_{1}-P$. La proposition ci-dessous montre que cette
condition est \'{e}quivalente \`{a} ce que les $\mu_{j}$ v\'{e}rifient un
syst\`{e}me d'\'{e}quations lin\'{e}aires avec second membre explicite.

\begin{proposition}
\label{P/ systLinMu}Soit $\mu\in\mathcal{O}\left(  D_{\omega}\right)  ^{d}$.

1. On note $(\sum\limits_{n\in\mathbb{Z}}K_{n}^{0}\left(  B,A,x\right)  y^{n})
$ le d\'{e}veloppement en s\'{e}rie de Laurent par rapport \`{a} $y$ de
$\left[  R-\left(  1\otimes B\right)  G_{1}\right]  e^{-H}$. Alors
$G_{1}+s_{1}\left(  \mu,B\right)  =R$ sur $Z_{\omega}$ si et seulement si
$\mu$ est solution sur $D_{\omega}$ du syst\`{e}me diff\'{e}rentiel%
\begin{equation}
\forall n\in\mathbb{Z},~\sum\limits_{0\leqslant m<j\leqslant d}c_{j,m}%
^{0,n}\mu_{j}^{\left(  m\right)  }=K_{n}^{0}\left(  B,A,.\right)  \label{E0}%
\end{equation}
o\`{u} pour $n\in\mathbb{Z}$, $0\leqslant m<j\leqslant d$ et $R>\rho_{\omega}%
$,%
\[
c_{j,m}^{0,n}=\frac{1}{2\pi i}\int_{R\mathbb{T}}\mathcal{E}_{j-1,m}\frac
{dy}{y^{n+1}}%
\]
En outre les fonctions $K_{n}^{0}\left(  B,A,.\right)  $ sont nulles pour
$n\geqslant d$ et lin\'{e}aires par rapport \`{a} $\left(  A,B\right)  $ avec
des coefficients qui ne d\'{e}pendent que de $G$ .

2. L'\'{e}quation $\frac{\partial G_{1}}{\partial x}+\frac{\partial
s_{1}\left(  \mu,B\right)  }{\partial x}=1\otimes\frac{B^{\prime}}{B}$ est
\'{e}quivalente \`{a} ce que $\mu$ est solution sur $D_{\omega}$ du
syst\`{e}me diff\'{e}rentiel
\begin{equation}
\forall n\in\mathbb{Z},~\sum\limits_{j=1}^{d}\sum\limits_{m=0}^{j}%
c_{j,m}^{1,n}\mu_{j}^{\left(  m\right)  }=K_{n}^{1}\left(  B,.\right)
\label{E1}%
\end{equation}
o\`{u} pour $n\in\mathbb{Z}$, $1\leqslant j\leqslant d$, $0\leqslant
m\leqslant j$ et $R>\rho$,%
\begin{gather*}
K_{n}^{1}\left(  B,.\right)  =\frac{\omega^{g}}{2\pi i}\int_{R\mathbb{T}%
}\left(  B^{\prime}\left(  y\right)  -B\left(  y\right)  \frac{\partial G_{1}%
}{\partial x}\left(  .,y\right)  \right)  \frac{e^{\widetilde{H}\left(
.,y\right)  }dy}{y^{n+1+\delta}}\\
c_{j,m}^{1,n}=\frac{1}{2\pi i}\int_{R\mathbb{T}}\mathcal{E}_{j,m}^{1}\frac
{dy}{y^{n+1}},
\end{gather*}
les fonctions $\mathcal{E}_{j,m}^{1}$ \'{e}tant d\'{e}termin\'{e}es par les
relations%
\begin{gather*}
\mathcal{E}_{j,j}^{1}=\mathcal{E}_{j-1,j-1}=1\otimes\frac{\left(
Y-\omega\right)  ^{j-1}}{\left(  j-1\right)  !},~~\mathcal{E}_{j,0}%
^{1}=\mathcal{DE}_{j-1,0}\\
\mathcal{EE}_{j,m}^{1}=\mathcal{E}_{j-1,m-1}+\widetilde{\mathcal{D}%
}\mathcal{E}_{j-1,m},~1\leqslant m\leqslant j,
\end{gather*}
En outre, $K_{n}^{1}\left(  B,.\right)  =0$ lorsque $n\geqslant d$.

3. On suppose $\frac{\partial^{2}G_{1}}{\partial x^{2}}\neq0$ et on pose
$B=\sum\limits_{0\leqslant j\leqslant r}\beta_{j}Y^{j}$. L'\'{e}quation
$\frac{\partial^{2}G_{1}}{\partial^{2}x}+\frac{\partial^{2}s_{1}\left(
\mu,B\right)  }{\partial x^{2}}=0$ est \'{e}quivalente \`{a} ce que $\mu$ est
solution sur $D_{\omega}$ du syst\`{e}me diff\'{e}rentiel%
\begin{equation}
\forall n\in\mathbb{Z},~\beta_{n}\mathbf{1}_{\left\{  \delta,...,d\right\}
}\left(  n\right)  =\sum\limits_{j=1}^{d}\sum\limits_{m=0}^{j+1}c_{j,m}%
^{2,n}\mu_{j}^{\left(  m\right)  }. \label{E2}%
\end{equation}
o\`{u}
\[
c_{j,m}^{2,n}=\frac{1}{2\pi i}\int_{R\mathbb{T}}\frac{\mathcal{E}_{j,m}^{2}%
}{\partial^{2}G_{1}/\partial x^{2}}\left(  .,y\right)  \frac{dy}{y^{n+1}},
\]
les fonctions $\mathcal{E}_{j,m}^{2}$ \'{e}tant d\'{e}termin\'{e}es par les
formules suivantes
\[
\mathcal{E}_{j,m}^{2}=\mathcal{E}_{j-1,m-2}+2\mathcal{DE}_{j-1,m-1}%
+\frac{\partial^{2}\mathcal{E}_{j-1,m}}{\partial x^{2}}+2\frac{\partial
\widetilde{H}}{\partial x}\frac{\partial\mathcal{E}_{j-1,m}}{\partial
x}+\left(  \left(  \frac{\partial\widetilde{H}}{\partial x}\right)  ^{2}%
+\frac{\partial^{2}\widetilde{H}}{\partial x^{2}}\right)  \mathcal{E}_{j-1,m}%
\]
avec la convention que $\mathcal{E}_{\alpha,\beta}=0$ si $m\notin\left[
2,j-1\right]  $
\end{proposition}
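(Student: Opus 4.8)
Proposition \ref{P/ systLinMu} の証明を計画しよう。

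この命題は三つの独立したパートからなる。各パートは、$\mu = (\mu_1,\dots,\mu_d)$ に対する関数等式が、係数 $c_{j,m}^{\ell,n}$ を持つ線形微分方程式系に同値であることを主張している。

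まず基本的な枠組みを確認しよう。補題は三つすべてに共通する。

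Let me think carefully about the three parts and what the proof structure should be.The plan is to handle the three parts by a common mechanism: each of the analytic identities on $Z_\omega$ --- obtained by applying $\mathrm{id}$, $\partial_x$, or $\partial_x^2$ to $G_1+s_1(\mu,B)$ --- is converted, by separating the variables $x$ and $y$ and reading off Laurent coefficients in $y$, into the infinite family of linear ODEs in $x$ displayed in (\ref{E0}), (\ref{E1}), (\ref{E2}). First I would put $s_1(\mu,B)$ in a form where $\partial_x$ acts cleanly. By Corollary~\ref{C/ constructionOndeMult} together with (\ref{F/ exp(-H)}), one has $(1\otimes B)\,s_1(\mu,B)=e^{H}\mathcal{E}_1(\mu)$ with $\mathcal{E}_1(\mu)=\sum_{j=1}^d\mathcal{E}^{j-1}(\mu_j\otimes1)$, exactly as in Proposition~\ref{P/ s=>Mu}. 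For part~1, I multiply $G_1+s_1=R$ by $1\otimes B$ and then by $e^{-H}$ --- legitimate because, by the remark following Definition~\ref{D/ PJH}, $e^{\pm H}$ extends holomorphically to $Z_\omega$ independently of the cut $\mathbb{R}_+\tau$ --- which isolates the relation $\mathcal{E}_1(\mu)=[R-(1\otimes B)G_1]\,e^{-H}$. Substituting the decomposition $\mathcal{E}^{j-1}(\mu_j\otimes1)=\sum_{0\le m\le j-1}(\mu_j^{(m)}\otimes1)\,\mathcal{E}_{j-1,m}$ of Proposition~\ref{P/ Ekj} rewrites the left-hand side as $\sum_{0\le m<j\le d}(\mu_j^{(m)}\otimes1)\,\mathcal{E}_{j-1,m}$, in which the coefficients $\mathcal{E}_{j-1,m}$ carry at most first-kind discontinuities along $\mathbb{R}\tau$.

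The separation of variables is then the crux. Since each $\mu_j^{(m)}$ depends only on $x$, I extract the $n$-th Laurent coefficient in $y$ by integrating against $\tfrac{1}{2\pi i}\,y^{-n-1}\,dy$ over $R\mathbb{T}$; the finite sum passes through the integral, and the first-kind discontinuities do not obstruct the contour integral since they meet $R\mathbb{T}$ in a single point. This produces $\sum_{0\le m<j\le d}c_{j,m}^{0,n}(x)\,\mu_j^{(m)}(x)=K_n^0(B,A,x)$, which is (\ref{E0}); conversely, equality of all Laurent coefficients forces equality of the two holomorphic functions on $Z_\omega$. The truncation $K_n^0=0$ for $n\ge d$ comes from a degree count --- $(1\otimes B)G_1$ has top $y$-degree $r-1$ and $e^{-H}=\omega^{-\delta}(1\otimes Y^{\delta})e^{-\widetilde H}$ contributes $y^{\delta}$, so the relevant product has top degree at most $d-1=r+\delta-1$ --- while linearity of $K_n^0$ in $(A,B)$ with $G$-dependent coefficients is immediate, since $R$ and $(1\otimes B)G_1$ are linear in $(A,B)$ and $e^{-H}$ depends only on the indicatrices.

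Parts~2 and~3 follow the same template after differentiation, the one new ingredient being the commutation $\partial_x(e^{H}\,\cdot)=e^{H}\mathcal{D}(\cdot)$ with $\mathcal{D}=\partial_x+\partial_x H$. Applying $\partial_x$ (resp.\ $\partial_x^2$) to $(1\otimes B)s_1=e^{H}\mathcal{E}_1(\mu)$ and isolating through $e^{-H}$ and (\ref{F/ exp(-H)}) yields $\mathcal{D}\mathcal{E}_1(\mu)=[1\otimes B'-(1\otimes B)\partial_x G_1]e^{-H}$, respectively the analogous second-order identity, whose right-hand sides give $K_n^1$ and $K_n^2$. Expanding $\mathcal{D}$ term by term on the decomposition of Proposition~\ref{P/ Ekj}, via $\mathcal{D}[(f^{(m)}\otimes1)\mathcal{E}_{j-1,m}]=(f^{(m+1)}\otimes1)\mathcal{E}_{j-1,m}+(f^{(m)}\otimes1)\mathcal{D}\mathcal{E}_{j-1,m}$, reproduces exactly the recursions defining $\mathcal{E}^1_{j,m}$, and with the second derivative the cross terms $2\,\partial_x H\,\partial_x$, $(\partial_x H)^2$, $\partial_x^2 H$ reproduce those defining $\mathcal{E}^2_{j,m}$; the Laurent extraction then gives (\ref{E1}) and (\ref{E2}). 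I expect the main obstacle to be part~3: keeping the second-order cross terms in order so that they match the stated formula for $\mathcal{E}^2_{j,m}$, and justifying the division by $\partial_x^2 G_1$ in $c_{j,m}^{2,n}$, which rests on the standing hypothesis $\partial_x^2 G_1\not\equiv0$ and requires care at its zero locus. The degree counts giving $K_n^1=0$ and the truncation of (\ref{E2}) for $n\ge d$ go through as in part~1.
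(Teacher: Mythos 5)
Your proposal is correct and follows essentially the same route as the paper's proof: write $\left(1\otimes B\right)s_{1}\left(\mu,B\right)=e^{H}\mathcal{E}_{1}\left(\mu\right)$, decompose $\mathcal{E}_{1}\left(\mu\right)$ through Proposition~\ref{P/ Ekj}, commute $\frac{\partial}{\partial x}$ (resp. $\frac{\partial^{2}}{\partial x^{2}}$) with $e^{H}$ via $\mathcal{D}=e^{-H}\frac{\partial}{\partial x}e^{H}$ to obtain exactly the recursions defining $\mathcal{E}_{j,m}^{1}$ and $\mathcal{E}_{j,m}^{2}$, then extract Laurent coefficients in $y$ by integration over $R\mathbb{T}$ (licit because the $\mathcal{E}_{k,j}$ have at most first-kind discontinuities along $\mathbb{R}\tau$), with the same degree counts yielding $K_{n}^{0}=K_{n}^{1}=0$ for $n\geqslant d$. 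The caveat you raise in part~3 about dividing by $\partial^{2}G_{1}/\partial x^{2}$ is fair, but the paper handles it no more carefully than you do, dividing formally under the standing hypothesis $\partial^{2}G_{1}/\partial x^{2}\not\equiv0$.
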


\begin{proof}
\textbf{1. }D'apr\`{e}s les d\'{e}finitions donn\'{e}es dans le
corollaire~\ref{C/ constructionOndeMult}, $s_{1}\left(  \mu,B\right)
=L\mathcal{E}_{1}\left(  \mu\right)  $. La proposition~\ref{P/ Ekj} donne
\[
\mathcal{E}_{1}\left(  \mu\right)  =\sum\limits_{1\leqslant j\leqslant
d}\mathcal{E}^{j-1}\left(  \mu_{j}\otimes1\right)  =\sum\limits_{1\leqslant
j\leqslant d}\sum\limits_{0\leqslant m\leqslant j-1}\left(  \mu_{j}^{\left(
m\right)  }\otimes1\right)  \mathcal{E}_{j-1,m}%
\]
Donc $G+s_{1}\left(  \mu,B\right)  =R$ si et seulement si%
\begin{equation}
\omega^{\delta}\sum\limits_{0\leqslant m<j\leqslant d}\left(  \mu_{j}^{\left(
m\right)  }\otimes1\right)  \mathcal{E}_{j-1,m}=\left(  X\otimes Y^{\delta
}B^{\prime}\right)  e^{-\widetilde{H}}+\left(  1\otimes Y^{\delta}A\right)
e^{-\widetilde{H}}-\left(  1\otimes Y^{\delta}B\right)  G_{1}e^{-\widetilde{H}%
}. \label{F/ dev0}%
\end{equation}
o\`{u} $A=\sum\limits_{0\leqslant j\leqslant r-1}a_{j}Y^{j}$. Le
d\'{e}veloppement en s\'{e}rie Laurent de $e^{-\widetilde{H}}$ \'{e}tant de la
forme $e^{-\widetilde{H}}=\sum\limits_{n\in\mathbb{Z}_{-}}\widetilde{E}%
_{n}\left(  x\right)  y^{n}$ o\`{u} les $\widetilde{E}_{n}$ sont des fonctions
holomorphes sur $D_{\omega}$, il vient, puisque $r+\delta=d$, que pour tout
$\left(  x,y\right)  \in Z$
\begin{align*}
xy^{\delta}B^{\prime}\left(  y\right)  e^{-\widetilde{H}\left(  x,y\right)  }
&  =x\sum\limits_{\nu\leqslant0}\sum\limits_{\delta\leqslant j<d}\left(
j-\delta+1\right)  \beta_{j-\delta+1}\widetilde{E}_{\nu}\left(  x\right)
y^{j+\nu}\\
&  =\sum\limits_{n\leqslant d-1}y^{n}\sum\limits_{n-d<\nu\leqslant\min\left(
n-\delta,0\right)  }\left(  n-\nu-\delta+1\right)  \beta_{n-\nu-\delta
+1}x\widetilde{E}_{\nu}\left(  x\right)
\end{align*}
et
\[
y^{\delta}A\left(  y\right)  e^{-\widetilde{H}\left(  x,y\right)  }%
=\sum\limits_{n\leqslant d-1}y^{n}\sum\limits_{n-d<\nu\leqslant\min\left(
n-\delta,0\right)  }a_{n-\nu-\delta}\widetilde{E}_{\nu}\left(  x\right)
\]
$G_{1}e^{-\widetilde{H}}$ se d\'{e}veloppant sous la forme $G_{1}%
e^{-\widetilde{H}}=\sum\limits_{n\in\mathbb{Z}_{-}^{\ast}}\widehat{E}%
_{n}\otimes Y^{n}=\frac{1}{y}\sum\limits_{n\in\mathbb{Z}_{-}}\widehat{E}%
_{n-1}\left(  x\right)  y^{n}$, $\left(  1\otimes Y^{\delta}B\right)
G_{1}e^{-\widetilde{H}}=\left(  1\otimes Y^{\delta-1}B\right)  \sum
\limits_{n\in\mathbb{Z}_{-}}\widehat{E}_{n-1}\otimes Y^{n}$ et on a de
m\^{e}me
\[
y^{\delta}B\left(  y\right)  G_{1}\left(  x,y\right)  e^{-\widetilde{H}\left(
x,y\right)  }=\sum\limits_{n\leqslant d-1}y^{n}\sum\limits_{n-d<\nu
\leqslant\min\left(  n-\delta+1,0\right)  }\beta_{n-\nu-\delta+1}%
\widehat{E}_{\nu-1}\left(  x\right)
\]
Ainsi, les coefficients $\omega^{-\delta}K_{n}^{0}\left(  B,A,x\right)  $ du
d\'{e}veloppement du membre de droite de (\ref{F/ dev0}) sont nuls si
$n\geqslant d$ et sinon donn\'{e}s par les formules,%
\[
K_{n}^{0}\left(  B,A,x\right)  =\omega^{-\delta}\sum\limits_{\nu=n-d+1}%
^{\min\left(  n-\delta,0\right)  }\left(  \left[  \left(  n-\nu-\delta
+1\right)  \beta_{n-\nu-\delta+1}x+a_{n-\nu-\delta}\right]  \widetilde{E}%
_{\nu}\left(  x\right)  +\beta_{n-\nu-\delta+1}\widehat{E}_{\nu-1}\left(
x\right)  \right)  .
\]
Si $R$ est assez grand, (\ref{F/ dev0}) est \'{e}quivalent au fait que pour
tout $n\in\mathbb{Z}$,
\[
\sum\limits_{0\leqslant m<j\leqslant d}\mu_{j}^{\left(  m\right)  }\frac
{1}{2\pi}\int_{R\mathbb{T}}\mathcal{E}_{j-1,m}\frac{dy}{y^{n+1}}=K_{n}%
^{0}\left(  B,A,x\right)  ,
\]
l'existence des int\'{e}grales \'{e}tant assur\'{e}e par le fait que les
$\mathcal{E}_{k,j}$ ont plus des discontinuit\'{e}s de premi\`{e}re esp\`{e}ce
le long de $\mathbb{R}\tau$. Ceci prouve le premier point de la proposition.

\textbf{2. }Puisque $\frac{\partial\widetilde{H}}{\partial x}=\frac{\partial
H}{\partial x}$, $e^{-\widetilde{H}}\frac{\partial}{\partial x}%
e^{\widetilde{H}}=e^{-H}\frac{\partial}{\partial x}e^{H}$ et posant
$L=e^{\widetilde{H}}/\left(  1\otimes\left(  Y/\omega\right)  ^{\delta
}B\right)  $
\begin{align*}
\frac{\partial s_{1}\left(  \mu,B\right)  }{\partial x}  &  =L\sum
\limits_{j=1}^{d}e^{-H}\frac{\partial}{\partial x}e^{H}\mathcal{E}%
^{j-1}\left(  \mu_{j}\otimes1\right)  =L\sum\limits_{0\leqslant m<j\leqslant
d}e^{-H}\frac{\partial}{\partial x}e^{H}\left(  \mu_{j}^{\left(  m\right)
}\otimes1\right)  \mathcal{E}_{j-1,m}\\
&  =L\sum\limits_{0\leqslant m<j\leqslant d}\left(  \left(  \mu_{j}^{\left(
m+1\right)  }\otimes1\right)  \mathcal{E}_{j-1,m}+\left(  \mu_{j}^{\left(
m\right)  }\otimes1\right)  \mathcal{DE}_{j-1,m}\right) \\
&  =L\sum\limits_{j=1}^{d}\sum\limits_{m=0}^{j}\mathcal{E}_{j,m}^{1}\left(
\mu_{j}^{\left(  m\right)  }\otimes1\right)
\end{align*}
avec $\mathcal{E}_{j,j}^{1}=\mathcal{E}_{j-1,j-1}=1\otimes\frac{\left(
Y-\omega\right)  ^{j-1}}{\left(  j-1\right)  !}$, $\mathcal{E}_{j,0}%
^{1}=\mathcal{DE}_{j-1,0}$ et pour $~1\leqslant m\leqslant j$, $\mathcal{E}%
_{j,m}^{1}=\mathcal{E}_{j-1,m-1}+\mathcal{DE}_{j-1,m}$. L'\'{e}quation
$\frac{\partial G_{1}}{\partial x}+\frac{\partial s_{1}\left(  \mu,B\right)
}{\partial x}=1\otimes\frac{B^{\prime}}{B}$ est alors \'{e}quivalente \`{a}%
\[
\sum\limits_{j=1}^{d}\sum\limits_{m=0}^{j}\mathcal{E}_{j,m}^{1}\left(  \mu
_{j}^{\left(  m\right)  }\otimes1\right)  =\left(  1\otimes\frac{B^{\prime}%
}{\left(  Y/\omega\right)  ^{\delta}}-1\otimes\frac{B}{\left(  Y/\omega
\right)  ^{\delta}}\frac{\partial G_{1}}{\partial x}\right)  e^{\widetilde{H}}%
\]
et donc au syst\`{e}me d'\'{e}quations annonc\'{e}. Puisque le
d\'{e}veloppement en s\'{e}rie de Laurent de $\frac{\partial G_{1}}{\partial
x}e^{\widetilde{H}}$ est de la forme $\sum\limits_{n\in\mathbb{Z}_{-}^{\ast}%
}\Lambda_{n}\left(  x\right)  y^{n}$ o\`{u} les $\Lambda_{n}$ sont des
fonctions holomorphe sur $D_{\omega}$, le coefficient en $y^{n}$ de celui de
$\frac{\partial G_{1}}{\partial x}e^{\widetilde{H}}\left(  1\otimes B\right)
$ est nul lorsque $n\geqslant\deg B=r$. On en d\'{e}duit que $K_{n}^{1}\left(
B,.\right)  =0$ lorsque $n\geqslant r+\delta=d$.

\textbf{3. }Pour simplifier les \'{e}critures, on \'{e}crit $f$ pour
$f\otimes1$. Puisque
\begin{align*}
\frac{\partial^{2}s_{1}\left(  \mu,B\right)  }{\partial x^{2}}  &
=L\sum\limits_{j=1}^{d}e^{-H}\frac{\partial^{2}}{\partial x^{2}}%
e^{H}\mathcal{E}^{j-1}\left(  \mu_{j}\otimes1\right) \\
&  =L\sum\limits_{j=1}^{d}\left(  \frac{\partial^{2}\mathcal{E}^{j-1}\mu_{j}%
}{\partial x^{2}}+2\frac{\partial\mathcal{E}^{j-1}\mu_{j}}{\partial x}%
\frac{\partial H}{\partial x}+\left(  \left(  \frac{\partial H}{\partial
x}\right)  ^{2}+\frac{\partial^{2}H}{\partial x^{2}}\right)  \mathcal{E}%
^{j-1}\mu_{j}\right)
\end{align*}
et donc%
\[
\frac{\partial^{2}s_{1}\left(  \mu,B\right)  }{\partial x^{2}}=L\sum
\limits_{j=1}^{d}\sum\limits_{\ell=0}^{j-1}\left(
\begin{array}
[c]{l}%
\mathcal{E}_{j-1,\ell}\mu_{j}^{\left(  \ell+2\right)  }+2\frac{\partial
\mathcal{E}_{j-1,\ell}}{\partial x}\mu_{j}^{\left(  \ell+1\right)  }%
+\frac{\partial^{2}\mathcal{E}_{j-1,\ell}}{\partial x^{2}}\mu_{j}^{\left(
\ell\right)  }\\
2\frac{\partial H}{\partial x}\left(  \mathcal{E}_{j-1,\ell}\mu_{j}^{\left(
\ell+1\right)  }+\frac{\partial\mathcal{E}_{j-1,\ell}}{\partial x}\mu
_{j}^{\left(  \ell\right)  }\right) \\
+\mathcal{E}_{j-1,\ell}\left(  \left(  \frac{\partial H}{\partial x}\right)
^{2}+\frac{\partial^{2}H}{\partial x^{2}}\right)  \mu_{j}^{\left(
\ell\right)  }%
\end{array}
\right)  =L\sum\limits_{j=1}^{d}\sum\limits_{m=0}^{j+1}\mathcal{E}_{j,m}%
^{2}\mu_{j}^{\left(  m\right)  }%
\]
o\`{u} les $\mathcal{E}_{j,m}^{2}$ sont d\'{e}finis comme annonc\'{e} pour
$\mathcal{E}_{j,j+1}^{2}$, $\mathcal{E}_{j,j}^{2}$, $\mathcal{E}_{j,0}^{2}$ et%
\begin{align*}
\mathcal{E}_{j,1}^{2}  &  =2\frac{\partial\mathcal{E}_{j-1,0}}{\partial
x}+\frac{\partial^{2}\mathcal{E}_{j-1,1}}{\partial x^{2}}+2\frac{\partial
H}{\partial x}\left(  \mathcal{E}_{j-1,0}+\frac{\partial\mathcal{E}_{j-1,1}%
}{\partial x}\right)  +\left(  \left(  \frac{\partial H}{\partial x}\right)
^{2}+\frac{\partial^{2}H}{\partial x^{2}}\right)  \mathcal{E}_{j-1,1}\\
&  =2\mathcal{DE}_{j-1,0}+\frac{\partial^{2}\mathcal{E}_{j-1,1}}{\partial
x^{2}}+2\frac{\partial H}{\partial x}\frac{\partial\mathcal{E}_{j-1,1}%
}{\partial x}+\left(  \left(  \frac{\partial H}{\partial x}\right)  ^{2}%
+\frac{\partial^{2}H}{\partial x^{2}}\right)  \mathcal{E}_{j-1,1}%
\end{align*}
ainsi que
\begin{align*}
\mathcal{E}_{j,m}^{2}  &  =\mathcal{E}_{j-1,m-2}+2\frac{\partial
\mathcal{E}_{j-1,m-1}}{\partial x}+\frac{\partial^{2}\mathcal{E}_{j-1,m}%
}{\partial x^{2}}+2\frac{\partial H}{\partial x}\left(  \mathcal{E}%
_{j-1,m-1}+\frac{\partial\mathcal{E}_{j-1,m}}{\partial x}\right)  +\left(
\left(  \frac{\partial H}{\partial x}\right)  ^{2}+\frac{\partial^{2}%
H}{\partial x^{2}}\right)  \mathcal{E}_{j-1,m.}\\
&  =\mathcal{E}_{j-1,m-2}+2\mathcal{DE}_{j-1,m-1}+\frac{\partial
^{2}\mathcal{E}_{j-1,m}}{\partial x^{2}}+2\frac{\partial H}{\partial x}%
\frac{\partial\mathcal{E}_{j-1,m}}{\partial x}+\left(  \left(  \frac{\partial
H}{\partial x}\right)  ^{2}+\frac{\partial^{2}H}{\partial x^{2}}\right)
\mathcal{E}_{j-1,m.}%
\end{align*}
Lorsque $\frac{\partial^{2}G}{\partial x^{2}}$ n'est pas la fonction nulle,
l'\'{e}quation $\frac{\partial^{2}G}{\partial x^{2}}+\frac{\partial^{2}%
s_{1}\left(  \mu,B\right)  }{\partial x^{2}}=0$ s'\'{e}crit donc%
\[
-1\otimes\left(  Y/\omega\right)  ^{\delta}B=\sum\limits_{j=1}^{d}%
\sum\limits_{m=0}^{j+1}\frac{\mathcal{E}_{j,m}^{2}}{\partial G/\partial x^{2}%
}\mu_{j}^{\left(  m\right)  }.
\]
Puisque $Y^{\delta}B=\sum\limits_{\delta\leqslant n\leqslant d}\beta_{n}y^{n}%
$, ceci est \'{e}quivalent \`{a} ce que pour tout $j\in\mathbb{Z}$ et $R>\rho
$,
\[
\beta_{n}\mathbf{1}_{\left\{  \delta,...,d\right\}  }\left(  n\right)
=\sum\limits_{j=1}^{d}\sum\limits_{m=0}^{j+1}\mu_{j}^{\left(  m\right)  }%
\frac{1}{2\pi i}\int_{R\mathbb{T}}\frac{\mathcal{E}_{j,m}^{2}}{\partial
G/\partial x^{2}}\frac{dy}{y^{n+1}}.
\]

\end{proof}

\subsection{Unicit\'{e} des d\'{e}compositions en ondes de choc}

On se donne $A,B\in\mathbb{C}\left[  Y\right]  $ avec $\deg A<r=\deg B$,
$B=\underset{1\leqslant q\leqslant r}{\Pi}\left(  1+s_{q}Y\right)
=\sum\limits_{0\leqslant j\leqslant r}\beta_{j}Y^{j}$ tel que $\left\{
B=0\right\}  \subset\rho\overline{\mathbb{D}}$ et on pose $P=1\otimes\frac
{A}{B}+X\otimes\frac{B^{\prime}}{B}$ ainsi que $d=r+\delta$. On note
$\mathcal{M}_{\omega}\left(  B\right)  $ l'ensemble des $\mu\in\mathcal{O}%
\left(  D_{\omega}\right)  ^{d}$ tel que le discriminant $\Delta\left(
\mu,B\right)  $ du polyn\^{o}me $S\left(  \mu,B\right)  =T^{d}+s_{1}\left(
\mu,B\right)  T^{d-1}+\cdots+s_{d}\left(  \mu,B\right)  \in\mathcal{O}\left(
D_{\omega}\right)  \left[  T\right]  $ n'est pas identiquement nul et tel que
$\mu$ est solution de (\ref{E0}).

Lorsque $\mu\in\mathcal{M}_{\omega}\left(  B\right)  $, on sait que pour tout
$\tau\in\mathbb{T}$ la fonction $-s_{1}\left(  \mu,B\right)  $ est une
$d$-onde de chocs sur $Z_{\omega,\tau}$. Pour tout $z_{\ast}$ appartenant au
compl\'{e}mentaire dans $Z_{\omega,\tau}$ d'un ensemble analytique de
dimension $1$, on sait gr\^{a}ce \`{a} la
proposition~\ref{C/ constructionOndeMult} que si $U_{\ast}$ un voisinage
suffisamment petit de $z_{\ast}$, il existe des ondes de chocs $g_{1}%
,...,g_{d}$ sur $U_{\ast}$ dont les images sont deux \`{a} deux distinctes
telles que pour tout $z\in U_{\ast}$,
\begin{gather*}
-s_{1}\left(  \mu,B\right)  \left(  z\right)  =N_{g,1}\left(  z\right) \\
N_{g,1}\left(  z\right)  +P\left(  z\right)  =G_{1}\left(  z\right)
=N_{h,1}\left(  z\right)  +P_{1}\left(  z\right)  =N_{Q,1}\left(  z\right)
+P_{1}%
\end{gather*}
o\`{u} les fonctions $h_{j}$ sont les ondes de chocs $h_{j}^{z_{\ast}}$
d\'{e}finies dans le corollaire~\ref{L/ serieLdeN}, c'est-\`{a}-dire les ondes
de chocs engendr\'{e}es par la collision de $Q$ avec les droites $L_{z} $,
$z\in U_{\ast}$.\medskip

A priori, rien ne garantit que $\left\{  g_{1},...,g_{d}\right\}  =\left\{
h_{1},...,h_{p}\right\}  $ car il se peut par exemple qu'il existe une partie
finie non vide $J$ de $\left\{  1,...,d\right\}  $ telle que $\sum
\limits_{j\in J}g_{j}$ se prolonge comme un \'{e}l\'{e}ment de l'espace
$\mathbb{C}\left(  Y\right)  _{1}\left[  X\right]  $ des fractions
rationnelles affines en $X$. Dans ce cas, $G_{1}=N_{\widetilde{g}%
,1}-\widetilde{P}$ avec $\widetilde{P}=P-\sum\limits_{j\in J}g_{j}%
\in\mathbb{C}\left(  Y\right)  _{1}\left[  X\right]  $ et $\left\{
\widetilde{g}_{1},...,\widetilde{g}_{\widetilde{d}}\right\}  $ o\`{u}
$\widetilde{d}=d-\operatorname{Card}\widetilde{J}\in\left\{  0,..,d-1\right\}
$. It\'{e}rant cette r\'{e}duction, on se ram\`{e}ne au cas o\`{u}
\begin{equation}
\forall J\in\mathcal{P}\left(  \left\{  1,..,d\right\}  \right)
\backslash\left\{  \varnothing\right\}  ,~\sum\limits_{j\in J}g_{j}%
\notin\mathbb{C}\left(  Y\right)  _{1}\left[  X\right]  . \label{F/ hyp}%
\end{equation}
Le cas $d=0$ ne se produit \`{a} l'issue de ces it\'{e}rations que si au
d\'{e}part, $\sum\limits_{1\leqslant j\leqslant d}g_{j}$ et donc $G_{1}$, se
prolonge comme \'{e}l\'{e}ment de $\mathbb{C}\left(  Y\right)  _{1}\left[
X\right]  $. Le lemme ci-dessous \'{e}tudie ce cas.

\begin{lemma}
\label{L/ G1affine}On utilise les notations du corollaire~\ref{L/ serieLdeN}.
$G_{1}$ se prolonge comme \'{e}l\'{e}ment de $\mathbb{C}\left(  Y\right)
_{1}\left[  X\right]  $ si et seulement si $Q$ est un domaine dans une courbe
compacte connexe $K$ telle que pour tout $z_{\ast}$ dans
$Z_{\operatorname{reg}}$ et $z$ dans un voisinage suffisamment petit $U_{\ast
}$ de $z_{\ast}$ dans $Z_{\operatorname{reg}}$,
\[
K\cap L_{z}=\left\{  \left(  1:h_{j}^{z_{\ast}}\left(  z\right)
:-x-yh_{j}^{z_{\ast}}\left(  z\right)  \right)  ;~1\leqslant j\leqslant
p\right\}  =Q\cap L_{z}.
\]

\end{lemma}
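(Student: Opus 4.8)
La stratégie consiste à ramener l'énoncé à une propriété de la seule fonction $N_{h,1}=\sum_{j}h_{j}^{z_{\ast}}$. D'après la proposition~\ref{P/ DH1997}, $G_{1}=N_{h,1}+P_{1}$ avec $P_{1}\in\mathbb{C}_{1}\left[  X,Y\right)  \subset\mathbb{C}\left(  Y\right)  _{1}\left[  X\right]$, et d'après le corollaire~\ref{L/ serieLdeN} la fonction $N_{h,1}$ se prolonge à $Z$ en $N_{Q,1}$. Ainsi $G_{1}$ se prolonge comme élément de $\mathbb{C}\left(  Y\right)  _{1}\left[  X\right]$ si et seulement si $N_{Q,1}\in\mathbb{C}\left(  Y\right)  _{1}\left[  X\right]$, et toute la preuve porte sur cette équivalence.

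\smallskip
Pour le sens réciproque, supposons $Q$ domaine d'une courbe compacte connexe $K$ avec $K\cap L_{z}=Q\cap L_{z}$ pour $z$ générique. Par le théorème de Chow, $K=\left\{  F=0\right\}$ est algébrique, $F$ homogène de degré $n$. En posant $t=w_{1}/w_{0}$ et en éliminant $w_{2}=-xw_{0}-yw_{1}$, l'intersection $K\cap L_{z}\cap\left\{  w_{0}\neq0\right\}$ est décrite par les racines de $f_{z}\left(  t\right)  =F\left(  1,t,-x-yt\right)$. Le calcul des deux coefficients dominants de $f_{z}$ montre que celui de $t^{n}$ ne dépend pas de $x$ et que celui de $t^{n-1}$ est affine en $x$, tous deux polynomiaux en $y$ ; par les relations de Viète, $\sum_{w\in K\cap L_{z}}w_{1}/w_{0}$ est donc affine en $x$ et rationnelle en $y$. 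L'hypothèse $K\cap L_{z}=Q\cap L_{z}$ identifie cette somme à $N_{Q,1}$ sur un ouvert de $Z_{\operatorname{reg}}$, les intersections à l'infini étant exclues par $\left(  0:1:0\right)  \notin Q_{\infty}$ et la généricité de $z$ ; d'où $N_{Q,1}\in\mathbb{C}\left(  Y\right)  _{1}\left[  X\right]$ puis $G_{1}\in\mathbb{C}\left(  Y\right)  _{1}\left[  X\right]$.

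\smallskip
Pour le sens direct, supposons $N_{Q,1}=a\left(  Y\right)  X+b\left(  Y\right)$, de sorte que $\partial_{x}\Sigma_{1}=-a\left(  y\right)$ ne dépend pas de $x$, où l'on pose $\Sigma_{k}=\left(  -1\right)  ^{k}S_{h,k}$ comme au lemme~\ref{L/ EqDiffSym}. Le système~(\ref{F/ EqSym0}) fournit la récurrence $\partial_{x}\Sigma_{k+1}=-a\,\Sigma_{k}+\partial_{y}\Sigma_{k}$ ; partant du degré $\leqslant1$ en $x$ de $\Sigma_{1}$, une récurrence immédiate montre que chaque $\Sigma_{k}$ est un polynôme en $x$ de degré au plus $k$, à coefficients holomorphes en $y$ et globalement défini sur $Z$ (les $S_{h,k}$ s'y prolongent via les identités de Newton et le corollaire~\ref{L/ serieLdeN}). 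On forme alors $\Sigma\left(  T,x,y\right)  =T^{p}+\sum_{k=1}^{p}\Sigma_{k}\left(  x,y\right)  T^{p-k}=\prod_{j}\left(  T-h_{j}\right)$. Le point décisif est que tout point $\left(  1:v_{1}:v_{2}\right)$ de $Q$ est sur $L_{z}$ pour $z=\left(  -yv_{1}-v_{2},y\right)$ et tout $y$ ; lorsque ce $z$ est dans $Z_{\operatorname{reg}}$, $v_{1}$ est l'une des valeurs $h_{j}\left(  z\right)$, donc $\Sigma\left(  v_{1},-yv_{1}-v_{2},y\right)  =0$, relation qui s'étend à tout $y$ par prolongement analytique.

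\smallskip
Il reste à en extraire une courbe algébrique. Le caractère polynomial en $x$ des $\Sigma_{k}$ entraîne que, dans le développement de $\Sigma\left(  v_{1},-yv_{1}-v_{2},y\right)$ en puissances de $y$, chaque coefficient est un polynôme en $\left(  v_{1},v_{2}\right)$ — remarquablement, la rationalité en $y$ des $\Sigma_{k}$, délicate à établir, n'est ici pas requise. L'annulation identique en $y$ livre donc une famille d'équations polynomiales $P_{m}\left(  v_{1},v_{2}\right)  =0$ satisfaites par tout point de $Q$ ; elles ne sont pas toutes nulles puisqu'en fixant $z_{0}=\left(  x_{0},y_{0}\right)  \in Z_{\operatorname{reg}}$, $v_{1}\notin\left\{  h_{j}\left(  z_{0}\right)  \right\}$ et $v_{2}=-x_{0}-y_{0}v_{1}$, on a $\Sigma\left(  v_{1},-y_{0}v_{1}-v_{2},y_{0}\right)  =\Sigma\left(  v_{1},x_{0},y_{0}\right)  \neq0$. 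Leur lieu commun est un ensemble algébrique propre de $\mathbb{CP}_{2}$ (finitude par noethérianité), de dimension $1$ car il contient la courbe $Q$ ; on prend pour $K$ sa composante compacte connexe contenant $Q$. Enfin, tout point de $K$ situé sur $L_{z}$, $z\in Z_{\operatorname{reg}}$, vérifie $\Sigma\left(  v_{1},z\right)  =0$ et appartient donc à $Q\cap L_{z}$, ce qui, joint à $Q\subset K$, donne $K\cap L_{z}=Q\cap L_{z}$. L'obstacle principal est précisément ce dernier contrôle : s'assurer qu'aucun point d'intersection parasite n'apparaît, en particulier à l'infini (où $L_{z}\cap\left\{  w_{0}=0\right\}  =\left\{  \left(  0:1:-y\right)  \right\}$) et le long de $E_{\infty}$, ce qui repose de façon essentielle sur les hypothèses génériques $\left(  0:1:0\right)  \notin Q_{\infty}$, $Q_{\infty}\subset\operatorname*{Reg}Q$ et $z\in Z_{\operatorname{reg}}\backslash E_{\infty}$.
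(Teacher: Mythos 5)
Votre sens \guillemotleft~$K\Rightarrow G_{1}$ rationnelle~\guillemotright\ est correct et m\^{e}me plus \'{e}l\'{e}mentaire que celui du texte~: l\`{a} o\`{u} la preuve originale invoque le th\'{e}or\`{e}me d'Abel via~\cite{HeG1995} pour obtenir $\sum h_{j}^{z_{\ast}}\in\mathbb{C}\left(  Y\right)  _{1}\left[  X\right]  $, vous le red\'{e}montrez directement par Vi\`{e}te sur $f_{z}\left(  t\right)  =F\left(  1,t,-x-yt\right)  $, ce qui est l\'{e}gitime modulo les v\'{e}rifications de g\'{e}n\'{e}ricit\'{e} que vous signalez (transversalit\'{e}, et absence de points de $K$ sur $L_{z}\cap\left\{  w_{0}=0\right\}  =\left\{  \left(  0:1:-y\right)  \right\}  $ pour $y$ hors d'un ensemble fini).

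Le sens direct pr\'{e}sente en revanche une lacune r\'{e}elle, que vous nommez d'ailleurs vous-m\^{e}me \guillemotleft~l'obstacle principal~\guillemotright\ sans la lever. La preuve du texte l'\'{e}vite en citant le th\'{e}or\`{e}me de Wood~\cite{WoJ1984}~: celui-ci fournit une courbe alg\'{e}brique $K$ de degr\'{e} \emph{exactement} $p$ contenant $Q$, et B\'{e}zout conclut aussit\^{o}t, puisque $Q\cap L_{z}$ fournit d\'{e}j\`{a} $p$ points distincts pour $z\in Z_{\operatorname{reg}}$~: $K\cap L_{z}$ ne peut rien contenir de plus, ni point affine parasite ni point \`{a} l'infini. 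Votre construction par les $\Sigma_{k}$ --- en substance une red\'{e}monstration du th\'{e}or\`{e}me de Wood --- contr\^{o}le correctement les points affines (tout point affine $\left(  1:v_{1}:v_{2}\right)  $ de $K\cap L_{z}$ v\'{e}rifie $\Sigma\left(  v_{1},z\right)  =0$, donc $v_{1}\in\left\{  h_{j}\left(  z\right)  \right\}  $), mais elle ne contr\^{o}le pas le degr\'{e} de $K$~: rien n'y exclut $\left(  0:1:-y\right)  \in K$ pour certains $y$ avec $\left\vert y\right\vert >\rho$, auquel cas $K\cap L_{z}\supsetneq Q\cap L_{z}$ pr\'{e}cis\'{e}ment pour ces $z$, alors que l'\'{e}nonc\'{e} porte sur tout $z_{\ast}\in Z_{\operatorname{reg}}$. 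Le trou se comble toutefois avec vos propres outils~: pour $z$ g\'{e}n\'{e}rique dans $Z_{\operatorname{reg}}$ (\'{e}vitant l'ensemble fini $K\cap\left\{  w_{0}=0\right\}  $ et les tangences avec $K$), $K\cap L_{z}$ est form\'{e} de $p$ points simples affines et $L_{z}\not\subset K$ (sinon $\Sigma\left(  \cdot,z\right)  $, unitaire de degr\'{e} $p$, aurait une infinit\'{e} de racines), d'o\`{u} $\deg K=p$ par B\'{e}zout~; le m\^{e}me B\'{e}zout interdit ensuite tout point suppl\'{e}mentaire pour les autres $z$. Deux points de d\'{e}tail encore~: l'amorce de votre prolongement analytique exige, pour chaque point de $Q$, l'existence d'au moins un $y$ admissible, c'est-\`{a}-dire le lemme~\ref{L/ Qexhaustion} que vous utilisez sans le citer~; et la polynomialit\'{e} des coefficients de Laurent de $y\mapsto\Sigma\left(  v_{1},-yv_{1}-v_{2},y\right)  $ utilise, outre $\deg_{x}\Sigma_{k}\leqslant k$, le fait que les $\Sigma_{k}$ n'ont que des puissances n\'{e}gatives de $y$, ce qui suit bien du corollaire~\ref{L/ serieLdeN} et des identit\'{e}s de Newton, mais m\'{e}ritait d'\^{e}tre dit.
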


\begin{proof}
Supposons tout d'abord que $K$ est une courbe compacte ayant les
propri\'{e}t\'{e}s ci-dessus. Fixons $z_{\ast}$ et $U_{\ast}$ comme dans
l'\'{e}nonc\'{e}. Puisque $K$ est une courbe alg\'{e}brique, on sait depuis
les travaux d'Abel que $\sum\limits_{1\leqslant j\leqslant p}h_{j}^{z_{\ast}%
}\in\mathbb{C}\left(  Y\right)  _{1}\left[  X\right]  $ (voir par
exemple~\cite{HeG1995}). Il s'ensuit que $G_{1}=N_{h^{z_{\ast}},1}+P_{1}$ est,
sur $U_{\ast}$ et donc sur $Z$, rationnelle en $y$ et affine en $x$.

R\'{e}ciproquement, supposons que $G_{1}\in\mathbb{C}\left(  Y\right)
_{1}\left[  X\right]  $. Alors $N_{h^{z_{\ast}},1}=G_{1}-P_{1}$ est sur
$U_{\ast} $ rationnelle en $y$ et affine en $x$. Comme $\left\{  \left(
1:h_{j}^{z_{\ast}}\left(  z\right)  :-x-yh_{j}^{z_{\ast}}\left(  z\right)
\right)  ;~1\leqslant j\leqslant p\right\}  =Q\cap L_{z}$ pour tout $z\in
U_{\ast}$, un th\'{e}or\`{e}me de Wood~\cite{WoJ1984} stipule l'existence
d'une courbe alg\'{e}brique compacte $K$ de degr\'{e} $p$ contenant $Q$.
Puisque $K$ est de degr\'{e} $p$, $K\cap L_{z}=\left\{  \left(  1:h_{j}\left(
z\right)  :-x-yh_{j}\left(  z\right)  \right)  ;~1\leqslant j\leqslant
\lambda\right\}  =Q\cap L_{z}$ pour tout $z\in U$.\medskip
\end{proof}

Dans le cas o\`{u} $G_{1}$ est rationnelle en $y$ et affine en $x$, la courbe
alg\'{e}brique $K$ du lemme~\ref{L/ G1affine} est connue au voisinage de $bQ$.
On peut alors s\'{e}lectionner de fa\c{c}on g\'{e}n\'{e}rique des
coordonn\'{e}es homog\`{e}nes $w$ pour qu'au moins une droite $L_{z}$, $z\in
U$, rencontre $K\backslash Q$. On est ainsi ramen\'{e} au cas g\'{e}n\'{e}ral
puisque le lemme~~\ref{L/ G1affine} assure alors que m\^{e}me apr\`{e}s
r\'{e}duction, $d$ n'est pas nul.

Gr\^{a}ce \`{a} des r\'{e}sultats de Henkin~\cite{HeG1995} et de
Collion~\cite{CoS1996}, cette r\'{e}duction de la famille $\left(
g_{j}\right)  $ ram\`{e}ne \`{a} sa premi\`{e}re \'{e}quation le syst\`{e}me
sur-d\'{e}termin\'{e}~(\ref{F/ G=N+P}).

\begin{proposition}
\label{P/ UniciteP1}Les notations sont celles \'{e}nonc\'{e}es au d\'{e}but de
cette section. On suppose que (\ref{F/ hyp}) est v\'{e}rifi\'{e}e. Pour le cas
o\`{u} $Q$ est contenue dans une courbe alg\'{e}brique, $\widehat{Q}$
d\'{e}signant alors la plus petite courbe ayant cette propri\'{e}t\'{e}, on
suppose que $\left(  0:1:0\right)  \notin\widehat{Q}$ et qu'au moins une des
droites $L_{z}$, $z\in U$, rencontre $Q$ et $\widehat{Q}\backslash Q$. Ceci
\'{e}tant, $\left\{  g_{1},...,g_{d}\right\}  =\left\{  h_{1},...,h_{p}%
\right\}  $ et $P=P_{1}$.
\end{proposition}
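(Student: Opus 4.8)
The plan is to read the two decompositions of $G_{1}$ available on a small neighbourhood $U_{\ast}$ of a generic point and to exploit the rigidity of decompositions into ondes de choc. On one hand, the corollaire~\ref{L/ serieLdeN} gives $G_{1}=N_{Q,1}+P_{1}=N_{h,1}+P_{1}$, where $N_{h,1}=\sum_{k=1}^{p}h_{k}$ and the $h_{k}=h_{k}^{z_{\ast}}$ are the ondes de choc cut out by the collision of $Q$ with the lines $L_{z}$; on the other hand, the corollaire~\ref{C/ constructionOndeMult} gives $G_{1}=-s_{1}(\mu,B)+P=N_{g,1}+P$, with $N_{g,1}=\sum_{j=1}^{d}g_{j}$ a sum of $d$ ondes de choc of pairwise distinct images. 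Subtracting, I obtain on $U_{\ast}$
\[
N_{g,1}-N_{h,1}=P_{1}-P\in\mathbb{C}\left(  Y\right)  _{1}\left[  X\right]  .
\]
Thus the two multiple ondes de choc $N_{g,1}$ and $N_{h,1}$ differ by a function rational in $y$ and affine in $x$, and the whole point is to force the underlying families of ondes de choc to coincide.

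First I would invoke the uniqueness theory of decompositions of a somme d'ondes de choc developed by Henkin~\cite{HeG1995} and Collion~\cite{CoS1996}: each onde de choc is the fonction pente of the germe de courbe traced by the points $(1:g(z):-x-yg(z))$ on $L_{z}$, two ondes de choc trace the same germe if and only if they are equal, and the difference of two multiple ondes de choc lies in $\mathbb{C}(Y)_{1}[X]$ precisely when, after cancelling the ondes de choc common to both families, the two complementary families each have a total sum in $\mathbb{C}(Y)_{1}[X]$. Applying this to the relation above, the family $\{g_{j}\}_{j\notin I}$ of those $g_{j}$ equal to no $h_{k}$ must satisfy $\sum_{j\notin I}g_{j}\in\mathbb{C}(Y)_{1}[X]$. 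Since this is a sub-sum of the $g_{j}$, the hypothèse~(\ref{F/ hyp}) forces $\{j:j\notin I\}=\varnothing$; hence every $g_{j}$ equals some $h_{k}$, and since both families have pairwise distinct images this is a genuine set inclusion $\{g_{1},\dots,g_{d}\}\subseteq\{h_{1},\dots,h_{p}\}$, in particular $d\leqslant p$.

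It then remains to eliminate the leftover $h_{k}$'s. The complementary family $\{h_{k}\}_{k\notin\sigma(I)}$ has a total sum in $\mathbb{C}(Y)_{1}[X]$, and each such $h_{k}$ traces a branche of $Q$; by the converse of Abel's theorem (Wood~\cite{WoJ1984}, already used in the lemme~\ref{L/ G1affine}) a nonempty such sub-collection with rational-affine trace would force the corresponding branches of $Q$ to lie on a compact courbe algébrique. If $Q$ is not contained in any courbe algébrique this is impossible, so the leftover is empty and $\{g_{j}\}=\{h_{k}\}$. If $Q$ is contained in a courbe algébrique, with $\widehat{Q}$ the smallest one, the genericity assumptions $(0:1:0)\notin\widehat{Q}$ and the existence of a line $L_{z}$ meeting both $Q$ and $\widehat{Q}\setminus Q$ guarantee, via the degree count of the lemme~\ref{L/ G1affine} (namely $\widehat{Q}\cap L_{z}=Q\cap L_{z}$ consists of exactly $p=\deg\widehat{Q}$ points, all lying on $Q$), that after the réduction leading to~(\ref{F/ hyp}) no genuine extra branch survives, so again the leftover is empty. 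In every case $\{g_{1},\dots,g_{d}\}=\{h_{1},\dots,h_{p}\}$, whence $d=p$ and $N_{g,1}=N_{h,1}=N_{Q,1}$; comparing with $G_{1}=N_{g,1}+P=N_{Q,1}+P_{1}$ then yields $P=P_{1}$ (consistently with $\deg B=r=q_{\infty}$ and $B=B_{\infty}$ via the lemme~\ref{L/ Pk}).

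The main obstacle is the rigidity step: correctly formulating and applying the Henkin--Collion uniqueness of shock-wave decompositions so that \emph{the difference is rational-affine} translates into \emph{the two families match up to rational-affine sub-sums}. The delicate points are keeping track of the distinct-image condition on the $g_{j}$, so that the matching is a genuine inclusion of sets, and, in the algebraic case, using the two genericity hypotheses on $\widehat{Q}$ to exclude spurious extra branches coming from $\widehat{Q}\setminus Q$; the transcendental case, by contrast, is immediate once the converse of Abel's theorem is in place.
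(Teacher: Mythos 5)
Your skeleton (subtract to get $N_{g,1}-N_{h,1}=P_{1}-P\in\mathbb{C}\left(Y\right)_{1}\left[X\right]$, then Abel--Radon rigidity, Wood's theorem and the hypothesis~(\ref{F/ hyp})) is the right one, but the pivotal step is asserted rather than proved, and it is not a theorem in the form you state. You invoke a \guillemotleft uniqueness theory\guillemotright\ according to which the difference of two multiple shock waves lies in $\mathbb{C}\left(Y\right)_{1}\left[X\right]$ precisely when, after cancelling the common shock waves, \emph{each} complementary family separately has a sum in $\mathbb{C}\left(Y\right)_{1}\left[X\right]$. Only the trivial direction of that equivalence is formal. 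What Collion's theorem~1.2 of~\cite{CoS1996} actually yields, applied as in the paper to the current $\varphi\wedge\left[C\right]$ where $C$ is the union of the germs traced by the uncommon $h_{\nu}$ and the uncommon $g_{\nu}$ and $\varphi=\pm d\frac{w_{1}}{w_{0}}$ with opposite signs on the two families, is \emph{one} algebraic curve $\Lambda$ containing all these germs together with a rational extension of $\varphi$; it does not split the sums. Indeed each family traces only some of the branches of $\Lambda\cap L_{z}$, so neither sub-sum is a full algebraic trace and need not be rational. The paper closes exactly this gap geometrically: any uncommon $h_{\nu}$ traces a germ of $Q$ lying in $\Lambda$, and since $Q$ is connected and (in the transcendental case) contained in no algebraic curve, this is absurd; hence $t=\min(p,d)$, after which the case $t=d<p$ is killed by Wood~\cite{WoJ1984} and the case $t=p<d$ by~(\ref{F/ hyp}). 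So the connectedness and non-algebraicity of $Q$ are needed to \emph{establish} your splitting, not merely to exploit it afterwards; as written, your first step is unsupported.

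The algebraic case contains a second, more flagrant error: you reverse the hypothesis. You claim that genericity gives $\widehat{Q}\cap L_{z}=Q\cap L_{z}$ with exactly $p=\deg\widehat{Q}$ points all lying on $Q$ -- but the proposition assumes the exact opposite, namely that at least one line $L_{z}$ meets $\widehat{Q}\backslash Q$ (this is what steers away from the degenerate situation of the lemme~\ref{L/ G1affine}). The paper's argument here is genuinely different from your sketch: using $\left(0:1:0\right)\notin\widehat{Q}$ and the corollaire~\ref{C/ confinementZeroBinfini} applied to $\widehat{Q}\backslash Q$, it introduces the supplementary shock waves $h_{p+1},\dots,h_{\widehat{p}}$ parametrizing $(\widehat{Q}\backslash Q)\cap L_{z}$, notes that $N_{g,1}+\widehat{N}_{1}\in\mathbb{C}\left(Y\right)_{1}\left[X\right]$ because $\widehat{Q}$ is algebraic, applies Henkin's theorem of~\cite{HeG1995} to the multiplicity-weighted family $\sum c_{\lambda}f_{\lambda}$ ($c_{\lambda}\in\{1,2\}$), and then uses the minimality of $\widehat{Q}$ together with~(\ref{F/ hyp}) to force the extension curve $\widetilde{F}$ to equal $\widehat{Q}$, which pins down $\left\{g_{1},\dots,g_{d}\right\}=\left\{h_{1},\dots,h_{p}\right\}$ and $P=P_{1}$. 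Your degree-count shortcut does not exist, so this half of the proposition is not proved by your proposal.
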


\begin{proof}
Avec une \'{e}ventuelle renum\'{e}rotation, on se ram\`{e}ne au cas o\`{u}
$g_{\nu}=h_{\nu}$, $1\leqslant\nu\leqslant t\in\mathbb{N}$ et que $\left\{
g_{t+1},...,g_{d}\right\}  \cap\left\{  h_{t+1},...,h_{p}\right\}
=\varnothing$.

1) On suppose que $Q$ n'est pas contenue dans une courbe alg\'{e}brique. Dans
ce cas $d\in\mathbb{N}^{\ast}$ car sinon, $N_{h,1}\in\mathbb{C}\left(
Y\right)  _{1}\left[  X\right]  $ et $G_{1}$ se prolonge comme \'{e}l\'{e}ment
de $\mathbb{C}\left(  Y\right)  _{1}\left[  X\right]  $, ce qui est impossible
d'apr\`{e}s lemme~\ref{L/ G1affine}.

Supposons $t<\min\left(  p,d\right)  $. Quitte \`{a} changer le point de
r\'{e}f\'{e}rence $z_{\ast}$ et \`{a} diminuer $U_{\ast}$, on suppose que les
courbes $H_{\nu}=\left\{  \left(  1:h_{\nu}\left(  z\right)  :-x-yh_{\nu
}\left(  z\right)  \right)  ;~z\in U_{\ast}\right\}  $, $t+1\leqslant
\nu\leqslant p$ et $C_{\nu}=\left\{  \left(  1:g_{\nu}\left(  z\right)
:-x-yg_{\nu}\left(  z\right)  \right)  ;~z\in U_{\ast}\right\}  $,
$t+1\leqslant\nu\leqslant d$ sont lisses et deux \`{a} deux disjointes. On
note alors $\varphi$ la forme diff\'{e}rentielle d\'{e}finie sur la
r\'{e}union $C$ de ces courbes par $\varphi\left\vert _{H_{\nu}}\right.
=d\frac{w_{1}}{w_{0}}$ lorsque $t+1\leqslant\nu\leqslant p$ et $\varphi
\left\vert _{C_{\nu}}\right.  =-d\frac{w_{1}}{w_{0}}$ quand $t+1\leqslant
\nu\leqslant d$. On note $AR$ la transform\'{e}e d'Abel-Radon du courant
$\varphi\wedge\left[  C\right]  $. Par d\'{e}finition (voir \cite{HeG1995},
\cite{CoS1996} ou \cite{HeG2004}),
\[
AR=d(\sum\limits_{t+1\leqslant\nu\leqslant p}h_{\nu}-\sum\limits_{t+1\leqslant
\nu\leqslant q}g_{\nu}).
\]
Mais du fait des hypoth\`{e}ses,%
\[
\sum\limits_{t+1\leqslant\nu\leqslant p}h_{\nu}-\sum\limits_{t+1\leqslant
\nu\leqslant q}g_{\nu}=N_{h,1}-N_{g,1}=R-P_{1}.
\]
$AR$ est donc alg\'{e}brique au sens de~\cite{CoS1996} de sorte que le
th\'{e}or\`{e}me~1.2 de~\cite{CoS1996} s'applique et donne en particulier
l'existence d'une courbe alg\'{e}brique $\Lambda$ qui contient $C$. Puisque
$Q$ n'est pas contenue dans $\Lambda$, la connexit\'{e} de $Q$ impose
qu'aucune des courbes $H_{\nu}$ n'est contenue dans $\Lambda$ et donc que
$\left\{  h_{1},...,h_{p}\right\}  \subset\left\{  g_{1},...,g_{d}\right\}  $.
Ainsi, $\sum\limits_{p<\nu\leqslant d}g_{\nu}$ est une fonction alg\'{e}brique
affine en $x$, ce qui est impossible de par la r\'{e}duction faites sur
$\left(  g_{j}\right)  _{1\leqslant j\leqslant d}$. D'o\`{u} $t=\min\left(
p,d\right)  $.

Si $t=d<p$, la relation $N_{g,1}+P=N_{h,1}+P_{1}$ se lit aussi $h_{t+1}%
+\cdots+h_{p}=P_{1}-P\in\mathbb{C}\left(  Y\right)  _{1}\left[  X\right]  $ et
le th\'{e}or\`{e}me de Wood implique, puisque $Q$ est connexe, que $Q$ est
contenue dans une courbe alg\'{e}brique ce qui est exclu par hypoth\`{e}se. Si
$t=p<d$, $g_{t+1}+\cdots+g_{d}=N_{g,1}-N_{h,1}+P-P_{1}\in\mathbb{C}\left(
Y\right)  _{1}\left[  X\right]  $ ce qui est exclu du fait de la r\'{e}duction
faite sur la famille $\left(  g_{j}\right)  $.

Finalement $t=p=d$, $\left\{  h_{1},...,h_{p}\right\}  =\left\{
g_{1},...,g_{d}\right\}  $ et $P_{1}=R$.

2) Supposons maintenant que $Q$ est contenue dans une courbe alg\'{e}brique
$\widehat{Q}$, minimale au sens de l'inclusion. Puisque par hypoth\`{e}se
$\left(  0:1:0\right)  \notin\widehat{Q}$, on peut appliquer le
corollaire~\ref{C/ confinementZeroBinfini} \`{a} $\widehat{Q}\backslash Q$ qui
est bord\'{e} par $-\partial Q$ et obtenir que pour $z=\left(  x,y\right)  \in
Z$, $(\widehat{Q}\backslash Q)\cap L_{z}\subset\rho\overline{\mathbb{D}}$. Par
cons\'{e}quent, pour tout $z\in Z$,%
\[
\widehat{Q}\cap L_{z}\subset\left\{  w_{0}\neq0\right\}  .
\]
Quitte \`{a} changer de point de r\'{e}f\'{e}rence $z_{\ast}$ et \`{a}
diminuer $U_{\ast}$, on peut supposer que pour tout $z\in U_{\ast}$, $L_{z} $
coupe transversalement $\widehat{Q}$. On note alors $h_{p+1}%
,...,h_{\widehat{p}}$ les ondes de choc sur $U_{\ast}$ telles que pour tout
$z\in U$,
\[
(\widehat{Q}\backslash Q)\cap L_{z}=\left\{  \left(  1:h_{\nu}\left(
z\right)  :-x-yh_{\nu}\left(  z\right)  \right)  ;~p+1\leqslant\nu
\leqslant\widehat{p}\right\}  .
\]
Puisque $\widehat{Q}$ est une courbe alg\'{e}brique, $N_{\widehat{Q}%
,1}\overset{d\acute{e}f}{=}N_{h,1}+N_{h_{p+1},...,h_{\widehat{p}}%
}\overset{d\acute{e}f}{=}N_{h,1}+\widehat{N}_{1}$ est rationnelle en $y$ et
affine en $x $. D'o\`{u}%
\[
N_{g,1}+\widehat{N}_{1}=N_{g,1}-N_{h,1}+N_{\widehat{Q},1}=P_{1}%
-R+N_{\widehat{Q},1}\in\mathbb{C}\left(  Y\right)  _{1}\left[  X\right]
\]
La somme $N_{g,1}+\widehat{N}_{1}$ se r\'{e}crit $\sum\limits_{1\leqslant
\lambda\leqslant s}c_{\lambda}f_{\lambda}$ o\`{u} $f_{1},...f_{s}$ sont les
fonctions deux \`{a} deux distinctes constituant la r\'{e}union de $\left\{
g_{\nu};~1\leqslant\nu\leqslant q\right\}  $ et $\left\{  h_{\nu
};~p+1\leqslant\nu\leqslant\widehat{p}\right\}  $ et o\`{u} $c_{\lambda}=2 $
si $f_{\lambda}$ est dans l'intersection de ces deux ensembles et $1$ sinon.
Comme pr\'{e}c\'{e}demment, on peut choisir $z_{\ast}$ et $U_{\ast}$ pour que
les fonctions $f_{\lambda}$ aient des images deux \`{a} deux disjointes. On
peut alors introduire la forme $\psi$ qui sur $F_{\lambda}=\left\{  \left(
1:f_{\lambda}\left(  z\right)  :-x-yf_{\lambda}\left(  z\right)  \right)
;~z\in U\right\}  $ vaut $d\frac{w_{1}}{w_{0}}$ si $c_{\lambda}=1$ et
$2d\frac{w_{1}}{w_{0}}$ si $c_{\lambda}=2$. La forme $\sum\limits_{1\leqslant
\lambda\leqslant s}c_{\lambda}df_{\lambda}$ est la transform\'{e}e
d'Abel-Radon de $\psi\wedge\left[  F\right]  $ o\`{u} $F=\cup F_{\lambda}$.
Celle-ci \'{e}tant rationnelle, le th\'{e}or\`{e}me principal de Henkin
dans~\cite{HeG1995} s'applique et donne en particulier l'existence d'une
courbe alg\'{e}brique $\widetilde{F}$ et d'une forme rationnelle $\Psi$ telles
que pour tout $\lambda$, $\Psi\left\vert _{F_{\lambda}}\right.  =\psi$ et pour
tout $z\in U_{\ast}$, $\widetilde{F}\cap L_{z}=\cup L_{z}\cap F_{\lambda}$.
Etant donn\'{e} que $\widehat{Q}\cap\widetilde{F}$ contient $(\widehat{Q}%
\backslash Q)\underset{z\in U_{\ast}}{\cup}L_{z}$, $\widehat{Q}\subset
\widetilde{F}$. Si $\widetilde{F}\neq\widehat{Q}$, $\overline{\widehat{Q}%
\backslash\widetilde{F}}$ est une courbe alg\'{e}brique dont les intersections
avec les $L_{z}$, $z\in U_{\ast}$, se param\`{e}tre avec une sous-famille des
$g_{j}$. Ceci est impossible car du fait des hypoth\`{e}ses, $d\neq0$ et
aucune sous-famille de $\left(  g_{j}\right)  $ n'a de somme qui soit
rationnelle en $y$ et affine en $x$. Ainsi, $\widehat{Q}=\widetilde{F}$ et
lorsque $z\in U_{\ast}$, $\widehat{Q}\cap L_{z}$ est la r\'{e}union de
$(\widehat{Q}\backslash Q)\cap L_{z}$ et de $\left\{  \left(  1:g_{j}\left(
z\right)  :-x-yg_{\lambda}\left(  z\right)  \right)  ;~1\leqslant j\leqslant
d\right\}  $. Ceci force $\left\{  h_{1},...,h_{p}\right\}  =\left\{
g_{1},...,g_{d}\right\}  $ et $P_{1}=R$.
\end{proof}

\subsection{Proc\'{e}d\'{e} de reconstruction\label{S/ algorithme}}

Pour d\'{e}crire le proc\'{e}d\'{e} algorithmique de reconstruction de $Q$, in
reprend int\'{e}gralement les notations de la section pr\'{e}c\'{e}dente.

\begin{enumerate}
\item Si $G_{1}$ est rationnelle en $y$ et affine en $x$, $Q$ est contenue
dans une courbe alg\'{e}brique connexe $K$ et on choisit des coordonn\'{e}es
pour qu'au moins une des droites $L_{z}$, $z\in Z$, rencontre $Q$ et
$K\backslash Q$.

\item On se donne $A,B\in\mathbb{C}\left[  Y\right]  $ avec $\deg A<r=\deg B$,
$B=\underset{1\leqslant q\leqslant r}{\Pi}\left(  1+s_{q}Y\right)
=\sum\limits_{0\leqslant j\leqslant r}\beta_{j}Y^{j}$ tel que $\left\{
B=0\right\}  \subset\rho\overline{\mathbb{D}}$, $R=1\otimes\frac{A}%
{B}+X\otimes\frac{B^{\prime}}{B}$ et on pose $d=r+\delta$. Etant donn\'{e} que
(\ref{E0}) est un syst\`{e}me diff\'{e}rentiel lin\'{e}aire
surd\'{e}termin\'{e}, l'existence d'une solution \`{a} (\ref{E0}) est
conditionn\'{e}e par un jeu, forc\'{e}ment fini, d'\'{e}quations lin\'{e}aires
portant sur $\left(  A,B\right)  $. Lorsque $\left(  A,B\right)  $ v\'{e}rifie
ce syst\`{e}me lin\'{e}aire, (\ref{E0}) a au moins une solution $\mu$ dans
$\mathcal{M}_{\omega,\tau}\left(  B\right)  $.

\item Localement, dans $Z_{\omega,\tau}\cap U_{\operatorname{reg}}$, il existe
$d$-onde de chocs $g_{1},..,g_{d}$ telles que $-s_{1}\left(  \mu,B\right)
=N_{h,1}$. Appliquant \`{a} cette famille $\left(  g_{j}\right)  $ la
r\'{e}duction d\'{e}crite au d\'{e}but de cette section puis la
proposition~\ref{P/ UniciteP1}, on en d\'{e}duit que $d\geqslant p$,
$r=d-\delta\geqslant p-\delta=q_{\infty}$ et que si $\left(  \widetilde{g}%
_{j}\right)  _{1\leqslant j\leqslant p}$ est le jeu de fonctions obtenu \`{a}
partir de $\left(  g_{j}\right)  $ par r\'{e}duction, $\left\{  \widetilde{g}%
_{1},...,\widetilde{g}_{p}\right\}  =\left\{  h_{1},...,h_{p}\right\}  $ et
$P_{1}=R$. En cons\'{e}quence, on conna\^{\i}t aussi $\left(  P_{k}\right)
_{k\in\mathbb{N}^{\ast}}$ comme le prolongement en fraction rationnelle de
$\left(  G_{k}\left\vert _{U_{\ast}}\right.  -N_{h,k}\right)  _{k\in
\mathbb{N}^{\ast}}$.

\item On sait qu'il existe une fonction localement constante $\pi
:U_{\operatorname{reg}}\rightarrow\mathbb{N}$ telle que lorsque $z_{\ast}\in
U_{\operatorname{reg}}$, il existe un voisinage $U_{z_{\ast}}$ de $z_{\ast}$
dans $Z_{\operatorname{reg}}$ et des ondes de chocs $h_{1}^{z_{\ast}%
},...,h_{\pi\left(  z_{\ast}\right)  }^{z_{\ast}}$ deux \`{a} deux distinctes
telles que $Q$ contient $Q_{z_{\ast}}=\underset{1\leqslant k\leqslant
\pi\left(  z_{\ast}\right)  }{\cup}\left\{  \left(  1:h_{j}^{z_{\ast}}\left(
z\right)  :-x-yh_{j}^{z_{\ast}}\left(  z\right)  \right)  ;~z\in U_{z_{\ast}%
}\right\}  $ et $\left(  G_{k}\left\vert _{U_{z_{\ast}}}\right.  \right)
_{k\in\mathbb{N}^{\ast}}=\left(  N_{h^{z_{\ast}},k}+P_{k}\left\vert
_{U_{z_{\ast}}}\right.  \right)  _{k\in\mathbb{N}^{\ast}}$. Gr\^{a}ce aux
formules de Newton~(\ref{F/ NS}), on conna\^{\i}t donc $\left(  S_{h^{z_{\ast
}},k}\right)  _{k\in\mathbb{N}^{\ast}}$. En outre, $\pi\left(  z_{\ast
}\right)  =G_{0}\left\vert _{U_{z_{\ast}}}\right.  -q_{\ast}$ est connu. On
peut donc calculer individuellement les fonctions $h_{j}^{z_{\ast}}$,
$1\leqslant j\leqslant\pi\left(  z_{\ast}\right)  $ \`{a} partir $\left(
S_{h^{z_{\ast}},k}\right)  _{1\leqslant k\leqslant\pi\left(  z_{\ast}\right)
}$.

\item Gr\^{a}ce au lemme~\ref{L/ Qexhaustion}, $Q\cap\left\{  w_{0}%
\neq0\right\}  $ et donc $Q$ sont connues.\smallskip
\end{enumerate}

Pour reconstruire une surface de Riemann \`{a} bord $M$ \'{e}quip\'{e}e d'une
conductivit\'{e} $\sigma$ \`{a} partir de $\left(  \partial M,N_{d}^{\sigma
}\right)  $, on peut proc\'{e}der comme suit~:

\begin{enumerate}
\item Le th\'{e}or\`{e}me de Henkin-Santacesaria produit une surface de
Riemann nodale \`{a} bord $\mathcal{M}$ dont $M$ est une normalisation.
$\mathcal{M}$ est une courbe complexe de $\mathbb{C}^{2}$ qu'on peut
d\'{e}terminer explicitement \`{a} partir de formules de type de Cauchy.

\item Avec la section~\ref{S/ FGN}, on produit \`{a} partir de $\mathcal{M}$
une fonction de Green pour la surface de Riemann $M$.

\item Avec le th\'{e}or\`{e}me~\ref{T/ plgt1} et l'\'{e}tape
pr\'{e}c\'{e}dente, on obtient une surface de Riemann \`{a} bord $S$
plong\'{e}e dans $\mathbb{CP}_{3}$, isomorphe \`{a} $M$ et dont les
projections sur $\left\{  w_{2}\neq0\right\}  $ et $\left\{  w_{3}%
\neq0\right\}  $ sont des surfaces de Riemann nodales $Q_{2}$ et $Q_{3}$
v\'{e}rifiant les hypoth\`{e}ses g\'{e}n\'{e}riques \'{e}nonc\'{e}es au
d\'{e}but de la section~\ref{S/ RcoeffCst}.

\item On applique \`{a} $Q_{2}$ et $Q_{3}$ le proc\'{e}d\'{e}
pr\'{e}c\'{e}dent pour construire un atlas de $S$.

\item $S$ \'{e}tant connue, le th\'{e}or\`{e}me de Henkin-Novikov~\ref{T/ HN}
permet de reconstruire le coefficient de conductivit\'{e} $s$ de la
conductivit\'{e} qui est l'image directe de $\sigma$ par l'isomorphisme du
point 3. D\'{e}signant alors par $c_{S}$ la conductivit\'{e} de coefficient
$1$ associ\'{e}e \`{a} la structure complexe de $S$ induite par celle de
$\mathbb{CP}_{3}$, $\left(  S,sc_{S}\right)  $ est une solution explicite du
probl\`{e}me pos\'{e}.
\end{enumerate}

\subsection{Genre d'une surface de Riemann \`{a} bord \label{S/ Genre}}

Conna\^{\i}tre a priori le nombre $p$ des fonctions inconnues $h_{j}$
intervenant dans la proposition~\ref{P/ DH1997} permettrait d'am\'{e}liorer
grandement l'efficacit\'{e} de l'algorithme propos\'{e} dans la section
pr\'{e}c\'{e}dente. Dans~\cite{BeM2003}, Belishev donne une formule pour
calculer le genre \`{a} partir de l'op\'{e}rateur de Dirichlet-Neumann%
\[
2g\left(  M\right)  =\operatorname*{rg}\left(  T+\left(  N^{\nu}J\right)
^{2}T\right)
\]
o\`{u} $T$ est la d\'{e}rivation tangentielle, $N^{\nu}$ est l'op\'{e}rateur
Dirichlet-Neumann dans sa pr\'{e}sentation m\'{e}trique, c'est \`{a} dire
celui qui \`{a} $u\in C^{\infty}\left(  bM\right)  $ associe la
d\'{e}riv\'{e}e normale le long de $bM$ de l'extension harmonique de $u$ \`{a}
$M$ et $J$ est l'op\'{e}rateur de primivitisation d\'{e}fini de la fa\c{c}on
suivante~: on note $\Gamma$ l'ensemble des composantes connexes de $bM$, pour
chaque $\gamma\in\Gamma$ on fixe un point $p_{\gamma}$ de $\gamma$ et lorsque
$q\in\gamma$, on note $\gamma_{q}$ l'arc de $\gamma$ dont le bord orient\'{e}
est $q-p_{\gamma}$. $J$ est\ alors l'op\'{e}rateur qui \`{a} $u\in
C^{0}\left(  bM\right)  $ telle que $\int_{\gamma}u\tau^{\ast}=0 $ pour tout
$\gamma\in\Gamma$ associe la fonction d\'{e}finie par $\left(  Ju\right)
\left\vert _{\gamma}\left(  p\right)  \right.  =\int_{\gamma_{p}}u\tau^{\ast}%
$, $\gamma\in\Gamma$. Cependant le calcul a priori du rang de $T+\left(
N^{\nu}J\right)  ^{2}T$ n'est pas forc\'{e}ment facile. Pour contourner cette
difficult\'{e}, \cite{BeM-ShV2008} et \cite{ShV-ShC2013} proposent d'utiliser
des op\'{e}rateurs de Dirichlet-Neumann agissant sur les formes. Ceci donne
des formules simples pour $g\left(  M\right)  $ dans le cas o\`{u} la
conductivit\'{e} se r\'{e}duit \`{a} une structure complexe mais il n'est pas
clair que ces op\'{e}rateurs aient un sens physique.

La formule (\ref{F/ qinfini-g}), prouv\'{e}e par le
corollaire~\ref{C/ genre via dp(u)}, relie le nombre $p$ ci-dessus au nombre
$q_{\infty}$ de points d'intersection de la surface de Riemann nodale \`{a}
bord $Q$ \`{a} reconstruire avec $\left\{  w\in\mathbb{CP}_{2};~w_{0}%
=0\right\}  $.

\begin{lemma}
\label{L/ GenreDuDouble}Soit $M$ une surface de Riemann \`{a} bord. On note
$c$ le nombre de composantes connexes du bord de $M$ et $\widehat{M}$ le
double de $M$. Le genre $g\left(  \widehat{M}\right)  $ de $\widehat{M}$ et
celui $g\left(  M\right)  $ de $M$, qui par d\'{e}finition est le genre de la
vari\'{e}t\'{e} compacte obtenue en recollant $c$ disques conformes (deux
\`{a} deux disjoints) le long des $c$ composantes connexes de $M$, sont
li\'{e}s par la relation%
\begin{equation}
g\left(  \widehat{M}\right)  =2g\left(  M\right)  +c-1. \label{F/ gdouble}%
\end{equation}

\end{lemma}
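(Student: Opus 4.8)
The plan is to compute the Euler characteristic of the double $\widehat{M}$ and relate it to that of $M$ by a standard gluing argument. First I would recall that by definition the genus $g(M)$ is the genus of the closed surface $M^{\#}$ obtained by capping each of the $c$ boundary circles of $M$ with a disk, so that $\chi(M^{\#}) = 2 - 2g(M)$. Capping a single circle with a disk increases the Euler characteristic by $1$ (a disk has $\chi = 1$ and is glued along a circle, which has $\chi = 0$), whence
\[
\chi(M) = \chi(M^{\#}) - c = 2 - 2g(M) - c.
\]
This expresses $\chi(M)$ entirely in terms of the two invariants appearing in the statement.

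\noindent\textbf{The doubling step.} Next I would compute $\chi(\widehat{M})$. The double $\widehat{M}$ is formed from two copies of $\overline{M}$ glued along their common boundary $bM$, which consists of $c$ disjoint circles. By the inclusion–exclusion formula for Euler characteristic (Mayer–Vietoris, or simply additivity of $\chi$ for a decomposition along a codimension-one submanifold),
\[
\chi(\widehat{M}) = \chi(\overline{M}) + \chi(\overline{M}) - \chi(bM) = 2\chi(M) - 0,
\]
since $bM$ is a disjoint union of circles and each circle has vanishing Euler characteristic. Thus $\chi(\widehat{M}) = 2\chi(M)$. Because $\widehat{M}$ is a closed orientable surface, $\chi(\widehat{M}) = 2 - 2g(\widehat{M})$.

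\noindent\textbf{Conclusion.} It then remains to substitute and solve for $g(\widehat{M})$. Combining the three displayed relations gives
\[
2 - 2g(\widehat{M}) = 2\bigl(2 - 2g(M) - c\bigr) = 4 - 4g(M) - 2c,
\]
and rearranging yields $2g(\widehat{M}) = 4g(M) + 2c - 2$, that is $g(\widehat{M}) = 2g(M) + c - 1$, which is exactly \eqref{F/ gdouble}. I expect no serious obstacle here: the only point requiring a little care is the bookkeeping of the definition of $g(M)$ for a bordered surface (the capping convention), since an off-by-one error in how the disks contribute to $\chi$ would propagate into the final formula; once the convention $\chi(M) = 2 - 2g(M) - c$ is fixed, the doubling computation is entirely routine.
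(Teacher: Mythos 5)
Your proof is correct and follows essentially the same route as the paper: both arguments reduce the lemma to the computation $\chi(\widehat{M})=2\chi(M)$ together with the capping convention $\chi(M)=2-2g(M)-c$. The only difference is presentational — the paper establishes $\chi(\widehat{M})=2\chi(M)$ by counting simplices of a symmetric triangulation (where the boundary contribution cancels because each boundary circle has equally many vertices and edges), while you invoke additivity of $\chi$ along $bM$ with $\chi(bM)=0$, which is the same fact stated abstractly.
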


\begin{proof}
On se donne une triangulation $T$ de $M$. Lorsque $\alpha$ est une composante
connexe de $\gamma=bM$, on note $\Sigma_{\gamma}$ l'ensemble des sommets
d'\'{e}l\'{e}ments $T$ qui sont sur $\gamma$ et $A_{\gamma}$ celui des
arr\^{e}tes d'\'{e}l\'{e}ments $T$ qui sont incluses dans $\gamma$. On pose
$\Sigma^{b}=\underset{\gamma\in\mathcal{C}}{\cup}M_{\gamma}$ et $A^{b}%
=\underset{\gamma\in\mathcal{C}}{\cup}T_{\gamma}$ o\`{u} $\mathcal{C}$ est
l'ensemble des composantes connexes de $bM$. Pour chaque $\gamma\in
\mathcal{C}$, $\left\vert \Sigma_{\gamma}\right\vert =\left\vert A_{\gamma
}\right\vert $ et supposant, quitte \`{a} changer de triangulation, que les
ensembles $\underset{t\in T,\ T\cap M_{\gamma}\neq\varnothing\ }{\cup}$ sont
deux \`{a} deux disjoints quand $\gamma$ d\'{e}crit $\mathcal{C}$, on obtient
$\left\vert \Sigma^{b}\right\vert =\left\vert A^{b}\right\vert $. Enfin, on
note $\sigma\left(  T\right)  $ le nombre de sommets de $T$, $a\left(
T\right)  $ le nombre d'arr\^{e}tes de $T$, $f\left(  T\right)  $ le nombre de
faces de $T$ et on pose $\widetilde{M}=\widehat{M}\backslash\overline{M}$. On
note $\widetilde{T}$ la triangulation de $\widetilde{M}$ obtenue comme
sym\'{e}trique de $T,$ c'est-\`{a}-dire celle obtenue en faisant agir sur $T$
l'involution naturelle de $\widehat{M}$. $\widehat{T}=T\cup\widetilde{T}$ est
alors une triangulation de $\widehat{M}$. Par d\'{e}finition de la
caract\'{e}ristique d'Euler d'une surface, il vient alors
\begin{align*}
\chi\left(  \widehat{M}\right)   &  =\sigma\left(  \widehat{T}\right)
-a\left(  \widehat{T}\right)  +f\left(  \widehat{T}\right) \\
&  =\left[  2\left(  \sigma\left(  T\right)  -\Sigma^{b}\right)  +\Sigma
^{b}\right]  -\left[  2\left(  a\left(  T\right)  -A^{b}\right)
+A^{b}\right]  +2f\left(  T\right) \\
&  =\left[  2\sigma\left(  T\right)  -\Sigma^{b}\right]  -\left[  2a\left(
T\right)  -A^{b}\right]  +2f\left(  T\right) \\
&  =2\sigma\left(  T\right)  -2a\left(  T\right)  +2f\left(  T\right)
=2\chi\left(  M\right)  .
\end{align*}
On sait gr\^{a}ce \`{a} la th\'{e}orie usuelles sur les surfaces de Riemann
compactes que $\chi\left(  \widehat{M}\right)  =2-2g\left(  \widehat{M}%
\right)  $. D'o\`{u}
\[
g\left(  \widehat{M}\right)  =1-\chi\left(  M\right)
\]
Notons $M^{\prime}$ la surface obtenu en rattachant $c$ disques le long des
composantes connexes de $\gamma$. Alors $\chi\left(  M^{\prime}\right)
=\chi\left(  M\right)  +c$ et par d\'{e}finition, $g\left(  M\right)
=g\left(  M^{\prime}\right)  $. On obtient donc%
\[
g\left(  \widehat{M}\right)  =1-\chi\left(  M\right)  =1+c-\chi\left(
M^{\prime}\right)  =1+c-\left[  2-2g\left(  M^{\prime}\right)  \right]
\]
D'o\`{u} $g\left(  \widehat{M}\right)  =2g\left(  M\right)  +c-1$.
\end{proof}

\begin{definition}
Soit $M$ une surface de Riemann \`{a} bord. On se donne une fonction
d\'{e}finissante $\rho$ de $bM$, ce qui signifie $\rho\in C^{\infty}\left(
M,\mathbb{R}\right)  $ telle que $\rho\left\vert _{B\cap M}\right.  <0$,
$\rho\left\vert _{B\cap bM}\right.  =0$ et $\left(  d\rho\left\vert
_{B}\right.  \right)  _{s}\neq0$ pour tout $s\in bM$. Dans ces conditions,
toute section $\omega$ de $\Lambda^{p,q}T^{\ast}\overline{M}$ de classe
$C^{k}$ sur un ouvert $U$ de $\overline{M}$ peut s'\'{e}crire sous la forme
$\omega_{0}+\rho\omega_{1}$ o\`{u} $\omega_{j}$, $j=0,1$, est une section de
$\Lambda^{p,q}T^{\ast}\overline{M}$ sur $U$ de classe $C^{k-j}$, le couple
$\left(  \omega_{\rho}^{\left(  0\right)  },\omega_{\rho}^{\left(  1\right)
}\right)  =\left(  \omega_{0}\left\vert _{U\cap M}\right.  ,\omega
_{1}\left\vert _{U\cap M}\right.  \right)  $ \'{e}tant le m\^{e}me pour tout
$\left(  \omega_{0},\omega_{1}\right)  $ tel que $\omega=\omega_{0}+\rho
\omega_{1}$. Le fait que $\omega_{\rho}^{\left(  1\right)  }$ soit nulle ne
d\'{e}pend pas du choix de la fonction d\'{e}finissante $\rho$ choisie. On dit
que $\omega$ est tangente \`{a} $bM$ lorsque $\omega_{\rho}^{\left(  1\right)
}=0$.
\end{definition}

Notons que l'existence d'une d\'{e}composition $\omega=\omega_{0}+\rho
\omega_{1}$ d\'{e}coule de ce que $\rho$ peut \^{e}tre prise comme l'une des
coordonn\'{e}es d'un syst\`{e}me de coordonn\'{e}es r\'{e}elles pour (chaque
branche) de $\overline{M}$ au voisinage de $bM$. L'unicit\'{e} de $\left(
\omega_{\rho}^{\left(  0\right)  },\omega_{\rho}^{\left(  1\right)  }\right)
$ provient de la m\^{e}me raison et si $\rho^{\prime}$ est une autre fonction
d\'{e}finissante de $bM$, on peut \'{e}crire $\rho^{\prime}=\lambda\rho$
o\`{u} $\lambda$ est une fonction de classe $C^{k-1}$ qui ne s'annule pas, de
sorte que l'annulation de $\omega_{\rho^{\prime}}^{\left(  1\right)  }%
=\lambda\left\vert _{M}\right.  \omega_{\rho}^{\left(  1\right)  }$ et
$\omega_{\rho}^{\left(  1\right)  }$ est simultan\'{e}e.

Notons que lorsque $M$ est munie d'une m\'{e}trique hermitienne et que $\rho$
est la distance \`{a} $bM$, $\frac{\partial\omega}{\partial\rho}$ n'est rien
d'autre que la d\'{e}riv\'{e}e de $\omega$ par rapport au vecteur unitaire qui
oriente la normale ext\'{e}rieure \`{a} $\overline{M}$ aux points de $bM$. Le
lemme ci-dessous assure qu'il existe des formes volumes satisfaisant
l'hypoth\`{e}se du th\'{e}or\`{e}me principal de cette section.

\begin{lemma}
Une surface de Riemann \`{a} bord admet des formes volumes de classe $C^{2}$
tangentes \`{a} son bord.
\end{lemma}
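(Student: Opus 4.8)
The plan is to reduce the statement to a purely local construction in a collar of $bM$, glued to an arbitrary interior volume form by a partition of unity, exploiting that tangency to $bM$ depends only on the germ of the form along $bM$. First I would invoke two standard facts about the smooth oriented surface with boundary $\overline{M}$: by orientability it carries a global volume form $\mu_{\mathrm{int}}$ compatible with the orientation, and the collar neighbourhood theorem furnishes a diffeomorphism of a neighbourhood $V$ of $bM$ onto $bM\times[0,\varepsilon)$. I would take the second coordinate of this collar as a distinguished defining function $\rho$ of $bM$, so that $(\rho,t)$ are adapted coordinates on $V$ with $t$ running along $bM$, and I would write $\pi\colon V\to bM$ for the collar projection.

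Next I would produce a tangent volume form on the collar. Choosing an orientation $1$-form $\beta$ on the oriented $1$-manifold $bM$, I set $\mu_{\mathrm{coll}}=\pi^{*}\beta\wedge d\rho$, normalised (by the sign of $\beta$) so as to be positively oriented; in the coordinates $(\rho,t)$ this reads $c(t)\,dt\wedge d\rho$ and is thus independent of $\rho$. Consequently the decomposition $\mu_{\mathrm{coll}}=\omega_0+\rho\,\omega_1$ can be taken with $\omega_0=\mu_{\mathrm{coll}}$ and $\omega_1=0$, whence $(\mu_{\mathrm{coll}})_\rho^{(1)}=0$; by the uniqueness of the couple $(\omega_\rho^{(0)},\omega_\rho^{(1)})$ recalled above, $\mu_{\mathrm{coll}}$ is tangent to $bM$ relative to $\rho$, and by the remark that the vanishing of $\omega_\rho^{(1)}$ is independent of the defining function it is tangent in the intrinsic sense of the definition.

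Then I would glue. Taking $\chi\in C^\infty(\overline{M},[0,1])$ equal to $1$ on a smaller collar and supported in $V$, I set
\[
\mu=\chi\,\mu_{\mathrm{coll}}+(1-\chi)\,\mu_{\mathrm{int}},
\]
where $\chi\,\mu_{\mathrm{coll}}$ is extended by $0$ off $V$. Since at every point of $\overline{M}$ the coefficients $\chi$ and $1-\chi$ are nonnegative with sum $1$, and $\mu_{\mathrm{coll}},\mu_{\mathrm{int}}$ are positive multiples of a common orientation form wherever both are defined, $\mu$ is everywhere a positive multiple of that form, hence a genuine volume form; it is of class $C^\infty$, a fortiori $C^2$. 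On the inner collar, where $\chi\equiv1$, one has $\mu=\mu_{\mathrm{coll}}$, so $\mu$ agrees with a tangent form on a full neighbourhood of $bM$, and as tangency is a condition on the germ of $\mu$ along $bM$ this yields that $\mu$ is tangent to $bM$.

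The only steps requiring care, and hence the main (if modest) obstacle, are the two checks that the interior modification spoils neither property: that the convex combination remains a nonvanishing positively oriented top form across the transition region, and that passing from the collar coordinate to an arbitrary defining function preserves the vanishing of the normal part. The first follows from positivity of nonnegative combinations of equally oriented volume forms; the second is precisely the defining-function independence established just before the statement, through $\omega_{\rho'}^{(1)}=\lambda|_{M}\,\omega_\rho^{(1)}$ for $\rho'=\lambda\rho$ with $\lambda$ nonvanishing.
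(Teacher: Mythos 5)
Your proof is correct, and it reaches the boundary-tangent form by a genuinely different route than the paper. The gluing step is essentially the same in both arguments: a cutoff convex combination $\chi\,\mu_{\mathrm{coll}}+(1-\chi)\,\mu_{\mathrm{int}}$, justified because two positively oriented volume forms combine to a nonvanishing one and because tangency is a germ condition along $bM$. Where you diverge is in producing the tangent piece near the boundary. The paper starts from an arbitrary $C^{2}$ volume form $V$ on the double $\widehat{M}$, restricts it to $bM$, and invokes the Whitney extension theorem to manufacture a section $\widetilde{V}$ with $\widetilde{V}$ equal to $V$ on $bM$ and $\partial\widetilde{V}/\partial\rho=0$, which is a volume form on a neighbourhood of $bM$ by continuity. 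You instead use the collar neighbourhood theorem to obtain adapted coordinates $\left(\rho,t\right)$ and write down the explicit product form $\pi^{\ast}\beta\wedge d\rho$, which is $\rho$-independent, so that one may take $\omega_{1}=0$ in the decomposition $\omega=\omega_{0}+\rho\,\omega_{1}$; and you correctly reduce independence of the defining function to the relation $\omega_{\rho^{\prime}}^{\left(1\right)}=\lambda\vert_{M}\,\omega_{\rho}^{\left(1\right)}$ recorded just before the statement. Your route is more elementary — it needs neither the double nor Whitney extension theory — and it yields a $C^{\infty}$ form rather than merely $C^{2}$. What the paper's construction buys in exchange is that the tangent form matches a \emph{prescribed} restriction to $bM$, which is what is actually used later (lemme~\ref{L/ D|bS}, where the volume form must restrict to $\nu^{\ast}\wedge\tau^{\ast}$ along $bM$); but your construction adapts at no cost: writing the prescribed boundary form as $c\left(t\right)dt\wedge d\rho$ along $bM$ and taking $\mu_{\mathrm{coll}}=c\left(t\right)dt\wedge d\rho$ on the collar achieves the same. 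If you polish the write-up, make explicit that $\beta$ is chosen component by component on $bM$ so the sign normalisation is coherent, and that the collar coordinate $\rho$ is indeed a defining function of $bM$ in the sense of the paper's definition.
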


\begin{proof}
Soit $V$ une forme volume arbitraire de classe $C^{2}$ sur le double
$\widehat{M}$ de $M$. On se donne $\rho\in C^{\infty}\left(  \widehat{M}%
,\mathbb{R}\right)  $ telle que $M=\left\{  \rho<0\right\}  ,bM=\left\{
\rho=0\right\}  $ et $\left(  d\rho\right)  _{s}\neq0$ pour tout $s\in bM$. La
restriction $V_{0}$ de $V$ \`{a} $bM$ est donc une section de $\Lambda
^{1,1}T\widehat{M}$ de classe $C^{2}$ sur $bM$. En utilisant le
th\'{e}or\`{e}me d'extension de Whitney (voir \cite[prop. 2.2]{AnA-HiD1972}),
on peut construire une section $\widetilde{V}$ de $\Lambda^{1,1}T\widehat{M}$
de classe $C^{2}$ telle que $\widetilde{V}\left\vert _{M}\right.  =V_{0}$ et
$V_{\rho}^{\left(  1\right)  }=\frac{\partial\widetilde{V}}{\partial\rho}=0$.
Par continuit\'{e}, il existe un voisinage $\Sigma$ de $bM$ dans $\widehat{M}$
tel que $\widetilde{V}\left\vert _{\Sigma}\right.  $ est une forme volume.
Soit $\chi\in C^{\infty}\left(  M,\left[  0,1\right]  \right)  $ valant $1$ au
voisinage de $bM$ dans $\Sigma$ et dont le support est contenu dans $\Sigma$.
$W=\chi\widetilde{V}+\left(  1-\chi\right)  V$ est une forme volume $W$ de
classe $C^{2}$ sur $\widehat{M}$ telle que $\frac{\partial W}{\partial\rho}=0$.
\end{proof}

\begin{theorem}
\label{T/ ZPgenre}Soit $M$ une surface de Riemann \`{a} bord dont le bord est
constitu\'{e} de $c$ composantes connexes. On se donne une forme volume $\mu$
de classe $C^{2}$ sur $\overline{M}$ tangente \`{a} $bM$ puis on munit le
fibr\'{e} des $\left(  1,0\right)  $-formes sur $\overline{M}$ de la
m\'{e}trique $h^{\ast}$ d\'{e}finie pour tout $s\in\overline{M}$ et $\alpha
\in\Lambda^{1,0}T^{\ast}M$ par%
\begin{equation}
h_{s}^{\ast}\left(  \alpha\right)  =\left(  \frac{\alpha\wedge\ast
\overline{\alpha}}{\mu_{s}}\right)  ^{1/2} \label{F/ h*}%
\end{equation}
o\`{u} $\ast$ est l'op\'{e}rateur de conjugaison associ\'{e} \`{a} la
structure complexe de $M$. On note $D$ la connexion de Chern associ\'{e}e
\`{a} $h^{\ast}$. Alors, quand $\omega$ est une $\left(  1,0\right)  $-forme
m\'{e}romorphe sur $\overline{M}$, sans p\^{o}le ni z\'{e}ro sur $bM$,
\begin{equation}
\frac{1}{2\pi i}\int_{\partial M}\frac{D\omega}{\omega}=N_{z}\left(
\omega\right)  -N_{p}\left(  \omega\right)  +2-2g\left(  M\right)  -c
\label{F/ genreS}%
\end{equation}
o\`{u} $N_{z}\left(  \omega\right)  $ et $N_{p}\left(  \omega\right)  $ sont
respectivement le nombre de z\'{e}ros et de p\^{o}les de $\omega$ compt\'{e}s
avec leur multiplicit\'{e}.
\end{theorem}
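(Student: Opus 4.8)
The plan is to turn the left-hand side into a curvature integral by Stokes, and then to evaluate that integral topologically by passing to the double $\widehat{M}$.

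First I would unwind the integrand. Away from the zeros and poles of $\omega$ the form $\omega$ is a local nonvanishing holomorphic frame of the canonical bundle $K_{M}=\Lambda^{1,0}T^{*}\overline{M}$, so for the Chern connection $D$ attached to $h^{*}$ one has $\frac{D\omega}{\omega}=\partial\log h^{*}(\omega)^{2}$, a $(1,0)$-form, whence
\[
d\Big(\tfrac{D\omega}{\omega}\Big)=\bar\partial\partial\log h^{*}(\omega)^{2}=\Theta ,
\]
the Chern curvature of $(K_{M},h^{*})$, a $(1,1)$-form independent of $\omega$. In a holomorphic coordinate $z=x_{1}+ix_{2}$ with $\mu=\rho\,dx_{1}\wedge dx_{2}$, a direct computation from (\ref{F/ h*}) gives $h^{*}(dz)^{2}=2/\rho$, hence $\Theta=-\bar\partial\partial\log\rho$; since $\mu$ is $C^{2}$ this shows $\Theta$ is continuous up to $bM$ and extends continuously across the zeros and poles (the singular part $2k\,\partial\log|z|$ of $\log h^{*}(\omega)^{2}$ is harmonic off the point). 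I then excise small coordinate disks $D(p_{j},\varepsilon)$ around the zeros and poles $p_{j}$ and apply Stokes on $M_{\varepsilon}=M\setminus\bigcup_{j}D(p_{j},\varepsilon)$: the smooth part of $\log h^{*}(\omega)^{2}$ contributes $O(\varepsilon)$, while $2k_{j}\,\partial\log|z|=k_{j}\,dz/z$ contributes $k_{j}$ to $\frac{1}{2\pi i}\int_{\partial D(p_{j},\varepsilon)}$, where $k_{j}$ is the order (positive at a zero, negative at a pole). Letting $\varepsilon\downarrow0$ gives
\[
\frac{1}{2\pi i}\int_{\partial M}\frac{D\omega}{\omega}=\frac{1}{2\pi i}\int_{M}\Theta+N_{z}(\omega)-N_{p}(\omega).
\]

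It remains to prove $\frac{1}{2\pi i}\int_{M}\Theta=2-2g(M)-c$. Let $g$ be the Riemannian metric in the conformal class of $M$ whose area form is $\mu$, so that $g=\rho(dx_{1}^{2}+dx_{2}^{2})$ locally. A local computation from $\Theta=-\bar\partial\partial\log\rho$ identifies the curvature integral with the total Gaussian curvature, $\frac{1}{2\pi i}\int_{M}\Theta=\frac{1}{2\pi}\int_{M}K_{g}\,dA_{g}$. Here the hypothesis that $\mu$ be tangent to $bM$ enters. In boundary-adapted coordinates tangency means the normal derivative of $\rho$ vanishes on $bM$, and since an even $C^{2}$ function with vanishing first normal derivative reflects to a $C^{2}$ function, even reflection extends $\rho$ to a $C^{2}$ conformal factor on the double $\widehat{M}$, yielding a $C^{2}$ metric $\widehat{g}$ on the closed surface $\widehat{M}$ that is invariant under the canonical involution and restricts to $g$ on $M$.

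Finally I would invoke the classical Gauss--Bonnet theorem on the closed surface $\widehat{M}$, $\int_{\widehat{M}}K_{\widehat{g}}\,dA=2\pi\chi(\widehat{M})$; by invariance $\int_{\widehat{M}}K_{\widehat{g}}\,dA=2\int_{M}K_{g}\,dA_{g}$, so $\frac{1}{2\pi}\int_{M}K_{g}\,dA_{g}=\tfrac12\chi(\widehat{M})$. The genus relation (\ref{F/ gdouble}) of Lemma~\ref{L/ GenreDuDouble}, $g(\widehat{M})=2g(M)+c-1$, combined with $\chi(\widehat{M})=2-2g(\widehat{M})$, gives $\chi(\widehat{M})=2\chi(M)=2\,(2-2g(M)-c)$, hence $\frac{1}{2\pi i}\int_{M}\Theta=2-2g(M)-c$. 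Substituting into the displayed identity yields (\ref{F/ genreS}). The main obstacle is the regularity bookkeeping at $bM$: one must check carefully that tangency of $\mu$ produces a $C^{2}$ doubled metric so that classical Gauss--Bonnet applies on $\widehat{M}$; by contrast the interior residue computation and the continuity of $\Theta$ across the zeros and poles are routine.
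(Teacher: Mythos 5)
Your proof is correct, and it follows the same skeleton as the paper's --- pass to the double $\widehat{M}$, use the tangency of $\mu$ to get $C^{2}$ regularity across $\gamma=bM$, and finish with the genus relation~(\ref{F/ gdouble}) --- but it executes the central step quite differently. The paper doubles the volume form chart by chart, endows $\Lambda^{1,0}T^{\ast}\widehat{M}$ with $\widehat{h^{\ast}}$, asserts the residue identity~(\ref{F/ genreS1}), and then spends the bulk of the proof on an explicit computation in the charts $\varphi_{1},\varphi_{-1},\widetilde{\varphi_{1}}$ showing $f_{-1}=f_{1}\circ j$ and hence $j^{\ast}\widehat{\Theta}=-\widehat{\Theta}$, which combined with the orientation reversal of $j$ yields $\int_{M_{1}}i\widehat{\Theta}=\int_{M_{-1}}i\widehat{\Theta}$ and then $\frac{1}{2\pi}\int_{\widehat{M}}i\widehat{\Theta}=2g(\widehat{M})-2$ as the degree of the canonical bundle. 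You instead recast the curvature of $(K_{M},h^{\ast})$ as the Gaussian curvature of the conformal metric with area form $\mu$, extend the conformal factor to $\widehat{M}$ by even reflection --- where the paper's tangency hypothesis is exactly the Neumann condition making the even extension $C^{2}$, the same fact the paper establishes via $\lambda_{\varphi,1}=0$ --- and invoke classical Gauss--Bonnet on the closed surface. This buys two simplifications: the involution-invariance of the doubled metric is manifest from the construction, so the half-integral claim $\int_{M}K\,dA_{g}=\frac{1}{2}\int_{\widehat{M}}K\,dA$ is immediate (curvature being a function rather than a form, no orientation bookkeeping is needed), replacing the paper's page of chart computations; and your excision argument around the zeros and poles supplies the proof of the residue step that the paper only asserts in~(\ref{F/ genreS1}). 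Your sign conventions check out ($\frac{1}{2\pi i}\int_{M}\Theta=\frac{1}{2\pi}\int_{M}K_{g}\,dA_{g}=\frac{1}{2}\chi(\widehat{M})=\chi(M)=2-2g(M)-c$, consistent with $\deg K_{\widehat{M}}=2g(\widehat{M})-2$); the factor $2/\rho$ versus $1/\rho$ in $h^{\ast}(dz)^{2}$ is a harmless normalization of $\ast$ that drops out of the curvature. The only point you gloss is that the reflected conformal factor in the conjugated boundary charts is indeed a conformal factor for the complex structure of $M_{-1}$, but this is exactly the coherence computation the paper performs for $\mu_{-1}$ and poses no difficulty.
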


\noindent\textbf{Remarque. }Supposons que $\mu^{\prime}$ est une forme volume
pour $\overline{M}$ ayant les m\^{e}mes propri\'{e}t\'{e}s que $\mu$. La
fonction $\kappa:\overline{M}\rightarrow\mathbb{R}$ telle que $\mu=e^{2\kappa
}\mu^{\prime}$ v\'{e}rifie $D_{\mu}=D_{\mu^{\prime}}-\partial\kappa$, ce qui
donne $\int_{\partial M}\frac{D_{\mu}\omega}{\omega}=\int_{\partial M}%
\frac{D_{\mu^{\prime}}\omega}{\omega}-\int_{\partial M}j_{bM}^{\ast}%
\partial\kappa$. (\ref{F/ genreS}) indique alors que $\int_{\partial M}%
j_{bM}^{\ast}\partial\kappa=0$. Pour v\'{e}rifier cela a priori,
consid\'{e}rons une fonction d\'{e}finissante $\rho$ de $bM$. De la relation
$\frac{\partial\mu}{\partial\rho}=e^{\kappa\left\vert _{M}\right.  }%
\frac{\partial\mu^{\prime}}{\partial\rho}+\mu^{\prime}\left\vert _{M}\right.
\frac{\partial\kappa}{\partial\rho}$, on tire $\frac{\partial\kappa}%
{\partial\rho}=0$. Munissons $M$ d'une m\'{e}trique hermitienne et
consid\'{e}rons une section lisse $\left(  \nu,\tau\right)  $ de $\left(
T_{bM}\overline{M}\right)  ^{2}$ telle que pour chaque $s\in bM$, $\left(
\nu_{s},\tau_{s}\right)  $ une base orthonorm\'{e}e directe de $T_{s}%
\overline{M}$. Alors pour tout $s\in bM$, $\left(  \partial\kappa\right)
_{s}=\frac{1}{2}\left(  \left(  \nu\kappa\right)  _{s}-i\left(  \tau
\kappa\right)  _{s}\right)  \left(  \tau_{s}^{\ast}+i\nu_{s}^{\ast}\right)  $
o\`{u} $\left(  \tau_{s}^{\ast},\nu_{s}^{\ast}\right)  $ est la base duale de
$\left(  \nu_{s},\tau_{s}\right)  $. Lorsque $s\in bM$, le fait que
$\frac{\partial\kappa}{\partial\rho}\left(  s\right)  =0$ indique que $\left(
d\kappa\right)  _{s}\in\mathbb{R}\tau_{s}^{\ast}$ et donc que $\left(
\nu\kappa\right)  _{s}=0$, ce qui donne $\left(  \partial\kappa\right)
_{s}=\frac{1}{2i}\left(  \tau\kappa\right)  _{s}\left(  \tau_{s}^{\ast}%
-i\nu_{s}^{\ast}\right)  $. D'o\`{u} $j_{bM}^{\ast}\partial\kappa=\frac{1}%
{2i}\left(  \tau\kappa\right)  \tau^{\ast}\left\vert _{M}\right.  =\frac
{1}{2i}j_{bM}^{\ast}d\kappa$. Ainsi, $j_{bM}^{\ast}\partial\kappa$ est exacte
et son int\'{e}grale sur $\partial M$ est nulle.

\begin{proof}
[Preuve du th\'{e}or\`{e}me~\ref{T/ ZPgenre}]Commen\c{c}ons par d\'{e}tailler
une construction du double $\widehat{M}$ de $M$ qu'on trouve par exemple
dans~\cite{AhL-SaL1960Livre}. Soit $\mathcal{U}$ un atlas de $M$. On utilise
les notations suivantes~: pour $\nu\in\left\{  -1,+1\right\}  $ et
$X\subset\overline{M}$, $X_{\nu}=X\times\left\{  \nu\right\}  $ et si $\left(
s,\nu\right)  \in M_{1}\cup M_{-1}$, $\pi\left(  s,\nu\right)  =s$~; quand
$s\in bM$; les points de $\widehat{M}=\overline{M_{1}}\cup\overline{M_{-1}}$
de la forme $\left(  s,-1\right)  $ et $\left(  s,1\right)  $ sont
identifi\'{e}s et forment la courbe r\'{e}elle $\gamma$. On munit $M_{1}$ de
la structure complexe associ\'{e}e \`{a} l'atlas $\mathcal{U}_{1}$
constitu\'{e} par les applications $\varphi_{1}:U_{1}\ni p\mapsto
\varphi\left(  \pi\left(  p\right)  \right)  $ o\`{u} $\varphi:U\rightarrow
\mathbb{C}$ d\'{e}crit $\mathcal{U}$. Pour $M_{-1}$, on utilise l'atlas
$\mathcal{U}_{-1}$ des applications $\varphi_{-1}:U_{-1}\ni p\mapsto
-\overline{\varphi\left(  \pi\left(  p\right)  \right)  }$, $\varphi
:U\rightarrow\mathbb{C}$ d\'{e}crivant $\mathcal{U}$. On obtient un atlas
$\widehat{\mathcal{U}}=\mathcal{U}_{1}\cup\mathcal{U}_{b}\cup\mathcal{U}_{-1}$
conf\'{e}rant \`{a} $\widehat{M}$ une structure complexe en d\'{e}finissant
$\mathcal{U}_{b}$ comme l'ensemble des applications $\varphi_{b}$ d\'{e}finies
de la mani\`{e}re suivante~: on se donne une carte de bord pour $\overline{M}$
c'est \`{a} dire $\varphi\in C^{\infty}\left(  U,\mathbb{C}\right)  $ o\`{u}
$U$ est un ouvert de $\overline{M}$ tel que $b_{U}M=\overline{U}\cap bM$ est
un ouvert de $bM$, $\varphi\left(  U\backslash M\right)  =\mathbb{D}%
^{+}=\mathbb{D}\cap\left\{  \operatorname{Im}>0\right\}  $ et $\varphi\left(
b_{U}M\right)  =\left]  -1,1\right[  $~; $\varphi_{b}$ est l'application de
$U_{b}=U_{1}\cup U_{-1}$ dans $\mathbb{C}$ obtenue en posant $\varphi
_{b}\left(  s,1\right)  =\varphi\left(  s\right)  $ et $\varphi_{b}\left(
s,-1\right)  =\overline{\varphi\left(  s\right)  }$ pour tout $s\in U$.

On d\'{e}finit des formes volumes $\mu_{1}$ et $\mu_{-1}$ sur $\overline
{M}_{1}$ et $\overline{M}_{-1}$ en posant lorsque $\varphi:U\rightarrow
\mathbb{C}$ est une carte de $\overline{M}$,
\begin{gather*}
\left(  \varphi_{1\ast}\mu_{1}\right)  _{z}=\left(  \varphi_{\ast}\mu\right)
_{z}=\lambda_{\varphi}\left(  z\right)  idz\wedge d\overline{z},~z\in U\\
\left(  \varphi_{-1\ast}\mu_{-1}\right)  _{w}=\left(  \varphi_{\ast}%
\mu\right)  _{-\overline{w}}=\lambda_{\varphi}\left(  -\overline{w}\right)
idw\wedge d\overline{w},~-\overline{w}\in U
\end{gather*}
Cette d\'{e}finition est \'{e}videmment coh\'{e}rente pour $\mu_{1}$.
Supposons que $\psi:V\rightarrow\mathbb{C}$ est une autre carte de $M$ et
$\psi_{\ast}\mu=\lambda_{\psi}idz\wedge d\overline{z}$. On note $\Phi
:\psi\left(  U\cap V\right)  \ni z\mapsto\varphi\left(  \psi^{-1}\left(
z\right)  \right)  $ le changement de carte de $\psi$ \`{a} $\varphi$. On a
donc $\lambda_{\psi}=\left\vert \Phi^{\prime}\right\vert ^{2}\lambda_{\varphi
}\circ\Phi$. Le changement de carte de $\psi_{-1}:V_{-1}\rightarrow\mathbb{C}$
\`{a} $\varphi_{-1}:U_{-1}\rightarrow\mathbb{C}$ est alors l'application
$\Phi_{-1}$ d\'{e}finie sur $\psi_{-1}\left(  V_{-1}\cap U_{-1}\right)
=-\overline{\psi}\left(  U\cap V\right)  $ par
\[
\Phi_{-1}\left(  w\right)  =\varphi_{-1}\left(  \left(  \psi_{-1}\right)
^{-1}w\right)  =\varphi_{-1}\left(  \psi^{-1}\left(  -\overline{w}\right)
,-1\right)  =-\overline{\varphi}\left(  \psi^{-1}\left(  -\overline{w}\right)
\right)  =-\overline{\Phi\left(  -\overline{w}\right)  }.
\]
D'o\`{u}%
\begin{align*}
\Phi_{-1}^{\ast}\left(  \lambda_{\varphi}\left(  -\overline{z}\right)
idz\wedge d\overline{z}\right)   &  =\lambda_{\varphi}\left(  \Phi\left(
-\overline{w}\right)  \right)  i\left(  -\frac{\partial\overline{\Phi\left(
-\overline{w}\right)  }}{\partial w}dw\right)  \wedge\left(  \left(
-\frac{\partial\Phi\left(  -\overline{w}\right)  }{\partial\overline{w}%
}d\overline{w}\right)  \right) \\
&  =\lambda_{\varphi}\left(  \Phi\left(  -\overline{w}\right)  \right)
\left\vert \Phi^{\prime}\left(  -\overline{w}\right)  \right\vert
^{2}idw\wedge d\overline{w}=\lambda_{\psi}\left(  -\overline{w}\right)
idw\wedge d\overline{w},
\end{align*}
ce qui prouve la coh\'{e}rence de la d\'{e}finition de $\mu_{-1}$.

Les formes $\mu_{1}$ et $\mu_{-1}$ se recollent contin\^{u}ment le long de
$\gamma$ en une forme volume $\widehat{\mu}$ pour $\widehat{M}$. En effet,
consid\'{e}rons une carte de bord $\varphi:U\rightarrow\mathbb{C}$ pour
$\overline{M}$ et la carte $\varphi_{b}:U_{b}\rightarrow\mathbb{C}$
d\'{e}finie comme plus haut. Posons $\varphi_{\ast}\mu=\lambda_{\varphi
}idz\wedge d\overline{z}$ . Lorsque $s\in U$, $\varphi_{b}\left(  s,-1\right)
=\overline{\varphi\left(  s\right)  }$ et $\varphi_{-1}\left(  s,-1\right)
=-\overline{\varphi\left(  s\right)  }$. Donc le changement de carte de
$\varphi_{b}$ \`{a} $\varphi_{-1}$ est l'application de $\overline{U}$ dans
$-\overline{U}$, $z\mapsto-z$. D'o\`{u}
\[
\left(  \left(  \varphi_{b}\right)  _{\ast}\mu_{-1}\right)  _{z}%
=\lambda_{\varphi}\left(  \overline{z}\right)  idz\wedge d\overline{z}=\left(
\varphi_{1\ast}\mu_{1}\right)  _{\overline{z}}%
\]
pour tout $z\in\mathbb{D}^{-}\cup\left]  -1,1\right[  $ o\`{u} $\mathbb{D}%
^{-}=\mathbb{D}\cap\left\{  \operatorname{Im}>0\right\}  $. Etant donn\'{e}
que $\varphi\left(  b_{U}M\right)  =\left]  -1,1\right[  $, ceci montre que
$\mu_{-1}=\mu_{1}$ en chaque point de $\gamma\cap U$. Ecrivons au voisinage
dans $\mathbb{D}^{+}\cup\left]  -1,1\right[  $ la fonction $\lambda_{\varphi}$
sous la forme $\lambda_{\varphi,0}\left(  x\right)  +\lambda_{\varphi
,1}\left(  x\right)  y+\lambda_{\varphi,2}\left(  x\right)  y^{2}+o\left(
y^{2}\right)  $. Puisque $\mu$ est tangente \`{a} $bM$ par hypoth\`{e}se,
$0=\lambda_{\varphi,1}$ sur $bM$ et il appara\^{\i}t que $\widehat{\mu}$ est
de classe $C^{2}$.

On peut maintenant munir $\Lambda^{1,0}T_{p}^{\ast}\widehat{M}$,
$p\in\widehat{M}$, de la m\'{e}trique $\widehat{h_{p}^{\ast}}$ d\'{e}finie par%
\[
\widehat{h_{p}^{\ast}}\left(  \alpha\right)  =\left(  \frac{\alpha\wedge
\ast\overline{\alpha}}{\widehat{\mu}_{p}}\right)  ^{1/2}%
\]
pour tout $\alpha\in\Lambda^{1,0}T_{p}^{\ast}\widehat{M}$. La connexion de
Chern $D$ qui lui est associ\'{e}e est donc de classe $C^{2}$ sur
$\widehat{M}$ car $\widehat{h^{\ast}}$ et la formule de Stokes livre que
lorsque $\omega$ est une $\left(  1,0\right)  $-forme m\'{e}romorphe sur $M$
sans p\^{o}le ni z\'{e}ro sur $bM$,%
\begin{equation}
\frac{1}{2\pi i}\int_{\partial M}\frac{D\omega}{\omega}=\frac{1}{2\pi i}%
\int_{\partial M_{1}}\frac{D\omega}{\omega}=N_{z}\left(  \omega\right)
-N_{p}\left(  \omega\right)  -\frac{1}{2\pi}\int_{M_{1}}i\widehat{\Theta}
\label{F/ genreS1}%
\end{equation}
o\`{u} $\widehat{\Theta}$ est la courbure de $D$. Si on admet que $\frac
{1}{2\pi}\int_{M_{1}}i\widehat{\Theta}=\frac{1}{2\pi}\int_{M_{-1}%
}i\widehat{\Theta}$, (\ref{F/ genreS}) r\'{e}sulte des formules
(\ref{F/ genreS1}) et (\ref{F/ gdouble}) car alors $\frac{1}{2\pi}\int_{M_{1}%
}i\widehat{\Theta}=\frac{1}{2}\frac{1}{2\pi}\int_{\widehat{M}}i\widehat{\Theta
}=g\left(  \widehat{M}\right)  -1$ puisque $\widehat{M}$ est compacte et $D$
de classe $C^{2}$.

Notons $j$ la sym\'{e}trie naturelle de $\widehat{M}$ par rapport \`{a}
$\gamma$ et $c$ la conjugaison de $\mathbb{C}$. Lorsque $\varphi
:U\rightarrow\mathbb{C}$ est une carte de $M$, l'expression de $j$ dans les
cartes $\varphi_{1}$ et $\varphi_{-1}$ est $\varphi_{-1}\circ j\circ\left(
\varphi_{1}\right)  ^{-1}$ c'est \`{a} dire $-c\left\vert _{U}^{\overline{U}%
}\right.  $. $j$ \'{e}change donc les orientations de $M_{1}$ et $M_{-1}$ ce
qui donne%
\[
\int_{M_{1}}\widehat{\Theta}=-\int_{M_{-1}}j^{\ast}\widehat{\Theta}.
\]

Lorsque $\psi:V\rightarrow\mathbb{C}$ est une carte de $\widehat{M}$,
l'application $\widetilde{\psi}:j\left(  V\right)  \rightarrow\mathbb{C}$
d\'{e}finie par $\widetilde{\psi}=\overline{\psi}\circ j$ est aussi une carte
de $\widehat{M}$. Ceci permet (voir \cite{AhL-SaL1960Livre} par exemple) \`{a}
partir d'une section $\omega$ de $\Lambda T^{\ast}\widehat{M}$ sur une partie
$X$ de $\widehat{M}$, de d\'{e}finir une section $\widetilde{\omega}$ de
$\Lambda T^{\ast}\widehat{M}$ sur $j\left(  X\right)  $ en posant pour chaque
carte $\psi:V\rightarrow\mathbb{C}$ de $\widehat{M} $ telle que $V\cap
X\neq\varnothing$, $\left(  \widetilde{\psi}_{\ast}\widetilde{\omega}\right)
_{w}=\beta\left(  \overline{w}\right)  dw+\alpha\left(  \overline{w}\right)
d\overline{w}$ quand $\psi_{\ast}\omega=\alpha dz+\beta d\overline{z}$ et
$\overline{w}\in\psi\left(  V\cap X\right)  $. En particulier, $\omega$
\'{e}tant une section fix\'{e}e de $\Lambda^{1,0}T^{\ast}M$ sans z\'{e}ro sur
$\overline{M}$, holomorphe sur $bM$ et de classe $C^{\infty}$ sur
$\overline{M}$, $\omega_{1}=\pi^{\ast}\omega$ (resp. $\omega_{-1}%
=\overline{\widetilde{\omega_{1}}}$) est une section de $\Lambda^{1,0}T^{\ast
}\widehat{M}$ holomorphe sans z\'{e}ro sur $\overline{X}_{1}$ (resp.
$\overline{X_{-1}}$), holomorphe sur $X_{1}$ et de classe $C^{\infty}$ sur
(resp. $\overline{X_{-1}}$). Posant $f_{\nu}=\ln\widehat{h}\left(  \omega
_{\nu}\right)  ^{2}$, on sait alors que%
\[
\widehat{\Theta}\left\vert _{M_{\nu}}\right.  =d\partial f_{\nu},~\nu=\pm1.
\]
Fixons une carte $\varphi:U\rightarrow\mathbb{C}$ et posons $\varphi_{\ast
}\omega=\alpha dz$. Alors $\left(  \varphi_{1}\right)  _{\ast}\omega
_{1}=\alpha dz$ et $\left(  \widetilde{\varphi_{1}}\right)  _{\ast}\omega
_{-1}=\overline{\alpha\left(  \overline{w}\right)  }dw$. Puisque $\ast$ agit
sur les $\left(  0,1\right)  $-formes comme la multiplication par $\frac{i}%
{2}$, on obtient que
\begin{align*}
\left(  \widetilde{\varphi_{1}}\right)  _{\ast}\left(  \omega_{-1}\wedge
\ast\overline{\omega_{-1}}\right)   &  =\overline{\alpha\left(  \overline
{w}\right)  }dw\wedge\frac{i}{2}\alpha\left(  \overline{w}\right)
d\overline{w}\\
&  =\left\vert \alpha\left(  \overline{w}\right)  \right\vert ^{2}\frac{i}%
{2}dw\wedge d\overline{w}%
\end{align*}
Posons $\mu=\lambda_{\varphi}\frac{i}{2}dz\wedge d\overline{z}$. L'expression
de $\mu_{-1}$ dans la carte $\varphi_{-1}$ est par d\'{e}finition
$\varphi_{-1\ast}\mu_{-1}=\lambda_{\varphi}\left(  -\overline{z}\right)
\frac{i}{2}dz\wedge d\overline{z}$. $\widetilde{\varphi_{1}}$ est aussi une
carte d\'{e}finie sur $j\left(  U_{1}\right)  =U_{-1}$ et le changement de
carte de $\widetilde{\varphi_{1}}$ \`{a} $\varphi_{-1}$ est l'application
$\Phi$ qui \`{a} $w\in\widetilde{\varphi_{1}}\left(  U_{-1}\right)
=\overline{U}$ associe le nombre $\Phi\left(  w\right)  $ d\'{e}fini par%
\begin{align*}
\Phi\left(  w\right)   &  =\widetilde{\varphi_{1}}\left(  \left(  \varphi
_{-1}\right)  ^{-1}\left(  w\right)  \right)  =\left(  \overline{\varphi_{1}%
}\circ j\right)  \left(  \varphi^{-1}\left(  -\overline{w}\right)  ,-1\right)
\\
&  =\overline{\varphi_{1}\left(  \varphi^{-1}\left(  -\overline{w}\right)
,1\right)  }=\overline{\varphi\left(  \varphi^{-1}\left(  -\overline
{w}\right)  \right)  }=-w.
\end{align*}
Par cons\'{e}quent, pour $w\in\mathbb{D}^{-}\cup\left[  -1,1\right]  $,%
\begin{align*}
\left(  \left(  \widetilde{\varphi_{1}}\right)  _{\ast}\mu_{-1}\right)  _{w}
&  =\left(  \left(  \widetilde{\varphi_{1}}\right)  {}^{-1}\right)  ^{\ast
}\varphi_{-1}^{\ast}\varphi_{-1\ast}\mu_{-1}=\left(  \varphi_{-1}\circ\left(
\widetilde{\varphi_{1}}\right)  {}^{-1}\right)  ^{\ast}\varphi_{-1\ast}%
\mu_{-1}\\
&  =\left(  \Phi^{-1}\right)  ^{\ast}\varphi_{-1\ast}\mu_{-1}=\left(
\Phi^{-1}\right)  ^{\ast}\left(  \lambda_{\varphi}\left(  -\overline
{z}\right)  \frac{i}{2}dz\wedge d\overline{z}\right) \\
&  =\lambda_{\varphi}\left(  \overline{w}\right)  \frac{i}{2}dw\wedge
d\overline{w}=\left(  \varphi_{1\ast}\mu_{1}\right)  _{\overline{w}}%
\end{align*}
et donc
\begin{align*}
\left(  \left(  \widetilde{\varphi_{1}}\right)  _{\ast}\widehat{h}\left(
\omega_{-1}\right)  \right)  \left(  w\right)   &  =\frac{\left(
\widetilde{\varphi_{1}}\right)  _{\ast}\left(  \omega_{-1}\wedge\ast
\overline{\omega_{-1}}\right)  }{\varphi_{-1\ast}\mu_{-1}}\left(  w\right)
=\frac{\left\vert \alpha\left(  \overline{w}\right)  \right\vert ^{2}}%
{\lambda\left(  \overline{w}\right)  }\\
&  =\left(  \varphi_{1}\right)  _{\ast}\left(  \widehat{h}\left(  \omega
_{1}\right)  \right)  \left(  \overline{w}\right)
\end{align*}
On en d\'{e}duit que $\widehat{h}\left(  \omega_{-1}\right)  \circ
\widetilde{\varphi_{1}}^{-1}=\widehat{h}\left(  \omega_{1}\right)
\circ\left(  \varphi_{1}\right)  ^{-1}\circ c$ et donc $\left(
\widetilde{\varphi_{1}}\right)  _{\ast}f_{-1}=\left(  \varphi_{1}\right)
_{\ast}f_{1}\circ c$ (ce qui, au passage, livre $f_{-1}=f_{1}\circ
j$).D\'{e}rivant deux fois cette relation et en utilisant que $d\overline
{\partial}=-d\partial$, on obtient finalement que $j^{\ast}\widehat{\Theta
}=-\widehat{\Theta}$ et donc que $\int_{M_{1}}\widehat{\Theta}=\int_{M_{-1}%
}\widehat{\Theta}$, ce qui ach\`{e}ve la preuve du th\'{e}or\`{e}me.
\end{proof}

Le th\'{e}or\`{e}me~\ref{T/ ZPgenre} contient le corollaire.

\begin{corollary}
\label{C/ genre via dp(u)}On se donne une surface de Riemann \`{a} bord $M$ et
une fonction harmonique $u$ non localement constante. On d\'{e}signe par $q$
le nombre de z\'{e}ros de $\partial u$ compt\'{e}s avec multiplicit\'{e} et
par $c$ le nombre de composantes connexes de $bM$. On \'{e}quipe $M$ d'une
forme volume $\mu$ telle que $j_{bS}^{\ast}\partial\mu$ est exacte puis on
construit pour $\Lambda^{1,0}T^{\ast}M$ une m\'{e}trique hermitienne $h^{\ast
}$ et une connexion $D$ comme dans le th\'{e}or\`{e}me~\ref{T/ ZPgenre}.
Alors
\[
\frac{1}{2\pi i}\int_{\partial S}\frac{D\partial u}{\partial u}=q+2-2g\left(
M\right)  -c.
\]

\end{corollary}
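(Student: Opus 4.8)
The plan is to obtain the corollary as the special case $\omega=\partial u$ of Theorem~\ref{T/ ZPgenre}. The first point to establish is that $\partial u$ is a holomorphic $(1,0)$-form on $M$. Using the identity $2\overline{\partial}\partial=i\,dd^{c}$ recalled in the proof of Theorem~\ref{T/ plgt1}, harmonicity of $u$ means $dd^{c}u=0$, hence $\overline{\partial}\,\partial u=0$; thus $\partial u$ is $\overline{\partial}$-closed, i.e.\ a holomorphic $(1,0)$-form. Being holomorphic it is in particular meromorphic on $\overline{M}$ with no poles, so $N_{p}(\partial u)=0$ and $N_{z}(\partial u)=q$ by the definition of $q$. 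Since $u$ is not locally constant, $\partial u\not\equiv0$ on each connected component, so $\frac{D\partial u}{\partial u}$ is a well-defined logarithmic expression; I would include in the hypotheses, or arrange by a generic choice, that $\partial u$ has no zero on $bM$, which is exactly the requirement ``no pole nor zero on $bM$'' needed to apply Theorem~\ref{T/ ZPgenre} to $\omega=\partial u$.

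Next I would reconcile the hypothesis on the volume form. Theorem~\ref{T/ ZPgenre} is stated for a volume form tangent to $bM$, whereas the corollary only assumes $j_{bM}^{\ast}\partial\mu$ exact. By the Lemma preceding the theorem there is a volume form $\mu_{0}$ of class $C^{2}$ tangent to $bM$; write $\mu=e^{2\kappa}\mu_{0}$ with $\kappa\in C^{\infty}(\overline{M},\mathbb{R})$. As in the Remark following Theorem~\ref{T/ ZPgenre}, the associated Chern connections satisfy $D_{\mu}=D_{\mu_{0}}-\partial\kappa$, whence
\[
\frac{1}{2\pi i}\int_{\partial M}\frac{D_{\mu}\partial u}{\partial u}
=\frac{1}{2\pi i}\int_{\partial M}\frac{D_{\mu_{0}}\partial u}{\partial u}
-\frac{1}{2\pi i}\int_{\partial M}j_{bM}^{\ast}\partial\kappa .
\]
The assumption that $j_{bM}^{\ast}\partial\mu$ is exact is precisely the a priori condition identified in that Remark forcing $j_{bM}^{\ast}\partial\kappa$ to be exact (the contribution of the tangent form $\mu_{0}$ being exact as verified there), so its integral over the closed $1$-cycle $\partial M$ vanishes, and the value of the left-hand integral is unchanged when $\mu$ is replaced by $\mu_{0}$.

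Finally, applying Theorem~\ref{T/ ZPgenre} to $\omega=\partial u$ with the tangent form $\mu_{0}$ gives
\[
\frac{1}{2\pi i}\int_{\partial M}\frac{D_{\mu_{0}}\partial u}{\partial u}
=N_{z}(\partial u)-N_{p}(\partial u)+2-2g(M)-c
=q+2-2g(M)-c,
\]
and combining this with the previous display yields the announced formula. The main obstacle I anticipate is the second step: making precise the sense in which ``$j_{bM}^{\ast}\partial\mu$ exact'' is the correct intrinsic substitute for tangency, and confirming that the difference term $\int_{\partial M}j_{bM}^{\ast}\partial\kappa$ genuinely vanishes under it. This is exactly the content of the a priori computation carried out in the Remark, so once that identification is granted the corollary is a direct substitution into the theorem together with the holomorphy of $\partial u$.
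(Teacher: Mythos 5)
Votre proposition est correcte et suit exactement la voie du papier : la preuve donnée par l'article tient en une ligne (\guillemotleft{}c'est un cas particulier du théorème~\ref{T/ ZPgenre}\guillemotright), et c'est précisément ce que vous faites en prenant $\omega=\partial u$, holomorphe car $u$ est harmonique ($\overline{\partial}\partial u=0$), donc sans pôle et avec $N_{z}\left(  \partial u\right)  =q$. Les précisions que vous ajoutez --- exiger $\partial u\neq0$ sur $bM$ (hypothèse implicite sans laquelle l'intégrale n'est pas définie) et ramener l'hypothèse sur $\mu$ au cas tangent via l'identité $D_{\mu}=D_{\mu_{0}}-\partial\kappa$ et l'annulation de $\int_{\partial M}j_{bM}^{\ast}\partial\kappa$ établie dans la remarque suivant le théorème~\ref{T/ ZPgenre} --- comblent des détails que le papier laisse implicites et sont cohérentes avec cette remarque.
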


\begin{proof}
C'est un cas particulier du th\'{e}or\`{e}me~\ref{T/ ZPgenre}.
\end{proof}

Les formules de cette section n\'{e}cessitent de pouvoir calculer des
connexions de Chern le long du bord d'une surface de Riemann \`{a} bord. La
proposition ci-dessous explique comment ceci est possible quand on
conna\^{\i}t l'op\'{e}rateur de Dirichlet-Neumann.

\begin{lemma}
\label{L/ D|bS}On se donne une surface de Riemann \`{a} bord $M$.
L'op\'{e}rateur de Dirichlet-Neumann $N$ de $M$ et la structure complexe de
$M$ le long de $bM$ d\'{e}terminent l'action sur $\Lambda^{1,0}T_{bM}^{\ast}M$
d'une connexion associ\'{e}e \`{a} une m\'{e}trique tangente \`{a} $bM$.
\end{lemma}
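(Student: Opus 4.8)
Lemme à prouver : L'opérateur de Dirichlet-Neumann $N$ et la structure complexe de $M$ le long de $bM$ déterminent l'action sur $\Lambda^{1,0}T^*_{bM}M$ d'une connexion de Chern associée à une métrique tangente à $bM$.

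Laissez-moi réfléchir à comment prouver ce lemme final.

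Le contexte : on a le théorème T/ ZPgenre qui utilise une connexion de Chern $D$ associée à une métrique hermitienne $h^*$ sur $\Lambda^{1,0}T^*M$, définie par une forme volume $\mu$ tangente au bord. Pour appliquer la formule de genre (F/ genreS) ou son corollaire (F/ qinfini-g), il faut pouvoir calculer $\frac{1}{2\pi i}\int_{\partial M}\frac{D\omega}{\omega}$ à partir des données du problème inverse, c'est-à-dire l'opérateur de Dirichlet-Neumann et la structure complexe sur le bord.

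La connexion de Chern $D$ sur $\Lambda^{1,0}T^*M$ : pour une section holomorphe locale $\omega$ sans zéro, $D\omega = \partial(\ln h^*(\omega)^2) \otimes \omega$ (à un facteur près). Donc $\frac{D\omega}{\omega} = \partial \ln h^*(\omega)^2$. Le potentiel de connexion est $\partial f$ où $f = \ln h^*(\omega)^2$.

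Ce qu'on veut montrer : l'action de $D$ sur $\Lambda^{1,0}T^*_{bM}M$ (donc $\frac{D\omega}{\omega}$ restreint au bord, ou $j^*_{bM}\partial f$) est déterminée par $N$ et la structure complexe sur $bM$.

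Le point clé : $j^*_{bM}\partial f$ fait intervenir à la fois la dérivée tangentielle de $f$ le long de $bM$ et la dérivée normale. La dérivée tangentielle est accessible car $f|_{bM}$ est connu dès qu'on connaît $h^*(\omega)$ sur le bord, ce qui dépend de la structure complexe sur $bM$ et de $\mu|_{bM}$. La dérivée normale de $f$, elle, nécessite de « rentrer » dans $M$ — c'est là qu'intervient l'opérateur de Dirichlet-Neumann.

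Voici mon plan.

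Je commencerais par fixer une section $\omega$ de $\Lambda^{1,0}T^*M$ sans zéro près de $bM$ et holomorphe le long de $bM$ (ce qui a un sens : il suffit de partir d'une coordonnée holomorphe de bord). La métrique $h^*$ étant donnée par (F/ h*), on a $f = \ln h^*(\omega)^2 = \ln\frac{\omega\wedge\ast\overline\omega}{\mu}$, et la connexion vérifie $\frac{D\omega}{\omega} = \partial f$. Muni d'une métrique hermitienne auxiliaire sur $M$, on décompose $(\partial f)_s = \frac12\big((\nu f)_s - i(\tau f)_s\big)(\tau^*_s + i\nu^*_s)$ comme dans la remarque qui suit le théorème~\ref{T/ ZPgenre}, où $(\nu,\tau)$ est un repère orthonormé direct au bord. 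La restriction $j^*_{bM}\partial f$ combine donc la dérivée tangentielle $\tau f$ et la dérivée normale $\nu f$ de $f$ au bord.

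La dérivée tangentielle $(\tau f)|_{bM}$ ne dépend que de $f|_{bM}$, qui est entièrement déterminé par la structure complexe de $M$ le long de $bM$ et par $\mu|_{bM}$ — donnée accessible puisque $\mu$ est prise tangente à $bM$ et que $h^*$ se calcule via $\ast$, l'opérateur de conjugaison, connu sur $T^*_{bM}M$. Le cœur de la preuve est le traitement de la dérivée normale $(\nu f)|_{bM}$. Ici $f = \ln|\omega|^2_{h^*}$ est harmonique sur $M$ en tant que logarithme du carré du module d'une forme holomorphe (à une contribution de courbure près qui s'annule dans la combinaison voulue) : plus précisément, on écrit localement $\omega = e^{g}\,dz$ où $g$ est holomorphe, de sorte que la partie de $f$ provenant de $\omega$ est $2\,\mathrm{Re}\,g$, harmonique, et la dérivée normale de sa conjuguée harmonique est accessible par $N$. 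C'est l'idée de Dirichlet-to-Neumann : connaître une fonction harmonique au bord détermine sa dérivée normale via $N$.

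La difficulté principale sera de démontrer que la dérivée normale $(\nu f)|_{bM}$ se récupère effectivement à partir de $N$ agissant sur des fonctions construites uniquement à partir des données de bord. Concrètement, il faut isoler dans $f$ la partie harmonique dont on connaît déjà la valeur au bord et relier sa dérivée normale à l'opérateur de Dirichlet-Neumann $N$ (via l'identité $dE_\sigma u = (N_\nu u)\nu^* + (du.\tau)\tau^*$ de la section~\ref{S/ PbDNI}) ; la partie restante de $f$, liée au choix de $\mu$, a une dérivée normale nulle puisque $\mu$ est tangente à $bM$, ce qui garantit que la connexion obtenue est bien associée à une métrique tangente au bord. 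Une fois cette décomposition établie, l'action de $D$ sur $\Lambda^{1,0}T^*_{bM}M$ s'exprime entièrement en fonction de $N$ et de la structure complexe restreinte à $bM$, ce qui conclut.
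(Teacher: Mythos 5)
Votre strat\'{e}gie est pour l'essentiel celle du papier~: on fabrique, \`{a} partir de l'op\'{e}rateur $\ast$ connu sur $T_{bM}^{\ast}\overline{M}$, une forme volume tangente \`{a} $bM$ (le papier prend directement $\mu_{0}=\nu^{\ast}\wedge\tau^{\ast}$ avec $\nu^{\ast}=-\ast\tau^{\ast}$, puis un prolongement tangent arbitraire, plut\^{o}t qu'une m\'{e}trique hermitienne auxiliaire), on munit les $\left(1,0\right)$-formes de la m\'{e}trique (\ref{F/ h*}), et on exploite, pour $\omega=\partial u$ holomorphe pr\`{e}s de $bM$, la formule $\frac{D\omega}{\omega}=\partial\ln h^{\ast}\left(\omega\right)^{2}$~; la tangence de $\mu$ \`{a} $bM$ annule la contribution normale du facteur venant de $\mu$, exactement comme vous le dites.

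Le seul m\'{e}canisme qui ne passe pas tel quel est celui que vous invoquez pour la d\'{e}riv\'{e}e normale de la partie $2\operatorname{Re}g$, $g=\ln\alpha$~: appliquer $N$ \`{a} ses valeurs au bord donnerait la d\'{e}riv\'{e}e normale de l'extension harmonique \`{a} $M$ de ces valeurs, or $2\operatorname{Re}\ln\alpha$ n'est pas cette extension --- elle n'est d\'{e}finie et harmonique que sur un collier, d\'{e}pend de la carte, y est a priori multivalu\'{e}e, et $\partial u$ poss\`{e}de des z\'{e}ros int\'{e}rieurs (pr\'{e}cis\'{e}ment les $q$ z\'{e}ros que compte la formule (\ref{F/ qinfini-g})), de sorte que $\ln h^{\ast}\left(\partial u\right)^{2}$ a des singularit\'{e}s logarithmiques dans $M$. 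La correction est celle que vous effleurez vous-m\^{e}me avec la conjugu\'{e}e harmonique~: $g$ \'{e}tant holomorphe sur le collier, les \'{e}quations de Cauchy--Riemann donnent $\nu\left(\operatorname{Re}g\right)=\tau\left(\operatorname{Im}g\right)$ le long de $bM$, si bien que le 1-jet au bord de $2\operatorname{Re}g$ --- et donc $\partial\ln h^{\ast}\left(\partial u\right)^{2}\left\vert_{bM}\right.$ --- se lit sur les seules valeurs au bord de $\partial u$, sans aucun recours \`{a} $N$ pour cette \'{e}tape. Le r\^{o}le de $N$ est en amont~: il fournit ces valeurs au bord, via $\partial\widetilde{u}\left\vert_{bM}\right.=\theta_{c}^{\sigma}u$ d\'{e}termin\'{e} par $N_{d}^{\sigma}u=dE_{\sigma}u\left\vert_{bM}\right.$ et $\ast\left\vert_{bM}\right.$. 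Avec ce recadrage, votre preuve co\"{\i}ncide avec celle du papier, lequel laisse d'ailleurs toute cette discussion implicite dans la formule finale $\frac{\left(D\partial u\right)_{s}}{\left(\partial u\right)_{s}}=\partial\ln\left(\frac{\partial u\wedge\ast\overline{\partial u}}{\nu^{\ast}\wedge\tau^{\ast}}\right)$.
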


\begin{proof}
L'op\'{e}rateur $\ast$ de conjugaison associ\'{e} \`{a} la structure complexe
de $M$ est par hypoth\`{e}se connu quand il agit sur le fibr\'{e} r\'{e}el
$T_{bM}^{\ast}\overline{M}=\underset{s\in bM}{\cup}T_{s}\overline{M}$. $bM$
\'{e}tant connu, on peut se donner une section lisse et g\'{e}n\'{e}ratrice
$\tau^{\ast}$ de $T^{\ast}bM$. Pour tout $s\in bM$, on pose alors $\nu
_{s}^{\ast}=-\ast_{s}\tau_{s}^{\ast}$. Par d\'{e}finition d'une
conductivit\'{e}, $\mu_{0}:bM\ni s\mapsto\nu_{s}^{\ast}\wedge\tau_{s}^{\ast}$
est alors une section du fibr\'{e} des formes volumes de $\overline{M}$. On se
donne alors une forme volume $\mu$ sur $\overline{M}$ qui est tangentielle
\`{a} $bM$ et telle que $\mu\left\vert _{M}\right.  =\mu_{0}$ puis On munit
$T^{\ast}\overline{M}$ de la m\'{e}trique $h^{\ast}$ d\'{e}finie pour $s\in M$
et $\alpha\in T_{s}\overline{M}$ par%
\[
h_{s}^{\ast}\left(  \alpha\right)  =\left(  \frac{\alpha\wedge\ast_{s}\alpha
}{\mu_{s}}\right)  ^{1/2}%
\]
Lorsque $u$ est une fonction r\'{e}elle de classe $C^{\infty}$ sur
$\overline{M}$ et harmonique sur $M$, telle que $\partial u$ ne s'annule pas
sur $bM$, il existe un voisinage $\Sigma$ de $bM$ dans $\overline{M}$ tel que
$\partial u$ ne s'annule pas dans $\Sigma$, est de classe $C^{\infty}$ sur
$\Sigma$ et holomorphe sur $\Sigma\backslash M$. On sait alors que pour tout
$s\in bM$%
\[
\frac{\left(  D\partial u\right)  _{s}}{\left(  \partial u\right)  _{s}%
}=\left(  \partial\ln h^{\ast}\left(  \partial u\right)  ^{2}\right)
_{s}=\partial\ln\left(  \frac{\partial u\wedge\ast\overline{\partial u}}%
{\nu_{s}^{\ast}\wedge\tau_{s}^{\ast}}\right)  ,
\]
ce qui prouve le lemme.
\end{proof}

{\normalsize
\bibliographystyle{amsperso}
\bibliography{ref}

\providecommand{\bysame}{\leavevmode\hbox to3em{\hrulefill}\thinspace}
\providecommand{\MR}{\relax\ifhmode\unskip\space\fi MR }
\providecommand{\MRhref}[2]{%
  \href{http://www.ams.org/mathscinet-getitem?mr=#1}{#2}
}
\providecommand{\href}[2]{#2}
\begin{thebibliography}{10}

\bibitem{AgA-HeG2015}
A.~Agaltsov et G.~Henkin, \emph{Explicit {R}econstruction of {R}iemann
  {S}urface with {G}iven {B}oundary in {C}omplex {P}rojective {S}pace}, J.
  Geom. Anal. \textbf{25} (2015), no.~4, 2450--2473.

\bibitem{AhL-SaL1960Livre}
L.V. Ahlfors et L.~Sario, \emph{Riemann surfaces}, Princeton Mathematical
  Series, No. 26, Princeton University Press, Princeton, N.J., 1960.

\bibitem{AnA-HiD1972}
A.~Andreotti et C.D. Hill, \emph{E. {E}. {L}evi convexity and the {H}ans {L}ewy
  problem. part {I} and {II}}, Ann. Scuola. Norm. Sup. Pisa \textbf{26 et 28}
  (1972), no.~2 et 4, 325--363 et 747--806.

\bibitem{AsK-PaL-LaM2005}
Kari Astala, Lassi P{\"a}iv{\"a}rinta, et Matti Lassas, \emph{Calder\'on's
  inverse problem for anisotropic conductivity in the plane}, Comm. Partial
  Differential Equations \textbf{30} (2005), no.~1-3, 207--224.

\bibitem{BeM2003}
M.~Belishev, \emph{The {C}alderon problem for two dimensional manifolds by the
  {BC}-method}, SIAM J. Math. Anal. \textbf{35} (2003), no.~1, 172--182.

\bibitem{BeM-ShV2008}
M.~Belishev et V.~Sharafutdinov, \emph{Dirichlet to {N}eumann operator on
  differential forms}, Bull. Sci. Math. \textbf{132} (2008), no.~2, 128--145.

\bibitem{CoS1996}
S.~Collion, \emph{Transformation d'{A}bel et formes diff\'erentielles
  alg\'ebriques}, C. R. Acad. Sci. Paris S\'er. I Math. \textbf{323} (1996),
  no.~12, 1237--1242.

\bibitem{DoP-HeG1997}
P.~Dolbeault et G.~Henkin, \emph{Cha\^{\i}nes holomorphes de bord donn\'e dans
  $\mathbb{CP}^{n}$}, Bull. Soc. math. France \textbf{125} (1997), 383--445.

\bibitem{GuB2007}
B.~Gutarts, \emph{The inverse boundary value problem for the two-dimensional
  elliptic equation in anisotropic media}, J. Math. Stat. Allied Fields
  \textbf{1} (2007), no.~1, HTML files.

\bibitem{HaR-LaB1975}
F.~Harvey et H.~B. Lawson, \emph{Boundaries of complex analytic varieties},
  Ann. of Math. \textbf{102} (1975), 233--290.

\bibitem{HaR1977}
R.~Harvey, \emph{Holomorphic chains and their boundaries}, Proc. Symp. Pure
  Math. \textbf{30} (1977), 309--382.

\bibitem{HeG1995}
G.~Henkin, \emph{The {A}bel-{R}adon transform and several complex variables},
  Modern methods in complex analysis (Princeton, {NJ}, 1992) (Princeton
  Princeton Univ.~Press, ed.), Ann. of Math. Stud., no. 137, Princeton Univ.
  Press, Princeton, NJ, 1995, pp.~223--275.

\bibitem{HeG2004}
\bysame, \emph{Abel-{R}adon transform and applications}, The legacy of {N}iels
  {H}enrik {A}bel, Springer, Berlin, 2004, pp.~567--584.

\bibitem{HeG-MiV2007}
G.~Henkin et V.~Michel, \emph{On the explicit reconstruction of a {R}iemann
  surface from its {D}irichlet-{N}eumann operator}, Geom. Funct. Anal.
  \textbf{17} (2007), no.~1, 116--155.

\bibitem{HeG-MiV2008}
\bysame, \emph{Inverse conductivity problem on {R}iemann surfaces}, J. Geom.
  Anal. \textbf{18} (2008), no.~4, 1033--1052.

\bibitem{HeG-MiV2012}
\bysame, \emph{Inverse {D}irichlet-to-{N}eumann problem for nodal curves},
  Russian Math. Surveys \textbf{67} (2012), no.~6, 1069--1089.

\bibitem{HeG-MiV2014}
\bysame, \emph{Probl\`eme de {P}lateau complexe feuillet\'e. {P}h\'enom\`enes
  de {H}artogs-{S}everi et {B}ochner pour des feuilletages {CR} singuliers},
  Bull. Soc. Math. France \textbf{142} (2014), no.~1, 95--126.

\bibitem{HeG-MiV2015}
\bysame, \emph{Bishop-{R}unge approximations and inversion of the
  {R}iemann-{K}lein theorem}, Mat. Sb. \textbf{206} (2015), no.~2, 149--174.
  \MR{3354975}

\bibitem{HeG-NoR2011}
G.~Henkin et R.~Novikov, \emph{On the reconstruction of conductivity of a
  bordered two-dimensional surface in {$\Bbb R^3$} from electrical current
  measurements on its boundary}, J. Geom. Anal. \textbf{21} (2011), no.~3,
  543--587.

\bibitem{HeG-SaM2010}
G.~Henkin et M.~Santacesaria, \emph{On an inverse problem for anisotropic
  conductivity in the plane}, Inverse Problems \textbf{26} (2010), no.~9,
  095011, 18.

\bibitem{HeG-SaM2012}
\bysame, \emph{Gelfand-{C}alder\'on's inverse problem for anisotropic
  conductivities on bordered surfaces in {$\Bbb R^3$}}, Int. Math. Res. Not.
  IMRN (2012), no.~4, 781--809.

\bibitem{KoR-VoM1984}
R.~Kohn et M.~Vogelius, \emph{Determining conductivity by boundary
  measurements}, Comm. Pure Appl. Math. \textbf{37} (1984), no.~3, 289--298.

\bibitem{LaM-UhG2001}
M.~Lassas et G.~Uhlmann, \emph{On determining a {R}iemannian manifold from the
  {D}irichlet-to-{N}eumann map}, Ann. Sci. \'Ecole Norm. Sup. (4) \textbf{34}
  (2001), no.~5, 771--787.

\bibitem{LeJ-UhG1989}
J.~Lee et G.~Uhlmann, \emph{Determining anisotropic real-analytic
  conductivities by boundary measurements}, Comm. Pure Appl. Math. \textbf{42}
  (1989), no.~8, 1097--1112.

\bibitem{NaA1996}
A.~Nachman, \emph{Global uniqueness for a two-dimensional inverse boundary
  value problem}, Ann. of Math. (2) \textbf{143} (1996), no.~1, 71--96.

\bibitem{NoR1988}
R.~Novikov, \emph{A multidimensional inverse spectral problem for the equation
  {$-\Delta\psi +(v(x)-Eu(x))\psi=0$}}, Funktsional. Anal. i Prilozhen.
  \textbf{22} (1988), no.~4, 11--22, 96.

\bibitem{RoM1954}
M.~Rosenlicht, \emph{Generalized {J}acobian varieties}, Ann. of Math. (2)
  \textbf{59} (1954), 505--530.

\bibitem{ShV-ShC2013}
V.~Sharafutdinov et C.~Shonkwiler, \emph{The complete {D}irichlet-to-{N}eumann
  map for differential forms}, J. Geom. Anal. \textbf{23} (2013), no.~4,
  2063--2080.

\bibitem{SyJ1990}
J.~Sylvester, \emph{An anisotropic inverse boundary value problem}, Comm. Pure
  Appl. Math. \textbf{43} (1990), 201--232.

\bibitem{WoJ1984}
J.~Wood, \emph{A simple criterion for local hypersurfaces to be algebraic},
  Duke Math. J. \textbf{51} (1984), no.~1, 235--237.

\end{thebibliography}
}%

\end{spacing}%

\end{document}